\theoremstyle{plain}
\title{minimalSequences2}
\date{October 2025}
\theoremstyle{plain}
\newtheorem{theorem}{Theorem}[section]
\newtheorem{lemma}[theorem]{Lemma}
\newtheorem{definition-theorem}[theorem]{Definition-Theorem}
\newtheorem{proposition}[theorem]{Proposition}
\newtheorem{corollary}[theorem]{Corollary}
\newtheorem{conjecture}[theorem]{Conjecture}
\theoremstyle{definition}
\newtheorem{definition}[theorem]{Definition}
\newtheorem{example}[theorem]{Example}
\newtheorem{remark}[theorem]{Remark}
\newtheorem{notation}[theorem]{Notation}
\newcommand \bth[1] { \begin{theorem}\label{t#1} }
\newcommand \ble[1] { \begin{lemma}\label{l#1} }
\newcommand \bpr[1] { \begin{proposition}\label{p#1} }
\newcommand \bco[1] { \begin{corollary}\label{c#1} }
\newcommand \bde[1] { \begin{definition}\label{d#1}\rm }
\newcommand \bex[1] { \begin{example}\label{e#1}\rm }
\newcommand \bre[1] { \begin{remark}\label{r#1}\rm }
\newcommand \bnota[1] {\begin{notation}\label{n#1}\rm }
\newcommand {\ele} { \end{lemma} }
\newcommand {\epr} { \end{proposition} }
\newcommand {\eco} { \end{corollary} }
\newcommand {\ede} { \end{definition} }
\newcommand {\eex} { \end{example} }
\newcommand {\ere} { \end{remark} }
\newcommand {\enota} { \end{notation} }
\begin{document}
\title[Properties of minimal sequences]{Construction of simple quotients of Bernstein-Zelevinsky derivatives and  highest derivative multisegments III: properties of minimal sequences} 
\date{\today}

\author[Kei Yuen Chan]{Kei Yuen Chan}
\address{
Department of Mathematics, The University of Hong Kong
}
\email{kychan1@hku.hk}

\begin{abstract}
Let $F$ be a non-Archimedean local field. For an irreducible smooth representation $\pi$ of $\mathrm{GL}_n(F)$ and a multisegment $\mathfrak m$, one associates a simple quotient $D_{\mathfrak m}(\pi)$ of a Bernstein-Zelevinsky derivative of $\pi$. In the preceding article, we showed that
\[  \mathcal S(\pi, \tau) :=\left\{ \mathfrak m : D_{\mathfrak m}(\pi)\cong \tau \right\} ,
\]
has a unique minimal element under the Zelevinsky ordering, where $\mathfrak m$ runs for all multisegments. The main result of this article includes commutativity and subsequent property of the minimal sequence. At the end of this article, we conjecture some module structure arising from the minimality.
\end{abstract}
\maketitle

%\tableofcontents

\part{Introduction and preliminaries}

\section{Introduction}

%We refer the reader to \cite{Ch22+d} for some background of Bernstein-Zelevinsky derivatives.  A main goal of this article together with \cite{Ch22+d, Ch22+b} is to give a comprehensive study on the simple quotients of Bernstein-Zelevinsky derivatives. It is also related to a branching law problem of a sign representation for affine Hecke algebras of type A (see \cite{CS19} and \cite[Appendix]{Ch22+d}). The results for those simple quotients may be regarded as generalizing the case of ladder representations in \cite{LM14} and generic representations \cite{Ch21}.

\subsection{The poset $\mathcal S(\pi, \tau)$ and the minimal sequence} \label{ss main results}

% Representations in $\mathrm{Irr}_{\rho}$ are the most interesting case, and the general case can be deduced from that (see Section \ref{ss reduction to cuspidal case}). 

Let $F$ be a non-Archimedean local field. Let $G_n=\mathrm{GL}_n(F)$, the general linear group over $F$.  Fix a cuspidal representation $\rho$ throughout the whole article. All the representations we consider are smooth and over $\mathbb C$. Let $\mathrm{Irr}(G_n)$ be the set of irreducible representations of $G_n$. We shall usually not distinguish isomorphic irreducible representations. For a representation $\pi_1$ of $G_{n_1}$ and a representation $\pi_2$ of $G_{n_2}$, denote the normalized parabolic induction by $\pi_1 \times \pi_2$. 

The complex representation theory of $\mathrm{GL}_n(F)$ has fruitful study in the literature, back to \cite{Ze80}. One central object is the so-called multisegments that parametrize irreducible representations of $\mathrm{GL}_n(F)$. Recently, \cite{Ch22+d, Ch22+e} found interesting combinatorics on multisegments arising from certain sequence of derivatives  of essentially square-integrable representations in the content of Bernstein-Zelevinsky derivatives \cite{Ch22+d, Ch22+e}. The notion of highest derivative multisegments is a key combinatorial notion to encode information for derivatives and is quite computable (c.f. explicit algorithms in \cite{LM16, CP25}). These results are motivated and have applications to quotient branching laws \cite{Ch22+b}, and also attempt to generalize some aspects of some  segment cases in \cite{AL23, LM22} to multisegment cases.

This article continues to study those sequences of derivatives. A main goal is to establish and prove some general properties of a sequence of derivatives, and explain why those properties are natural from some structure of Jacquet modules. In order to define the notion of derivatives precisely, we need more notations, following Zelevinsky \cite{Ze80}.

Let $\nu: G_n \rightarrow \mathbb C^{\times}$ be the character $\nu(g)=|\mathrm{det}(g)|_F$, where $|.|_F$ is the normalized absolute value for $F$. Let $\mathrm{Irr}_{\rho}(G_n)$ be the set of irreducible representations of $G_n$ which are an irreducible constitutent of $\nu^{a_1}\rho \times \ldots \times \nu^{a_r}\rho$ for some integers $a_1, \ldots, a_r$. Let $\mathrm{Irr}_{\rho}=\sqcup_n \mathrm{Irr}_{\rho}(G_n)$.

We now define some combinatorial objects to parametrize and study representations. For $a,b \in \mathbb Z$ with $b-a \in \mathbb Z_{\geq 0}$, we call $[a,b]_{\rho}$ to be a {\it segment} (associated to $\rho$). We also set $[a,a-1]_{\rho}=\emptyset$ for $a \in \mathbb Z$. For a segment $\Delta=[a,b]_{\rho}$, we write $a(\Delta)=a$ and $b(\Delta)=b$. We also write $[a]_{\rho} :=[a,a]_{\rho}$. Let $\mathrm{Seg}_{\rho}$ be the set of segments. A {\it multisegment} (associated to $\rho$) is a multiset of non-empty segments. Let $\mathrm{Mult}_{\rho}$ be the set of multisegments. For each segment $\Delta \in \mathrm{Seg}_{\rho}$, we associate to the unique essentially square-integrable representation, $\mathrm{St}(\Delta)$, of $G_n$ as one of the irreducible composition factors in $\nu^a\rho\times \ldots \times \nu^b\rho$ (also see \cite[Section 2.6]{Ch22+d}). One may refer to \cite{Ze80} a more general notion of multisegments, which we shall not use in this article.
%We may also write $[\nu^a\rho, \nu^b\rho]$ for $[a,b]_{\rho}$ and write $[\nu^a\rho]$ for $[a]_{\rho}$. 

For $\pi \in \mathrm{Irr}_{\rho}(G_n)$ and a segment $\Delta \in \mathrm{Seg}_{\rho}$, there is at most one irreducible module $\tau \in \mathrm{Irr}_{\rho}(G_{n-i})$ such that 
 \[   \pi \hookrightarrow \tau \times  \mathrm{St}(\Delta).
\]
 If such $\tau$ exists, we denote such $\tau$ by $D_{\Delta}(\pi)$ and call $\Delta$ to be {\it admissible} to $\pi$. Otherwise, we set $D_{\Delta}(\pi)=0$. We shall refer $D_{\Delta}$ to be a {\it derivative}. %Let $\varepsilon_{\Delta}(\pi)$ be the largest integer such that $(D_{\Delta})^k(\pi)\neq 0$.

A sequence of segments $[a_1,b_1]_{\rho}, \ldots, [a_k,b_k]_{\rho}$ (all $a_j, b_j\in \mathbb Z$) is said to be in an {\it ascending order} if for any $i\leq j$, either $[a_i,b_i]_{\rho}$ and $[a_j,b_j]_{\rho}$ are unlinked; or $a_i<a_j$. For a multisegment $\mathfrak n \in \mathrm{Mult}_{\rho}$, which we write the segments in $\mathfrak n$ in an ascending order $\Delta_1, \ldots, \Delta_k$. Define 
\[  D_{\mathfrak n}(\pi):=D_{\Delta_k}\circ \ldots \circ D_{\Delta_1}(\pi) .
\]
The derivative is independent of the choice of an ascending order \cite{Ch22+d}. In particular, one may choose an ordering such that $a_1\leq \ldots \leq a_k$. We say that $\mathfrak n$ is {\it admissible} to $\pi$ if $D_{\mathfrak n}(\pi)\neq 0$. We refer the reader to \cite{LM16, Ch22+, Ch22+d} for more theory on derivatives.

%An advantage of such result is to allow some representation theoretic techniques to prove some combinatorics structure. 

%The existence part of Theorem \ref{thm highest derivative multi} will be given in Theorem \ref{thm highest derivative multiseg} and the minimality is dealt in Section \ref{ss proof of thm}. 

%Indeed, Theorem \ref{thm highest derivative multi} is only a special case of Theorem {tm unique minimal} below. However, we give an explicit construction in Section \ref{ss define highest derivative}, which we shall call $\mathfrak{hd}(\pi)$ to be the highest derivative multisegment of $\pi$, and it has to be used to deduce Theorem \ref{thm highest derivative multi}. 

%We remark that when $\pi$ is generic, those simple quotients have been described in \cite[Corollary 2.6]{CS21} by using a suitable filtration on the derivatives from the geometric lemma.

For $\pi \in \mathrm{Irr}_{\rho}$, denote its $i$-th Bernstein-Zelevinsky derivatives by $\pi^{(i)}$. We shall refer the reader \cite{Ze80, Ch21, Ch22+d} for the precise definition, and the main discussions and proofs will not involve the use of Bernstein-Zelevinsky derivatives. The main relation of derivatives and Bernstein-Zelevinsky derivatives is that $D_{\mathfrak n}(\pi)$ is a simple quotient of $\pi^{(i)}$ \cite{Ch22+d}, where $i=l_{abs}(\mathfrak n)$  (see Section \ref{ss notations} for the notation $l_{abs}$). The goal of this series of articles \cite{Ch22+d, Ch22+e} is to study constructions of Bernstein-Zelevinsky derivatives from $D_{\mathfrak n}(\pi)$. In particular, \cite{Ch22+e} studies the following poset: for a simple quotient $\tau$  of $\pi^{(i)}$,
\[ \mathcal S(\pi, \tau) := \left\{ \mathfrak n \in \mathrm{Mult}_{\rho}: D_{\mathfrak n}(\pi) \cong \tau \right\} .
\]
 The ordering  $\leq_Z$ on $\mathcal S(\pi, \tau)$ is the Zelevinsky ordering (see Section \ref{ss zel ordering}). We recall two fundamental combinatorial structure on the set $\mathcal S(\pi, \tau)$:

\begin{theorem} \cite[Theorem 1.1]{Ch22+e} \label{thm convex derivatives}
We use the notatons above. Let $\mathfrak n_1, \mathfrak n_2 \in \mathcal S(\pi, \tau)$ with $\mathfrak n_1\leq_Z \mathfrak n_2$. If $\mathfrak n \in \mathrm{Mult}_{\rho}$ with $\mathfrak n_1 \leq_Z \mathfrak n \leq_Z \mathfrak n_2$, then $\mathfrak n \in \mathcal S(\pi, \tau)$.
\end{theorem}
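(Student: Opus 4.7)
My plan is to prove the convexity of $\mathcal S(\pi,\tau)$ by inducting along saturated chains in the Zelevinsky order, reducing the statement to elementary ``union--intersection'' moves. Recall that $\leq_Z$ on $\mathrm{Mult}_\rho$ is generated by covering relations $\mathfrak m \lessdot \mathfrak m'$, where $\mathfrak m$ is obtained from $\mathfrak m'$ by replacing a pair of linked segments $\{\Delta_1, \Delta_2\}$ with $\{\Delta_1 \cup \Delta_2, \Delta_1 \cap \Delta_2\}$ (an empty intersection being discarded). Using the lattice-theoretic structure of the Zelevinsky poset, any $\mathfrak n$ with $\mathfrak n_1 \leq_Z \mathfrak n \leq_Z \mathfrak n_2$ lies on a saturated chain from $\mathfrak n_1$ to $\mathfrak n_2$, so it suffices to handle a single cover at a time, e.g.\ the case $\mathfrak n_1 \lessdot \mathfrak n$.

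Next, I would unpack the condition $D_{\mathfrak n_i}(\pi) \cong \tau$. Ordering the segments of $\mathfrak n_i$ ascendingly as $\Delta_1^{(i)}, \ldots, \Delta_{k_i}^{(i)}$, iterating the definition gives an embedding
\[
\pi \hookrightarrow \tau \times \mathrm{St}\bigl(\Delta_{k_i}^{(i)}\bigr) \times \cdots \times \mathrm{St}\bigl(\Delta_1^{(i)}\bigr),
\]
which factors through $\pi \hookrightarrow \tau \times \zeta(\mathfrak n_i)$ for $\zeta(\mathfrak n_i)$ the Zelevinsky (socle) representation attached to $\mathfrak n_i$. By Frobenius reciprocity, this corresponds to a nonzero occurrence of $\tau \boxtimes \zeta(\mathfrak n_i)$ in the appropriate $\rho$-Jacquet module of $\pi$. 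The inductive step then reduces to producing an analogous embedding $\pi \hookrightarrow \tau \times \zeta(\mathfrak n)$ for the intermediate multisegment $\mathfrak n$.

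Finally, to construct this intermediate embedding I would deploy a geometric-lemma argument tracking composition factors in the relevant Jacquet modules. The Zelevinsky relations among the standard modules indexed by $\mathfrak n_1, \mathfrak n, \mathfrak n_2$ allow a side-by-side comparison of the three Jacquet-module components, and the appearance of the required factor for $\mathfrak n$ can be squeezed between those of $\mathfrak n_1$ and $\mathfrak n_2$. The main obstacle I anticipate is pinning down $D_\mathfrak n(\pi)$ to be \emph{exactly} $\tau$ rather than some other simple quotient: nonvanishing is easy once the intermediate embedding is in hand, but isolating the correct simple quotient requires combining the ``at most one $\tau$'' uniqueness in the definition of derivatives with the two-sided sandwich. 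Without the upper bound $\mathfrak n \leq_Z \mathfrak n_2$, the derivative could in principle drift to a different simple quotient, so both inequalities must be used essentially.
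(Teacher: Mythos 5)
Your reduction steps are fine as far as they go (any intermediate $\mathfrak n$ does lie on a saturated chain refining $\mathfrak n_1\leq_Z\mathfrak n\leq_Z\mathfrak n_2$, and iterating the defining embeddings does give $\pi\hookrightarrow \tau\times \mathrm{St}(\Delta_{k_i}^{(i)})\times\cdots\times\mathrm{St}(\Delta_1^{(i)})$), but the entire content of the theorem is concentrated in your third step, and there you have only restated the difficulty, not resolved it. The assertion that ``the appearance of the required factor for $\mathfrak n$ can be squeezed between those of $\mathfrak n_1$ and $\mathfrak n_2$'' has no mechanism behind it: knowing that $\tau\boxtimes\zeta(\mathfrak n_1)$ and $\tau\boxtimes\zeta(\mathfrak n_2)$ occur in (or even embed into) Jacquet modules of $\pi$ does not, by any geometric-lemma bookkeeping of composition factors, produce an embedding $\pi\hookrightarrow\tau\times\zeta(\mathfrak n)$. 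Multiplicity counts live in the Grothendieck group, while the existence and identification of $D_{\mathfrak n}(\pi)$ are statements about submodule/quotient structure; indeed the paper emphasizes that this derivative formalism is sensitive to the module category and not merely to Grothendieck-group data. Your closing remark that ``nonvanishing is easy once the intermediate embedding is in hand'' is circular: constructing that intermediate embedding \emph{is} the nonvanishing of $D_{\mathfrak n}(\pi)$, and you give no argument for it; likewise no argument is offered for why the resulting simple quotient is exactly $\tau$. Also, ``it suffices to handle a single cover at a time'' must be read carefully: membership in $\mathcal S(\pi,\tau)$ is not preserved under single covers in either direction, so the only legitimate reduction is to the case $\mathfrak n_1\lessdot\mathfrak n\leq_Z\mathfrak n_2$ with the far bound retained, and then the two-sided hypothesis still has to enter through an actual argument, which is missing. (The invoked ``lattice-theoretic structure'' of the Zelevinsky poset is neither needed nor justified.)

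For comparison, the proof this paper relies on (from the cited part II) does not run through such a representation-theoretic squeeze at all. It first converts membership in $\mathcal S(\pi,\tau)$ into pure combinatorics via Theorem \ref{thm isomorphic derivatives}: $\mathfrak n\in\mathcal S(\pi,\tau)$ if and only if $\mathfrak n$ is admissible to $\mathfrak{hd}(\pi)$ and $\mathfrak r(\mathfrak n,\mathfrak{hd}(\pi))=\mathfrak r(\mathfrak n_1,\mathfrak{hd}(\pi))$. Convexity then becomes a statement about removal processes on the highest derivative multisegment, proved by analyzing how a single elementary intersection--union move interacts with the removal sequences (the machinery of fine chains, $\Upsilon$, and the lemmas recalled in Sections \ref{s highest derivative removal}--\ref{s non-overlapping}). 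If you want to salvage a direct representation-theoretic proof, you would need a genuinely new input at the squeeze step; as written, the proposal has a gap precisely where the theorem's content lies.
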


\begin{theorem} \cite[Theorem 1.2]{Ch22+e} \label{thm unique minimal}
We use the notatons above. Suppose $\mathcal S(\pi, \tau)\neq \emptyset$. Then $\mathcal S(\pi, \tau)$ has a unique $\leq_Z$-minimal element.
\end{theorem}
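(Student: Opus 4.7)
The plan is to deduce uniqueness of the $\leq_Z$-minimum from the convexity property of Theorem~\ref{thm convex derivatives} by establishing that any two elements of $\mathcal S(\pi,\tau)$ admit a common $\leq_Z$-lower bound inside $\mathcal S(\pi,\tau)$. Once this is in hand, any two $\leq_Z$-minimal elements $\mathfrak n_1, \mathfrak n_2$ must both coincide with such a common lower bound, and hence with each other, since $\mathrm{Mult}_\rho$ has the descending chain condition on any order ideal of bounded total length.

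To construct a common lower bound, I would first recall the combinatorial description of the covers in the Zelevinsky order on $\mathrm{Mult}_\rho$: each cover is given by the elementary "union--intersection" move, which replaces two linked segments $\{\Delta,\Delta'\}$ inside a multisegment by $\{\Delta \cup \Delta', \Delta \cap \Delta'\}$. Working by induction, say on the lexicographic pair consisting of the number of segments and $l_{abs}(\mathfrak n_1)+l_{abs}(\mathfrak n_2)$, I would aim at a descent lemma: whenever $\mathfrak n_1, \mathfrak n_2 \in \mathcal S(\pi,\tau)$ are incomparable, there exists an elementary move $\mathfrak n_1 \leadsto \mathfrak n_1'$ that lands in $\mathcal S(\pi,\tau)$ and strictly reduces a suitable measure of distance from $\mathfrak n_2$. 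Iterating such moves produces some $\mathfrak n \leq_Z \mathfrak n_1$ with $\mathfrak n \leq_Z \mathfrak n_2$, at which point Theorem~\ref{thm convex derivatives} applied to a chain from $\mathfrak n_2$ down to $\mathfrak n$ guarantees $\mathfrak n \in \mathcal S(\pi,\tau)$.

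To justify the elementary move, I would write $\mathfrak n_1 = \{\Delta, \Delta'\} \sqcup \mathfrak n_0$ with $\Delta, \Delta'$ linked and chosen in accordance with the combinatorics of $\mathfrak n_2$. Using the independence of $D_{\mathfrak n}$ on the choice of ascending order, one has
\[
D_{\mathfrak n_1}(\pi) \;=\; D_{\Delta}\circ D_{\Delta'}\circ D_{\mathfrak n_0}(\pi),
\]
so the descent reduces to showing that the two-segment composition $D_{\Delta\cup\Delta'}\circ D_{\Delta\cap\Delta'}$ produces the same output when applied to the intermediate module $D_{\mathfrak n_0}(\pi)$. For this step I would deploy the socle theory of Bernstein--Zelevinsky derivatives and the highest derivative multisegment machinery from \cite{Ch22+d}, which controls exactly how admissibility behaves under merging of linked segments.

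The main obstacle I anticipate is that elementary moves in $\mathrm{Mult}_\rho$ do \emph{not} in general preserve $\mathcal S(\pi,\tau)$, so the descent lemma requires a careful selection of which linked pair $\{\Delta,\Delta'\}$ to merge. Identifying this canonical choice, which should be dictated by the compatibility of $\Delta\cap\Delta'$ with the highest derivative multisegment of $D_{\mathfrak n_0}(\pi)$, and verifying that such a choice always exists when $\mathfrak n_1 \neq \mathfrak n_2$, is the crux. I expect the argument to lean on the explicit algorithmic descriptions of derivatives in \cite{LM16, CP25} to track admissibility, and on the convexity of Theorem~\ref{thm convex derivatives} to avoid having to reverify $D_{\mathfrak n}(\pi)\cong \tau$ at every intermediate step.
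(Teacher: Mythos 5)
There is a genuine gap. First, note that this statement is not proved in the present paper at all: it is recalled verbatim from the companion article (\cite[Theorem 1.2]{Ch22+e}), so there is no internal proof to lean on here, and your argument must stand on its own. It does not. Your reduction of uniqueness to the existence of common $\leq_Z$-lower bounds inside $\mathcal S(\pi,\tau)$ is logically fine (the set is finite, so minimal elements exist and a common lower bound below two minimal elements forces them to coincide), but the ``descent lemma'' you then postulate --- given incomparable $\mathfrak n_1,\mathfrak n_2\in\mathcal S(\pi,\tau)$, some elementary intersection--union move on $\mathfrak n_1$ stays in $\mathcal S(\pi,\tau)$ and moves toward $\mathfrak n_2$ --- is not a lemma that your proposal proves; it is the theorem itself. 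Indeed, applied to two incomparable minimal elements it would produce $\mathfrak n_1'<_Z\mathfrak n_1$ with $\mathfrak n_1'\in\mathcal S(\pi,\tau)$, contradicting minimality, so the descent lemma is at least as strong as the uniqueness statement. You explicitly acknowledge that identifying the ``canonical choice'' of linked pair is the crux and only express the expectation that the highest-derivative-multisegment machinery of \cite{Ch22+d} and the algorithms of \cite{LM16, CP25} will supply it. No argument is given for why such a pair exists whenever $\mathfrak n_1$ is not the (alleged) minimum, and this is exactly where all the difficulty lies; the cited article attacks it through a quite different explicit apparatus (reduction to removal processes on $\mathfrak{hd}(\pi)$ as in Theorem \ref{thm isomorphic derivatives}, fine chains, and the local-minimizability criterion recalled as Theorem \ref{thm minimizability}), not through an unguided descent.

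A secondary but real problem is the factorization $D_{\mathfrak n_1}(\pi)=D_{\Delta}\circ D_{\Delta'}\circ D_{\mathfrak n_0}(\pi)$ for an arbitrary linked pair $\{\Delta,\Delta'\}\subset\mathfrak n_1$: the derivative $D_{\mathfrak n_1}$ is defined via an ascending order, and one cannot in general place an arbitrary linked pair at the end of such an order. Making this legitimate requires either restricting to suitably chosen (e.g.\ consecutive) pairs together with cancellation statements, as the present paper does in its later arguments (Lemma \ref{lem intersect union ascending order}, Lemma \ref{lem cover consecutive}, Proposition \ref{prop cancel property}), or invoking commutativity results such as Proposition \ref{prop dagger property 2} --- which, however, are established downstream of the minimality theory you are trying to prove, so care is needed to avoid circularity. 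As it stands, the proposal is a plausible strategy outline with the decisive step missing, not a proof.
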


%In \cite{Ch22+d}, we showed a combinatorial process, called {\it removal process}, in studying the effect of $D_{\Delta}$. Two applications of removal process are given below:

%\begin{theorem} (=Theorem \ref{thm closed under zelevinsky}) \label{cor closedness property}
%Let $\pi \in \mathrm{Irr}_{\rho}$. Let $\tau$ be a simple quotient of $\pi^{(i)}$ for some $i$. If $\mathfrak n_1, \mathfrak n_2 \in \mathcal S(\pi, \tau)$ and $\mathfrak n_1 \leq_Z \mathfrak n_2$, then any $\mathfrak n_3 \in \mathrm{Mult}_{\rho}$ satisfying $\mathfrak n_1\leq_Z \mathfrak n_3\leq_Z \mathfrak n_2$ is also in $\mathcal S(\pi, \tau)$. 
%\end{theorem}

%Another application is to show that $\mathcal S(\pi, \tau)$ has a unique minimal element:

%\begin{theorem} (=Theorem \ref{thm unique module}) \label{thm unique minimal}
%Let $\pi \in \mathrm{Irr}_{\rho}$. Let $\tau$ be a simple quotient of $\pi^{(i)}$ for some $i$. If $\mathcal S(\pi, \tau)\neq \emptyset$, then $\mathcal S(\pi, \tau)$ has a unique minimal element with respect to $\leq_Z$. 
%\end{theorem}

%The condition $\mathcal S(\pi, \tau)\neq \emptyset$ will be removed in \cite{Ch22+b}. There are some other criteria for a minimal sequence such as the $(\dagger)$ property and in terms of $\eta_{\Delta}$ in Section \ref{s dagger property}. We remark that there is no uniqueness for maximal elements in general. We give an example in Section \ref{no unique max element}. 

For $\pi \in \mathrm{Irr}_{\rho}$, a multisegment $\mathfrak n \in \mathrm{Mult}_{\rho}$ is said to be {\it minimal} to $\pi$ if $D_{\mathfrak n}(\pi)\neq 0$ and $\mathfrak n$ is $\leq_Z$-minimal in $\mathcal S(\pi, D_{\mathfrak n}(\pi))$. We shall sometimes refer such $\mathfrak n$ to be the {\it minimal multisegment} or {\it minimal sequence} (of derivatives) of $\mathcal S(\pi, \tau)$.

\subsection{Main results}
The main goal of this article  is to obtain some properties of the minimal sequence, which are useful in \cite{Ch22+b}. We also provide some (partly conjectural) representation theoretic interpretations in Part \ref{part rep theoy asp}. The main results are the following subsequence and commutativity phenomenons:

%Guided from the study from quotient branching law \cite{Ch22+b}, we shall see and show that such minimal multisegment has several useful properties.

\begin{theorem} (=Theorem \ref{thm subsequent minimal}) \label{thm subsequent minimal intro}
Let $\pi \in \mathrm{Irr}_{\rho}$. If $\mathfrak n \in \mathrm{Mult}_{\rho}$ is {\it minimal} to $\pi$, then any submultisegment $\mathfrak n'$ of $\mathfrak n$ is also minimal to $\pi$ and in particular, $D_{\mathfrak n'}(\pi)\neq 0$.
\end{theorem}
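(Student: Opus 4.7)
The natural setup is an induction on $|\mathfrak n|-|\mathfrak n'|$, reducing to the case where $\mathfrak n'=\mathfrak n\setminus\{\Delta\}$ for a single segment $\Delta\in\mathfrak n$. Two claims must then be proved: (i) $D_{\mathfrak n'}(\pi)\neq 0$, and (ii) $\mathfrak n'$ is $\leq_Z$-minimal in $\mathcal S(\pi, D_{\mathfrak n'}(\pi))$.

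For (i), the easy case is when $\Delta$ can be made the last entry of some ascending order on $\mathfrak n$, i.e.\ when no segment $\Delta''\in\mathfrak n$ linked to $\Delta$ satisfies $a(\Delta'')>a(\Delta)$: then $\mathfrak n'$ inherits an ascending order and $D_{\mathfrak n'}(\pi)$ appears as a nonzero intermediate in the composition defining $D_{\mathfrak n}(\pi)$. In general, $\Delta$ has to be moved rightward past later linked segments, and I would combine commutativity-type results for derivatives (either from the earlier part of this paper or from \cite{Ch22+d}) with the minimality of $\mathfrak n$ in order to preserve admissibility at each swap.

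For (ii), I would argue by contradiction: suppose $\mathfrak l<_Z\mathfrak n'$ satisfies $D_{\mathfrak l}(\pi)=D_{\mathfrak n'}(\pi)$. Forming $\mathfrak l+\{\Delta\}$, I would verify that $D_{\mathfrak l+\{\Delta\}}(\pi)=D_{\mathfrak n}(\pi)$ by placing $D_\Delta$ at an appropriate position in an ascending order of $\mathfrak l+\{\Delta\}$, and that $\mathfrak l+\{\Delta\}<_Z\mathfrak n$ by the standard compatibility of the Zelevinsky order with addition of a common segment. Together these produce a $<_Z$-smaller element of $\mathcal S(\pi, D_{\mathfrak n}(\pi))$ than $\mathfrak n$, contradicting the minimality of $\mathfrak n$.

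I expect the principal obstacle to be step (i) in the general case: since commuting $D_\Delta$ past a later linked derivative genuinely changes the intermediate representations, admissibility is not formally preserved, and the minimality hypothesis on $\mathfrak n$ must be used in an essential way — likely via an auxiliary structural property of minimal sequences such as the commutativity theorem of this paper — before the induction can close. The compatibility of $\leq_Z$ with addition used in (ii) is combinatorial and routine, but should also be verified in the precise formulation used here.
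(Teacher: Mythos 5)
Your overall scheme (induct down to $\mathfrak n'=\mathfrak n-\Delta$, then contradict the minimality of $\mathfrak n$ by adding $\Delta$ back to a hypothetical smaller element) is the right shape, but the step you call routine is precisely where the proof is hard, and as written it has a genuine gap. In (ii) you claim that from $D_{\mathfrak l}(\pi)\cong D_{\mathfrak n'}(\pi)$ one gets $D_{\mathfrak l+\Delta}(\pi)\cong D_{\mathfrak n}(\pi)$ ``by placing $D_\Delta$ at an appropriate position in an ascending order.'' But $D_{\mathfrak m}$ is defined through an ascending order, and the isomorphism transfers formally only if $\Delta$ can be placed \emph{last} in ascending orders of both $\mathfrak n$ and $\mathfrak l+\Delta$; in general $\Delta$ is forced into the middle, and relating $D_{\mathfrak n}(\pi)$ to a composition in which $D_\Delta$ has been displaced is exactly the commutativity problem. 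The cancellative property (Proposition \ref{prop cancel property}) only cancels a common \emph{tail} of the ascending order, and the commutativity for a segment sitting in the middle of a sequence is available in this paper only for \emph{minimal} sequences (Theorems \ref{thm minimal and commut} and \ref{thm minimal and commut 2}) — results proved \emph{after}, and by means of, Theorem \ref{thm subsequent minimal}; moreover $\mathfrak l+\Delta$ is not known to be minimal, or even admissible, so no such commutation is available for it. For the same reason, your suggestion to use ``the commutativity theorem of this paper'' in step (i) is circular; the only inputs legitimately available at this stage are Lemma \ref{lem comm derivative 1}, Proposition \ref{prop dagger property 2} and Lemma \ref{lem min commut in special case}. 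So neither (i) nor (ii) closes in the general case, and — contrary to your assessment — (ii) is not a routine compatibility of $\leq_Z$ with adding a segment (that bookkeeping, $\mathfrak l+\Delta<_Z\mathfrak n$, is indeed the easy part).

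What the paper does differently, and what your plan is missing, is a reduction that keeps control of ascending orders: by the convexity of $\mathcal S(\pi,\tau)$ (Theorem \ref{thm convex derivatives}) together with Lemma \ref{lem cover consecutive}, it suffices to show that a \emph{single} elementary intersection--union operation on a pair of \emph{consecutive} segments of $\mathfrak n'$ changes the derivative — one never handles an arbitrary $\mathfrak l<_Z\mathfrak n'$ directly. Consecutive pairs can be positioned in an ascending order with all remaining segments cleanly before or after them (Lemma \ref{lem cover consecutive}(2)), so the cancellative property strips the common tail, Lemma \ref{lem min commut in special case} (proved beforehand via Proposition \ref{prop first subseq property} and the three-segment analysis) moves $\Delta$ next to the pair when needed, and everything reduces to the three-segment base cases (Lemmas \ref{lem basic subsequent property} and \ref{lem basic subsequent property 3}), whose proofs use the $\eta$-invariant/non-overlapping machinery and even a representation-theoretic input (Lemmas \ref{lem irreducible of a product} and \ref{lem intermediate segment property for delta3}). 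Without this consecutive-pair reduction and these base cases, the ``add $\Delta$ back'' move and the nonvanishing $D_{\mathfrak n'}(\pi)\neq 0$ remain unjustified, so the proposal is an outline whose essential steps are still open.
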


\begin{theorem} (=Theorem \ref{thm minimal and commut 2}) \label{thm commute and minimal}
Let $\pi \in \mathrm{Irr}_{\rho}$. If $\mathfrak n \in \mathrm{Mult}_{\rho}$ is minimal to $\pi$, then for any submultisegment $\mathfrak n'$ of $\mathfrak n$, we have:
\begin{enumerate}
\item[(1)] $\mathfrak n-\mathfrak n'$ is minimal to $D_{\mathfrak n'}(\pi)$; and
\item[(2)] $D_{\mathfrak n-\mathfrak n'}\circ D_{\mathfrak n'}(\pi) \cong D_{\mathfrak n}(\pi)$. 
\end{enumerate}
\end{theorem}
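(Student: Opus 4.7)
The plan is to prove (1) and (2) jointly by induction on $|\mathfrak n'|$, with the base case $\mathfrak n' = \emptyset$ tautological. For the inductive step with $|\mathfrak n'| \geq 1$, I would pick any $\Delta \in \mathfrak n'$ and reduce to the single-segment case by first applying the theorem to the pair $(\pi, \{\Delta\})$ and then to the pair $(D_\Delta(\pi), \mathfrak n' - \{\Delta\})$. The latter is justified because Theorem \ref{thm subsequent minimal intro} ensures that $\mathfrak n - \{\Delta\}$ is minimal to $D_\Delta(\pi)$, so $\mathfrak n' - \{\Delta\}$ sits inside it as a submultisegment on which the inductive hypothesis applies.

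The core task is thus the single-segment case $\mathfrak n' = \{\Delta\}$: show that $D_{\mathfrak n - \{\Delta\}} \circ D_\Delta(\pi) \cong D_\mathfrak n(\pi)$ and that $\mathfrak n - \{\Delta\}$ is minimal to $D_\Delta(\pi)$. For the first statement, fix any ascending ordering $\Delta_1, \ldots, \Delta_k$ of $\mathfrak n$ with $\Delta = \Delta_j$, and argue by descending induction on the position $j$ that the composition $D_{\Delta_k} \circ \cdots \circ D_{\Delta_1}(\pi)$ is unchanged when $\Delta_j$ is moved one step to the left past $\Delta_{j-1}$. If $\Delta_{j-1}$ and $\Delta_j$ are unlinked the swap is immediate since the two derivatives commute. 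The substantive case is when $\Delta_{j-1}$ and $\Delta_j$ are linked, which by ascending order forces $a(\Delta_{j-1}) < a(\Delta_j)$. Here the minimality of $\mathfrak n$ enters: if the swap were to alter the value, one could replace the pair $\{\Delta_{j-1}, \Delta_j\}$ in $\mathfrak n$ by $\{\Delta_{j-1} \cap \Delta_j, \, \Delta_{j-1} \cup \Delta_j\}$ to produce an element of $\mathcal S(\pi, D_\mathfrak n(\pi))$ strictly below $\mathfrak n$ in the Zelevinsky order, contradicting minimality. Iterating the swaps brings $\Delta$ to the first position and yields $D_\mathfrak n(\pi) = D_{\mathfrak n - \{\Delta\}}(D_\Delta(\pi))$, which is (2).

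For (1), suppose toward a contradiction that $\mathfrak n - \{\Delta\}$ is not minimal to $D_\Delta(\pi)$. By Theorem \ref{thm unique minimal} there is a strictly smaller $\tilde{\mathfrak m} <_Z \mathfrak n - \{\Delta\}$ with $D_{\tilde{\mathfrak m}}(D_\Delta(\pi)) \cong D_\mathfrak n(\pi)$. Put $\mathfrak m = \tilde{\mathfrak m} + \{\Delta\}$; compatibility of $\leq_Z$ with multiset sum gives $\mathfrak m <_Z \mathfrak n$. To contradict the minimality of $\mathfrak n$ it suffices to show $D_\mathfrak m(\pi) \cong D_\mathfrak n(\pi)$, which reduces to the identity $D_\mathfrak m(\pi) = D_{\tilde{\mathfrak m}}(D_\Delta(\pi))$. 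This is immediate when $\Delta$ admits an ascending ordering of $\mathfrak m$ placing it first, and otherwise is obtained by a commutation argument in the same spirit as that used for (2), applied now to the minimal element $\tilde{\mathfrak m}$ of $\mathcal S(D_\Delta(\pi), D_\mathfrak n(\pi))$.

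The main obstacle throughout is the linked adjacent swap in the proof of (2): bridging the ascending-order definition of $D_\mathfrak n$ with the split composition $D_{\mathfrak n - \{\Delta\}} \circ D_\Delta$ via local swaps of linked pairs requires showing that each hypothetical deviation is detected by an elementary $\leq_Z$-descent inside $\mathcal S(\pi, D_\mathfrak n(\pi))$. Theorem \ref{thm convex derivatives} is the natural tool in this step, guaranteeing that any multisegment wedged between two elements of $\mathcal S(\pi, D_\mathfrak n(\pi))$ stays in the set, so that the minimality hypothesis can be turned against any candidate descent produced by the swap.
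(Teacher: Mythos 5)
Your reduction of the general statement to the single-segment case by induction on $|\mathfrak n'|$ is exactly how the paper proceeds (its Theorem \ref{thm minimal and commut 2} is deduced by iterating the one-segment statement, Theorem \ref{thm minimal and commut}), and your swap argument for assertion (2) is essentially the paper's Lemma \ref{lem minmal commut 1}: it goes through once each linked adjacent swap is justified by combining the subsequent property (Theorem \ref{thm subsequent minimal} / Proposition \ref{prop first subseq property}) with the two-segment commutativity of Proposition \ref{prop dagger property 2}, rather than by the bare assertion that a failed swap "produces" an intersection--union descent; that dichotomy (non-commutativity forces the composition to equal $D_{\Delta\cup\Delta'}\circ D_{\Delta\cap\Delta'}$) is precisely the content of Proposition \ref{prop dagger property 2} and the criterion it cites, so this part is repairable. (Also, a small slip: Theorem \ref{thm subsequent minimal intro} gives minimality of $\mathfrak n-\Delta$ to $\pi$, not to $D_{\Delta}(\pi)$; the latter is part (1) of the one-segment case itself, which is fine for your induction but should be cited as such.)

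The genuine gap is in your proof of (1) for a single segment. Starting from a hypothetical $\tilde{\mathfrak m}<_Z\mathfrak n-\Delta$ with $D_{\tilde{\mathfrak m}}(D_{\Delta}(\pi))\cong D_{\mathfrak n}(\pi)$, you set $\mathfrak m=\tilde{\mathfrak m}+\Delta$ and claim $D_{\mathfrak m}(\pi)\cong D_{\tilde{\mathfrak m}}\circ D_{\Delta}(\pi)$ "by a commutation argument in the same spirit as (2), applied to the minimal element $\tilde{\mathfrak m}$." This does not work: $D_{\mathfrak m}(\pi)$ is computed with $\Delta$ sitting in its ascending-order position inside $\mathfrak m$, and moving $D_{\Delta}$ to the front requires commuting it past segments of $\tilde{\mathfrak m}$ that may be linked to $\Delta$; every commutativity statement available (Proposition \ref{prop dagger property 2}, via the non-overlapping property) needs a minimality hypothesis for the relevant pair \emph{relative to the representation on which the derivatives act}, i.e.\ relative to $\pi$ through submultisegments of a multisegment known to be minimal to $\pi$. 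Here the only minimality you have is that of $\tilde{\mathfrak m}$ relative to $D_{\Delta}(\pi)$, which controls swaps among segments of $\tilde{\mathfrak m}$ applied after $D_{\Delta}$ but says nothing about swapping $D_{\Delta}$ itself across linked members of $\tilde{\mathfrak m}$ in the computation of $D_{\mathfrak m}(\pi)$; it is not even clear that $\mathfrak m$ is admissible to $\pi$. In effect you are assuming a commutativity that is of the same depth as the statement being proved. The paper avoids this by a different route: using Theorem \ref{thm convex derivatives} and Lemma \ref{lem cover consecutive} it reduces a hypothetical failure of minimality of $\mathfrak n-\Delta$ to a single elementary intersection--union of a \emph{consecutive} pair, cancels the outer derivatives (Proposition \ref{prop cancel property}), and then invokes the three-segment basic cases (Lemmas \ref{lem minimal in a basic case} and \ref{lem basic subsequent property 2}), whose proofs require genuinely new inputs, notably the Jacquet-module irreducibility Lemma \ref{lem irreducible of a product} and the $\eta$-invariant analysis of Lemma \ref{lem intermediate segment property for delta3}. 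Your proposal bypasses all of this, and that is exactly where it breaks down.
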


By using Theorem \ref{thm commute and minimal} multiple times, we have:

\begin{corollary}
Let $\pi \in \mathrm{Irr}_{\rho}$. Let $\mathfrak n \in \mathrm{Mult}_{\rho}$ be minimal to $\pi$. Write the segments in $\mathfrak n$ as $\left\{ \Delta_1, \ldots, \Delta_r \right\}$ in any order. Then, 
\[  D_{\Delta_r}\circ \ldots \circ D_{\Delta_1}(\pi) \cong D_{\mathfrak n}(\pi) .
\] 
\end{corollary}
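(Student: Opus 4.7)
The plan is to prove the corollary by induction on $r$, the number of segments in $\mathfrak{n}$, peeling off one segment at a time from the front of the chosen ordering via Theorem~\ref{thm commute and minimal}. The base cases $r = 0, 1$ are immediate: for $r = 1$, the multisegment $\mathfrak{n}$ consists of the single segment $\Delta_1$, so $D_{\mathfrak{n}}(\pi) = D_{\Delta_1}(\pi)$ directly from the definition of $D_{\mathfrak{n}}$ as a composition along an ascending order.

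For the inductive step, assume the statement holds for all minimal multisegments with fewer than $r$ segments. Given $\mathfrak{n}$ minimal to $\pi$ with $r$ segments and any ordering $\Delta_1, \ldots, \Delta_r$ of its segments, set $\mathfrak{n}' = \{\Delta_1\}$, which is a submultisegment of $\mathfrak{n}$. Applying Theorem~\ref{thm commute and minimal} with this choice of $\mathfrak{n}'$ yields two facts: first, that $\mathfrak{n} - \{\Delta_1\}$ is minimal to $D_{\{\Delta_1\}}(\pi) = D_{\Delta_1}(\pi)$; and second, the identity
\[ D_{\mathfrak{n} - \{\Delta_1\}} \circ D_{\Delta_1}(\pi) \cong D_{\mathfrak{n}}(\pi). \]
Now invoke the inductive hypothesis on the minimal multisegment $\mathfrak{n} - \{\Delta_1\}$ of $D_{\Delta_1}(\pi)$, using the inherited ordering $\Delta_2, \ldots, \Delta_r$, to obtain
\[ D_{\Delta_r} \circ \cdots \circ D_{\Delta_2}\bigl(D_{\Delta_1}(\pi)\bigr) \cong D_{\mathfrak{n} - \{\Delta_1\}}\bigl(D_{\Delta_1}(\pi)\bigr) \cong D_{\mathfrak{n}}(\pi), \]
which is the desired identity.

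There is no substantive obstacle in this argument, since all the difficult content is packaged into Theorem~\ref{thm commute and minimal}: part (1) is exactly what propagates the minimality hypothesis to the smaller multisegment and so permits the induction to continue, while part (2) supplies the compositional isomorphism needed at each step. The only tiny bookkeeping point is to confirm that for a singleton submultisegment $\{\Delta_1\}$ the derivative $D_{\{\Delta_1\}}(\pi)$ agrees with $D_{\Delta_1}(\pi)$, which is immediate from the definition of $D_{\mathfrak{n}'}$ as a composition of derivatives along an ascending order of the (one) segment in $\mathfrak{n}'$.
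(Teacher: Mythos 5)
Your proof is correct and is exactly the argument the paper intends: the paper derives this corollary by "using Theorem \ref{thm commute and minimal} multiple times," which is precisely your induction peeling off one segment at a time, with part (1) propagating minimality and part (2) giving the compositional isomorphism at each step.
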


One important ingredient in studying the commutativity is a notion of $\eta$-invariants (see Definition \ref{def eta invariant}), which also plays an important role in studying "left-right" commutativity in \cite{Ch22+c}. It seems that such commutativity phenomenon in derivatives plays a crucial role in quotient branching laws \cite{Ch22+b}, and the above results are not quite expected before, not even that the minimal sequences seem to be known before. It is interesting to see whether there are some deeper reasons behind such commutativity. The embedding model in Conjecture \ref{conj minimal model} is an attempt to provide some explanations on that, and the interplay with the removal process is conjectured in Section \ref{ss conjectures}.

In order to demonstrate the above non-straightforward results, it is unavoidable to work on some details. The main idea of the proofs for Theorems \ref{thm subsequent minimal intro} and \ref{thm commute and minimal} is to reduce checking elementary intersection-union processes by Theorem \ref{thm convex derivatives}. Then one uses some more basic commutativity for two segment case (e.g. Proposition \ref{prop dagger property 2}) to reduce to three segment cases in Section \ref{s three segment cases}. There are also interactions between commutativity and minimality in the proofs.

While our proofs are largely combinatorial in nature, a motivation comes from a simple example from Lemma \ref{lem commut for max case} below. There are some further results on minimality such as a   construction from the removal process, which will be explored in another sequel \cite{Ch26+}.

%Indeed, the removal process above gives a way to construct or realize those minimal multisegments, improving a double derivative result shown in \cite{Ch22+d}:

%\begin{theorem} (=Theorem \ref{thm realize minimal by removal})
%Let $\pi \in \mathrm{Irr}_{\rho}$. Let $\mathfrak n \in \mathrm{Mult}_{\rho}$ be minimal to $\pi$. Then $\mathfrak r(\mathfrak n, \pi)$ is minimal to $D_{\mathfrak n}(\pi)$. 
%\end{theorem}

The commutativity and minimality also play important roles in the branching law \cite{Ch22+b}. The uniqueness of minimality is closely related to the layer of Bernstein-Zelevinsky filtration determining a branching law \cite{Ch22+b}.

We remark that it has been known that the representation theory of reductive $p$-adic groups has been known to have connections to many objects objects of type A. It is a classical result of Borel-Casselman to transfer results to the module category of affine Hecke algebras (see e.g. \cite{Ro86, CS19}), and then transfer to the module category of quantum affine algebras of type A (e.g \cite{Ch86, CP96}). One may expect that the results can be translated into those setting in suitable manners, and may have some other interpretations. It is also an interesting question to see whether the results have a geometric counterpart in the geometry for ABV packets, see e.g. a recent study in \cite{CR24} as well as in \cite{AL23}. 

We also remark that Bernstein-Zelevinsky derivatives can be viewed as certain kinds of degenerate Whittaker models. While the sequences of derivatives study structure of Bernstein-Zelevinsky derivatives, the information is more sensitive in the representation category rather than simply in its Grothendieck group. This is in contrast to some study on some behaviours of wavefront sets in the content of degenerate Whittaker model, see, e.g. \cite{Mu03}, more recent study \cite{CMBO24} and references therein, and hence our study should also provide some complementary information.

%We finally mention an expected duality. For a segment $[a,b]_{\rho}$, define $\Psi([a,b]_{\rho})=[-b,-a]_{\rho}$. This induces a bijection $\Psi: \mathrm{Mult}_{\rho}\rightarrow \mathrm{Mult}_{\rho}$ (see Section \ref{ss dual removal process}). For a multisegment $\mathfrak n$ and $\mathfrak h$, define
%\[  \mathfrak r^d(\mathfrak n, \mathfrak h)=\Psi(\mathfrak r(\Psi(\mathfrak n), \Psi(\mathfrak h))) ,\]
%where $\mathfrak r(\Psi(\mathfrak n), \Psi(\mathfrak h))$ is defined as in Definition \ref{def removal process}. We give some expected duality between original $\mathfrak r$ and dual removal processes $\mathfrak r^d$ in Section \ref{ss dual removal process}. We remark that while we only define the dual process combinatorially, one may expect the representation theoretic counterpart is an inverse of the derivative (from parabolic inductions). 

%In order to facilitate some representation-theoretic methods, we also have the following realization theorem:

%\begin{theorem} (=Theorem \ref{thm realize highest derivative mult})
%Let $\mathfrak h\in \mathrm{Mult}_{\rho}$. Then there exists $\pi \in \mathrm{Irr}_{\rho}$ such that 
%\[  \mathfrak{hd}(\pi) \cong \mathfrak h.\]
%See Section \ref{ss highest derivative mult} for the definition of $\mathfrak{hd}$.
%\end{theorem}
%%%%%%%%%%%

%We have the following duality result:
%\begin{theorem} (Theorem \ref{thm duality removal process} +Theorem \ref{thm dual minimal seq})
%Let $\pi \in \mathrm{Irr}_{\rho}$. Let $\mathfrak n$ be minimal to $\pi$. Then $\mathfrak r^d(\mathfrak r(\mathfrak n, \pi), \pi)=\mathfrak n$.
%\end{theorem}

\subsection{Discussions on applications}

For $\pi \in \mathrm{Irr}_{\rho}$ and $\Delta \in \mathrm{Seg}_{\rho}$, instead of studying $D_{\Delta}(\pi)$, one studies on a so-called big derivative in \cite{Ch22+} involving some higher structures. It is shown in \cite{Ch22+} to be useful to study a reduced decomposition \cite{AL23} for $\pi$ in the following sense:
\begin{align} \label{eq reduced decomp}
   \mathrm{St}(\mathfrak p) \times D_{\mathfrak p}(\pi) \twoheadrightarrow \pi \quad (\mbox{equivalently, $\pi \hookrightarrow D_{\mathfrak p}(\pi)\times \mathrm{St}(\mathfrak p)$ }) ,
\end{align}
where $\mathfrak p=\mathfrak{mx}(\pi, \Delta)$ for some segment $\Delta$ (see (\ref{eqn mx segment}) for the definition of $\mathfrak{mxpt}^a$). In Appendix B, we give a generalization to mutlisegment cases. Such reduced decomposition is also useful to study the relation between Bernstein-Zelevinsky derivatives and layers of Bernstein-Zelevinsky filtrations in \cite{Ch22+b}.

Another application is to give an inductive construction of some simple quotients of Bernstein-Zelevinsky derivatives (which is a main goal in this series of articles). For example, for $\pi \in \mathrm{Irr}_{\rho}$ and $\Delta \in \mathrm{Seg}_{\rho}$, if a simple quotient of $\pi^{(i)}$ is $\Delta$-reduced in the sense that $\mathfrak{mx}(\pi, \Delta)=\emptyset$ (see (\ref{eqn mx segment} for a detailed notion), then one may construct such simple quotient from a simple quotient of $(D_{\mathfrak p}(\pi))^{(i-l)}$ via (\ref{eq reduced decomp}), where $l=l_{abs}(\mathfrak p)$. The idea of this construction is closely related to the commutativity discussed above and see Proposition \ref{prop inductive construction} for a precise statement.

%The significance of our results is to give new constructions of simple quotients of Bernstein-Zelevinsky derivatives. We give an example. Let $\pi \in \mathrm{Irr}_{\rho}(G_n)$. Let $c$ be the largest integer such that $\nu^c\rho \in \mathrm{csupp}(\pi)$. Suppose $\tau$ is a simple quotient of $\pi^{(i)}$ satisfying that $\nu^c\rho \notin \mathrm{csupp}(\tau)$.

\subsection{Organization}

In first few sections, we recall some main ingredients:  highest derivative multisegments in Section \ref{s highest derivative multi}, removal processes in Section \ref{s highest derivative removal}, and non-overlapping and intermediate segment properties in Section \ref{s non-overlapping}.

Sections \ref{s two segment commut} to \ref{s commutative minimal} study the commutativity and subsequent property for minimal sequences. The approach is largely combinatorial using the overlapping property. Section \ref{s two segment commut} studies the two segment case while Section \ref{s three segment cases} studies the three segment case. Section \ref{s prelim subsequent and commut} shows some preliminary results for general cases. Sections \ref{s subseq property} and \ref{s commutative minimal} prove the general case for the  subsequent and commutativity property respectively. 

Sections \ref{s eta invariant and commutativity} to \ref{s embedding minimality} study some representation-theoretic aspects of the minimality. Section \ref{s eta invariant and commutativity} explains a representation-theoretic proof of commutativity of two segment case.  Section \ref{s conj model minimal} conjectures a representation-theoretic interpretation for the minimality and proves for the two segment case. Sections \ref{s application embedding models} and \ref{s embedding minimality} study how the interpretation gives some applications and connections to removal processes.

\subsection{Acknowledgements}
This article is benefited from author's visit to NCTS at Taiwan in December 2023 and December 2024, and the author would like to thank the center for hospitality. This project is partly supported by the Research Grants Council of the Hong Kong Special Administrative Region, China (Project No: 	17305223) and NSFC grant for Excellent Young Scholar (Project No.: 12322120).

\section{Highest derivative multisegments} \label{s highest derivative multi}

%In this section, we shall first recall the notion of highest derivative multisegments. Our new result is to establish a realization theorem for highest derivative mutlisegments. This allows us to prove some basic subsequent properties from representation-theoretic techniques.

The highest derivative multisegment is introduced in \cite[Section 8.1]{Ch22+d} as a main tool for the entire study. In this section, we first recall the definition and then prove a new realization theorem, which is of independent interests.

\subsection{Highest derivative multisegments} \label{ss highest derivative mult}

 For $c \in \mathbb Z$ and $\pi \in \mathrm{Irr}_{\rho}$, define $\mathfrak{mxpt}^a(\pi, c)$ to be the maximal multisegment such that 
\begin{enumerate}
\item for any $\Delta \in \mathfrak{mxpt}^a(\pi,c)$, $a(\Delta)=c$; and
\item $D_{\mathfrak{mxpt}^a(\pi,c)}(\pi)\neq 0$.
\end{enumerate}
Here the maximality is to take the lexicographical ordering on the $b(\Delta)$ values for all segments in $\mathfrak{mxpt}^a(\pi, c)$ . See \cite{Ch22+d} for details and examples. Define the {\it highest derivative multisegment} of $\pi \in \mathrm{Irr}_{\rho}$ to be
\[  \mathfrak{hd}(\pi):=\sum_{c\in \mathbb Z} \mathfrak{mxpt}^a(\pi, c).
\]
It is shown in \cite[Theorem 7.3]{Ch22+d} that $D_{\mathfrak{hd}(\pi)}(\pi)$ is the highest derivative of $\pi$ in the sense of \cite{Ze80}. An effecient algorithm in computing the highest derivative multisegment is given in \cite{CP25}. However, a key point is that effects of derivatives can be reflected from the removal process on the highest derivative multisegment, and so we can directly study removal processes.

%\begin{theorem} \label{thm highest derivative multi} (Theorem \ref{thm highest derivative multiseg}, also see Theorem \ref{thm minimal highest derivative})
%Let $\pi \in \mathrm{Irr}_{\rho}$. Then 
%\[  D_{\mathfrak{hd}(\pi)}(\pi)=\pi^- .\]
%There exists a unique minimal multisegment $\mathfrak m$ such that when the segments $\left\{ \Delta_1, \ldots, \Delta_k \right\}$ in $\mathfrak m$ are written in an ascending order, we have that:
%\[ D_{\Delta_k}\circ \ldots \circ D_{\Delta_1}(\pi) \cong \pi^- . \]
%Here the minimality is with respect to $\leq_Z$. 
%\end{theorem}

\subsection{Realization Theorem}

For $d, m \in \mathbb Z_{> 0}$ and a cuspidal representation $\rho$, define $u_{\rho}(d,m)$ to be the unique simple quotient of 
\[ \mathrm{St}\left( \left[-\frac{d-m}{2}, \frac{d+m-2}{2}\right]_{\rho} \right) \times \mathrm{St}\left( \left[-\frac{d-m}{2}-1, \frac{d+m-2}{2}-1\right]_{\rho} \right)  \times \ldots \times 
\mathrm{St}\left(\left[-\frac{d+m-2}{2}, \frac{d-m}{2} \right]_{\rho} \right) .
\]
In other words, $u_{\rho}(d,m)$ is the Langlands quotient of the above parabolically induced module, see \cite[Section 2.6]{Ch22+d} for some notations. One special property of $u_{\rho}(d,m)$ is that it is unitarizable, while this is not the main property used in this content.

An irreducible representation $\pi$ of $G_n$ is said to be an {\it essentially Speh representation} if $\pi \cong \nu^c \cdot u_{\rho}(d,m)$ for some $c \in \mathbb Z$ and some $d,m \in \mathbb Z_{>0}$. Denote such representation by $u_{\rho}(c,d,m)$. We now show the following realization theorem by an explicit construction. The key idea is to exploit some commutativity of the product of some essentially Speh representations. For results in this theme, one sees e.g. \cite{Ba08}, \cite{Ta15}, \cite{LM16} and \cite{Ch21}.

To facilitate discussions, for $\pi \in \mathrm{Irr}_{\rho}$, define $\mathrm{csupp}(\pi)$ to be the multiset of cuspidal representations $\left\{ \rho_1, \ldots, \rho_r\right\}$ such that $\pi$ is an irreducible composition factor in $\rho_1 \times \ldots \times \rho_r$.

\begin{lemma} \label{lem irreducible product}
Let $u_{\rho}(c,d,m)$ be an essentially Speh representation. Let $\pi \in \mathrm{Mult}_{\rho}$. Suppose, for any $\rho' \in \mathrm{csupp}(\pi)$, $\rho' \in \mathrm{csupp}(u_{\rho}(c,d,m))$. Then $\pi \times u_{\rho}(c,d,m)$  is irreducible, and 
\[   \pi \times u_{\rho}(c,d,m) \cong u_{\rho}(c,d,m) \times \pi .
\]
\end{lemma}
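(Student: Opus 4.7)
The plan is to exploit the fact that essentially Speh representations are $\square$-irreducible together with the observation that the cuspidal support hypothesis places every segment appearing in $\pi$ inside the cuspidal interval spanned by the staircase of $u_{\rho}(c,d,m)$. I would proceed in three steps: first reduce to the case where $\pi$ is essentially square-integrable; then establish irreducibility of $\mathrm{St}(\Delta)\times u_{\rho}(c,d,m)$ in that case; and finally deduce commutativity from irreducibility.

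For the reduction, realize $\pi$ as the unique irreducible submodule (or quotient) of a standard module $\mathrm{St}(\Delta_1)\times\cdots\times\mathrm{St}(\Delta_r)$, where the $\Delta_i$ are the segments in the Zelevinsky/Langlands data of $\pi$ arranged in an appropriate order. The cuspidal support hypothesis forces each $\Delta_i$ to lie inside $\mathrm{csupp}(u_{\rho}(c,d,m))$. Granting the single-segment case, each factor $\mathrm{St}(\Delta_i)\times u_{\rho}(c,d,m)$ is irreducible and isomorphic to $u_{\rho}(c,d,m)\times\mathrm{St}(\Delta_i)$, so sliding $u_{\rho}(c,d,m)$ step by step through the product yields an isomorphism of the two iterated inductions; extracting the irreducible $\pi$-generated component on either side then gives the claim.

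For the single-segment case, I would verify irreducibility of $\mathrm{St}(\Delta)\times u_{\rho}(c,d,m)$ via the combinatorial criterion for products of ladder representations: since $\Delta$ is contained in the cuspidal interval of $u_{\rho}(c,d,m)$, any potential linking with a staircase step of $u_{\rho}(c,d,m)$ is absorbed by an adjacent step, and the combinatorial irreducibility criterion is satisfied. A self-contained check using Jacquet modules or highest-derivative multisegments is also feasible, since the rigid staircase of $u_{\rho}(c,d,m)$ forces tightly constrained Jacquet components. Commutativity then follows from the $\square$-irreducibility of $u_{\rho}(c,d,m)$: once $\pi\times u_{\rho}(c,d,m)$ is irreducible, the common semisimplification of $\pi\times\sigma$ and $\sigma\times\pi$ in the Grothendieck group (parabolic induction being commutative there) forces them to be isomorphic.

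The main obstacle is the single-segment irreducibility; verifying the combinatorial criterion in the staircase setting is nontrivial and likely needs an induction on $m$, peeling off the longest step of the staircase and combining the inductive hypothesis with the more classical irreducibility of a Steinberg representation with a smaller essentially Speh representation. Once that combinatorial input is in place, the reduction to arbitrary $\pi$ and the passage from irreducibility to commutativity are essentially formal.
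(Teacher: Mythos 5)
The paper does not prove this lemma from scratch: it simply cites known results (\cite[Section 6]{LM16}, \cite[Section 9]{Ch21}, \cite[Proposition 5.2]{Ch22+}), so the real question is whether your sketch would constitute a valid proof. Your single-segment step and your final step are fine in outline: both $\mathrm{St}(\Delta)$ and $u_{\rho}(c,d,m)$ are ladder representations, so the Lapid--M\'inguez irreducibility criterion for products of two ladders settles $\mathrm{St}(\Delta)\times u_{\rho}(c,d,m)$, and once irreducibility of $\pi\times u_{\rho}(c,d,m)$ is known, commutativity follows just from exactness of parabolic induction and equality in the Grothendieck group (no $\square$-irreducibility needed there).

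The genuine gap is the reduction step, which you describe as ``essentially formal.'' Sliding $u=u_{\rho}(c,d,m)$ through the standard (or co-standard) module only gives an isomorphism $\mathrm{St}(\Delta_1)\times\cdots\times\mathrm{St}(\Delta_r)\times u\cong u\times\mathrm{St}(\Delta_1)\times\cdots\times\mathrm{St}(\Delta_r)$; it does not follow that $\pi\times u$ is irreducible for the irreducible subquotient $\pi$, and there is no well-defined ``$\pi$-generated component'' one can extract from both sides. Knowing that $\pi\times u$ and $u\times\pi$ are quotients (or submodules) of one and the same module, with the same image in the Grothendieck group, does not make them isomorphic, and in general irreducibility of $X\times u$ for each factor $X$ of a standard module does not pass to subquotients without an extra argument. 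The implication can be rescued, but only by a substantive input you do not invoke: either the KKKO-type invariants for the $\square$-irreducible module $u$ (subadditivity of $\mathfrak d(u,-)$ over simple subquotients of a product, so that $\mathfrak d(u,\mathrm{St}(\Delta_i))=0$ for all $i$ forces $\mathfrak d(u,\pi)=0$), or a direct socle/multiplicity-one analysis of Jacquet modules exploiting the trapezoidal cuspidal support of $u$ --- which is essentially what the cited references do. As written, the bridge from the single-segment case to general $\pi$ is missing, and that bridge is the actual content of the lemma.
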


\begin{proof}
 See e.g. \cite[Section 6]{LM16}, \cite[Section 9]{Ch21} and \cite[Proposition 5.2]{Ch22+}.
\end{proof}

The highest derivative multisegments for essentially Speh representations are particularly simple:

\begin{lemma}
Let $u_{\rho}(c,d,m)$ be an essentially Speh representation. Then 
\[  \mathfrak{hd}(u_{\rho}(c,d,m)) =\left\{ [c-\frac{d-m}{2}, c+\frac{d+m-2}{2}]_{\rho} \right\} .
\]
\end{lemma}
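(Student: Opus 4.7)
The plan is to induct on $m$. The base case $m=1$ reduces to $\mathfrak{hd}(\mathrm{St}(\Delta)) = \{\Delta\}$ for a single Steinberg, which follows directly from the definition of $\mathfrak{mxpt}^a$ (the whole segment is admissible and the residue is trivial). For $m \ge 2$, write $\Delta_1, \ldots, \Delta_m$ for the defining segments of $u_\rho(c,d,m)$ appearing in its definition, and set $\tau = u_\rho(c - \tfrac{1}{2}, d, m-1)$; unpacking the definition, the defining segments of $\tau$ are precisely $\Delta_2, \ldots, \Delta_m$.

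The first key step I would establish is the embedding $u_\rho(c,d,m) \hookrightarrow \tau \times \mathrm{St}(\Delta_1)$. Since $e(\Delta_1) > \cdots > e(\Delta_m)$ by construction, the Langlands subrepresentation realization yields both $u_\rho(c,d,m) \hookrightarrow \mathrm{St}(\Delta_m) \times \cdots \times \mathrm{St}(\Delta_1)$ and $\tau \hookrightarrow \mathrm{St}(\Delta_m) \times \cdots \times \mathrm{St}(\Delta_2)$. Tensoring the second on the right by $\mathrm{St}(\Delta_1)$ and using exactness of parabolic induction turns $\tau \times \mathrm{St}(\Delta_1)$ into a nonzero subrepresentation of $\mathrm{St}(\Delta_m) \times \cdots \times \mathrm{St}(\Delta_1)$. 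Since $u_\rho(c,d,m)$ is the unique irreducible subrepresentation (i.e. the socle) of the latter, every nonzero subrepresentation contains it; in particular, $u_\rho(c,d,m) \hookrightarrow \tau \times \mathrm{St}(\Delta_1)$. This yields $D_{\Delta_1}(u_\rho(c,d,m)) = \tau$, and so $\{\Delta_1\}$ appears in $\mathfrak{hd}(u_\rho(c,d,m))$.

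To upgrade this containment to an equality I would appeal to \cite[Theorem 7.3]{Ch22+d}, which identifies $D_{\mathfrak{hd}(\pi)}(\pi)$ with the highest Bernstein--Zelevinsky derivative of $\pi$; hence $l_{abs}(\mathfrak{hd}(\pi))$ equals the largest $i/\deg(\rho)$ with $\pi^{(i)} \ne 0$. For Speh representations this bound is exactly $d$, the length of a single row, a classical fact which follows, for instance, from the ladder derivative formulas of \cite{LM16}. Combined with $|\Delta_1| = d$ and the containment above, this forces $\mathfrak{hd}(u_\rho(c,d,m)) = \{\Delta_1\}$.

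The main obstacle I expect is precisely the length cap $l_{abs}(\mathfrak{hd}(u_\rho(c,d,m))) = d$: ruling out admissible multisegments of total absolute length exceeding $d$. If one prefers a self-contained route instead of citing the ladder computation, the inductive hypothesis $\mathfrak{hd}(\tau) = \{\Delta_2\}$ (with $a(\Delta_2) = a(\Delta_1) - 1$) together with a cuspidal-support and Jacquet-module tracking on the ladder $\Delta_1, \ldots, \Delta_m$ should eliminate any putative extra segment in $\mathfrak{mxpt}^a(u_\rho(c,d,m), c')$ for $c' \ne a(\Delta_1)$, by showing that it would force an inadmissible subsequent derivative against $\tau$.
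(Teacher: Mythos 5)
Your argument is correct in substance, but note that the paper does not actually prove this lemma in situ: its ``proof'' is a pointer to \cite[Section 11.3]{Ch22+d}, so your write-up is doing genuine work where the paper outsources. Your route --- peel off the top segment $\Delta_1$ via the socle of the standard module in reverse order to get $u_{\rho}(c,d,m)\hookrightarrow \tau\times \mathrm{St}(\Delta_1)$, hence $D_{\Delta_1}(u_{\rho}(c,d,m))=\tau\neq 0$, and then cap the total size of $\mathfrak{hd}$ by the level of the highest Bernstein--Zelevinsky derivative via \cite[Theorem 7.3]{Ch22+d} together with the classical fact that the highest derivative of a Speh representation has level $d\cdot\mathrm{deg}(\rho)$ --- is a perfectly sound direct proof, and the socle/exactness step and the uniqueness clause in the definition of $D_{\Delta}$ are used correctly. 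What the citation-based route buys the paper is brevity; what yours buys is transparency, at the cost of importing the highest-derivative computation for Speh (or ladder) representations from \cite{LM16} (or Zelevinsky's highest-derivative theorem applied to the rectangular Zelevinsky multisegment), which is legitimate but is really the same input that \cite[Section 11.3]{Ch22+d} packages.

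A few small points you gloss over, none fatal. First, ``$\{\Delta_1\}$ appears in $\mathfrak{hd}$'' needs one more line: from $D_{\Delta_1}(\pi)\neq 0$ and the lexicographic maximality defining $\mathfrak{mxpt}^a(\pi,a(\Delta_1))$, its first segment is some $[a(\Delta_1),b']_{\rho}$ with $b'\geq b(\Delta_1)$; either the cuspidal support bound or your length cap then forces $b'=b(\Delta_1)$ and excludes any further segments, so the cap argument closes. Second, the base case is not quite ``trivial from the definition'': you need that $D_{[c',b]_{\rho}}(\mathrm{St}([a,b]_{\rho}))=0$ for $a<c'$, which holds because $\mathrm{St}([a,b]_{\rho})$ is the quotient, not the submodule, of $\mathrm{St}([a,c'-1]_{\rho})\times\mathrm{St}([c',b]_{\rho})$ (alternatively, run the same length-cap argument with $m=1$). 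Third, your $\tau=u_{\rho}(c-\tfrac12,d,m-1)$ has a half-integral twist, whereas the paper's convention takes $c\in\mathbb Z$; this is cosmetic (relabel $\rho$ by $\nu^{-1/2}\rho$, or just describe $\tau$ as the Langlands quotient attached to $\Delta_2,\ldots,\Delta_m$), and in fact the induction on $m$ is not even needed for your main route, since you only use irreducibility of $\tau$ and its standard-module embedding, not $\mathfrak{hd}(\tau)=\{\Delta_2\}$. Finally, there is a units slip: the highest derivative level $i$ equals $l_{abs}(\mathfrak{hd}(\pi))$ itself, not $i/\mathrm{deg}(\rho)$; this does not affect the comparison with $l_{abs}(\Delta_1)=d\cdot\mathrm{deg}(\rho)$.
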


\begin{proof}
See \cite[Section 11.3]{Ch22+d}.
\end{proof}

\begin{theorem} \label{thm realize highest derivative mult}
Let $\mathfrak m \in \mathrm{Mult}_{\rho}$. Then there exists $\pi \in \mathrm{Irr}_{\rho}$ such that 
\[  \mathfrak{hd}(\pi)=\mathfrak m . \]
\end{theorem}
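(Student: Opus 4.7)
The plan is to prove the theorem by induction on the number $r$ of segments in $\mathfrak m$, constructing $\pi$ as an irreducible product of essentially Speh representations and using Lemma \ref{lem irreducible product} as the main tool for irreducibility. The base case $r=0$ is handled by the trivial $G_0$-representation, whose highest derivative multisegment is empty. For the inductive step, I would enumerate the segments of $\mathfrak m$ as $\Delta_1, \ldots, \Delta_r$ ordered so that $b(\Delta_1) \leq \ldots \leq b(\Delta_r)$. Applying the induction hypothesis to $\mathfrak m \setminus \{\Delta_r\}$ gives some $\pi_0 \in \mathrm{Irr}_{\rho}$ with $\mathfrak{hd}(\pi_0) = \mathfrak m \setminus \{\Delta_r\}$; moreover, unrolling the inductive construction produces $\pi_0$ explicitly as a product of essentially Speh representations whose cuspidal supports all have upper endpoint at most $\max_{i<r} b(\Delta_i)\leq b(\Delta_r)$.

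Write $\Delta_r = [a,b]_\rho$. The preceding lemma, together with the definition of $u_\rho(c,d,m)$, shows that the one-parameter family $\sigma_m := u_\rho\bigl((a+b+1-m)/2,\, b-a+1,\, m\bigr)$, for $m\geq 1$, all satisfy $\mathfrak{hd}(\sigma_m) = \{\Delta_r\}$; a direct computation identifies the underlying segments of $\sigma_m$ as $\{[a-i,b-i]_\rho : 0\leq i\leq m-1\}$, whence $\mathrm{csupp}(\sigma_m)$ as a set equals $\{\nu^k \rho : a-m+1 \leq k \leq b\}$. By choosing $m$ sufficiently large, one can arrange $\mathrm{csupp}(\pi_0) \subseteq \mathrm{csupp}(\sigma_m)$, since the upper endpoint of $\mathrm{csupp}(\pi_0)$ is at most $b(\Delta_r) = b$ by the ordering choice and the lower endpoint is finite. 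Lemma \ref{lem irreducible product} then yields the irreducibility of $\pi := \pi_0 \times \sigma_m$.

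The hard part will be verifying that $\mathfrak{hd}(\pi) = \mathfrak{hd}(\pi_0) + \mathfrak{hd}(\sigma_m) = \mathfrak m$. I would approach this via the multiplicativity of highest derivative multisegments under irreducible parabolic induction: for each $c \in \mathbb Z$, the equality $\mathfrak{mxpt}^a(\pi_0 \times \sigma_m, c) = \mathfrak{mxpt}^a(\pi_0, c) + \mathfrak{mxpt}^a(\sigma_m, c)$ should follow by combining the Leibniz-type rule for Bernstein-Zelevinsky derivatives with the maximality characterization of $\mathfrak{mxpt}^a$, using that irreducibility of the product forces the highest derivative to factor through both tensor factors (so in particular both $D_{\mathfrak{hd}(\pi_0)}$ and $D_{\mathfrak{hd}(\sigma_m)}$ stay nonzero on the respective factors, while maximality prevents any strictly larger multisegment starting at $c$ from being admissible). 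Summing over $c$ then gives the desired equality and closes the induction.
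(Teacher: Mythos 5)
Your construction is essentially the paper's: order the segments by their $b$-endpoints, realize each segment as the highest derivative multisegment of an essentially Speh representation whose cuspidal support reaches down far enough to contain the support of what has already been built, get irreducibility of the product from Lemma \ref{lem irreducible product}, and conclude by additivity of $\mathfrak{hd}$ over the product; phrasing it as an induction on $r$ rather than one product of $r$ factors is cosmetic. Your parameter bookkeeping is correct (indeed cleaner than the paper's wording): for $\Delta_r=[a,b]_{\rho}$ the parameter $d$ is forced to be $b-a+1$, it is the number of segments $m$ that one takes large, $c=(a+b+1-m)/2$ (take $m$ of the appropriate parity so the twist stays in $\mathrm{Irr}_{\rho}$), the support computation $\{\nu^k\rho : a-m+1\leq k\leq b\}$ is right, and carrying the strengthened induction hypothesis that the upper end of $\mathrm{csupp}(\pi_0)$ is at most $\max_{i<r}b(\Delta_i)$ is harmless.

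The one thin spot is the final step. The paper does not prove the additivity $\mathfrak{hd}(\pi_0\times\sigma_m)=\mathfrak{hd}(\pi_0)+\mathfrak{hd}(\sigma_m)$ inside this proof; it quotes it from \cite[Proposition 11.1]{Ch22+d}. Your sketch via a Leibniz-type rule plus ``maximality prevents any strictly larger multisegment from being admissible'' is not yet an argument: the Leibniz/geometric-lemma filtration only controls composition factors of Jacquet modules in the Grothendieck group, and as written it does not exclude that $\mathfrak{mxpt}^a(\pi_0\times\sigma_m,c)$ is strictly larger than $\mathfrak{mxpt}^a(\pi_0,c)+\mathfrak{mxpt}^a(\sigma_m,c)$ for some $c$; derivatives of an irreducible product do not ``factor through the tensor factors'' in any naive sense. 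Either invoke the cited proposition, as the paper does, or give a genuine proof of the additivity in your special situation; with that reference in place the rest of your argument is complete.
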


\begin{proof}
We label the segments in $\mathfrak m$ as:
\[  \Delta_1, \ldots, \Delta_r \]
such that $b(\Delta_1)\leq b(\Delta_2)\leq \ldots \leq b(\Delta_r)$.

We simply let $\pi_1=\mathrm{St}(\Delta_1)$. It is clear that $\mathfrak{hd}(\pi_1)=\left\{ \Delta_1 \right\}$. Now, for $i \geq 2$, we recursively define $\pi_i$ to be an essentially Speh representation $u_{\rho}(c_i, d_i, m_i)$ such that for any $\sigma \in \mathrm{csupp}(\pi_{i-1})$, $\sigma \in \mathrm{csupp}(\pi_i)$. We just have to justify such $\pi_i$ exists. To see this, we write $\Delta_i=[a_i,b_i]_{\rho}$ and we can first choose $d_i$ large enough such that any representation in $\mathrm{csupp}(\pi_{i-1})$ lies in $[b_i-d_i+1,b_i]_{\rho}$. (Such $d_i$ exists by using $b(\Delta_i) \geq b(\Delta_{i-1})$.) For such fixed $d_i$, now we solve $c_i$ and $m_i$ such that $\mathfrak{hd}(u_{\rho}(c_i,d_i,m_i))=\Delta_i$. 

Now let 
\[  \pi=\pi_1 \times \ldots \times \pi_r .\]
The cuspidal conditions guarantee that $\pi$ is irreducible by applying Lemma \ref{lem irreducible product} multiple times. By  \cite[Proposition 11.1]{Ch22+d}, (also see the Arthur type representation in \cite[Proposition 11.2]{Ch22+d}), we have that 
\[  \mathfrak{hd}(\pi)=\mathfrak{hd}(\pi_1)+\ldots+\mathfrak{hd}(\pi_r)=\mathfrak m .
\]
\end{proof}

\part{Combinatorial aspects}

\section{Removal processes} \label{s highest derivative removal}

In this section, we recall some results of removal processes in \cite{Ch22+d}. 

\subsection{More notations on multisegments} \label{ss notations}

%For an integer $c$, let $\mathrm{Mult}_{\rho,c}^a$ be the subset of $\mathrm{Mult}_{\rho}$ containing all multisegments $\mathfrak m$ such that any segment $\Delta$ in $\mathfrak m$ satisfies $a(\Delta) \cong \nu^c\rho$. Similarly, define $\mathrm{Mult}_{\rho,c}^b$ to be the subset of $\mathrm{Mult}_{\rho}$ containing all multisegments $\mathfrak m$ such that any segment $\Delta$ in $\mathfrak m$ satisfies $b(\Delta) \cong \nu^c\rho$. The empty sets are also considered in $\mathrm{Mult}_{\rho,c}^a$ and $\mathrm{Mult}_{\rho,c}^b$.% We shall also say that a multisegment $\mathfrak m \in \mathrm{Mult}_{\rho, c}^a$ is  at the left point $\nu^c\rho$.

Let $\mathrm{deg}(\rho)$ be the number such that $\rho$ is in $\mathrm{Irr}(G_{\mathrm{deg}(\rho)})$. For a segment $\Delta=[a,b]_{\rho}$, let $l_{abs}(\Delta)=(b-a+1)\mathrm{deg}(\rho)$. For a multisegment $\mathfrak m$ in $\mathrm{Mult}_{\rho}$ and an integer $c$, let $\mathfrak m[c]$ be the submultisegment of $\mathfrak m$ containing all the segments $\Delta$ satisfying $a(\Delta)=c$; and let $\mathfrak m\langle c \rangle$ be the submultisegment of $\mathfrak m$ containing all the segments $\Delta$ satisfying $b(\Delta)=c$. 

For a multisegment $\mathfrak m=\left\{ \Delta_1, \ldots, \Delta_k \right\}$, we also set:
\[  l_{abs}(\mathfrak m)=l_{abs}(\Delta_1)+\ldots+l_{abs}(\Delta_k) .
\]

%Fix an integer $c$. Let $\Delta_1=[c,b_1]_{\rho}, \Delta_2=[c,b_2]_{\rho}$ be two non-empty segments. We write $\Delta_1 \leq_c^a \Delta_2$ if $b_1 \leq b_2$, and write $\Delta_1 <_c^a \Delta_2$ if $b_1<b_2$.

%For non-empty $\mathfrak m_1, \mathfrak m_2$ in $\mathrm{Mult}_{\rho, c}^a$, label the segments in $\mathfrak m_1$ as: $\Delta_{1,k} \leq_c^a   \ldots \leq_c^a \Delta_{1,2} \leq_c^a \Delta_{1,1}$ and label the segments in $\mathfrak m_2$ as: $\Delta_{2,r} \leq_c^a \ldots  \leq_c^a \Delta_{2,2}  \leq_c \Delta_{2,1}$. We define the lexicographical ordering: $\mathfrak m_1 \leq_c^a \mathfrak m_2$ if $k \leq r$ and, for any $i \leq k$, $\Delta_{1,i} \leq_c^a \Delta_{2,i}$. We write $\mathfrak m_1 <_c^a \mathfrak m_2$ if $\mathfrak m_1 \leq_c^a \mathfrak m_2$ and $\mathfrak m_1\neq \mathfrak m_2$.

%We also need a 'right' ordering. One can define $[a_1,c]_{\rho}\leq_c^b[a_2,c]_{\rho}$ if $a_1<a_2$, and similarly define $[a_1,c]_{\rho}<^b_c [a_2,c]_{\rho}$. One similarly define $\leq_c^b$ on $\mathrm{Mult}_{\rho,c}^b$. 

\subsection{Removal process} \label{ss removal process}

We write $[a,b]_{\rho} \prec^L [a',b']_{\rho}$ if either $a<a'$; or $a=a'$ and $b<b'$. A segment $\Delta=[a,b]_{\rho}$ is said to be {\it admissible} to a multisegment $\mathfrak h$ if there exists a segment of the form $[a,c]_{\rho}$ in $\mathfrak h$ for some $c\geq b$.  We now recall the {\it removal process}.

\begin{definition} \cite[Section 8.2]{Ch22+d} \label{def removal process}
Let $\mathfrak h\in \mathrm{Mult}_{\rho}$ and let $\Delta=[a,b]_{\rho}$ be admissible to $\mathfrak h$. The removal process on $\mathfrak h$ by $\Delta$ is an algorithm to carry out the following steps:
\begin{enumerate}
\item Pick a shortest segment $[a,c]_{\rho}$ in $\mathfrak h[a]$ satisfying $b \leq c$. Set $\Delta_1=[a,c]_{\rho}$. Set $a_1=a$ and $b_1=c$.
\item One recursively finds the $\prec^L$-minimal segment $\Delta_i=[a_i,b_i]_{\rho}$ in $\mathfrak h$ such that $a_{i-1}<a_i$ and $b\leq b_i<b_{i-1}$ \footnote{There is a missing condition in \cite{Ch22+d} and \cite{Ch22+e}: $b\leq b_i$. The author would like to thank Peng Zhou for pointing out that. Alternatively, one can use Lemma \ref{lem removal process}(1) to compute the removal process. There are no changes in results and proofs in \cite{Ch22+d} and \cite{Ch22+e}. }. The process stops if one can no longer find those segments.
\item Let $\Delta_1, \ldots, \Delta_r$ be all those segments. For $1\leq i<r$, define $\Delta_i^{tr}=[a_{i+1}, b_i]_{\rho}$ and $\Delta_r^{tr}=[b_{r}+1,b]_{\rho}$ (possibly empty).
\item Define
\[  \mathfrak r(\Delta, \mathfrak h):= \mathfrak h-\sum_{i=1}^r\Delta_i+\sum_{i=1}^r \Delta_i^{tr} .
\]
\end{enumerate} 
We call $\Delta_1, \ldots, \Delta_r$ to be the {\it removal sequence} for $(\Delta, \mathfrak h)$. We also define $\Upsilon(\Delta, \mathfrak h)=\Delta_1$, the first segment of the removal sequence. If $\Delta$ is not admissible to $\mathfrak h$, we set $\mathfrak r(\Delta, \mathfrak h)=\infty$, called the infinity multisegment. We also set $\mathfrak r(\Delta, \infty)=\infty$. 
\end{definition}

\subsection{Properties of removal process}
For a segment $\Delta=[a,b]_{\rho} \neq \emptyset$, let ${}^-\Delta=[a+1,b]_{\rho}$. Two non-empty segments $\Delta=[a,b]_{\rho}$ and $\Delta'=[a',b']_{\rho}$ in $\mathrm{Seg}_{\rho}$ are said to be {\it linked} if one of the following conditions holds:
\begin{enumerate}
\item $a <a' \leq b+1\leq b' $; or
\item $a'<a \leq b'+1 \leq b$.
\end{enumerate}
 Otherwise, $\Delta$ and $\Delta'$ are called to be not linked or unlinked. For two linked segments $\Delta, \Delta'$, we write $\Delta <\Delta'$ if it is in the first condition above. Otherwise, we write $\Delta' <\Delta$. For example, $[2,3]_{\rho}<[4,5]_{\rho}$. 
 
 We recall the following properties for computations:

\begin{lemma} \label{lem removal process} \cite{Ch22+d}
Let $\mathfrak h \in \mathrm{Mult}_{\rho}$ and let $\Delta, \Delta' \in \mathrm{Seg}_{\rho}$ be admissible to $\mathfrak h$. Then
\begin{enumerate}
\item \cite[Lemma 8.7]{Ch22+d} Let $\mathfrak h^*=\mathfrak h-\Upsilon(\Delta, \mathfrak h)+{}^-\Upsilon(\Delta, \mathfrak h)$. Then $\mathfrak r(\Delta, \mathfrak h)=\mathfrak r({}^-\Delta, \mathfrak h^*)$.
\item \cite[Lemma 8.8]{Ch22+d} Write $\Delta=[a,b]_{\rho}$. For any $a'<a$, $\mathfrak r(\Delta, \mathfrak h)[a']=\mathfrak h[a']$.
\item  \cite[Lemma 8.9]{Ch22+d} If $\Delta \in \mathfrak h$, then $\mathfrak r(\Delta, \mathfrak h)=\mathfrak h-\Delta$.
\item \cite[Lemma 8.10]{Ch22+d} Suppose $a(\Delta)=a(\Delta')$. Then 
\[  \Upsilon(\Delta, \mathfrak h)+\Upsilon(\Delta', \mathfrak r(\Delta, \mathfrak h))=\Upsilon(\Delta', \mathfrak h)+\Upsilon(\Delta, \mathfrak r(\Delta', \mathfrak h)) .
\]
\item \cite[Lemma 8.12]{Ch22+d} If $\Delta, \Delta'$ are unlinked, then $\mathfrak r(\Delta', \mathfrak r(\Delta, \mathfrak h))=\mathfrak r(\Delta, \mathfrak r(\Delta', \mathfrak h))$.
\end{enumerate}
\end{lemma}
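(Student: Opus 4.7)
The plan is to tackle the five items in sequence, with item (1) as the recursive backbone from which (4) and (5) follow by induction. Items (2) and (3) are immediate from \deref{def removal process}: for (2), every $\Delta_i$ in the removal sequence satisfies $a(\Delta_i) \geq a$ by the selection rule $a_{i-1} < a_i$, and every truncation $\Delta_i^{tr}$ has starting index $\geq a+1$, so the $\mathfrak h[a']$-part for $a' < a$ is untouched. For (3), the hypothesis $\Delta \in \mathfrak h$ makes $\Delta$ itself the shortest segment in $\mathfrak h[a]$ with right-endpoint $\geq b$, forcing $\Delta_1 = \Delta$ and $b_1 = b$; then the recursive constraint $b \leq b_2 < b_1 = b$ on $\Delta_2$ is vacuous, so $r = 1$ and $\Delta_1^{tr} = [b+1,b]_\rho = \emptyset$, giving $\mathfrak r(\Delta, \mathfrak h) = \mathfrak h - \Delta$.

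Next I would prove (1). Writing $\Delta_1 = \Upsilon(\Delta, \mathfrak h) = [a, b_1]_\rho$ with $b_1 \geq b$, so ${}^-\Delta_1 = [a+1, b_1]_\rho$ and $\mathfrak h^* = \mathfrak h - \Delta_1 + {}^-\Delta_1$, I compare the removal sequences on the two sides by a case split on the second segment $\Delta_2$ of the original sequence. If $\Delta_2$ exists with $a(\Delta_2) = a+1$, then the $({}^-\Delta, \mathfrak h^*)$-process picks $\Delta_2$ first (it is strictly shorter than ${}^-\Delta_1$ and satisfies $b_2 \geq b$); if $\Delta_2$ exists with $a(\Delta_2) > a+1$, the $({}^-\Delta, \mathfrak h^*)$-process picks ${}^-\Delta_1$ first and $\Delta_2$ second; if $\Delta_2$ does not exist, ${}^-\Delta_1$ is picked and the process terminates. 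In each subcase the remaining steps and their truncations duplicate those of the original sequence, and the identity $\Delta_1^{tr} = [a_2, b_1]_\rho = {}^-\Delta_1$ when $a_2 = a+1$ reconciles the multiset bookkeeping. An induction on the removal length $r$ then finishes (1).

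With (1) in hand, item (4) becomes a direct comparison of two shortest-segment selections inside $\mathfrak h[a]$. By (2) and the observation that all truncations begin strictly after $a$, one has $\mathfrak r(\Delta, \mathfrak h)[a] = \mathfrak h[a] - \Upsilon(\Delta, \mathfrak h)$, so $\Upsilon(\Delta', \mathfrak r(\Delta, \mathfrak h))$ reduces to the shortest segment of $\mathfrak h[a] - \Upsilon(\Delta, \mathfrak h)$ whose right-endpoint dominates $b(\Delta')$. Sorting $\mathfrak h[a]$ by right-endpoint and splitting on whether the thresholds $b(\Delta)$ and $b(\Delta')$ force the same or different first picks yields the symmetric multiset identity. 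For (5), I would apply (1) to peel the first segment on each side and induct on $l_{abs}(\mathfrak h)$; when $\Delta, \Delta'$ are unlinked, the first-step choices $\Upsilon(\Delta, \mathfrak h)$ and $\Upsilon(\Delta', \mathfrak h)$ live in separate starting strata (or are otherwise non-interfering), so the induction hypothesis applies to the reduced multisegment after one peel on each side.

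The main obstacle lies in the case split in (1) depending on whether $\Delta_2$ starts at $a+1$ or strictly later: in the former, the shortened ${}^-\Delta_1$ is preempted by $\Delta_2$ as the shortest candidate in $\mathfrak h^*[a+1]$; in the latter, ${}^-\Delta_1$ is itself selected and then consumed by the next truncation. The same flavor of bookkeeping reappears in (4) when $\Upsilon(\Delta, \mathfrak h) = \Upsilon(\Delta', \mathfrak h)$, where one must verify that the next pick from $\mathfrak h[a]$ is forced to be the same segment regardless of processing order. Once this layer of case analysis is absorbed into (1), items (4) and (5) fall out of structural identities by straightforward induction.
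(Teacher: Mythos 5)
The paper gives no proof of this lemma at all: each item is imported from \cite{Ch22+d} (Lemmas 8.7--8.12), so your proposal can only be measured against Definition \ref{def removal process}. On that basis, items (1)--(4) are in good shape. (2) and (3) are the immediate observations you give. For (1), the three-way split on the second segment $\Delta_2$ of the original removal sequence is the right bookkeeping: when $a(\Delta_2)=a+1$, the $\prec^L$-minimality of $\Delta_2$ makes it the shortest segment of $\mathfrak h^*[a+1]$ with right end at least $b$; when $a(\Delta_2)>a+1$ or $\Delta_2$ does not exist, the same minimality excludes any segment of $\mathfrak h$ starting at $a+1$ with right end in $[b,b_1)$, so ${}^-\Upsilon(\Delta,\mathfrak h)$ is picked first and the later picks agree because $\mathfrak h$ and $\mathfrak h^*$ coincide on segments starting beyond $a+1$; the identity $\Delta_1^{tr}={}^-\Upsilon(\Delta,\mathfrak h)$ closes the multiset computation in the first case, as you say. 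For (4), the reduction via $\mathfrak r(\Delta,\mathfrak h)[a]=\mathfrak h[a]-\Upsilon(\Delta,\mathfrak h)$ to an exchange property of ``smallest right end above a threshold'' selections inside the fixed multiset $\mathfrak h[a]$ is correct, and the remaining two-case check is routine. (Two snags in a careful write-up come from the printed definition itself, not from you: the last truncation should read $[b+1,b_r]_{\rho}$ rather than $[b_r+1,b]_{\rho}$ for the lengths to balance, and (1) has the degenerate case ${}^-\Delta=\emptyset$, where the printed step (2) needs a further bound on the $a_i$ for the convention $\mathfrak r(\emptyset,\cdot)=\mathrm{id}$ to be consistent.)

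The genuine gap is item (5), the only part that needs real work, and your sketch does not prove it. It rests on the claim that for unlinked $\Delta,\Delta'$ the first picks $\Upsilon(\Delta,\mathfrak h)$ and $\Upsilon(\Delta',\mathfrak h)$ ``live in separate starting strata (or are otherwise non-interfering).'' Unlinked does not mean non-interacting: nested segments are unlinked, and if $a(\Delta)=a(\Delta')$ with $\Delta'\subsetneq\Delta$, both first picks are taken from the very same stratum $\mathfrak h[a]$ and are in general different segments, so removing one changes what is available to the other --- this is exactly the interaction item (4) exists to control, and (5) must be proved in its presence; even for nested segments with $a(\Delta)<a(\Delta')$ the two removal chains can pass through common strata. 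Moreover the proposed induction does not close: peeling one step via (1) replaces $\Delta$ by ${}^-\Delta$, and ${}^-\Delta$ need not remain unlinked from $\Delta'$ (for $\Delta=[a,b]_{\rho}$ and $\Delta'=[a,b']_{\rho}$ with $b'<b$ the pair is unlinked, but ${}^-\Delta=[a+1,b]_{\rho}$ and $\Delta'$ are linked), so the inductive hypothesis is unavailable for the peeled configuration; in addition, peeling $\Delta$ and peeling $\Delta'$ produce two different auxiliary multisegments $\mathfrak h^*$, and the sketch never explains how the two sides are matched after one peel. As written, the commutation $\mathfrak r(\Delta',\mathfrak r(\Delta,\mathfrak h))=\mathfrak r(\Delta,\mathfrak r(\Delta',\mathfrak h))$ of \cite[Lemma 8.12]{Ch22+d} is assumed rather than proved; one needs an actual analysis of how $\Upsilon(\Delta',-)$ and the whole removal sequence change when $\mathfrak h$ is replaced by $\mathfrak h-\Upsilon(\Delta,\mathfrak h)+{}^-\Upsilon(\Delta,\mathfrak h)$, including the nested cases.
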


For $\mathfrak h \in \mathrm{Mult}_{\rho}$, a multisegment $\mathfrak n=\left\{ \Delta_1, \ldots, \Delta_r\right\} \in \mathrm{Mult}_{\rho}$ written in an ascending order, define:
\[   \mathfrak r(\mathfrak n, \mathfrak h)= \mathfrak r(\Delta_r, \ldots , \mathfrak r(\Delta_1, \mathfrak h)\ldots ) .  \]
We say that $\mathfrak n$ is {\it admissible} to $\mathfrak h$ if $\mathfrak r(\mathfrak n, \mathfrak h)\neq \infty$.

\begin{theorem} \cite[Theorem 10.2]{Ch22+d} \label{thm isomorphic derivatives} 
Let $\pi \in \mathrm{Irr}_{\rho}$. Let $\mathfrak m, \mathfrak m' \in \mathrm{Mult}_{\rho}$ be admissible to $\pi$. Then $\mathfrak m, \mathfrak m'$ are admissible to $\mathfrak{hd}(\pi)$, and furthermore, $D_{\mathfrak m}(\pi)\cong D_{\mathfrak m'}(\pi)$ if and only if $\mathfrak r(\mathfrak m, \pi)=\mathfrak r(\mathfrak m', \pi)$.
\end{theorem}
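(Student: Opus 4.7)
The plan is to derive the theorem from the finer identity
\[
\mathfrak{hd}(D_{\mathfrak m}(\pi)) \;=\; \mathfrak r(\mathfrak m, \mathfrak{hd}(\pi)) \qquad (\star)
\]
valid for every multisegment $\mathfrak m$ admissible to $\pi$. This is the central bridge between the representation-theoretic operation of taking derivatives and the purely combinatorial removal process of Definition \ref{def removal process}: once $(\star)$ is in hand, the admissibility assertion ($\mathfrak r(\mathfrak m, \mathfrak{hd}(\pi)) \ne \infty$) and the forward direction of the equivalence come essentially for free.

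I would establish $(\star)$ first in the single-segment case $\mathfrak m = \Delta$. The key input is that $\mathfrak{mxpt}^{a}(\pi, c)$ is defined by maximality of the derivatives it admits, which forces the segment $\Upsilon(\Delta, \mathfrak{hd}(\pi))$ singled out in Definition \ref{def removal process} to coincide with the segment of $\mathfrak{hd}(\pi)$ that is consumed by $D_{\Delta}$. When $\Delta \in \mathfrak{hd}(\pi)$ the identity follows at once from Lemma \ref{lem removal process}(3). In general I would induct on $b(\Delta) - a(\Delta)$: setting $\mathfrak h^{\ast} = \mathfrak{hd}(\pi) - \Upsilon(\Delta, \mathfrak{hd}(\pi)) + {}^{-}\Upsilon(\Delta, \mathfrak{hd}(\pi))$, Lemma \ref{lem removal process}(1) reduces the combinatorial side to $\mathfrak r({}^{-}\Delta, \mathfrak h^{\ast})$, while a parallel recursion on the representation side (peeling off one cuspidal factor of $\mathrm{St}(\Delta)$) shows $\mathfrak{hd}(D_{\Delta}(\pi)) = \mathfrak{hd}(D_{{}^{-}\Delta}(\pi'))$ for an irreducible $\pi'$ with $\mathfrak{hd}(\pi') = \mathfrak h^{\ast}$; the induction then closes. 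Extension of $(\star)$ to multisegments is immediate by iterating along an ascending order $\Delta_{1}, \ldots, \Delta_{k}$, since both $D_{\mathfrak m}$ and $\mathfrak r(\mathfrak m, \cdot)$ are, by construction, the $k$-fold iterations of their single-segment versions.

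The main obstacle is the reverse direction of the equivalence, since the highest derivative multisegment by itself is not a complete invariant of an irreducible representation. My plan is to iterate $(\star)$. Setting $\tau = D_{\mathfrak m}(\pi)$ and $\tau' = D_{\mathfrak m'}(\pi)$, the hypothesis together with $(\star)$ yields $\mathfrak{hd}(\tau) = \mathfrak{hd}(\tau')$; applying the highest derivative to both and invoking $(\star)$ again equates the next-level highest derivative multisegments, and so on. This produces matched descending chains of derivatives that terminate in cuspidal representations. Equality of cuspidal supports on both sides is forced by $\mathrm{csupp}(\tau) = \mathrm{csupp}(\pi) - \mathrm{csupp}(\mathrm{St}(\mathfrak m))$, the analogous identity for $\tau'$, and the coincidence $\mathrm{csupp}(\mathrm{St}(\mathfrak m)) = \mathrm{csupp}(\mathrm{St}(\mathfrak m'))$, which follows from the assumed equality of removals together with $l_{abs}(\mathfrak m) = l_{abs}(\mathfrak m')$. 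The descending chains therefore share the same cuspidal terminal. Walking back up the chains and recovering each intermediate representation from the pair (highest derivative multisegment, highest derivative) via the standard embedding $\tau \hookrightarrow D_{\mathfrak{hd}(\tau)}(\tau) \times \mathrm{St}(\mathfrak{hd}(\tau))$ as the unique irreducible submodule (Zelevinsky classification), we conclude $\tau \cong \tau'$.
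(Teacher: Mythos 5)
First, note that this paper does not actually prove Theorem \ref{thm isomorphic derivatives}: it is imported verbatim from \cite[Theorem 10.2]{Ch22+d}, so there is no internal proof to compare with and your attempt has to stand on its own.

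It does not, because your central identity $(\star)$, $\mathfrak{hd}(D_{\mathfrak m}(\pi))=\mathfrak r(\mathfrak m,\mathfrak{hd}(\pi))$, is false in general, and everything in your argument (the admissibility claim, the forward implication, and the derivation of $\mathfrak{hd}(\tau)=\mathfrak{hd}(\tau')$ that launches your chain argument for the converse) is routed through it. The paper itself signals this: Theorem \ref{thm effect of Steinberg small a} only gives the inequality $\varepsilon_{\Delta'}(D_{\Delta}(\pi))\geq\varepsilon_{\Delta'}(\mathfrak r(\Delta,\pi))$ when $a(\Delta')<a(\Delta)$, and the proof of Lemma \ref{lem dagger property on derivative} explicitly describes $\mathfrak{hd}(D_{\Delta''}(\pi))$ as obtained from $\mathfrak r(\Delta'',\mathfrak{hd}(\pi))$ by \emph{prolonging} some segments, i.e.\ the two multisegments differ in general. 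A concrete counterexample: let $\pi\in\mathrm{Irr}_{\rho}$ be the essentially Speh representation of $G_{2\deg(\rho)}$ with $\mathfrak{hd}(\pi)=\left\{[2,2]_{\rho}\right\}$ (it exists by the lemma on $\mathfrak{hd}(u_{\rho}(c,d,m))$, or by Theorem \ref{thm realize highest derivative mult}). Then $D_{[2,2]_{\rho}}(\pi)$ is the highest derivative of $\pi$, namely the cuspidal representation $\nu\rho$ of $G_{\deg(\rho)}$, so $\mathfrak{hd}(D_{[2,2]_{\rho}}(\pi))=\left\{[1,1]_{\rho}\right\}$, whereas Lemma \ref{lem removal process}(3) gives $\mathfrak r([2,2]_{\rho},\mathfrak{hd}(\pi))=\emptyset$. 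So the whole point of the quoted theorem is subtler than your plan allows: $\mathfrak r(\mathfrak m,\mathfrak{hd}(\pi))$ is \emph{not} the highest derivative multisegment of $D_{\mathfrak m}(\pi)$, yet it is still a complete invariant of the isomorphism class of $D_{\mathfrak m}(\pi)$; the induction you sketch cannot establish that, because the "missing" $\varepsilon_{\Delta'}$ with $a(\Delta')<a(\Delta)$ are exactly where the two objects diverge.

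Two further steps are also unsubstantiated even granting the framework: the inductive step "peeling off one cuspidal factor of $\mathrm{St}(\Delta)$" asserts $\mathfrak{hd}(D_{\Delta}(\pi))=\mathfrak{hd}(D_{{}^{-}\Delta}(\pi'))$ for some irreducible $\pi'$ with $\mathfrak{hd}(\pi')=\mathfrak h^{*}$, but $\mathfrak{hd}$ does not determine an irreducible representation, so producing such a $\pi'$ gives no control over $D_{\Delta}(\pi)$ itself; and the claims that $\mathfrak r(\mathfrak m,\pi)=\mathfrak r(\mathfrak m',\pi)$ forces $l_{abs}(\mathfrak m)=l_{abs}(\mathfrak m')$ and $\mathrm{csupp}(\mathrm{St}(\mathfrak m))=\mathrm{csupp}(\mathrm{St}(\mathfrak m'))$ are stated without proof (they are consequences of the theorem one is trying to prove rather than inputs to it). If you want a genuine proof you would need the machinery of \cite{Ch22+d} (the $\varepsilon$-bookkeeping of Theorems \ref{thm effect of Steinberg} and \ref{thm effect of Steinberg small a} together with a reconstruction argument), not the clean identity $(\star)$.
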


\subsection{More relations to derivatives}

 For $\mathfrak h \in \mathrm{Mult}_{\rho}$ and $\Delta=[a,b]_{\rho} \in \mathrm{Seg}_{\rho}$, set
\begin{align} \label{eqn varepsilon combinatorics}
\varepsilon_{\Delta}(\mathfrak h)=|\left\{ \widetilde{\Delta} \in \mathfrak h[a]: \Delta\subset \widetilde{\Delta} \right\}| . 
\end{align}
Define $\varepsilon_{\Delta}(\pi):=\varepsilon_{\Delta}(\mathfrak{hd}(\pi))$, which is equivalent to a different formulation of the same notation in \cite[Section 4.2]{Ch22+d}. We also remark that when $\Delta$ is a singleton, this number coincides with the number defined in \cite[Definition 2.1.1]{Ja07}. In terms of derivatives, we have the following:

\begin{theorem} \cite[Theorem 9.3]{Ch22+d} \label{thm effect of Steinberg}
Let $\pi \in \mathrm{Irr}_{\rho}$. Let $\Delta=[a,b]_{\rho} \in \mathrm{Seg}_{\rho}$ be admissible to $\pi$. Let $\Delta'=[a',b']_{\rho}$ be another segment in $\mathrm{Seg}_{\rho}$. If either $a'>a$; or $\Delta'$ and $\Delta$ are unlinked, then 
\[ \varepsilon_{\Delta'}(D_{\Delta}(\pi))=\varepsilon_{\Delta'}(\mathfrak r(\Delta, \pi)). \]
\end{theorem}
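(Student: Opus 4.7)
The plan is to reduce the identity to a combinatorial comparison on the highest derivative multisegment, and then trace the removal process in parallel with the derivative operation. The starting point is that, by definition, $\varepsilon_{\Delta'}(\sigma) = \varepsilon_{\Delta'}(\mathfrak{hd}(\sigma))$ for any irreducible $\sigma$, so the left-hand side equals $\varepsilon_{\Delta'}(\mathfrak{hd}(D_\Delta(\pi)))$. The task thus becomes: even though $\mathfrak{hd}(D_\Delta(\pi))$ and $\mathfrak r(\Delta, \mathfrak{hd}(\pi))$ are in general distinct multisegments, they should carry the same number of segments starting at $a'$ and containing $\Delta'$ under the hypothesis. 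Note that Theorem \ref{thm isomorphic derivatives} supplies the conceptual link: $\mathfrak r(\Delta, \mathfrak{hd}(\pi))$ determines $D_\Delta(\pi)$ up to isomorphism, so any numerical invariant of the derivative must in principle be recoverable from the removal data.

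I would argue by induction on $l_{abs}(\Delta) = b - a + 1$, using Lemma \ref{lem removal process}(1) to peel off one segment-length at a time: $\mathfrak r(\Delta, \mathfrak h) = \mathfrak r({}^-\Delta, \mathfrak h^*)$ with $\mathfrak h^* = \mathfrak h - \Upsilon(\Delta, \mathfrak h) + {}^-\Upsilon(\Delta, \mathfrak h)$. On the representation side, one needs a parallel identity $D_\Delta(\pi) \cong D_{{}^-\Delta}(\sigma)$ for an irreducible $\sigma$ whose highest derivative multisegment matches $\mathfrak h^*$ on the relevant region. The realization Theorem \ref{thm realize highest derivative mult} produces a candidate $\sigma$ (when combined with elementary rearrangements of $\mathfrak{hd}(\pi)$), and the hypothesis ($a' > a$ or $\Delta \not\sim \Delta'$) is preserved in the reduction, since shortening $\Delta$ from the left only strengthens the inequality $a' > a$ and does not create new linkings with $\Delta'$.

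The base case is $\Delta \in \mathfrak{hd}(\pi)$: here the removal sequence has length one and Lemma \ref{lem removal process}(3) gives $\mathfrak r(\Delta, \mathfrak h) = \mathfrak h - \Delta$. Removing $\Delta$ does not touch any segment at $a' > a$, so the right-hand side equals $\varepsilon_{\Delta'}(\pi)$; one must match this with $\varepsilon_{\Delta'}(\mathfrak{hd}(D_\Delta(\pi)))$ by a direct analysis of $\mathfrak{mxpt}^{a'}$ on $D_\Delta(\pi)$ using the maximality characterization from \ref{ss highest derivative mult}. The unlinked case with $a' \leq a$ is handled separately: Lemma \ref{lem removal process}(2) gives $\mathfrak r(\Delta, \mathfrak h)[a'] = \mathfrak h[a']$ for $a' < a$, and on the representation side the equality $\varepsilon_{\Delta'}(D_\Delta(\pi)) = \varepsilon_{\Delta'}(\pi)$ follows from the commutativity of derivatives by unlinked segments, a standard Jacquet module fact.

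The main obstacle, I anticipate, lies in the inductive step: rigorously producing the intermediate irreducible $\sigma$ and certifying that the combinatorial operation $\mathfrak h \mapsto \mathfrak h^*$ is matched by a representation-theoretic step. The delicate point is that $\mathfrak{hd}(\sigma)$ need not literally coincide with $\mathfrak h^*$; they need only agree on the $a'$-piece relevant to $\varepsilon_{\Delta'}$. Selecting an appropriate $\sigma$ and isolating which part of $\mathfrak h^*$ is genuinely represented will require care and will likely lean on finer structural properties of the highest derivative multisegment beyond what is recalled in this section.
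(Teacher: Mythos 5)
This statement is quoted from the first paper of the series (\cite[Theorem 9.3]{Ch22+d}); the present article gives no proof of it, so there is no in-paper argument to compare yours against. Judged on its own, your proposal is a plan rather than a proof, and its central step has a genuine gap that you yourself flag. The inductive step asks for an irreducible $\sigma$ with $D_{\Delta}(\pi)\cong D_{{}^-\Delta}(\sigma)$ and with $\mathfrak{hd}(\sigma)$ agreeing with $\mathfrak h^*=\mathfrak h-\Upsilon(\Delta,\mathfrak h)+{}^-\Upsilon(\Delta,\mathfrak h)$ on the relevant region. Theorem \ref{thm realize highest derivative mult} only produces \emph{some} irreducible representation with a prescribed highest derivative multisegment; it gives no link between that representation and $\pi$, and neither $D_{\Delta}(\pi)$ nor $\varepsilon_{\Delta'}(D_{\Delta}(\pi))$ is a priori a function of $\mathfrak{hd}(\pi)$ alone. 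Asserting that you may replace $\pi$ by such a realization is exactly the content of the theorem you are trying to prove (that the effect of $D_{\Delta}$ on the $\varepsilon$-invariants is computed purely by the removal process on $\mathfrak{hd}(\pi)$), so the reduction is circular. Theorem \ref{thm isomorphic derivatives} does not help here either: it compares two derivatives of the \emph{same} $\pi$, not derivatives of two different representations with related $\mathfrak{hd}$'s.

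There are also structural problems in the induction itself. You induct on $l_{abs}(\Delta)$ but declare the base case to be ``$\Delta\in\mathfrak{hd}(\pi)$'', which is not the base of that induction (the base would be $\Delta$ a singleton, where the full difficulty already appears). And even in the case $\Delta\in\mathfrak{hd}(\pi)$, the key equality $\varepsilon_{\Delta'}(D_{\Delta}(\pi))=\varepsilon_{\Delta'}(\pi)$ for $a'>a$ is not obtained by a formal maximality argument with $\mathfrak{mxpt}^{a'}$: it requires genuine representation-theoretic input (analysis of Jacquet modules via the geometric lemma, or the machinery of \cite{Ch22+d}), none of which is recalled in this paper and none of which your sketch supplies. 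In short, the two places where you defer to ``direct analysis'' and ``finer structural properties'' are precisely where the theorem lives; as written, the proposal does not constitute a proof.
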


\begin{theorem} \cite[Theorem 9.3]{Ch22+d} \label{thm effect of Steinberg small a}
Let $\pi \in \mathrm{Irr}_{\rho}$. Let $\Delta=[a,b]_{\rho} \in \mathrm{Seg}_{\rho}$ be admissible to $\pi$. Let $\Delta'=[a',b']_{\rho}$ be another segment. If $a' <a$, then 
\[  \varepsilon_{\Delta'}(D_{\Delta}(\pi)) \geq \varepsilon_{\Delta'}(\mathfrak r(\Delta, \pi))=\varepsilon_{\Delta'}(\pi) .
\]

\end{theorem}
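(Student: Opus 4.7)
The plan is to treat the equality and the inequality separately. The equality $\varepsilon_{\Delta'}(\mathfrak r(\Delta, \pi))=\varepsilon_{\Delta'}(\pi)$ is immediate: by definition (\ref{eqn varepsilon combinatorics}), $\varepsilon_{\Delta'}(\mathfrak h)$ depends only on the submultisegment $\mathfrak h[a']$, and Lemma \ref{lem removal process}(2) gives $\mathfrak r(\Delta, \mathfrak{hd}(\pi))[a']=\mathfrak{hd}(\pi)[a']$ under the hypothesis $a'<a$. So the substantive content is the inequality $\varepsilon_{\Delta'}(D_{\Delta}(\pi)) \geq \varepsilon_{\Delta'}(\mathfrak r(\Delta, \pi))$.

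For the inequality I would induct on $l_{abs}(\Delta)$. In the inductive step, Lemma \ref{lem removal process}(1) rewrites $\mathfrak r(\Delta, \mathfrak{hd}(\pi))$ as $\mathfrak r({}^-\Delta, \mathfrak h^*)$, where $\mathfrak h^*=\mathfrak{hd}(\pi)-\Upsilon(\Delta,\mathfrak{hd}(\pi))+{}^-\Upsilon(\Delta,\mathfrak{hd}(\pi))$ differs from $\mathfrak{hd}(\pi)$ only at position $a$; in particular $\mathfrak h^*[a']=\mathfrak{hd}(\pi)[a']$. Factoring $D_\Delta$ through a derivative at $[a]_{\rho}$ followed by a derivative at ${}^-\Delta$ on an appropriate intermediate module whose highest derivative multisegment matches $\mathfrak h^*$, and applying the inductive hypothesis for ${}^-\Delta$, reduces the problem to the base case $\Delta=[a]_{\rho}$.

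For the base case, the statement becomes $\varepsilon_{\Delta'}(D_{[a]}(\pi))\geq \varepsilon_{\Delta'}(\pi)$ with $a'<a$. One interprets $\varepsilon_{\Delta'}(\pi)$ as the maximum number of segments starting at $a'$ and containing $\Delta'$ that can be simultaneously derived from $\pi$ (equivalently, the maximal $k$ such that a multisegment of $k$ such copies is admissible to $\mathfrak{hd}(\pi)$). For each witnessing collection at $a'$, one would check admissibility after a further right derivative by $[a]_{\rho}$: in the unlinked case ($b'<a-1$) this is the commutativity in Lemma \ref{lem removal process}(5), while in the linked case ($b'\geq a-1$) one needs a direct combinatorial analysis of how the removal process by $[a]_{\rho}$ interacts with segments starting at $a'$.

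The main obstacle is precisely this linked subcase of the base case. Since $\mathfrak{hd}(D_{[a]}(\pi))$ need not coincide with $\mathfrak r([a]_{\rho}, \pi)$ (Theorem \ref{thm isomorphic derivatives} only guarantees they determine the same isomorphism class of derivative), one must argue that any discrepancy between these two multisegments at position $a'$ can only enlarge, never shrink, the count of segments containing $\Delta'$. I expect the key input to be a semicontinuity statement complementary to Theorem \ref{thm effect of Steinberg}, using the strict inequality $a'<a$ to preclude cancellation between the right derivative by $[a]_\rho$ and the left-side structure at $a'$ being counted by $\varepsilon_{\Delta'}$.
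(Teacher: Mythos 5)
Your treatment of the equality $\varepsilon_{\Delta'}(\mathfrak r(\Delta,\pi))=\varepsilon_{\Delta'}(\pi)$ is correct and coincides exactly with what this paper itself says: the statement is not proved here at all but imported from the prequel \cite[Theorem 9.3]{Ch22+d}, and the only in-paper justification offered is the remark that the equality follows from Lemma \ref{lem removal process}(2), which is precisely your first observation. The substantive content is therefore the inequality $\varepsilon_{\Delta'}(D_{\Delta}(\pi))\geq \varepsilon_{\Delta'}(\mathfrak r(\Delta,\pi))$, and there your proposal has genuine gaps rather than a proof.

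Concretely: (a) the inductive step rests on ``factoring $D_{\Delta}$ through a derivative at $[a]_{\rho}$ followed by a derivative at ${}^-\Delta$'', but no such factorization holds: $D_{{}^-\Delta}\circ D_{[a]_{\rho}}$ is the derivative attached to the multisegment $\left\{[a]_{\rho},[a+1,b]_{\rho}\right\}$, which is $\leq_Z$-comparable to but different from $\left\{[a,b]_{\rho}\right\}$, and by Theorem \ref{thm isomorphic derivatives} these generally produce non-isomorphic derivatives. Lemma \ref{lem removal process}(1) is an identity of removal processes on multisegments; it does not supply an ``intermediate module whose highest derivative multisegment matches $\mathfrak h^*$'', and indeed $\mathfrak{hd}(D_{[a]_{\rho}}(\pi))$ may differ from $\mathfrak h^*$ exactly at positions to the left of $a$, which is the locus the theorem is about. (b) In the base case you explicitly leave the linked subcase open, saying you ``expect'' a semicontinuity statement complementary to Theorem \ref{thm effect of Steinberg}; but that expected statement is essentially the inequality to be proven. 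The whole difficulty is that $\mathfrak{hd}(D_{\Delta}(\pi))$ and $\mathfrak r(\Delta,\mathfrak{hd}(\pi))$ can disagree precisely at starting points $a'<a$, and showing that the discrepancy can only enlarge the counts requires representation-theoretic input (Jacquet module/geometric lemma analysis of the kind carried out in \cite{Ch22+d}), not only the removal-process combinatorics invoked in your sketch. As written, the proposal reduces the theorem to an unproved claim equivalent to itself, so it does not establish the inequality.
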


We remark that the equality in Theorem \ref{thm effect of Steinberg small a} follows from Lemma \ref{lem removal process}(2).

\section{Non-overlapping property for a sequence} \label{s non-overlapping}

In \cite{Ch22+e}, we have shown some characterizations for the minimality of two segment case. The goal of this section is to generalize a so-called non-overlapping property to a multisegment case. For this, we first recall some ingredients in \cite{Ch22+e}: fine chains and local minimizability. Those ingredients are combinatorial in nature and so most statements will be formulated for $\mathrm{Mult}_{\rho}$ rather than $\mathrm{Irr}_{\rho}$.

\subsection{Zelevinsky ordering} \label{ss zel ordering}

For two linked segments $\Delta=[a,b]_{\rho}$ and $\Delta'=[a',b']_{\rho}$ with $\Delta<\Delta'$, we define:
\[ \Delta \cup \Delta' = [a,b']_{\rho}, \quad \Delta \cap \Delta'=[a',b]_{\rho} .
\]
A multisegment $\mathfrak n$ is said to be obtained from another multisegment $\mathfrak m$ by an {\it elementary intersection-union process} if there exists a pair of linked segments $\Delta, \Delta'$ in $\mathfrak m$ such that
\[  \mathfrak n=\mathfrak m-\Delta-\Delta'+\Delta\cup \Delta'+\Delta\cap \Delta' .
\]
Here the subtractions mean the (multi-)set theoretic subtractions and additions mean the (multi-)set theoretic unions. We shall also use such notions for subtractions and additions later. Note that $\Delta\cap \Delta'$ is possibly the empty set and in such case, we simply drop the term. For example, 
$\left\{ [1,3]_{\rho}, [2]_{\rho}, [4,5]_{\rho} \right\}$ and $\left\{ [1,2]_{\rho}, [2,5]_{\rho} \right\}$ are obtained from $\left\{ [1,2]_{\rho}, [2,3]_{\rho}, [4,5]_{\rho} \right\}$ by elementary intersection-union processes.

For two multisegments $\mathfrak m, \mathfrak n \in \mathrm{Mult}_{\rho}$, we wrtie $\mathfrak n \leq_Z \mathfrak m$ if $\mathfrak n$ is obtained from $\mathfrak m$ by a sequence of intersection-union processes, or $\mathfrak m=\mathfrak n$. It is well-known from \cite{Ze80} that $\leq_Z$ defines a partial ordering on $\mathrm{Mult}_{\rho}$. 

\subsection{Non-overlapping property and intermediate segment property}

We first define the $\eta$-invariant, which is crucial in studying the minimality of sequences of derivatives:
\begin{definition} \label{def eta invariant}
\begin{itemize}
\item[(1)] Let $\mathfrak h \in \mathrm{Mult}_{\rho}$. Let $\Delta=[a,b]_{\rho} \in \mathrm{Seg}_{\rho}$ be admissible to $\mathfrak h$. Define
\[  \eta_{\Delta}(\mathfrak h) := (\varepsilon_{[a,b]_{\rho}}(\mathfrak h), \varepsilon_{[a+1,b]_{\rho}}(\mathfrak h), \ldots, \varepsilon_{[b,b]_{\rho}}(\mathfrak h)) .
\]
\item[(2)] Let $\pi \in \mathrm{Irr}_{\rho}$. Let $\Delta \in \mathrm{Seg}_{\rho}$ be admissible to $\pi$. Define $\eta_{\Delta}(\pi)=\eta_{\Delta}(\mathfrak{hd}(\pi))$. 
\end{itemize}
\end{definition}

\begin{definition} \label{def overlapping property}
Let $\Delta, \Delta' \in \mathrm{Seg}_{\rho}$ with $\Delta < \Delta'$. Let $\mathfrak h \in \mathrm{Mult}_{\rho}$. Suppose $\Delta$ is admissible to $\mathfrak h$. 
\begin{itemize}
\item[(1)] We say that $(\Delta, \Delta', \mathfrak h)$ satisfies the {\it non-overlapping property} if 
\[ \eta_{\Delta'}(D_{\Delta}(\pi))=\eta_{\Delta'}(\pi). \] 
\item[(2)] We say that $(\Delta, \Delta', \mathfrak h)$ satisfies the {\it intermediate segment property} if there exists a segment $\widetilde{\Delta} \in \mathfrak h$ such that 
\[   a(\Delta) \leq a(\widetilde{\Delta}) <a(\Delta') , \mbox{ and } a(\widetilde{\Delta})\leq b(\Delta) \leq b(\widetilde{\Delta}) <b(\Delta') . \]
\end{itemize}
\end{definition}

We remark that the original formulation of the non-overlapping property in \cite{Ch22+e} is phrased in terms of some properties among segments related to intersections between segments. We shall use the above equivalent combinatorial formulation (see \cite[Proposition 9.5]{Ch22+e}), which will be more useful for our study later.

\begin{proposition}\cite[Proposition 9.5]{Ch22+e} \label{prop nonoverlapping property}
Let $\Delta, \Delta', \mathfrak h$ be as in Definition \ref{def overlapping property}. We further assume that $\Delta'$ is admissible to $\mathfrak h$. Then the following conditions are equivalent:
\begin{itemize}
\item[(1)] $(\Delta, \Delta', \mathfrak h)$ satisfies the non-overlapping property.
\item[(2)] $(\Delta, \Delta', \mathfrak h)$ satisfies the intermediate segment property.
\item[(3)] $\left\{ \Delta, \Delta' \right\}$ is the $\leq_Z$-minimal element in $\mathcal S(\mathfrak h, \mathfrak r(\left\{ \Delta, \Delta'\right\}, \mathfrak h))$, where $\mathcal S(\mathfrak h, \mathfrak r(\left\{ \Delta, \Delta'\right\}, \mathfrak h)$ is defined as:
\[  \mathcal S(\mathfrak h, \mathfrak r(\left\{ \Delta, \Delta'\right\}, \mathfrak h):= \left\{ \mathfrak n \in \mathrm{Mult}_{\rho}: \mathfrak r(\mathfrak n, \mathfrak h)=\mathfrak r(\left\{ \Delta, \Delta'\right\}, \mathfrak h)\right\} .
\]
\end{itemize}
\end{proposition}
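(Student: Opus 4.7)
The plan is to use $(2)$ as the combinatorial pivot and establish $(1)\Leftrightarrow(2)$ and $(2)\Leftrightarrow(3)$ separately. The entire argument can be carried out at the level of multisegments and the removal process of Definition~\ref{def removal process}; the representation-theoretic content enters only through Theorem~\ref{thm isomorphic derivatives} (which translates equality of derivatives into equality of $\mathfrak r$) and Theorem~\ref{thm effect of Steinberg} (which equates $\varepsilon$-counts on $D_\Delta(\pi)$ with those on $\mathfrak r(\Delta,\mathfrak h)$ whenever the $a$-coordinate is at least $a(\Delta)$), so it suffices to work with the pair $(\mathfrak h, \mathfrak r(\Delta,\mathfrak h))$.

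For $(2)\Rightarrow(1)$ I would unwind the recursive definition of $\mathfrak r(\Delta,\mathfrak h)$. The removal sequence $\Delta_1,\ldots,\Delta_r$ has strictly increasing $a$-coordinates $a_1<a_2<\cdots$ and strictly decreasing $b$-coordinates, and the existence of an intermediate segment $\widetilde\Delta$ with $a(\Delta)\le a(\widetilde\Delta)<a(\Delta')$ and $b(\Delta)\le b(\widetilde\Delta)<b(\Delta')$ is exactly what forces the chain to terminate before any $a_i$ reaches $a(\Delta')$: the $\prec^L$-minimal continuation in that range is available as $\widetilde\Delta$ (or a still smaller candidate), and since $b(\widetilde\Delta)\ge b(\Delta)$ the recursion halts there. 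Consequently $\mathfrak r(\Delta,\mathfrak h)[c]=\mathfrak h[c]$ for every $c\ge a(\Delta')$, and by Lemma~\ref{lem removal process}(2) also for every $c<a(\Delta)$; a short computation of $\varepsilon_{[c,b(\Delta')]_\rho}$ via (\ref{eqn varepsilon combinatorics}) then yields $\eta_{\Delta'}(\mathfrak r(\Delta,\mathfrak h))=\eta_{\Delta'}(\mathfrak h)$. For the converse $(1)\Rightarrow(2)$ I argue the contrapositive: if no such $\widetilde\Delta$ exists, then at some step of the recursion no candidate lies in the window $[a(\Delta),a(\Delta')-1]\times[b(\Delta),b(\Delta')-1]$, the $\prec^L$-minimal continuation is forced to jump to a segment with $a$-value $\ge a(\Delta')$ (such a segment exists since $\Delta'$ is admissible to $\mathfrak h$), and the resulting truncation visibly alters one of the entries of $\eta_{\Delta'}$.

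For $(2)\Leftrightarrow(3)$ I use that since $\Delta<\Delta'$ are linked, the only multisegment strictly below $\{\Delta,\Delta'\}$ in the Zelevinsky order is $\mathfrak m_0=\{\Delta\cup\Delta',\Delta\cap\Delta'\}$ (dropping $\Delta\cap\Delta'$ when it is empty), since the two new segments $\Delta\cup\Delta'$ and $\Delta\cap\Delta'$ are unlinked (one is contained in the other). The problem thus reduces to comparing $\mathfrak r(\mathfrak m_0,\mathfrak h)$ with $\mathfrak r(\{\Delta,\Delta'\},\mathfrak h)$. For $(2)\Rightarrow(3)$: with $\widetilde\Delta$ present, the removal of $\Delta\cup\Delta'$ is forced to traverse $b$-values up to $b(\Delta')$ and therefore consumes or truncates $\widetilde\Delta$ past $b(\widetilde\Delta)$, whereas removing $\Delta$ alone leaves $\widetilde\Delta$ with $b$-value at least $b(\widetilde\Delta)$; inspecting both outputs at the segments starting at $a(\widetilde\Delta)$ exposes a discrepancy. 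For $(3)\Rightarrow(2)$: without any intermediate segment, Lemma~\ref{lem removal process}(1) applied iteratively — peeling off $\Upsilon(\Delta,\mathfrak h)=\Upsilon(\Delta\cup\Delta',\mathfrak h)$ at each step — reduces the comparison to a strictly shorter instance, and an induction on $b(\Delta')-b(\Delta)$ eventually yields $\mathfrak r(\mathfrak m_0,\mathfrak h)=\mathfrak r(\{\Delta,\Delta'\},\mathfrak h)$, contradicting minimality.

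The main obstacle is the intertwined recursive structure of the removal process: because each step in Definition~\ref{def removal process} involves a $\prec^L$-minimum constrained on both endpoints, carefully matching the removal chains produced by $\Delta$, by $\Delta\cup\Delta'$, and by the do-nothing scenario of $(1)$ requires a case analysis according to whether segments of $\mathfrak h$ sit flush with the boundaries $a=a(\Delta')$ or $b=b(\Delta')$. I would organize this by induction on $b(\Delta')-b(\Delta)$, with the base case $b(\Delta)+1=b(\Delta')$ handled by direct inspection and the inductive step driven by Lemma~\ref{lem removal process}(1); the intermediate segment property is the precise invariant propagated through the induction.
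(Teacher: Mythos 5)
Your overall skeleton (pivot on condition (2), reduce (3) to the single covering element $\{\Delta\cup\Delta',\Delta\cap\Delta'\}$, and work entirely with the removal process) is reasonable, and note that the paper itself does not prove this statement — it is imported from \cite[Proposition 9.5]{Ch22+e} — so the comparison can only be with the intended combinatorics. However, your argument for $(2)\Rightarrow(1)$ rests on two claims that are false. The intermediate segment does \emph{not} force the removal chain to terminate before its $a$-values reach $a(\Delta')$, and it is \emph{not} true that $\mathfrak r(\Delta,\mathfrak h)[c]=\mathfrak h[c]$ for all $c\ge a(\Delta')$. For example, take $\mathfrak h=\{[0,5]_{\rho},[1,3]_{\rho},[2,2]_{\rho},[2,6]_{\rho}\}$, $\Delta=[0,2]_{\rho}$, $\Delta'=[2,4]_{\rho}$: the segment $[1,3]_{\rho}$ is intermediate, yet the removal sequence is $[0,5]_{\rho},[1,3]_{\rho},[2,2]_{\rho}$, whose last $a$-value equals $a(\Delta')$, and $\mathfrak r(\Delta,\mathfrak h)[2]=\{[2,3]_{\rho},[2,6]_{\rho}\}\neq\mathfrak h[2]$, although $\eta_{\Delta'}$ is indeed unchanged. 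The true mechanism is a cancellation: each removed $\Delta_i$ with $b(\Delta_i)\ge b(\Delta')$ and $a(\Delta_i)\ge a(\Delta')$ is compensated in $\eta_{\Delta'}$ by the truncation $[a_{i+1},b_{i}]_{\rho}$ of its predecessor; what the intermediate segment buys (via $\prec^L$-minimality) is that the first chain member whose $b$-value drops below $b(\Delta')$ exists and starts strictly before $a(\Delta')$, so the one uncancelled truncation falls outside the $\eta_{\Delta'}$-window. Your converse $(1)\Rightarrow(2)$ has a matching gap: you only treat the case where the chain ``jumps'' to a segment with $a$-value $\ge a(\Delta')$ (and the appeal to admissibility of $\Delta'$ to produce such a continuation is not valid, since the witnessing segment $[a(\Delta'),c]_{\rho}$ with $c\ge b(\Delta')$ need not satisfy the constraint $b(\Delta)\le c<b_{i-1}$), but you omit the case where the chain simply terminates while its last member still has $b$-value $\ge b(\Delta')$; there the failure of (1) comes from the final truncation of $\Delta_r$, which creates a new segment starting at $b(\Delta)+1$ inside the window.

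The $(2)\Leftrightarrow(3)$ half is too sketchy to assess as a proof, and its central assertion is wrong as stated: the removal sequence for $\Delta\cup\Delta'=[a(\Delta),b(\Delta')]_{\rho}$ only involves segments with $b$-value $\ge b(\Delta')$, so it can never ``consume or truncate'' the intermediate segment $\widetilde\Delta$, whose $b$-value is $<b(\Delta')$; the discrepancy between $\mathfrak r(\{\Delta,\Delta'\},\mathfrak h)$ and $\mathfrak r(\{\Delta\cup\Delta',\Delta\cap\Delta'\},\mathfrak h)$ has to be located by a genuine comparison of the two composite removals, not by tracking $\widetilde\Delta$ itself. Likewise, the proposed induction on $b(\Delta')-b(\Delta)$ does not match the peeling operation of Lemma \ref{lem removal process}(1), which replaces $\Delta$ by ${}^-\Delta$ and leaves all $b$-values (hence $b(\Delta')-b(\Delta)$) unchanged; if you induct along that lemma you must induct on the length of $\Delta$ (equivalently on $a(\Delta')-a(\Delta)$) and verify that the absence/presence of an intermediate segment is preserved when $(\Delta,\mathfrak h)$ is replaced by $({}^-\Delta,\mathfrak h^*)$, and you must also control the second removal (by $\Delta'$, respectively $\Delta\cap\Delta'$), none of which is addressed. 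So while the architecture is sound, the key steps as written would not survive scrutiny.
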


\subsection{Fine chains}

\begin{definition} \label{def truncation}
Let $\mathfrak h \in \mathrm{Mult}_{\rho}$ and let $\mathfrak n \in \mathrm{Mult}_{\rho}$. Let $a$ be the smallest integer such that $\mathfrak n[a] \neq \emptyset$. Write $\mathfrak n[a]=\left\{ \Delta_1, \ldots, \Delta_k \right\}$.  
\begin{itemize}
\item Define $\mathfrak r_1=\mathfrak h$. For $i \geq 2$, define
\[  \mathfrak r_i:= \mathfrak r(\Delta_{i-1}, \ldots , \mathfrak r(\Delta_1, \mathfrak h)\ldots) .
\]
Define 
\[  \mathfrak{fs}(\mathfrak n, \mathfrak h):= \left\{ \Upsilon(\Delta_1, \mathfrak r_1), \ldots , \Upsilon(\Delta_k, \mathfrak r_k) \right\} . \]
\item Define
\[  \mathfrak{trr}(\mathfrak n, \mathfrak h):= \mathfrak h-\mathfrak{fs}(\mathfrak n, \mathfrak h)+{}^-(\mathfrak{fs}(\mathfrak n, \mathfrak h))
\]
and
\[  \mathfrak{trd}(\mathfrak n, \mathfrak h):= \mathfrak n- \mathfrak n[a]+{}^-(\mathfrak n[a]) .
\]
\end{itemize}
\end{definition}

\begin{definition} \label{def fine chains and sequences}
Let $\mathfrak h \in \mathrm{Mult}_{\rho}$. Let $\mathfrak n \in \mathrm{Mult}_{\rho}$ be admissible to $\mathfrak h$. Set $\mathfrak h_0=\mathfrak h$ and $\mathfrak n_0=\mathfrak n$. Define recursively 
\[  \mathfrak h_i=\mathfrak{trr}(\mathfrak n_{i-1}, \mathfrak h_{i-1}) , \quad \mathfrak n_i=\mathfrak{trd}(\mathfrak n_{i-1}, \mathfrak h_{i-1}) .
\]
The {\it fine chain for the removal process} for $(\mathfrak n, \mathfrak h)$ (or simply fine chain for $(\mathfrak n, \mathfrak h)$) is the sequence 
\[ \mathfrak{fs}(\mathfrak n_0, \mathfrak h_0), \mathfrak{fs}(\mathfrak n_1, \mathfrak h_1), \ldots 
\]
The sequences $\mathfrak h_0, \mathfrak h_1, \ldots$ and $\mathfrak n_0, \mathfrak n_1, \ldots$ will also be useful later.

\end{definition}

\begin{lemma} \label{lem reomval equaltiy fine chain}
We use the notations in Definition \ref{def fine chains and sequences}. Then, for all $i$,
\[  \mathfrak r(\mathfrak n, \mathfrak h)=\mathfrak r(\mathfrak n_i, \mathfrak h_i) .
\]
\end{lemma}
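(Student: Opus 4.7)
The plan is to induct on $i$. Since $(\mathfrak h_i, \mathfrak n_i)$ is defined recursively by applying the same operation to $(\mathfrak h_{i-1}, \mathfrak n_{i-1})$, it suffices to prove the single-step identity $\mathfrak r(\mathfrak n_0, \mathfrak h_0) = \mathfrak r(\mathfrak n_1, \mathfrak h_1)$; the general case then follows by iteration. Let $a$ be the smallest integer with $\mathfrak n[a] \neq \emptyset$ and write $\mathfrak n[a] = \{\Delta_1, \ldots, \Delta_k\}$. I will use an ascending order that places $\mathfrak n[a]$ at the front (justified because each $\Delta \in \mathfrak n - \mathfrak n[a]$ has $a(\Delta) > a$), and the analogous placement for ${}^-(\mathfrak n[a])$ in $\mathfrak n_1$ (which has new minimal $a$-value $a+1$). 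This reduces the problem to showing
\[ \mathfrak r(\mathfrak n[a], \mathfrak h) = \mathfrak r({}^-(\mathfrak n[a]), \mathfrak h_1). \qquad (*) \]

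I will prove $(*)$ by induction on $k$. For $k=1$, we have $\mathfrak h_1 = \mathfrak h - \Upsilon(\Delta_1, \mathfrak h) + {}^-\Upsilon(\Delta_1, \mathfrak h)$, and $(*)$ is exactly Lemma \ref{lem removal process}(1). For the inductive step, I apply Lemma \ref{lem removal process}(1) to trade the initial removal $\mathfrak r(\Delta_1, \mathfrak h)$ for $\mathfrak r({}^-\Delta_1, \mathfrak h^{(1)})$, where $\mathfrak h^{(1)} := \mathfrak h - \Upsilon(\Delta_1, \mathfrak h) + {}^-\Upsilon(\Delta_1, \mathfrak h)$. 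Using that any ascending order of $\{\Delta_2, \ldots, \Delta_k, {}^-\Delta_1\}$ places the segment at $a+1$ last, this yields
\[ \mathfrak r(\mathfrak n[a], \mathfrak h) = \mathfrak r({}^-\Delta_1, \mathfrak r(\{\Delta_2, \ldots, \Delta_k\}, \mathfrak h^{(1)})). \]
The inductive hypothesis applied to the pair $(\{\Delta_2, \ldots, \Delta_k\}, \mathfrak h^{(1)})$ then gives $\mathfrak r(\{\Delta_2, \ldots, \Delta_k\}, \mathfrak h^{(1)}) = \mathfrak r(\{{}^-\Delta_2, \ldots, {}^-\Delta_k\}, \tilde{\mathfrak h})$, where $\tilde{\mathfrak h}$ is the corresponding $\mathfrak h_1$-construction for the new pair. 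Combining these and invoking the independence of ascending order (via Lemma \ref{lem removal process}(5) for unlinked pairs at levels $a$ and $a+1$) collapses the two nested $\mathfrak r$'s into $\mathfrak r({}^-(\mathfrak n[a]), \tilde{\mathfrak h})$.

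The main obstacle will be to verify $\tilde{\mathfrak h} = \mathfrak h_1$. This reduces to showing that the fine-chain Upsilons $\Upsilon(\Delta_j, \mathfrak r'_{j-1})$ for the new pair $(\{\Delta_2, \ldots, \Delta_k\}, \mathfrak h^{(1)})$ coincide with $\Upsilon(\Delta_j, \mathfrak r_j)$ for the original pair for all $j = 2, \ldots, k$. Since $\Upsilon(\Delta, \mathfrak h)$ depends only on $\mathfrak h[a(\Delta)]$, it suffices to check $\mathfrak r_j[a] = \mathfrak r'_{j-1}[a]$, which I would establish by a sub-induction on $j$ using Lemma \ref{lem removal process}(2): a removal with a segment of $a$-value $a$ affects $\mathfrak h[a]$ precisely by subtracting the chosen $\Upsilon$ (the remaining segments in the removal sequence and the truncations $\Delta_i^{tr}$ all lie at $a$-values strictly larger than $a$, and the trade $\Upsilon(\Delta_1,\mathfrak h) \leftrightarrow {}^-\Upsilon(\Delta_1,\mathfrak h)$ likewise produces the same slice at $a$). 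Once this matching of Upsilons is verified, the $k$ collected trades combine to give $\tilde{\mathfrak h} = \mathfrak h - \mathfrak{fs}(\mathfrak n, \mathfrak h) + {}^-\mathfrak{fs}(\mathfrak n, \mathfrak h) = \mathfrak h_1$, completing the induction.
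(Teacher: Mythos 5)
Your overall skeleton is consistent with what the paper intends (its own proof is a one-line appeal to repeated use of Lemma \ref{lem removal process}(1), deferring details to the prequel), and several of your steps are sound: the reduction to a single step of the chain, the placement of $\mathfrak n[a]$ resp.\ ${}^-(\mathfrak n[a])$ at the front of ascending orders, and the matching of the $\Upsilon$'s via the level-$a$ slice (using that a removal by a segment with left endpoint $a$ changes $\mathfrak h[a]$ only by deleting the chosen first segment, and that $\Upsilon(\Delta,\cdot)$ depends only on the slice at $a(\Delta)$). The genuine gap is the commutation step on which your induction hinges: you pass from $\mathfrak r(\Delta_k,\ldots,\mathfrak r(\Delta_2,\mathfrak r({}^-\Delta_1,\mathfrak h^{(1)}))\ldots)$ to $\mathfrak r({}^-\Delta_1,\mathfrak r(\{\Delta_2,\ldots,\Delta_k\},\mathfrak h^{(1)}))$ and justify it by ``Lemma \ref{lem removal process}(5) for unlinked pairs at levels $a$ and $a+1$''. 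But a segment $[a,b_j]_{\rho}$ and ${}^-\Delta_1=[a+1,b(\Delta_1)]_{\rho}$ are \emph{linked} precisely when $b_j<b(\Delta_1)$, so the unlinkedness you invoke is false in general: for instance with $\mathfrak n[a]=\{[1,3]_{\rho},[1,1]_{\rho}\}$ and your $\Delta_1=[1,3]_{\rho}$, the segments ${}^-\Delta_1=[2,3]_{\rho}$ and $\Delta_2=[1,1]_{\rho}$ are linked, and Lemma \ref{lem removal process}(5) gives nothing. The remark that any ascending order of $\{\Delta_2,\ldots,\Delta_k,{}^-\Delta_1\}$ puts ${}^-\Delta_1$ last is not a substitute: the expression you start from removes ${}^-\Delta_1$ \emph{first}, which is not along an ascending order when such a linkage occurs, so an actual commutation statement has to be proved, and removals by linked segments do not commute in general.

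The gap is repairable inside your framework, but it needs a normalization you never make: choose $\Delta_1$ to be a segment of $\mathfrak n[a]$ with \emph{minimal} $b$-value. This costs nothing, because the segments of $\mathfrak n[a]$ are pairwise unlinked (equal left endpoints), so $\mathfrak r(\mathfrak n[a],\mathfrak h)$ is independent of the enumeration by Lemma \ref{lem removal process}(5), and the multiset $\mathfrak{fs}(\mathfrak n,\mathfrak h)$, hence $\mathfrak h_1$, is independent of it by Lemma \ref{lem removal process}(4) (iterated, together with (5)). With that choice $b(\Delta_j)\geq b(\Delta_1)$ for all $j\geq 2$, so ${}^-\Delta_1$ really is unlinked from every remaining $\Delta_j$ (and is simply absent if $\Delta_1=[a,a]_{\rho}$), your interchange is then licensed by Lemma \ref{lem removal process}(5), and the rest of your argument --- the sub-induction matching the $\Upsilon$'s and the identification $\tilde{\mathfrak h}=\mathfrak h_1$ --- goes through as written. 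As it stands, the key step is asserted with an incorrect justification, so the proof is incomplete.
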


\begin{proof}
This follows from repeated uses of Lemma \ref{lem removal process}(1), or a slightly stronger statement of \cite[Lemma 3.4]{Ch22+e}. 
\end{proof}

\subsection{Local minimizability}

\begin{definition} \label{def local min}
Let $\mathfrak h \in \mathrm{Mult}_{\rho}$ and let $\mathfrak n \in \mathrm{Mult}_{\rho}$. Let $a$ be the smallest integer such that $\mathfrak n[a]\neq \emptyset$. We say that $(\mathfrak n, \mathfrak h)$ is {\it locally minimizable} if there exists a segment $\Delta$ in $\mathfrak n[a+1]$ such that 
\[   |\left\{ \widetilde{\Delta} \in \mathfrak n[a] : \Delta \subset \widetilde{\Delta} \right\}| < |\left\{ \widetilde{\Delta} \in \mathfrak{fs}(\mathfrak n, \mathfrak h): \Delta \subset \widetilde{\Delta} \right\} |.
\]
\end{definition}
Note that each $\widetilde{\Delta} \in \mathfrak n[a]$ satisfying $\Delta \subset \widetilde{\Delta}$ guarantees the corresponding first segment $\overline{\Delta}$ in the removal sequence satisfying $\Delta \subset \overline{\Delta}$. Roughly speaking, when the difference of two cardinalities in Definition \ref{def local min} is non-zero, one can find a "short" segment in $\mathfrak n[a]$ to do the intersection-union process which still does not change the choices of first segments in the removal processes.

\begin{theorem} \cite[Lemma 6.1 and Lemma 6.2]{Ch22+e} \label{thm minimizability}
Let $\mathfrak h \in \mathrm{Mult}_{\rho}$. Let $\mathfrak n \in \mathrm{Mult}_{\rho}$ be admissible to $\mathfrak h$. Let $\mathfrak h_0, \mathfrak h_1, \ldots$ and $\mathfrak n_0, \mathfrak n_1, \ldots$ be as constructed in Definition \ref{def fine chains and sequences}. Then $\mathfrak n$ is minimal to $\mathfrak h$ if and only if $(\mathfrak n_i, \mathfrak h_i)$ is not locally minimizable for all $i$.
\end{theorem}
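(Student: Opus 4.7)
The plan is to prove the two implications separately, using \prref{nonoverlapping property} as the two-segment base case and transferring information between different levels of the fine-chain iteration via \leref{reomval equaltiy fine chain}. The guiding picture is that at each level $i$ the truncations $\mathfrak{trr}, \mathfrak{trd}$ isolate the two smallest $a$-strata of the problem, so local minimizability at level $i$ should exactly detect a Zelevinsky-smaller move at that stratum, while all other potential moves are deferred to later levels.

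For the ``only if'' direction, I would start from a non-minimal $\mathfrak n$ and pick $\mathfrak n' <_Z \mathfrak n$ with $\mathfrak r(\mathfrak n', \mathfrak h) = \mathfrak r(\mathfrak n, \mathfrak h)$; by standard reductions on elementary intersection-union chains it suffices to assume $\mathfrak n'$ differs from $\mathfrak n$ by a single intersection-union on a linked pair $\Delta_1 < \Delta_2$. I would then select the smallest index $i$ such that $a(\Delta_1)$ is the minimum $a$-value inside $\mathfrak n_i$ and $a(\Delta_2) = a(\Delta_1)+1$. By \leref{reomval equaltiy fine chain} and a level-by-level tracking of the pair across the truncation operators, the equality of removals descends to $\mathfrak r(\mathfrak n_i', \mathfrak h_i) = \mathfrak r(\mathfrak n_i, \mathfrak h_i)$, so the pair $\{\Delta_1, \Delta_2\}$ fails $\leq_Z$-minimality inside this sub-problem. \prref{nonoverlapping property} applied to $(\Delta_1, \Delta_2, \mathfrak h_i)$ then forces the failure of the intermediate segment property, and counting fine-chain first segments containing $\Delta_2$ against segments of $\mathfrak n_i[a(\Delta_1)]$ containing $\Delta_2$ yields the strict inequality in \deref{local min}, i.e.\ local minimizability of $(\mathfrak n_i, \mathfrak h_i)$.

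For the ``if'' direction, assume $(\mathfrak n_i, \mathfrak h_i)$ is locally minimizable with witness $\Delta \in \mathfrak n_i[a+1]$, where $a$ is the minimum $a$-value in $\mathfrak n_i$. Local minimizability guarantees a ``short'' segment $\widetilde\Delta^{*} \in \mathfrak n_i[a]$ with $\Delta \not\subset \widetilde\Delta^{*}$ whose associated fine-chain first segment $\Upsilon(\widetilde\Delta^{*}, \cdot)$ nevertheless contains $\Delta$. Since $\widetilde\Delta^{*}$ and $\Delta$ are then linked, I would perform the elementary intersection-union swap on them to obtain $\mathfrak n_i' <_Z \mathfrak n_i$; the new long segment $\widetilde\Delta^{*} \cup \Delta$ in $\mathfrak n_i'[a]$ selects the same fine-chain first segment as $\widetilde\Delta^{*}$ did, so $\mathfrak r(\mathfrak n_i', \mathfrak h_i) = \mathfrak r(\mathfrak n_i, \mathfrak h_i)$. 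Inverting the truncations $\mathfrak{trr}, \mathfrak{trd}$ one level at a time, using \leref{removal process}(1) to re-lift the swap across each shift ${}^{-}\Delta$, transports $\mathfrak n_i'$ back to an $\mathfrak n' <_Z \mathfrak n$ with $\mathfrak r(\mathfrak n', \mathfrak h) = \mathfrak r(\mathfrak n, \mathfrak h)$, contradicting minimality.

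The main obstacle will be the bookkeeping for inverting the truncations: one must verify that at each intermediate level $j \leq i$ the replacement remains an elementary intersection-union on two linked segments and that the $\Upsilon$-selections along all intermediate fine chains are unaffected, i.e.\ $\mathfrak{fs}(\mathfrak n_j', \mathfrak h_j) = \mathfrak{fs}(\mathfrak n_j, \mathfrak h_j)$ for $j \leq i$. This should follow by downward induction on $j$ using \leref{removal process}(1), (4) and (5), which together encode the compatibility of $\Upsilon$ with segment shifts and the commutation with unlinked segments; keeping the combinatorics consistent across levels, rather than any single step, is the delicate part of the argument.
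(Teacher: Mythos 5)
You should first note that the paper does not actually prove Theorem \ref{thm minimizability}: it is imported wholesale from [Ch22+e, Lemmas 6.1 and 6.2], so there is no internal argument to compare your route against, and I can only judge the proposal on its own terms. As written it has genuine gaps at precisely the two ``transfer'' steps you defer to bookkeeping, and one intermediate claim is false. Consider $\mathfrak h=\{[1,5]_{\rho},[2,10]_{\rho},[3,6]_{\rho},[3,12]_{\rho}\}$ and $\mathfrak n=\{[1,4]_{\rho},[2,4]_{\rho},[3,9]_{\rho}\}$ (admissible). Then $\mathfrak h_1=\{[2,5]_{\rho},[2,10]_{\rho},[3,6]_{\rho},[3,12]_{\rho}\}$, $\mathfrak n_1=\{[2,4]_{\rho},[2,4]_{\rho},[3,9]_{\rho}\}$, $\mathfrak{fs}(\mathfrak n_1,\mathfrak h_1)=\{[2,5]_{\rho},[2,10]_{\rho}\}$, and $(\mathfrak n_1,\mathfrak h_1)$ is locally minimizable with witness $[3,9]_{\rho}$ (the counts in Definition \ref{def local min} are $0<1$), while $(\mathfrak n_0,\mathfrak h_0)$ is not. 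In your lifting direction the ``short'' stratum segment whose first segment contains the witness is one of two identical copies of $[2,4]_{\rho}$, one being the truncation of $[1,4]_{\rho}$ and the other an original segment of $\mathfrak n$, and the pairing between stratum segments, their originals, and the successively computed first segments is not canonical. Un-truncating the first copy produces $\{[1,9]_{\rho},[2,4]_{\rho},[3,4]_{\rho}\}$, which is not even admissible to $\mathfrak h$ (there is no $[1,c]_{\rho}$ with $c\geq 9$), whereas swapping the original $[2,4]_{\rho}$ with $[3,9]_{\rho}$ does preserve the removal. So the asserted invariance $\mathfrak{fs}(\mathfrak n_j',\mathfrak h_j)=\mathfrak{fs}(\mathfrak n_j,\mathfrak h_j)$ for $j\leq i$ does not ``follow by downward induction'' from Lemma \ref{lem removal process}; the lift works only for a carefully chosen segment of $\mathfrak n$, and identifying it is the actual content of the cited lemmas, not routine bookkeeping.

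The other direction contains a step that is false as stated: from equality of removals at level $i$ you pass to ``the pair fails $\leq_Z$-minimality'' and apply Proposition \ref{prop nonoverlapping property} to $(\Delta_1,\Delta_2,\mathfrak h_i)$. In the same example, with $\mathfrak n'=\{[1,4]_{\rho},[2,9]_{\rho},[3,4]_{\rho}\}$ one has $\mathfrak r(\mathfrak n',\mathfrak h)=\mathfrak r(\mathfrak n,\mathfrak h)$ and the chosen pair is $([2,4]_{\rho},[3,9]_{\rho})$ with relevant level $i=1$; yet the bare pair $\{[2,4]_{\rho},[3,9]_{\rho}\}$ \emph{is} $\leq_Z$-minimal with respect to $\mathfrak h_1$, because $[2,5]_{\rho}\in\mathfrak h_1$ supplies the intermediate segment property. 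The point is that the other segment of $\mathfrak n_1[2]$ consumes $[2,5]_{\rho}$ first, and it is the pair tested against that \emph{partial removal} of $\mathfrak h_1$, not against $\mathfrak h_1$ itself, that fails the non-overlapping property; converting this into the inequality of Definition \ref{def local min}, which counts over the successively computed $\mathfrak{fs}(\mathfrak n_i,\mathfrak h_i)$, is again where the difficulty sits. Two further points: the level selection ``$a(\Delta_2)=a(\Delta_1)+1$'' can only be meant for the truncations of the chosen segments (for the original segments it need never hold), and the descent of $\mathfrak r(\mathfrak n',\mathfrak h)=\mathfrak r(\mathfrak n,\mathfrak h)$ to a common $\mathfrak h_i$ presupposes that the fine chains of $(\mathfrak n,\mathfrak h)$ and $(\mathfrak n',\mathfrak h)$ share the same ambient multisegments, which fails in general once the swapped pair meets the lowest stratum. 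Your skeleton—witness equals a short stratum segment whose first segment overshoots, the two-segment criterion of Proposition \ref{prop nonoverlapping property} as the engine, transfer along the fine chain via Lemma \ref{lem reomval equaltiy fine chain}—is the right intuition (and the initial reduction to a single intersection-union via Theorem \ref{thm convex derivatives} is fine), but the two postponed compatibility claims are the theorem, and as formulated they do not hold.
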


\subsection{Non-overlapping property for a sequence}

We now generalize Proposition \ref{prop nonoverlapping property} to a multisegment situation, which will be used in Section \ref{ss case 13 minimal to pi}. We first prove a lemma:

\begin{lemma} \label{lem a computation on eta change}
Let $\Delta=[a,b]_{\rho}$ and $\Delta'=[c,d]_{\rho}$ be two segments. Suppose $\Delta'<\Delta$. Let $\mathfrak h\in \mathrm{Mult}_{\rho}$ such that $\Delta'$ is admissible to $\mathfrak h$. If $\Delta \subset \Upsilon(\Delta', \mathfrak h)$, then $\eta_{\Delta}(\mathfrak r(\Delta', \mathfrak h))\neq \eta_{\Delta}(\mathfrak h)$. 
\end{lemma}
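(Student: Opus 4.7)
The plan is to unpack the removal process for $(\Delta',\mathfrak h)$ explicitly and locate a coordinate of $\eta_\Delta$ at which the multiplicity strictly changes. Write the removal sequence for $(\Delta',\mathfrak h)$ as $\Delta_1,\ldots,\Delta_r$ with $\Delta_i=[a_i,b_i]_\rho$. By definition $\Delta_1=\Upsilon(\Delta',\mathfrak h)=[c,c']_\rho$ and one has $c=a_1<a_2<\cdots<a_r$ and $d\leq b_r<\cdots<b_1=c'$. The hypotheses $\Delta'<\Delta$ and $\Delta\subset\Delta_1$ translate into $c<a$ and $b\leq c'=b_1$.

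Next I would write out
\[
\mathfrak r(\Delta',\mathfrak h)=\mathfrak h-\sum_{i=1}^{r}[a_i,b_i]_\rho+\sum_{i=1}^{r-1}[a_{i+1},b_i]_\rho+[b_r+1,d]_\rho
\]
and inspect, for each starting position $a'$, what is added to or removed from $\mathfrak h[a']$. Only the positions $a_1,\ldots,a_r$ (losses) and $a_2,\ldots,a_r,\,b_r+1$ (gains from the truncation segments) are touched. Set $i^\ast:=\max\{i:b_i\geq b\}$, which exists because $b_1\geq b$. The candidate position is $a^\ast:=a_{i^\ast+1}$ when $i^\ast<r$ and $a^\ast:=b_r+1$ when $i^\ast=r$. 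At $a^\ast$, the truncation segment that is added has right endpoint $\geq b$, whereas the removed segment (if any) has right endpoint $<b$. Hence $\varepsilon_{[a^\ast,b]_\rho}(\mathfrak r(\Delta',\mathfrak h))=\varepsilon_{[a^\ast,b]_\rho}(\mathfrak h)+1$, and consequently $\eta_\Delta(\mathfrak r(\Delta',\mathfrak h))\neq \eta_\Delta(\mathfrak h)$ once one shows $a^\ast$ belongs to $\{a,a+1,\ldots,b\}$.

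Verifying $a^\ast\in[a,b]$ is the key step. The upper bound is immediate: $a^\ast\leq b_{i^\ast+1}+1\leq b$ in the first subcase and $a^\ast=b_r+1\leq d+1\leq b$ in the second. For the lower bound $a^\ast\geq a$, I would exploit the \textit{shortest-segment} selection rule defining $\Upsilon$: since $\Delta\subset[c,c']$, there is no $[c,c'']_\rho\in\mathfrak h[c]$ with $d\leq c''<b$, which forces the removal sequence to drop from $b_i\geq b$ down to $b_i<b$ only after the starting points $a_i$ have strictly crossed from $c$ past $a-1$. Lemma \ref{lremoval process}(1)--(2) and the recursive behavior of $\mathfrak r$ should give the needed traceability.

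The main obstacle I expect is the edge case $i^\ast=r$, in which the naive bookkeeping at interior positions $a_2,\ldots,a_r$ shows a perfect cancellation (lose a segment with right endpoint $\geq b$, gain another such) and the only remaining change is at $a_1=c<a$, outside the range of $\eta_\Delta$. Pushing through here will require using the strict containment $c<a$ and the shortest-segment property of $\Upsilon$ to argue that this degenerate pattern is incompatible with $\Delta\subset\Upsilon(\Delta',\mathfrak h)$; handling this carefully is the delicate part and is where the geometric content of the hypothesis is used.
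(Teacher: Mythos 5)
Your skeleton is the same as the paper's: track the change of each coordinate $\varepsilon_{[x,b]_\rho}$ under the removal, observe the cancellation at the interior starting points $a_2,\dots,a_{i^*}$, and isolate one surviving $+1$ at the place where the removal sequence first drops below $b$. But the two points you yourself flag as delicate are genuine gaps, and neither can be closed along the lines you propose. For the edge case $i^*=r$: you work with the printed formula $\Delta_r^{tr}=[b_r+1,d]_\rho$ and then hope to show that this case is incompatible with $\Delta\subset\Upsilon(\Delta',\mathfrak h)$. It is not incompatible: it occurs whenever the removal sequence has length $r=1$, e.g.\ $\mathfrak h=\{[a-1,b]_\rho\}$, $\Delta'=[a-1]_\rho$, where all hypotheses hold. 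What actually rescues this case is that the printed formula in Definition \ref{def removal process}(3) is a typo: consistency with Lemma \ref{lem removal process}(1) forces $\Delta_r^{tr}=[d+1,b_r]_\rho$ (the last segment loses its initial part, not its tail). With that reading, when $i^*=r$ the truncation is nonempty ($b_r\ge b\ge d+1$), starts at $d+1\in\{a,\dots,b\}$ because $\Delta'<\Delta$, and ends at $b_r\ge b$; it is exactly the ``extra one'' the paper's proof invokes when it asserts $\Delta_{i^*}^{tr}\neq\emptyset$. Note also that your inequality $b_r+1\le d+1$ in this subcase is false, since $i^*=r$ gives $b_r\ge b>d$.

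For $i^*<r$ the lower bound $a_{i^*+1}\ge a$ is the crux, and the justification you sketch (the shortest-segment rule for $\Upsilon$ forces the starting points to pass $a-1$ before the right ends drop below $b$) is not a consequence of the stated hypotheses. Concretely, take $\mathfrak h=\{[0,10]_\rho,[1,5]_\rho,[4,9]_\rho\}$, $\Delta'=[0,3]_\rho$, $\Delta=[4,9]_\rho$: then $\Delta'<\Delta$, $\Upsilon(\Delta',\mathfrak h)=[0,10]_\rho\supset\Delta$, but the removal (compute it via Lemma \ref{lem removal process}(1)) gives $\{[1,10]_\rho,[4,5]_\rho,[4,9]_\rho\}$; the drop below $b$ happens at $a_2=1<4=a$, the surviving $+1$ lands outside the range of $\eta_\Delta$, and $\eta_\Delta$ is unchanged. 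So the inequality you need is genuinely extra input rather than something to be derived; it does hold automatically in the situation where the lemma is actually used in Proposition \ref{prop dagger property}, namely $c=a-1$, since then every $a_i$ with $i\ge 2$ satisfies $a_i>a-1$, hence $a_i\ge a$. The paper's own very short proof also passes over this in-range issue silently, so you have correctly located where the real content sits, but your proposal does not supply it, and under the hypotheses as literally stated it cannot.
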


\begin{proof}
Let $\Delta_1, \ldots, \Delta_r$ be the removal sequence for $(\Delta', \mathfrak h)$. Let $i^*$, which exists from our assumption, be the largest integer such that $b(\Delta) \leq b(\Delta_{i^*})$. Write $\Delta^{tr}_{i^*}=[a_{i^*},b_{i^*}]_{\rho}$. Then $\Delta_{i^*}^{tr}\neq \emptyset$ contributes extra one to $\varepsilon_{[a_{i^*},b]_{\rho}}$ for $\mathfrak r(\Delta', \mathfrak h)$. However, the segments $\Delta_1, \ldots, \Delta_r$ as well as $\Delta^{tr}_j$ $(j\neq i^*)$ do not contribute to $\varepsilon_{[a_{i^*},b]_{\rho}}$. This implies the desired inequality. 
\end{proof}

\begin{proposition} \label{prop dagger property}
Let $\mathfrak h \in \mathrm{Mult}_{\rho}$. Let $\mathfrak n \in \mathrm{Mult}_{\rho}$ be minimal to $\mathfrak h$. Let $\Delta=[a,b]_{\rho}$ be a segment such that $a(\Delta')<a$ and $b(\Delta')<b$ for any $\Delta' \in \mathfrak n$. Then $\mathfrak n+\Delta$ is still minimal to $\mathfrak h$ if and only if 
\[ \eta_{\Delta}(\mathfrak r(\mathfrak n, \mathfrak h))=\eta_{\Delta}(\mathfrak h). \]
\end{proposition}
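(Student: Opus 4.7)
The plan is to apply the fine-chain characterization of minimality (Theorem \ref{thm minimizability}) to $(\mathfrak n+\Delta, \mathfrak h)$ and compare, step by step, with the fine chain of $(\mathfrak n, \mathfrak h)$.

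First, since $a(\Delta)=a$ exceeds $a(\Delta')$ for every $\Delta'\in\mathfrak n$, an induction on the step index shows that the sequences $\mathfrak h'_i,\mathfrak n'_i$ associated with $(\mathfrak n+\Delta,\mathfrak h)$ satisfy $\mathfrak h'_i=\mathfrak h_i$ and $\mathfrak n'_i=\mathfrak n_i+\Delta$ as long as the minimum $a$-value in $\mathfrak n_i$ is still strictly less than $a$. This is because $\Delta$ is never in the minimum-$a$ layer being processed at those steps, so $\mathfrak{fs},\mathfrak{trr},\mathfrak{trd}$ produce the same outputs as in the $(\mathfrak n,\mathfrak h)$-chain. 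The hypothesis $b(\Delta')<b$ guarantees that the later steps, where $\Delta$ finally enters the minimum-$a$ layer, either process $\Delta$ on the residual multisegment $\mathfrak r(\mathfrak n,\mathfrak h)$ (if $\mathfrak n$ is already exhausted) or process $\Delta$ together with pushed-forward $\mathfrak n$-residuals whose $b$-values are still less than $b$; in either case the tail piece coming from $\Delta$ alone does not contribute local minimizability.

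Next, I would compare local minimizability of $(\mathfrak n'_i,\mathfrak h'_i)$ with that of $(\mathfrak n_i,\mathfrak h_i)$ at each step. For indices $i$ where $\Delta$ lies strictly above the $(a_i+1)$-layer of $\mathfrak n'_i$, the two checks agree, and the assumed minimality of $\mathfrak n$ excludes local minimizability. The one new check arises at the unique \emph{close step}, where $\Delta$ first appears as an element of the $(a_i+1)$-layer of $\mathfrak n'_i$; it corresponds to taking $\Delta^*=\Delta$ in Definition \ref{def local min} and produces the potential strict inequality
\[ |\{\widetilde{\Delta}\in\mathfrak n_i[a_i]:\Delta\subset\widetilde{\Delta}\}|<|\{\widetilde{\Delta}\in\mathfrak{fs}(\mathfrak n_i,\mathfrak h_i):\Delta\subset\widetilde{\Delta}\}|. \]
Thus $\mathfrak n+\Delta$ is minimal to $\mathfrak h$ if and only if equality holds in the displayed line at the close step.

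The main technical step, and the expected obstacle, is to identify this close-step equality with the equation $\eta_\Delta(\mathfrak r(\mathfrak n,\mathfrak h))=\eta_\Delta(\mathfrak h)$. Here I would iterate Lemma \ref{lem a computation on eta change} along the single-segment removals that assemble $\mathfrak r(\mathfrak n,\mathfrak h)$: each such removal either leaves $\eta_\Delta$ fixed or strictly increases a component of it, so $\eta_\Delta(\mathfrak r(\mathfrak n,\mathfrak h))=\eta_\Delta(\mathfrak h)$ is equivalent to the condition that no first segment $\Upsilon$ appearing in the fine chain strictly contains $\Delta$. A direct count at the close step recovers exactly the equality of cardinalities displayed above, because $\mathfrak{fs}(\mathfrak n_i,\mathfrak h_i)$ is precisely the set of these $\Upsilon$'s and each such $\Upsilon$ contains its corresponding seed $\widetilde{\Delta}\in\mathfrak n_i[a_i]$. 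This is the multisegment analogue of $(1)\Leftrightarrow(3)$ in Proposition \ref{prop nonoverlapping property}, and combining the two directions yields the claim.
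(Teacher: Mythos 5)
Your overall route is the same as the paper's: compare the fine chains of $(\mathfrak n+\Delta,\mathfrak h)$ and $(\mathfrak n,\mathfrak h)$, note that they coincide (with $\Delta$ carried along inertly) at every step whose bottom level is $<a-1$, and isolate the one genuinely new local-minimizability test at the step with bottom level $a-1$, where $\Delta$ is the new candidate in Definition \ref{def local min}. That reduction can indeed be made to work, but for a cleaner reason than the one you give: since every segment of $\mathfrak n$ has left end $<a$, once the bottom level of the chain is $\geq a$ every surviving residual (including the residual of $\Delta$) lies in the bottom layer itself, so the candidate layers at those later steps are empty and they are vacuously not locally minimizable; your appeal to the $b$-values of the pushed-forward residuals does not address this. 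You should also treat separately the case in which no step has bottom level $a-1$ (i.e. $b(\Delta')<a-1$ for all $\Delta'\in\mathfrak n$), which the paper handles on its own.

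The genuine gap is in your main technical step. The monotonicity you invoke --- that each single-segment removal either fixes $\eta_\Delta$ or strictly increases a component --- is not true, and with it the asserted equivalence between $\eta_\Delta(\mathfrak r(\mathfrak n,\mathfrak h))=\eta_\Delta(\mathfrak h)$ and the absence of first segments containing $\Delta$ breaks down. A removal whose first segment contains $\Delta$ contributes to $\eta_\Delta$ only when its removal sequence drops below $b$ in right endpoints at a left endpoint lying in $[a,b]$; if the sequence terminates while all of its members still have right end $\geq b$, nothing lands in the window and $\eta_\Delta$ is unchanged. For instance, removing $[1,2]_{\rho}$ from $\{[1,4]_{\rho}\}$ with $\Delta=[2,4]_{\rho}$: the first segment $[1,4]_{\rho}$ contains $\Delta$, yet $\eta_\Delta$ is the zero vector before and after, so the dichotomy you rely on (and a naive iteration of Lemma \ref{lem a computation on eta change}, whose proof needs the removal sequence to continue past the segments with right end $\geq b$) does not hold. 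There is also a level mismatch: the close-step count controls only the first segments with left end exactly $a-1$, whereas your equivalence quantifies over the first segments of the whole chain, including levels $<a-1$ that the test involving $\Delta$ never sees; so even granting the equivalence, the two sides of your ``if and only if'' would not match up. The paper avoids both problems by localizing: it compares $\eta_\Delta(\mathfrak h_{i^*})$ with $\eta_\Delta(\mathfrak r(\mathfrak n_{i^*},\mathfrak h_{i^*}))$, using that $\eta_\Delta(\mathfrak h_j)$ is constant for $j\leq i^*$ (all earlier truncations occur at levels $\leq a-2$) together with $\mathfrak r(\mathfrak n,\mathfrak h)=\mathfrak r(\mathfrak n_{i^*},\mathfrak h_{i^*})$, and then exploits that every element of $\mathfrak{fs}(\mathfrak n_{i^*},\mathfrak h_{i^*})$ has left end exactly $a-1$, so ``contains $\Delta$'' becomes ``right end $\geq b$'' and the nesting property of the removal sequences controls that entire step. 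To complete your argument you would need to carry out this localized bookkeeping in place of the global monotonicity claim.
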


\begin{proof}
We construct a sequence of multisegments $\mathfrak n_0, \mathfrak n_1, \mathfrak n_2, \ldots$ and $\mathfrak h_0, \mathfrak h_1, \mathfrak h_2, \ldots$ as in Definition \ref{def fine chains and sequences}. By Lemma \ref{lem reomval equaltiy fine chain}, we have:
\[ (*)\quad  \mathfrak r(\mathfrak n_0, \mathfrak h_0)=\mathfrak r(\mathfrak n_1, \mathfrak h_1)= \ldots
\]
Let $a_i$ be the smallest integer such that $\mathfrak n_i[a_i]\neq \emptyset$. 

 Let $i^*$ be the index such that $a_{i^*}=c-1$. If such index does not exist, it implies that $b(\widetilde{\Delta})<c-1$ for all $\widetilde{\Delta} \in \mathfrak n$. In such case, by a direct computation on removal process using Definition \ref{def removal process}(3) and (4), one has $\mathfrak r(\mathfrak n, \mathfrak h)[x] =\mathfrak h[x]$ for $x \geq c$. In particular, $\eta_{\Delta}(\mathfrak r(\mathfrak n, \mathfrak h))=\eta_{\Delta}(\mathfrak h)$; and $\mathfrak n+\Delta$ is still minimal since $\Delta$ is unlinked to any segment in $\mathfrak n$. In other words, both conditions are automatically satisfied if such $i^*$ does not exist.

We now assume such $i^*$ exists. We first prove the only if direction. By the minimality condition and Theorem \ref{thm minimizability}, $(\mathfrak n_{i^*}+\Delta, \mathfrak h_{i^*})$ is not locally minimizable. On the other hand, the hypothesis in this proposition guarantees that any segment $\widetilde{\Delta}$ in $\mathfrak n_{i^*}$ satisfies $\Delta \not\in \widetilde{\Delta}$ and so
\[  |\left\{ \widetilde{\Delta} \in \mathfrak n_{i^*}[a] : \Delta \subset \widetilde{\Delta} \right\}|=0 .
\]
The local non-minimizability on $(\mathfrak n_{i^*}+\Delta, \mathfrak h_{i^*})$ implies that 
\[  |\left\{ \widetilde{\Delta} \in \mathfrak{fs}(\mathfrak n_{i^*}+\Delta, \mathfrak h_{i^*} ):  \Delta \subset \widetilde{\Delta} \right\}|=0 .
\]
In other words, for any segment $[x,y]_{\rho}$ in $\mathfrak{fs}(\mathfrak n_{i^*}+\Delta, \mathfrak h_{i^*})=\mathfrak{fs}(\mathfrak n_{i^*}, \mathfrak h_{i^*})$ (the equaltiy follows from Definition \ref{def fine chains and sequences}), it must take the form $[x',y']_{\rho}$ for $y'\leq b-1$. Thus the segments involved or produced in the removal process (by the nesting property) cannot contribute to $\eta_{\Delta}(\mathfrak r(\mathfrak n_{i^*}, \mathfrak h_{i^*}))$ and so 
\[ (**) \quad \eta_{\Delta}(\mathfrak r(\mathfrak n_{i^*}, \mathfrak h_{i^*}))=\eta_{\Delta}(\mathfrak h_{i^*}) .
\]

Moreover, $\mathfrak h_{i^*}$ is obtained by truncating left points for $\nu^x\rho$ for some $x\leq c-2$. Thus, $\eta_{\Delta}(\mathfrak h_0)=\ldots =\eta_{\Delta}(\mathfrak h_{i^*})$ by Definitions \ref{def truncation} and \ref{def fine chains and sequences}. Combining with (*) and (**), we have the desired equation, proving the only if direction.

We now prove the if direction. By the construction of $\mathfrak h_i$ (Definition \ref{def fine chains and sequences}), we have:
\[ \eta_{\Delta}(\mathfrak h)=\eta_{\Delta}(\mathfrak h_{i^*}) \]
Now with (*) and the hypothesis $\eta_{\Delta}(\mathfrak h)=\eta_{\Delta}(\mathfrak r(\mathfrak n, \mathfrak h))$, we again have that 
\[  \eta_{\Delta}(\mathfrak r(\mathfrak n_{i^*}, \mathfrak h_{i^*})) =\eta_{\Delta}(\mathfrak h_{i^*}) .
\]
This condition says that $\mathfrak{fs}(\mathfrak n_{i^*}, \mathfrak h_{i^*})$ cannot take the form $[c-1, y]_{\rho}$ for some $y \geq b$, by Lemma \ref{lem a computation on eta change} and the assumption that any segment in $\mathfrak n$ (and so $\mathfrak n_{i^*}$) satisfies $b(\Delta)<b$. In other words, $(\mathfrak n_{i^*}, \mathfrak h_{i^*})$ is not locally minimizable. The local  non-minimizability for other pairs $(\mathfrak n_{j}, \mathfrak h_j)$ for $j<i^*$ follows from the minimality of $\mathfrak n$. Then the minimality of $\mathfrak n+\Delta$ to $\mathfrak h$ follows from Theorem \ref{thm minimizability}.
\end{proof}

\section{Two segment basic case (commutativity)} \label{s two segment commut}

\subsection{Lemma for unlinked segments}
We recall the following first commutativity (see e.g. \cite[Lemma 4.4]{Ch22+d}), which will be used later:

\begin{lemma} \label{lem comm derivative 1}
Let $\Delta, \Delta' \in \mathrm{Seg}_{\rho}$ be unlinked. For any $\pi \in \mathrm{Irr}_{\rho}$, 
\[  D_{\Delta'}\circ D_{\Delta}(\pi)\cong D_{\Delta}\circ D_{\Delta'}(\pi).  \]
\end{lemma}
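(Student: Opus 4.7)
The plan is to reduce this to the independence of ascending orderings, which is invoked in Section \ref{ss main results} right after the definition of $D_{\mathfrak n}(\pi)$. Recall that a sequence $\Delta_1,\ldots,\Delta_k$ is in ascending order if for every $i\le j$, either $\Delta_i$ and $\Delta_j$ are unlinked, or $a(\Delta_i)<a(\Delta_j)$. Observe that when the segments $\Delta,\Delta'$ are unlinked, both of the two-term sequences $(\Delta,\Delta')$ and $(\Delta',\Delta)$ trivially satisfy this ascending-order condition, since the only inequality that needs to be checked is between the two terms and by hypothesis they are unlinked.

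First I would let $\mathfrak n=\{\Delta,\Delta'\}\in\mathrm{Mult}_{\rho}$. By the cited independence statement (the derivative $D_{\mathfrak n}(\pi)$ is independent of the chosen ascending order), computing $D_{\mathfrak n}(\pi)$ via the order $(\Delta,\Delta')$ yields $D_{\Delta'}\circ D_{\Delta}(\pi)$, whereas computing it via $(\Delta',\Delta)$ yields $D_{\Delta}\circ D_{\Delta'}(\pi)$. The equality of these two outputs is exactly the statement of the lemma. In particular, the convention $D_{\Delta}(\pi)=0$ when $\Delta$ is not admissible ensures that the equation makes sense even in the degenerate situations, since the independence forces one side to vanish precisely when the other does.

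If one preferred a more hands-on argument that avoids invoking the multisegment-level independence, the alternative would go as follows. Use Zelevinsky's irreducibility criterion to get $\mathrm{St}(\Delta)\times\mathrm{St}(\Delta')\cong\mathrm{St}(\Delta')\times\mathrm{St}(\Delta)$ is irreducible. Then, starting from $\pi\hookrightarrow D_{\Delta}(\pi)\times\mathrm{St}(\Delta)$ and $D_{\Delta}(\pi)\hookrightarrow D_{\Delta'}D_{\Delta}(\pi)\times\mathrm{St}(\Delta')$, chain the embeddings and swap the two Steinberg factors to get
\[
\pi\hookrightarrow D_{\Delta'}D_{\Delta}(\pi)\times\mathrm{St}(\Delta)\times\mathrm{St}(\Delta').
\]
Then extract an irreducible subrepresentation $\tau'\subset D_{\Delta'}D_{\Delta}(\pi)\times\mathrm{St}(\Delta)$ through which $\pi\hookrightarrow\tau'\times\mathrm{St}(\Delta')$, and use uniqueness of the derivative to identify $\tau'=D_{\Delta'}(\pi)$ and subsequently $D_{\Delta}(\tau')=D_{\Delta'}D_{\Delta}(\pi)$.

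The main (mild) obstacle in this second approach is the extraction step: it requires knowing that the embedding into the triple product factors through an embedding into a product with $\mathrm{St}(\Delta')$ on the right, which is why the first approach via ascending-order independence is cleaner and is presumably the one adopted in \cite{Ch22+d}.
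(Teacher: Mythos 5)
Your first argument is fine as far as this paper's internal logic goes, but note that the paper does not prove this lemma at all: it simply cites \cite[Lemma 4.4]{Ch22+d}, where the unlinked two-segment commutativity is established directly (by an argument with embeddings and Jacquet modules). Your route instead deduces it from the independence of $D_{\mathfrak n}(\pi)$ on the choice of ascending order, which this paper also imports from \cite{Ch22+d}; since for unlinked $\Delta,\Delta'$ both orderings of $\{\Delta,\Delta'\}$ are ascending, the deduction is immediate, and (granting that the independence statement covers admissibility, i.e.\ vanishing, as well) it also handles the degenerate cases. The caveat is that in the source \cite{Ch22+d} the ordering-independence theorem is itself built on exactly this kind of two-segment commutativity, so as a self-contained proof your reduction inverts the logical order of the original development and would be circular if one traced it back; as a derivation inside the present paper, where both facts are black boxes, it is acceptable but buys nothing over the paper's direct citation. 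Your second, hands-on sketch is the shape of the actual proof in \cite{Ch22+d}, and you correctly identify its missing step: after swapping the Steinberg factors in $\pi\hookrightarrow D_{\Delta'}D_{\Delta}(\pi)\times\mathrm{St}(\Delta')\times\mathrm{St}(\Delta)$ one must justify that the embedding can be rearranged so that $\mathrm{St}(\Delta')$ sits on the right with an irreducible module $\tau'\subset D_{\Delta'}D_{\Delta}(\pi)\times\mathrm{St}(\Delta)$ in front; this requires an analysis of Jacquet modules via the geometric lemma (or a socle/Frobenius reciprocity argument), not just the irreducibility of $\mathrm{St}(\Delta)\times\mathrm{St}(\Delta')$, so as written that version is incomplete.
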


\subsection{Intermediate segment property under a derivative}

\begin{lemma} \label{lem dagger property on derivative}
Let $\pi \in \mathrm{Irr}_{\rho}$. Let $\Delta, \Delta''\in \mathrm{Seg}_{\rho}$ be admissible to $\pi$. Let $\Delta' \in \mathrm{Seg}_{\rho}$. Suppose $(\Delta, \Delta', \mathfrak{hd}(\pi))$ satisfies the non-overlapping property or intermediate segment property. If $\Delta'' \subset \Delta'$, then $(\Delta, \Delta', \mathfrak{hd}(D_{\Delta''}(\pi)))$ also satisfies the non-overlapping property and the intermediate segment property.
\end{lemma}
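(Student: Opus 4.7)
The plan is to transport an intermediate segment witness through the removal process for $\Delta''$. By Proposition~\ref{prop nonoverlapping property}, we may assume the intermediate segment property at $\mathfrak{hd}(\pi)$, so fix a witness $\widetilde{\Delta} \in \mathfrak{hd}(\pi)$ with
\[
a(\Delta) \leq a(\widetilde{\Delta}) < a(\Delta'), \qquad a(\widetilde{\Delta}) \leq b(\Delta) \leq b(\widetilde{\Delta}) < b(\Delta').
\]

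The crucial inequality is $a(\widetilde{\Delta}) < a(\Delta') \leq a(\Delta'')$, where the second step uses $\Delta'' \subset \Delta'$. By Theorem~\ref{thm isomorphic derivatives} we have $\mathfrak{hd}(D_{\Delta''}(\pi)) = \mathfrak r(\Delta'', \mathfrak{hd}(\pi))$, and Lemma~\ref{lem removal process}(2) applied at $a' = a(\widetilde{\Delta})$ yields $\mathfrak r(\Delta'', \mathfrak{hd}(\pi))[a(\widetilde{\Delta})] = \mathfrak{hd}(\pi)[a(\widetilde{\Delta})]$. Consequently $\widetilde{\Delta}$ still belongs to $\mathfrak{hd}(D_{\Delta''}(\pi))$, and since the defining inequalities involve only $\widetilde{\Delta}, \Delta, \Delta'$, the same $\widetilde{\Delta}$ witnesses the intermediate segment property for $(\Delta, \Delta', \mathfrak{hd}(D_{\Delta''}(\pi)))$.

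To recover the non-overlapping property, I would apply Proposition~\ref{prop nonoverlapping property} once more in the reverse direction. For this one needs admissibility of $\Delta$ to $\mathfrak{hd}(D_{\Delta''}(\pi))$, which is inherited from $\mathfrak{hd}(\pi)$ via Lemma~\ref{lem removal process}(2) together with $a(\Delta) \leq a(\widetilde{\Delta}) < a(\Delta'')$. The main bookkeeping obstacle I anticipate is the parallel admissibility check for $\Delta'$, when required: a short inspection of Definition~\ref{def removal process} should confirm that a segment of the form $[a(\Delta'), c]_{\rho}$ with $c \geq b(\Delta')$ persists in $\mathfrak r(\Delta'', \mathfrak{hd}(\pi))$, using that the trimming prescribed in Definition~\ref{def removal process}(3) affects only starting coordinates $\geq a(\Delta'') \geq a(\Delta')$ and that $\Delta'' \subset \Delta'$ bounds the extent of that trimming.
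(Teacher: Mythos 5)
There is a genuine gap at the central step. You identify $\mathfrak{hd}(D_{\Delta''}(\pi))$ with $\mathfrak r(\Delta'', \mathfrak{hd}(\pi))$ and cite Theorem~\ref{thm isomorphic derivatives} for this, but that theorem only characterizes when two derivatives of $\pi$ are isomorphic in terms of equality of removal results; it does not compute the highest derivative multisegment of $D_{\Delta''}(\pi)$, and in fact the two multisegments differ in general. The correct comparison is via Theorems~\ref{thm effect of Steinberg} and~\ref{thm effect of Steinberg small a}: one gets $\varepsilon_{[a',b]_{\rho}}(D_{\Delta''}(\pi))=\varepsilon_{[a',b]_{\rho}}(\mathfrak r(\Delta'',\pi))$ only when $a'>a(\Delta'')$ or the segments are unlinked, while for $a'<a(\Delta'')$ one only has the inequality $\varepsilon_{[a',b]_{\rho}}(D_{\Delta''}(\pi))\geq \varepsilon_{[a',b]_{\rho}}(\pi)$. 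Your witness $\widetilde{\Delta}$ starts precisely in the range $a(\widetilde{\Delta})<a(\Delta')\leq a(\Delta'')$ where only the inequality is available, so Lemma~\ref{lem removal process}(2) tells you the \emph{removal} leaves $\mathfrak{hd}(\pi)[a(\widetilde{\Delta})]$ untouched, but it does not follow that $\widetilde{\Delta}$ itself survives as a segment of $\mathfrak{hd}(D_{\Delta''}(\pi))$: segments starting at $a(\widetilde{\Delta})$ may be prolonged there, and a prolonged witness $[a(\widetilde{\Delta}),e]_{\rho}$ is only useful if one also knows $e<b(\Delta')$.

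The missing idea (which is how the paper closes the argument) is to control the prolongation: the discrepancy between $\mathfrak r(\Delta'',\mathfrak{hd}(\pi))$ and $\mathfrak{hd}(D_{\Delta''}(\pi))$ can only involve the cuspidal values $a''-1,\ldots,b''-1$, and since $\Delta''\subset\Delta'$ these are all $\leq b'-1<b'$. Hence the segment $[a(\widetilde{\Delta}),b(\widetilde{\Delta})]_{\rho}$ of $\mathfrak r(\Delta'',\mathfrak{hd}(\pi))$ can at worst become some $[a(\widetilde{\Delta}),e]_{\rho}$ in $\mathfrak{hd}(D_{\Delta''}(\pi))$ with $b(\Delta)\leq b(\widetilde{\Delta})\leq e\leq b'-1$, which still witnesses the intermediate segment property. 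Your proposal never addresses this prolongation phenomenon, so as written the persistence claim is unjustified. (Your closing paragraph on admissibility of $\Delta'$ is a side issue: the intermediate segment property does not require it, and in the case $a(\Delta'')=a(\Delta')$ your assertion that a segment $[a(\Delta'),c]_{\rho}$ with $c\geq b(\Delta')$ persists under the removal is not justified either, since the removal sequence may begin exactly with such a segment.)
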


\begin{proof}
Since the non-overlapping property and the intermediate segment property are equivalent by Proposition \ref{prop nonoverlapping property}, it suffices to see that $(\Delta, \Delta', \mathfrak{hd}(D_{\Delta'}(\pi)))$ also satisfies the intermediate segment property.

%On the other hand, by Lemma \ref{lem removal process}(2), for each integer $c \leq a-1$, 
%\[ (**)\quad \mathfrak r(\Delta', \mathfrak{hd}(\pi))[c]  =\mathfrak{hd}(\pi)[c] .\]

%Write $\Delta'=[a',b']_{\rho}$. Recall that the number of segments in $\mathfrak{hd}(D_{\Delta'}(\pi))[c]$ is equal to the level of $D_{\Delta'}(\pi)$, which is at most $\mathrm{lev}(\pi)$.
Write $\Delta=[a,b]_{\rho}$, $\Delta'=[a',b']_{\rho}$ and $\Delta''=[a'',b'']_{\rho}$. By Theorems \ref{thm effect of Steinberg} and \ref{thm effect of Steinberg small a}, we have that for any segment $\widetilde{\Delta}=[\widetilde{a}, \widetilde{b}]_{\rho}$, 
\[  (*)\quad   \varepsilon_{\widetilde{\Delta}}(\mathfrak r(\Delta'', \pi))  \leq \varepsilon_{\widetilde{\Delta}}(D_{\Delta''}(\pi))  .\]
From this, one can recover $\mathfrak{hd}(\pi)[c]$ and $\mathfrak r(\Delta, \pi)[c]$ for each integer $c$. The missing part of (*) between $\mathfrak r(\Delta', \mathfrak{hd}(\pi))$ and $ \mathfrak{hd}(D_{\Delta''}(\pi))$ is on some values $a''-1, \ldots, b''-1<b'$. Thus, one obtains $\mathfrak{hd}(D_{\Delta''}(\pi))$ by prolonging some segments in $\mathfrak r(\Delta'', \mathfrak{hd}(\pi))$ using (possibly some of) $a''-1, \ldots, b''-1$. 
 
Now, by the intermediate segment property for $(\Delta, \Delta', \mathfrak{hd}(\pi))$, there exists a segment of the form $[c,d]_{\rho}$ satisfying:
\[   a \leq c <a', \quad b\leq d< b' .
\]
Now, by the above process of obtaining $\mathfrak{hd}(D_{\Delta''}(\pi))$, the segment $[c,d]_{\rho}$ can be prolonged to the form $[c,e]_{\rho}$ in $\mathfrak{r}(\Delta'', \pi)$ for some $e \leq b'-1$. Then the segment $[c,e]_{\rho}$ gives the desired requirement for the intermediate segment property for $(\Delta, \Delta', D_{\Delta''}(\pi))$. Thus, by Proposition \ref{prop nonoverlapping property}, the triple satisfies the two properties.
\end{proof}
%there exists a segment of the form of Definition \ref{def overlapping property}(2) in $\mathfrak{hd}(\pi)$. Such segment is in $\mathfrak{hd}(\pi)[c]$ for some $c\leq a'-1$. Thus (**) and above discussions (for obtaining $\mathfrak{hd}(D_{\Delta'}(\pi))$ from $\mathfrak r(\Delta', \mathfrak{hd}(\pi))$) implies that such form of segment still exists in $\mathfrak{hd}(D_{\Delta'}(\pi))$. Thus, by Proposition \ref{prop nonoverlapping property} (3)$\Rightarrow$(1), we have that $(\Delta, \Delta', \mathfrak{hd}(D_{\Delta'}(\pi)))$ still satisfies the non-overlapping property.

For a segment $\Delta=[a,b]_{\rho} \in \mathrm{Seg}_{\rho}$ and $\pi \in \mathrm{Irr}_{\rho}$, define:
\[  |\eta|_{\Delta}(\pi)= \varepsilon_{[a,b]_{\rho}}(\pi)+\varepsilon_{[a+1,b]_{\rho}}(\pi)+\ldots +\varepsilon_{[b,b]_{\rho}}(\pi) . \]
In view of (\ref{eqn varepsilon combinatorics}), $|\eta|_{\Delta}(\pi)$ measures the number of segments $\overline{\Delta}$ in $\mathfrak{hd}(\pi)$ satisfying that $\nu^a\rho\leq a(\overline{\Delta}) \leq \nu^b\rho$ and $\nu^b\rho \leq b(\overline{\Delta})$. For example, suppose we have $\pi \in \mathrm{Irr}_{\rho}$ such that 
\[  \mathfrak{hd}(\pi)= \left\{ [1,4]_{\rho}, [1,3]_{\rho} , [1,2]_{\rho}, [2,5]_{\rho}, [2,4]_{\rho}   \right\}.
\]
Then $|\eta|_{[1,3]_{\rho}}(\pi)=4$, which is contributed from $[1,4]_{\rho}, [1,3]_{\rho}$ for $\varepsilon_{[1,3]_{\rho}}(\pi)$ and from $[2,5]_{\rho}, [2,4]_{\rho}$ for $\varepsilon_{[2,3]_{\rho}}(\pi)$.

\begin{lemma} \label{lem eta for intermediate}
Let $\pi \in \mathrm{Irr}_{\rho}$. Let $\Delta, \Delta' \in \mathrm{Seg}_{\rho}$. Suppose $\Delta$ is admissible to $\pi$. Suppose $(\Delta, \Delta', \mathfrak{hd}(\pi))$ satisfies the non-overlapping property or the intermediate segment property. Let $\widetilde{\Delta}=\Delta \cup \Delta'$.
\[  |\eta|_{\widetilde{\Delta}}(\pi)-|\eta|_{\Delta'}(\pi)=|\eta|_{\widetilde{\Delta}}(D_{\Delta}(\pi))-|\eta|_{\Delta'}(D_{\Delta}(\pi)) .
\]
\end{lemma}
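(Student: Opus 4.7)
My plan will be to prove the equivalent identity $M(\mathfrak{hd}(\pi)) = M(\mathfrak{hd}(D_\Delta(\pi)))$ directly, where
\[ M(\mathfrak h) := |\eta|_{\widetilde{\Delta}}(\mathfrak h) - |\eta|_{\Delta'}(\mathfrak h) = \sum_{i=a}^{a'-1}\varepsilon_{[i,b']_\rho}(\mathfrak h) \]
counts the segments $\overline{\Delta} \in \mathfrak h$ with $a(\overline{\Delta}) \in [a, a'-1]$ and $b(\overline{\Delta}) \geq b'$ (writing $\Delta = [a,b]_\rho$, $\Delta' = [a',b']_\rho$, $\widetilde{\Delta} = [a,b']_\rho$, and noting $a < a'$, $b < b'$).

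\textbf{Reduction to the removal process.} For each $i \in [a, a'-1]$, the pair $([a,b]_\rho, [i,b']_\rho)$ is either unlinked (when $i = a$, they share the left endpoint) or has $i > a$; in either case Theorem \ref{thm effect of Steinberg} will yield $\varepsilon_{[i,b']_\rho}(D_\Delta(\pi)) = \varepsilon_{[i,b']_\rho}(\mathfrak r(\Delta, \mathfrak{hd}(\pi)))$, and summing will give $M(\mathfrak{hd}(D_\Delta(\pi))) = M(\mathfrak r(\Delta, \mathfrak{hd}(\pi)))$. So the task reduces to showing that the removal process preserves $M$. Writing the removal sequence for $(\Delta, \mathfrak{hd}(\pi))$ as $\Delta_j = [a_j, b_j]_\rho$ with truncations $\Delta_j^{tr}$, I will set $k_1' = \max\{j : a_j \leq a'-1\}$ (necessarily $\geq 1$ since $a_1 = a < a'$) and $k_2 = \max\{j : b_j \geq b'\}$. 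A short bookkeeping---using that $\Delta_r^{tr} = [b_r+1, b]_\rho$ contributes nothing (its right endpoint $b$ satisfies $b < b'$), and that for $j < r$ the segments $\Delta_j$ and $\Delta_j^{tr}$ share the right endpoint $b_j$ while the indicator $1_{a(\cdot) \leq a'-1}$ changes across the strictly increasing sequence $a_j$ at exactly one position, namely $j = k_1'$---will yield
\[ M(\mathfrak r(\Delta, \mathfrak{hd}(\pi))) - M(\mathfrak{hd}(\pi)) = -1_{b_{k_1'} \geq b'} = -1_{k_1' \leq k_2}. \]
Everything then reduces to establishing $k_1' > k_2$.

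\textbf{Main obstacle: using the intermediate segment property.} The technical heart will be to show $k_1' > k_2$ by contradiction: assuming $b_{k_1'} \geq b'$, the intermediate segment property (either given directly, or derived via Proposition \ref{prop nonoverlapping property} from the non-overlapping hypothesis) provides $[c,d]_\rho \in \mathfrak{hd}(\pi)$ with $a \leq c < a'$ and $b \leq d < b'$. I will then split into three cases based on the position of $c$ relative to the sequence $a_1 < \cdots < a_{k_1'}$. When $c = a$, the segment $[a,d]_\rho$ is a valid $\Delta_1$-candidate in $\mathfrak{hd}(\pi)[a]$, forcing $b_1 \leq d < b'$ and contradicting $b_1 \geq b_{k_1'} \geq b'$. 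When $c > a_{k_1'}$, the segment $[c,d]_\rho$ is a valid candidate at step $k_1'+1$ (since $d < b' \leq b_{k_1'}$), and either the $\prec^L$-minimal $a_{k_1'+1} \geq a'$ contradicts $c < a' \leq a_{k_1'+1}$ (when $k_1' < r$), or the existence of $[c,d]_\rho$ would extend the sequence beyond $r$, contradicting termination (when $k_1' = r$). When $a < c \leq a_{k_1'}$, taking $i^* \in [2, k_1']$ maximal with $a_{i^*-1} < c$, the $\prec^L$-minimality at step $i^*$ will force $a_{i^*} = c$ and $b_{i^*} \leq d$, contradicting $b_{i^*} \geq b_{k_1'} \geq b'$. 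The delicate part will be tracking how the greedy $\prec^L$-selection in the removal algorithm is precisely what makes the intermediate segment $[c,d]_\rho$ incompatible with the assumption $b_{k_1'} \geq b'$; once this incompatibility is established, the bookkeeping collapses to the desired identity.
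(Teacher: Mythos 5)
Your proof is correct and follows essentially the same route as the paper's: reduce to the removal process via Theorem \ref{thm effect of Steinberg}, and then compare the contributions of the removal sequence $\Delta_j$ with those of the truncated segments $\Delta_j^{tr}$, using the intermediate segment to show the count does not drop at the turning point of the nested sequence. The only difference is that you spell out, via the greedy $\prec^L$-minimality of the removal algorithm, why the removal sequence must contain a segment $\Delta_k$ with $a(\Delta_k)<a(\Delta')$ and $b(\Delta_k)<b(\Delta')$ — a step the paper asserts without detail.
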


\begin{proof}
Write the removal sequence for $(\Delta, \pi)$ to be:
\[   \Delta_1, \ldots, \Delta_r .\]
By the intermediate segment property, there exists a segment $\Delta_k$ such that $a(\Delta_k)<a(\Delta')$ and $b(\Delta_k)< b(\Delta')$. Let $k^*$ be the smallest such integer. Then, with the nesting property, only $\Delta_1, \ldots, \Delta_{k^*-1}$ (among $\Delta_1, \ldots, \Delta_r$) contribute to $|\eta|_{\widetilde{\Delta}}(\pi)-|\eta|_{\Delta'}(\pi)$. 

Write $\Delta=[a,b]_{\rho}$ and $\Delta'=[a',b']_{\rho}$. By Theorem \ref{thm effect of Steinberg}, for $a \leq c \leq a'-1$
\[   \varepsilon_{[c,b']_{\rho}}(D_{\Delta}(\pi)) =\varepsilon_{[c,b']_{\rho}}(\mathfrak r(\Delta, \pi)) .\]
Then, only $\Delta_1^{tr}, \ldots, \Delta_{k^*-1}^{tr}$ (among $\Delta_1^{tr}, \ldots, \Delta_{r}^{tr}$) can contribute to $|\eta|_{\widetilde{\Delta}}(\pi)-|\eta|_{\Delta'}(\pi)$ and so this implies the equality.
\end{proof}

\subsection{Commutativity and minimality for two segment case}

%One may compare the following proof with the one in Proposition \ref{prop non ext submodule}. 

\begin{proposition} \label{prop dagger property 2}
Let $\pi \in \mathrm{Irr}_{\rho}$. Let $\Delta_1, \Delta_2 \in \mathrm{Seg}_{\rho}$ be admissible to $\pi$. Suppose $\Delta_1<\Delta_2$ and $(\Delta_1, \Delta_2, \mathfrak{hd}(\pi))$ satisfies the non-overlapping property, or equivalently 
\[  D_{\Delta_2}\circ D_{\Delta_1}(\pi) \not\cong D_{\Delta_1\cup \Delta_2}\circ D_{\Delta_1\cap \Delta_2}(\pi) .
\]
 Then 
\[  D_{\Delta_2}\circ D_{\Delta_1}(\pi) \cong D_{\Delta_1}\circ D_{\Delta_2}(\pi).
\]

\end{proposition}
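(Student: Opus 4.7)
By Proposition \ref{prop nonoverlapping property}, the non-overlapping property is equivalent to the intermediate segment property: there exists $\widetilde{\Delta} = [c,d]_{\rho} \in \mathfrak{hd}(\pi)$ with $a(\Delta_1) \leq c < a(\Delta_2)$ and $b(\Delta_1) \leq d < b(\Delta_2)$. If $\Delta_1, \Delta_2$ are unlinked, Lemma \ref{lem comm derivative 1} gives commutativity directly, so I may assume they are linked and write $\Delta_i = [a_i, b_i]_{\rho}$ with $a_1 < a_2 \leq b_1 + 1 \leq b_2$. I would first verify that $\Delta_1$ is admissible to $D_{\Delta_2}(\pi)$: since $a_1 < a_2$, Theorem \ref{thm effect of Steinberg small a} gives $\varepsilon_{\Delta_1}(D_{\Delta_2}(\pi)) \geq \varepsilon_{\Delta_1}(\pi) > 0$, so $D_{\Delta_1}\circ D_{\Delta_2}(\pi)$ is nonzero.

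Since $D_{\Delta_2}\circ D_{\Delta_1}(\pi) = D_{\{\Delta_1, \Delta_2\}}(\pi)$ by the ascending-order convention, the task becomes $D_{\Delta_1}(D_{\Delta_2}(\pi)) \cong D_{\{\Delta_1, \Delta_2\}}(\pi)$. Using Theorem \ref{thm isomorphic derivatives} together with tracking at the level of highest derivative multisegments, this should reduce to the combinatorial equality
\[
  \mathfrak r(\Delta_1, \mathfrak r(\Delta_2, \mathfrak{hd}(\pi))) = \mathfrak r(\Delta_2, \mathfrak r(\Delta_1, \mathfrak{hd}(\pi))),
\]
the right-hand side being $\mathfrak r(\{\Delta_1, \Delta_2\}, \mathfrak{hd}(\pi))$ by definition. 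The intermediate segment $\widetilde{\Delta}$ shields the two removal sequences from one another: because $c < a_2$, $\widetilde{\Delta}$ is untouched by $\mathfrak r(\Delta_2, \cdot)$ (Lemma \ref{lem removal process}(2)) and thus remains an available mediator for $\Delta_1$'s removal; and because $d < b_2$, the segments actually consumed by $\Delta_1$'s removal process have $b$-values capped by $d$, hence lie outside the range accessible to $\Delta_2$'s process.

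I would formalize this shielding by strong induction on $l_{abs}(\Delta_1) + l_{abs}(\Delta_2)$: Lemma \ref{lem removal process}(1) allows me to shorten $\Delta_i$ to ${}^-\Delta_i$; Lemma \ref{lem dagger property on derivative} ensures the intermediate segment property persists through the shorter sub-derivative at each inductive step; and Lemma \ref{lem eta for intermediate} controls the $\eta$-invariants across the reductions. The main obstacle I foresee is the case $c > a_1$: then $\Upsilon(\Delta_1, \mathfrak{hd}(\pi))$ is not immediately bounded by $d$, since $\widetilde{\Delta} \notin \mathfrak{hd}(\pi)[a_1]$, and $\widetilde{\Delta}$ enters $\Delta_1$'s removal sequence only at some later index. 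To close this case I would need to verify that every segment appearing before $\widetilde{\Delta}$ in the removal sequence has $b$-value strictly greater than $d$ and starting point strictly less than $a_2$, which in turn will require careful deployment of Lemma \ref{lem removal process}(4)--(5) to commute and realign the starting points of the removal processes, so that the induction still applies after the reduction.
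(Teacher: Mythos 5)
Your plan has a genuine gap at its central step: the claimed reduction, via Theorem \ref{thm isomorphic derivatives}, of the isomorphism $D_{\Delta_1}\circ D_{\Delta_2}(\pi)\cong D_{\{\Delta_1,\Delta_2\}}(\pi)$ to the removal-process identity $\mathfrak r(\Delta_1,\mathfrak r(\Delta_2,\mathfrak{hd}(\pi)))=\mathfrak r(\Delta_2,\mathfrak r(\Delta_1,\mathfrak{hd}(\pi)))$ is not justified, and the identity would not suffice even if proved. Theorem \ref{thm isomorphic derivatives} compares $D_{\mathfrak m}(\pi)$ and $D_{\mathfrak m'}(\pi)$ for multisegments applied in ascending order to the \emph{same} representation $\pi$; the composition $D_{\Delta_1}\circ D_{\Delta_2}$ is in descending order, i.e.\ it is a derivative of the different representation $D_{\Delta_2}(\pi)$, so the theorem gives no direct comparison with $D_{\Delta_2}\circ D_{\Delta_1}(\pi)$. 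Moreover, to run $\Delta_1$'s removal after $D_{\Delta_2}$ you need $\mathfrak{hd}(D_{\Delta_2}(\pi))$, and this is \emph{not} $\mathfrak r(\Delta_2,\mathfrak{hd}(\pi))$: by Theorem \ref{thm effect of Steinberg small a} the values $\varepsilon_{\Delta'}$ for segments with $a(\Delta')<a(\Delta_2)$ satisfy only an inequality, and these left endpoints are exactly the region where $\Delta_1$ and your mediator $\widetilde{\Delta}$ live (compare the "prolonging" phenomenon in the proof of Lemma \ref{lem dagger property on derivative}). So the "shielding" argument carried out purely inside $\mathfrak r(\Delta_2,\mathfrak{hd}(\pi))$ does not track the actual highest derivative multisegment of $D_{\Delta_2}(\pi)$. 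Finally, you explicitly leave the case $c>a_1$ unresolved, so even the combinatorial identity is not established in your plan.

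For contrast, the paper does not compare removal processes at all at this point: it invokes the commutativity criterion of \cite[Proposition 6.2]{Ch22+d}, by which it suffices to show $D_{\Delta_1}\circ D_{\Delta_2}(\pi)\not\cong D_{\Delta_1\cup\Delta_2}\circ D_{\Delta_1\cap\Delta_2}(\pi)$, and proves this non-isomorphism by showing that the invariant $|\eta|_{\Delta_1\cup\Delta_2}-|\eta|_{\Delta_2}$ is unchanged under $D_{\Delta_2}$ (here the relevant segments contain $\Delta_2$, hence are unlinked to it, so the equality part of Theorem \ref{thm effect of Steinberg} applies — this is how it sidesteps the small-$a$ ambiguity) and then unchanged under $D_{\Delta_1}$ via Lemma \ref{lem dagger property on derivative} and Lemma \ref{lem eta for intermediate}. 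If you want to salvage a direct approach, you would need some such device (an invariant computable despite the indeterminacy of $\mathfrak{hd}(D_{\Delta_2}(\pi))$ at left endpoints below $a(\Delta_2)$, or the representation-theoretic argument of Lemma \ref{lem commut for max case}) rather than Theorem \ref{thm isomorphic derivatives} alone.
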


\begin{proof}

The equivalence of the two conditions follows from Proposition \ref{prop nonoverlapping property}. 

 We shall use the notations in the proof. Indeed, in view of a criteria of commutativity in \cite[Proposition 6.2]{Ch22+d}, it suffices to prove 
\[   D_{\Delta_1}\circ D_{\Delta_2}(\pi) \not\cong D_{\Delta_1\cup \Delta_2}\circ D_{\Delta_1\cap \Delta_2}(\pi) .\]
Let $\widetilde{\Delta}=\Delta_1\cup \Delta_2$. To this end, it suffices to show that 
\begin{align} \label{eqn absolute eta diff}
  (|\eta|_{\widetilde{\Delta}}-|\eta|_{\Delta_2})(D_{\Delta_1}\circ D_{\Delta_2}(\pi)) =(|\eta|_{\widetilde{\Delta}}-|\eta|_{\Delta_2})(\pi) .
\end{align}

Note that, by the unlinked part of Theorem \ref{thm effect of Steinberg},
\[ (|\eta|_{\widetilde{\Delta}}-|\eta|_{\Delta_2})( D_{\Delta_2}(\pi))=(|\eta|_{\widetilde{\Delta}}-|\eta|_{\Delta_2})(\pi) \] 
% Now, a direct computation using the removal process on $\mathfrak r(\Delta_1, D_{\Delta_2}(\pi))$ that $|\eta|_{\widetilde{\Delta}}(D_{\Delta_1}\circ D_{\Delta_2}(\pi))=|\eta|_{\widetilde{\Delta}}(D_{\Delta_2}(\pi))$.

On the other hand, by Proposition \ref{prop nonoverlapping property}(2), $(\Delta_1, \Delta_2, \pi)$ satisfies the intermediate segment property and so $(\Delta_1, \Delta_2, D_{\Delta_2}(\pi))$ also satisfies the intermediate segment property by Lemma \ref{lem dagger property on derivative}. Now, by Lemma \ref{lem eta for intermediate},

\[  (|\eta|_{\widetilde{\Delta}}-|\eta|_{\Delta_2})(D_{\Delta_1}\circ D_{\Delta_2}(\pi))=(|\eta|_{\widetilde{\Delta}}-|\eta|_{\Delta_2})(D_{\Delta_2}(\pi)) .\]
Combining above two equations, we have (\ref{eqn absolute eta diff}) as desired.
\end{proof}

We shall give a proof of Proposition \ref{prop dagger property 2} from a representation-theoretic perspective (see Lemma \ref{lem commut for max case}).

\section{Some preliminary results for subsequent and commutativity properties} \label{s prelim subsequent and commut}

\subsection{Cancellative property}

\begin{proposition} (Cancellative property) \label{prop cancel property}
Let $\pi \in \mathrm{Irr}_{\rho}$. Let $\mathfrak n$ and $\mathfrak n'$ be multisegments with respective segments in the following respective ascending sequences:
\[  \Delta_1', \ldots, \Delta_p', \Delta_1, \ldots, \Delta_r \]
and
\[ \Delta_1'', \ldots, \Delta_q'', \Delta_1, \ldots, \Delta_r .\]
Then $\mathfrak r(\mathfrak n', \pi)=\mathfrak r(\mathfrak n'', \pi)$ if and only if 
\[  \mathfrak r(\left\{ \Delta_1', \ldots, \Delta_p'\right\}, \pi)=\mathfrak r(\left\{ \Delta_1'', \ldots, \Delta_q'' \right\}, \pi) .
\]
\end{proposition}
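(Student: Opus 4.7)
The ``if'' direction is immediate: using the ascending-order decomposition of $\mathfrak n'$ and $\mathfrak n''$, we have $\mathfrak r(\mathfrak n',\pi)=\mathfrak r(\mathfrak m,\mathfrak r(\mathfrak a',\pi))$ where $\mathfrak a'=\{\Delta_1',\ldots,\Delta_p'\}$ and $\mathfrak m=\{\Delta_1,\ldots,\Delta_r\}$, and similarly $\mathfrak r(\mathfrak n'',\pi)=\mathfrak r(\mathfrak m,\mathfrak r(\mathfrak a'',\pi))$; applying $\mathfrak r(\mathfrak m,\cdot)$ to the hypothesis $\mathfrak r(\mathfrak a',\pi)=\mathfrak r(\mathfrak a'',\pi)$ yields the conclusion.

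For the ``only if'' direction I would proceed by induction on $r=|\mathfrak m|$. The case $r=0$ is vacuous. For $r\geq 2$, observe that $(\mathfrak a'+\Delta_1)+(\mathfrak m\setminus\{\Delta_1\})$ and $(\mathfrak a''+\Delta_1)+(\mathfrak m\setminus\{\Delta_1\})$ are still in ascending order, with the same underlying linear order as $\mathfrak n'$ and $\mathfrak n''$. The inductive hypothesis applied to the common tail $\mathfrak m\setminus\{\Delta_1\}$ of size $r-1$ then yields $\mathfrak r(\mathfrak a'+\Delta_1,\pi)=\mathfrak r(\mathfrak a''+\Delta_1,\pi)$, reducing the task to the key $r=1$ case: if $\mathfrak r(\mathfrak a'+\Delta,\pi)=\mathfrak r(\mathfrak a''+\Delta,\pi)$ with $\Delta$ last in ascending order, then $\mathfrak r(\mathfrak a',\pi)=\mathfrak r(\mathfrak a'',\pi)$.

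For $r=1$, I would translate to the derivative side via Theorem~\ref{thm isomorphic derivatives}. Setting $\sigma'=D_{\mathfrak a'}(\pi)$ and $\sigma''=D_{\mathfrak a''}(\pi)$ (both nonzero by admissibility of $\mathfrak n',\mathfrak n''$), the hypothesis becomes $D_\Delta(\sigma')\cong D_\Delta(\sigma'')$, and the goal becomes $\sigma'\cong\sigma''$. The defining embeddings $\sigma'\hookrightarrow D_\Delta(\sigma')\times\mathrm{St}(\Delta)$ and $\sigma''\hookrightarrow D_\Delta(\sigma'')\times\mathrm{St}(\Delta)$ exhibit both $\sigma'$ and $\sigma''$ as irreducible submodules of the same parabolic induction; combining this with the uniqueness statement embedded in the definition of $D_\Delta$ (together with the ascending constraint, which forces $\Delta$ to play the role of the ``last'' derivative relative to both $\mathfrak a'$ and $\mathfrak a''$) should give $\sigma'\cong\sigma''$, and another application of Theorem~\ref{thm isomorphic derivatives} translates this back to $\mathfrak r(\mathfrak a',\pi)=\mathfrak r(\mathfrak a'',\pi)$.

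The main obstacle is precisely the injectivity step in the $r=1$ case: while the definition of $D_\Delta(\pi)$ as the unique $\tau$ with $\pi\hookrightarrow\tau\times\mathrm{St}(\Delta)$ ensures uniqueness of $\tau$ given $\pi$, the reverse requires additional input. A purely combinatorial alternative, more consonant with the methods developed in Sections~\ref{s non-overlapping}--\ref{s two segment commut}, would be to recover $\mathfrak h_1:=\mathfrak r(\mathfrak a',\mathfrak{hd}(\pi))$ from $\mathfrak r(\Delta,\mathfrak h_1)$ directly at the level of removal processes: reduce $\Delta$ to singleton segments using Lemma~\ref{lem removal process}(1), then identify the first segment $\Upsilon(\Delta,\mathfrak h_1)$ of the removal sequence via the $\eta$-invariant change between $\mathfrak h_1$ and $\mathfrak r(\Delta,\mathfrak h_1)$ as in Lemma~\ref{lem a computation on eta change}, using that the ascending constraint on $\mathfrak a'+\Delta$ rigidifies the segments of $\mathfrak h_1$ with $a$-values near $a(\Delta)$ enough to make this recovery unambiguous.
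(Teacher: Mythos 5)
Your skeleton coincides with the paper's: the ``if'' direction, the reduction to cancelling the common tail one segment at a time, and the translation of the key $r=1$ step through Theorem \ref{thm isomorphic derivatives} into the statement ``$D_{\Delta}(\sigma')\cong D_{\Delta}(\sigma'')\neq 0$ implies $\sigma'\cong\sigma''$'' are all exactly what the paper does. But that implication is the entire content of the proposition, and you leave it unproved: as you yourself observe, the uniqueness built into the definition of $D_{\Delta}$ goes the wrong way (unique $\tau$ for a given $\pi$, not unique $\pi$ for a given $\tau$). The missing input is the irreducibility of the socle of $\kappa\times\mathrm{St}(\Delta)$ for irreducible $\kappa$, i.e.\ the integral $I_{\Delta}(\kappa)$ defined as \emph{the} unique irreducible submodule of $\kappa\times\mathrm{St}(\Delta)$ (see \cite{LM16}; the paper invokes this repeatedly, e.g.\ in Proposition \ref{prop injective for a pair} and Appendix A). With it one has $I_{\Delta}\circ D_{\Delta}(\tau)\cong\tau$ whenever $D_{\Delta}(\tau)\neq 0$, and applying $I_{\Delta}$ to both sides of $D_{\Delta}(\sigma')\cong D_{\Delta}(\sigma'')$ cancels the derivative; this is precisely how the paper peels off $D_{\Delta_r},\ldots,D_{\Delta_1}$. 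Writing ``should give'' at this point is not a substitute for citing or proving this fact, so as it stands the proposal has a genuine gap at the decisive step.

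Your proposed combinatorial fallback does not close the gap either. The map $\mathfrak h\mapsto\mathfrak r(\Delta,\mathfrak h)$ is not injective, so $\mathfrak r(\Delta,\mathfrak h_1)$ simply does not determine $\mathfrak h_1$: for $\Delta=[1,1]_{\rho}$, both $\left\{[1,2]_{\rho}\right\}$ and $\left\{[1,1]_{\rho},[2,2]_{\rho}\right\}$ have removal $\left\{[2,2]_{\rho}\right\}$ (for the second use Lemma \ref{lem removal process}(3)). What makes the $r=1$ case true is not that the ascending constraint ``rigidifies'' the segments of $\mathfrak h_1$ near $a(\Delta)$, but that $\mathfrak h_1=\mathfrak r(\mathfrak a',\mathfrak{hd}(\pi))$ and $\mathfrak h_2=\mathfrak r(\mathfrak a'',\mathfrak{hd}(\pi))$ both arise from the \emph{same} $\mathfrak{hd}(\pi)$; your sketch offers no mechanism that exploits this, and recovering $\Upsilon(\Delta,\mathfrak h_1)$ via $\eta$-invariants as in Lemma \ref{lem a computation on eta change} cannot repair a map that is genuinely non-injective. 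So the representation-theoretic route is the one to complete, by adding the socle-irreducibility/integral ingredient; once that is in place your induction is a correct (if slightly more verbose) rephrasing of the paper's cancellation argument.
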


\begin{proof}
The  if direction is straightforward. We now consider the only if direction. By Theorem \ref{thm realize highest derivative mult}, let $\pi \in \mathrm{Irr}_{\rho}$ such that $\mathfrak{hd}(\pi)=\mathfrak h$. By Theorem \ref{thm isomorphic derivatives}, we have that
\[  D_{\Delta_r}\circ \ldots \circ D_{\Delta_1}\circ D_{\Delta_p'}\circ \ldots \circ D_{\Delta_1'}(\pi) \cong D_{\Delta_r}\circ \ldots \circ D_{\Delta_1}\circ D_{\Delta_q'}\circ \ldots \circ D_{\Delta_1'}(\pi) .
\]
For any irreducible $\tau \in \mathrm{Irr}_{\rho}$ and any segment $\Delta \in \mathrm{Seg}_{\rho}$, denote by $I_{\Delta}(\tau)$ the unique irreducible submodule of $\pi \times \mathrm{St}(\Delta)$. Now, by uniqueness, $I_{\Delta_i}\circ D_{\Delta_i}(\tau)\cong \tau$ if $D_{\Delta_i}(\tau)\neq 0$ for any $i$ and irreducible $\tau$. Hence, we cancel the derivatives $D_{\Delta_r}, \ldots, D_{\Delta_1}$ to obtain:
\[  D_{\Delta_p'}\circ \ldots \circ D_{\Delta_1'}(\pi) \cong D_{\Delta_q'}\circ \ldots \circ D_{\Delta_1'}(\pi) .
\]
\end{proof}

\subsection{First subsequent property}

We shall frequently use the following simple fact:

\begin{lemma} \cite[Section 6.7]{Ze80} \label{lem intersect union ascending order}
Let $\Delta_1, \ldots, \Delta_r$ be a sequence of segments in an ascending order. Suppose $\Delta_k$ and $\Delta_{k+1}$ are linked for some $k$. Then 
\[  \Delta_1, \ldots, \Delta_{k-1}, \Delta_k\cup \Delta_{k+1}, \Delta_k\cap \Delta_{k+1}, \Delta_{k+2}, \ldots, \Delta_r 
\]
is also in an ascending order.
\end{lemma}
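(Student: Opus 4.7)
The plan is to identify the shapes of $\Delta_k \cup \Delta_{k+1}$ and $\Delta_k \cap \Delta_{k+1}$ explicitly, then verify the ascending-order condition pair by pair in the new sequence. Since $\Delta_k = [a_k, b_k]_{\rho}$ and $\Delta_{k+1} = [a_{k+1}, b_{k+1}]_{\rho}$ are linked and the original sequence is ascending, we must have $a_k < a_{k+1}$ and $\Delta_k < \Delta_{k+1}$, so $a_{k+1} \leq b_k + 1 \leq b_{k+1}$, and moreover $b_k < b_{k+1}$. Setting $A := \Delta_k \cup \Delta_{k+1} = [a_k, b_{k+1}]_{\rho}$ and $B := \Delta_k \cap \Delta_{k+1} = [a_{k+1}, b_k]_{\rho}$, the relation $a(A) = a_k < a_{k+1} = a(B)$ is immediate, which handles the new pair $(A, B)$.

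All pairs in the new sequence not involving $A$ or $B$ are already ascending by hypothesis. The genuinely new pairs to check are $(\Delta_i, A)$ and $(\Delta_i, B)$ for $i < k$, and $(A, \Delta_j)$ and $(B, \Delta_j)$ for $j > k+1$. If such a pair is unlinked there is nothing to verify, so I would assume linkedness and prove the required inequality on the starting points.

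The main task is a uniform case analysis on the two linkedness scenarios. For $(\Delta_i, A)$ with $i < k$ and $\Delta_i$ linked to $A$: the scenario $a_i < a_k$ is exactly the desired conclusion. The dangerous scenario $a_k < a_i \leq b_{k+1} + 1 \leq b_i$ is ruled out by combining with the original ascending constraints for $(\Delta_i, \Delta_k)$ and $(\Delta_i, \Delta_{k+1})$: unlinkedness of $(\Delta_i, \Delta_k)$ together with $a_i > a_k$ forces $a_i > b_k + 1$ (the alternative $b_i \leq b_k$ contradicts $b_i \geq b_{k+1} + 1 > b_k$), but then $a_i > b_k + 1 \geq a_{k+1}$, $a_i \leq b_{k+1} + 1 \leq b_i$ show $\Delta_i$ is linked to $\Delta_{k+1}$ with $a_{k+1} < a_i$, contradicting the original ascending order. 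The same skeleton handles $(\Delta_i, B)$ using $a(B) = a_{k+1}$ and $b(B) = b_k$, and the cases $(A, \Delta_j)$ and $(B, \Delta_j)$ for $j > k+1$ are treated by a symmetric argument that tests against $\Delta_k$ or $\Delta_{k+1}$ on the right side.

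The main obstacle is merely bookkeeping: the argument is an elementary inequality chase, but one must track which of the four endpoints $\{a_k, a_{k+1}, b_k, b_{k+1}\}$ appears where in $A$ and $B$, and which of $\Delta_k$ or $\Delta_{k+1}$ to play against in deriving a contradiction. The structural fact that makes everything work is that $A$ is the union with extreme endpoints $(a_k, b_{k+1})$ while $B$ is the intersection sitting inside both originals, so linkedness with $A$ or $B$ forces linkedness with $\Delta_k$ or $\Delta_{k+1}$ in a direction compatible with the original ordering.
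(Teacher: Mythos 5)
Your argument is correct, and it checks exactly what needs to be checked. The paper itself gives no proof of this lemma — it is quoted from Zelevinsky \cite[Section 6.7]{Ze80} — so there is nothing to compare line by line; your direct verification supplies the details the paper leaves to the reference. Your reduction is the right one: the only new pairs are $(\Delta_i,A)$, $(\Delta_i,B)$ for $i<k$ and $(A,\Delta_j)$, $(B,\Delta_j)$ for $j>k+1$, plus $(A,B)$, and in each case the ``dangerous'' linkedness scenario is excluded by playing it off against the original ascending constraints with $\Delta_k$ and $\Delta_{k+1}$. Two small remarks. First, your phrase ``tests against $\Delta_k$ or $\Delta_{k+1}$'' is slightly loose: in the cases $(\Delta_i,A)$ and $(A,\Delta_j)$ you genuinely need both, e.g.\ for $(A,\Delta_j)$ with $a_j<a_k\leq b_j+1\leq b_{k+1}$ one first uses unlinkedness of $(\Delta_k,\Delta_j)$ to force $b_j\geq b_k$, and then unlinkedness of $(\Delta_{k+1},\Delta_j)$ to force $b_j\geq b_{k+1}$, contradicting $b_j+1\leq b_{k+1}$; your worked case $(\Delta_i,A)$ has exactly this two-step structure, so the skeleton is fine, just state it as ``and'' rather than ``or''. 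Second, you should note the degenerate case $\Delta_k\cap\Delta_{k+1}=\emptyset$ (i.e.\ $a_{k+1}=b_k+1$): by the paper's convention the empty segment is dropped, so the pairs involving $B$ disappear and your argument for $A$ alone already gives the conclusion.
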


A particular case of the lemma is that if $\Delta', \Delta'', \Delta'''$ are in an ascending order, then one has:
\begin{enumerate}
\item $\Delta'\cap \Delta'', \Delta'\cup \Delta'', \Delta'''$ is also in an ascending order;
\item $\Delta', \Delta''\cap \Delta''', \Delta''\cup \Delta'''$ is also in an ascending order.
\end{enumerate}
From these two simple cases, one can deduce some other variations needed in the following Proposition \ref{prop first subseq property}.

\begin{proposition} \label{prop first subseq property}
Let $\mathfrak \pi \in \mathrm{Irr}_{\rho}$. Let $\mathfrak n$ be minimal to $\pi$. We write the segments in $\mathfrak n$ in an ascending order 
\[\Delta_1, \ldots, \Delta_r. \] 
Then, for any $s\leq r$, 
\begin{enumerate}
\item $\left\{\Delta_1, \ldots, \Delta_s \right\}$ is still minimal to $\pi$; and
\item $\left\{ \Delta_{s+1}, \ldots, \Delta_r \right\}$ is minimal to $\mathfrak r(\left\{ \Delta_1, \ldots, \Delta_s\right\}, \pi)$. 
\end{enumerate}
\end{proposition}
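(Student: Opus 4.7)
The plan is to prove both statements by contradiction via a concatenation argument that converts a putative smaller competitor for a sub-block into one for the whole sequence $\mathfrak n$. For (1), assume $\{\Delta_1, \ldots, \Delta_s\}$ is not minimal to $\pi$. Since $D_{\{\Delta_1, \ldots, \Delta_s\}}(\pi) \neq 0$ (because $D_{\mathfrak n}(\pi) \neq 0$ and derivatives compose), this means there exists $\mathfrak m' <_Z \{\Delta_1, \ldots, \Delta_s\}$ with $D_{\mathfrak m'}(\pi) \cong D_{\{\Delta_1, \ldots, \Delta_s\}}(\pi)$. I would then form the multisegment $\mathfrak m' + \{\Delta_{s+1}, \ldots, \Delta_r\}$, arrange its segments in an ascending order with those of $\mathfrak m'$ first, and compute
\[
D_{\mathfrak m' + \{\Delta_{s+1}, \ldots, \Delta_r\}}(\pi) \;\cong\; D_{\Delta_r} \circ \cdots \circ D_{\Delta_{s+1}} \circ D_{\mathfrak m'}(\pi) \;\cong\; D_{\Delta_r} \circ \cdots \circ D_{\Delta_{s+1}} \circ D_{\{\Delta_1, \ldots, \Delta_s\}}(\pi) \;\cong\; D_{\mathfrak n}(\pi).
\]
Performing the very same sequence of elementary intersection-union processes that transforms $\{\Delta_1, \ldots, \Delta_s\}$ into $\mathfrak m'$ also transforms $\mathfrak n$ into $\mathfrak m' + \{\Delta_{s+1}, \ldots, \Delta_r\}$, hence $\mathfrak m' + \{\Delta_{s+1}, \ldots, \Delta_r\} <_Z \mathfrak n$ while realizing the same derivative, contradicting the minimality of $\mathfrak n$.

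For (2), the argument is the mirror image. Setting $\sigma = D_{\{\Delta_1, \ldots, \Delta_s\}}(\pi)$, if $\{\Delta_{s+1}, \ldots, \Delta_r\}$ were not minimal to $\sigma$, one would obtain $\mathfrak m'' <_Z \{\Delta_{s+1}, \ldots, \Delta_r\}$ with $D_{\mathfrak m''}(\sigma) \cong D_{\mathfrak n}(\pi)$. Forming $\{\Delta_1, \ldots, \Delta_s\} + \mathfrak m''$, written in an ascending order with $\Delta_1, \ldots, \Delta_s$ first, gives $D_{\{\Delta_1, \ldots, \Delta_s\} + \mathfrak m''}(\pi) \cong D_{\mathfrak m''}(\sigma) \cong D_{\mathfrak n}(\pi)$ while $\{\Delta_1, \ldots, \Delta_s\} + \mathfrak m'' <_Z \mathfrak n$, again contradicting the minimality of $\mathfrak n$.

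The main bookkeeping point I expect is justifying that these concatenated multisegments can indeed be written in an ascending order with the two blocks in the desired relative positions. The key observation is that an elementary intersection-union process preserves the multiset of left endpoints of segments, since $a(\Delta \cap \Delta') = a(\Delta')$ and $a(\Delta \cup \Delta') = a(\Delta)$ when $\Delta < \Delta'$. Hence every left endpoint appearing in $\mathfrak m'$ is one of $a(\Delta_1), \ldots, a(\Delta_s)$ and in particular is $\leq a(\Delta_s) \leq a(\Delta_{s+1})$; analogously every left endpoint in $\mathfrak m''$ is at least $a(\Delta_{s+1}) \geq a(\Delta_s)$. Combined with the elementary fact that two segments sharing the same left endpoint are never linked (immediate from the definition of linked), this forces any linked pair straddling the two blocks to satisfy the strict inequality $a < a'$ required by the definition of ascending order, while linked pairs inside a single block are already correctly ordered by hypothesis. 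All remaining admissibility checks reduce to $D_{\mathfrak m'}(\pi) \cong \sigma \neq 0$ and $D_{\mathfrak n}(\pi) \neq 0$, which hold by construction.
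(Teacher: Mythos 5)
Your overall strategy is essentially the paper's: append the untouched tail $\Delta_{s+1},\ldots,\Delta_r$ to a hypothetical smaller competitor for the prefix (or prepend the prefix to a competitor for the tail) and contradict the minimality of $\mathfrak n$; the paper runs the same argument contrapositively, after reducing to a single elementary intersection--union move and invoking the easy direction of the cancellative property (Proposition \ref{prop cancel property}). The problem is the step you yourself single out as the crux. You justify that the concatenation admits an ascending order with the $\mathfrak m'$-block first by asserting that every left endpoint occurring in $\mathfrak m'$ is $\leq a(\Delta_s)\leq a(\Delta_{s+1})$, and dually that every left endpoint in $\mathfrak m''$ is $\geq a(\Delta_{s+1})\geq a(\Delta_s)$. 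This presupposes that an ascending order has non-decreasing left endpoints, which is false: the defining condition constrains only linked pairs, so pairwise-unlinked segments may appear in any relative order. Concretely, $[3,4]_{\rho},[1,6]_{\rho},[5,8]_{\rho}$ is an ascending order; taking $s=1$, the elementary move on the tail $\{[1,6]_{\rho},[5,8]_{\rho}\}$ yields $\mathfrak m''=\{[1,8]_{\rho},[5,6]_{\rho}\}$, whose left endpoint $1$ is smaller than $a(\Delta_1)=3$, so both inequalities you rely on fail (the concatenation is in fact still ascending here, because $[1,8]_{\rho}\supset[3,4]_{\rho}$ is unlinked --- but that is precisely what your argument does not establish). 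Note also that you cannot repair this by simply choosing the ascending order with $a(\Delta_1)\leq\cdots\leq a(\Delta_r)$: the proposition is asserted for an arbitrary ascending order, and different ascending orders produce genuinely different prefix and tail multisegments.

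The ordering claim you need is true, but it requires the content of Lemma \ref{lem intersect union ascending order} (an elementary intersection--union move on two segments of one block never creates a segment that violates the ascending relation against the other block), applied either after reducing to a single move via Theorem \ref{thm convex derivatives}, as the paper does, or iterated along the chain of moves carrying $\{\Delta_1,\ldots,\Delta_s\}$ to $\mathfrak m'$. The verification uses the ascending relations of the original pairs $(\Delta_i,\Delta_j)$ and $(\Delta_k,\Delta_j)$ (with $\Delta_i,\Delta_k$ the two segments being merged and $\Delta_j$ in the other block), not just the multiset of left endpoints; the case where a newly created segment has left endpoint $\geq$ that of a segment in the other block and might be linked to it is exactly the case your argument leaves open, and it is the load-bearing point of your proof. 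The remaining steps --- admissibility of the prefix, strictness of $\mathfrak m'+\{\Delta_{s+1},\ldots,\Delta_r\}<_Z\mathfrak n$, and the contradiction with minimality --- are fine.
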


\begin{proof}
We only prove (1), and (2) can be proved similarly. 

The admissibility follows from definitions (and Lemma \ref{lem removal process}(5)). Let 
\[  \mathfrak n'=\left\{ \Delta_1, \ldots, \Delta_s \right\} .\]
We pick two linked segments $\Delta_i$ and $\Delta_j$ in $\mathfrak n'$ and we set
\[ \mathfrak n''= \mathfrak n'-\left\{ \Delta_i, \Delta_j \right\}+\Delta_i\cup \Delta_j+\Delta_i\cap \Delta_j  . \]
It suffices to show that $\mathfrak r(\mathfrak n'', \mathfrak h)\neq \mathfrak r(\mathfrak n, \mathfrak h)$. To this end, we first write the segments in $\mathfrak n''$ in an ascending order:
\[  \Delta_1', \ldots, \Delta_s' .
 \]
(There are $s-1$ segments if $\Delta_i \cap \Delta_j =\emptyset$, but the below arguments could be still applied.) It follows from Lemma \ref{lem intersect union ascending order} that $\Delta_1', \ldots, \Delta_s', \Delta_{s+1}, \ldots, \Delta_r$ are still in an ascending order.

%{\it Claim:} $\Delta_1', \ldots, \Delta_s', \Delta_{s+1}, \ldots, \Delta_r$ is still in an ascending order. \\

%{\it Proof of claim:} We consider two possibilities for each $\Delta_p'$ with $1\leq p \leq s$ and $\Delta_l$ with $s+1\leq l\leq r$:
%\begin{enumerate}
%\item $\Delta_p'=\Delta_k$ for some $k \leq s$. Then the ascending order of segments for $\mathfrak n$ implies that, for any $s+1\leq l \leq r$, either $\Delta_p'$ and $\Delta_l$ are not linked; or $\Delta_p'<\Delta_l$.  
%\item $\Delta_p'=\Delta_i\cup \Delta_j$. If $\Delta_j$ and $\Delta_l$ are not linked, then $\Delta_p'$ is still not linked to $\Delta_l$. Otherwise, $\Delta_j<\Delta_l$ and so we also have $\Delta_p'<\Delta_l$.
%\item $\Delta_p'=\Delta_i\cap \Delta_j$. If $\Delta_i$ and $\Delta_l$ are not linked, then $\Delta_p'$ is still not linked to $\Delta_l$. Otherwise, $\Delta_i<\Delta_l$ and so $\Delta_p'<\Delta_l$.
%\end{enumerate}
%The above cases with the ascending order of $\Delta_1', \ldots, \Delta'_s$ imply the claim. \\

Now we return to the proof. The minimality of $\mathfrak n$ implies that
\[  \mathfrak r(\left\{ \Delta_1', \ldots, \Delta_s', \Delta_{s+1}, \ldots, \Delta_r\right\}, \pi) \neq \mathfrak r(\left\{ \Delta_1, \ldots, \Delta_s, \Delta_{s+1}, \ldots, \Delta_r\right\}, \pi).
\]
By Proposition \ref{prop cancel property}, 
\[  \mathfrak r(\left\{ \Delta_1', \ldots, \Delta_s'\right\}, \pi) \neq \mathfrak r(\left\{ \Delta_1, \ldots, \Delta_s \right\}, \pi),
\]
as desired.
\end{proof}

\section{Three segment basic cases} \label{s three segment cases}

The main goal of this section is to prove the subsequent property and commutativity for three segment cases. To show the minimality, the main strategy is to use the convex property for $\mathcal S(\pi, \tau)$ of Theorem \ref{thm convex derivatives} and the non-overlapping property of Theorem \ref{prop nonoverlapping property}.

\subsection{Case: $\left\{ \Delta_1, \Delta_3\right\}$ minimal to $D_{\Delta_2}(\pi)$}

\begin{lemma} \label{lem eta change unlinked case}
Let $\Delta, \Delta' \in \mathrm{Seg}_{\rho}$ with $\Delta \subset \Delta'$. Let $\pi \in \mathrm{Irr}_{\rho}$ with $D_{\Delta'}(\pi)\neq 0$. Then, the following holds: 
\begin{enumerate}
\item If $a(\Delta)>a(\Delta')$, then $\eta_{\Delta}(D_{\Delta'}(\pi))=\eta_{\Delta}(\pi)$.
\item If $a(\Delta) = a(\Delta')$, then $\eta_{\Delta}(D_{\Delta'}(\pi))$ is obtained from $\eta_{\Delta}(\pi)$ by decreasing the coordinate $\varepsilon_{\Delta}(\pi)$ by $1$. 
\end{enumerate}
\end{lemma}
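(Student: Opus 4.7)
The plan is to prove both statements by a coordinate-by-coordinate analysis of $\eta_{\Delta}$, i.e., of the values $\varepsilon_{[c,b]_{\rho}}$ for $c \in \{a(\Delta), \ldots, b\}$ (where $b = b(\Delta)$), combining Theorem \ref{thm effect of Steinberg} with direct bookkeeping on the removal process $\mathfrak r(\Delta', \mathfrak{hd}(\pi))$.

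The first step is to pass from $D_{\Delta'}(\pi)$ to $\mathfrak r(\Delta', \mathfrak{hd}(\pi))$. Since $\Delta \subset \Delta'$, every coordinate $[c,b]_{\rho}$ of $\eta_{\Delta}$ satisfies either $c > a(\Delta')$ (directly in the scope of Theorem \ref{thm effect of Steinberg}) or $c = a(\Delta')$, which only arises in case (2). In the latter subcase, $[c,b]_{\rho}$ and $\Delta'$ share their left endpoint and are therefore unlinked, so Theorem \ref{thm effect of Steinberg} still applies. Hence $\varepsilon_{[c,b]_{\rho}}(D_{\Delta'}(\pi)) = \varepsilon_{[c,b]_{\rho}}(\mathfrak r(\Delta', \mathfrak{hd}(\pi)))$ for every coordinate involved; the case $c < a(\Delta')$, where only the inequality of Theorem \ref{thm effect of Steinberg small a} is available, is entirely avoided thanks to $\Delta \subset \Delta'$.

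The second step is to analyze how a single application of $\mathfrak r(\Delta', \cdot)$ shifts each $\varepsilon_{[c,b]_{\rho}}$. Writing the removal sequence as $\Delta_1 = [a_1,b_1]_{\rho}, \ldots, \Delta_r = [a_r,b_r]_{\rho}$, one has $a_1 = a(\Delta') < a_2 < \ldots < a_r$ and $b_i \geq b(\Delta') \geq b$ throughout; in particular the tail truncation $\Delta_r^{tr}$ is empty. For each fixed $c$, at most one index $j$ satisfies $a_j = c$, and if $j \geq 2$ then $\Delta_{j-1}^{tr} = [a_j, b_{j-1}]_{\rho}$ is inserted at the same left endpoint $c$, with $b_{j-1} > b_j \geq b$. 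Both $\Delta_j$ and $\Delta_{j-1}^{tr}$ thus contain $[c,b]_{\rho}$, so the removal and insertion cancel at every position $c > a(\Delta')$, while at $c = a(\Delta')$ only the removal of $\Delta_1$ takes place (with no compensating truncation), producing a net decrease by one.

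Combining the two steps gives the conclusion: in case (1), every coordinate of $\eta_{\Delta}$ lies at $c \geq a(\Delta) > a(\Delta')$ and is unchanged; in case (2), only the leading coordinate $\varepsilon_{[a(\Delta),b]_{\rho}} = \varepsilon_{\Delta}$ sits at $c = a(\Delta')$ and drops by exactly one. The argument is essentially bookkeeping, and the main point I expect one must be careful about is the dual role played by $\Delta \subset \Delta'$: it forces $b_i \geq b$ for every segment in the removal sequence (which is what makes the cancellation count exact), and simultaneously rules out the ``small $a$'' range of Theorem \ref{thm effect of Steinberg small a} where only an inequality holds.
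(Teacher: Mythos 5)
Your proof is correct and follows essentially the same route as the paper: reduce via Theorem \ref{thm effect of Steinberg} (noting the $c=a(\Delta')$ coordinate is covered by the unlinked case) to comparing $\eta_{\Delta}(\mathfrak r(\Delta',\pi))$ with $\eta_{\Delta}(\pi)$, then cancel each removed segment $\Delta_j$ against the inserted truncation $\Delta_{j-1}^{tr}$ at the same left endpoint, with only $\Delta_1$ uncompensated in case (2). Your bookkeeping (strictly increasing $a_j$, all $b_j\geq b(\Delta')\geq b(\Delta)$, empty $\Delta_r^{tr}$) matches the paper's pairing argument and is, if anything, stated slightly more explicitly.
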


\begin{proof}
By Theorem \ref{thm effect of Steinberg}, it suffices to compare $\eta_{\Delta}(\mathfrak r(\Delta', \pi))$ and $\eta_{\Delta}(\pi)$. Let the removal sequence for $(\Delta', \pi)$ be 
\[  \Delta_1, \ldots, \Delta_r .
\]
For (1), we consider two cases.
\begin{itemize}
\item[(i)] Suppose there does not exist an integer $i^*$ such that $a(\Delta_{i^*}) \geq a(\Delta)$ and $a(\Delta_{i^*})\leq b(\Delta)$. In such case, all $\Delta_1, \ldots, \Delta_r$ and $\Delta_1^{tr}, \ldots, \Delta_r^{tr}$ do not contribute $\eta_{\Delta}$. Thus we have such equality. 
\item[(ii)] Suppose there exists an integer $i^*$ such that $a(\Delta_{i^*})\geq a(\Delta)$. Let $i^*>1$ be the smallest such integer. Let $j^*$ be the largest integer such that $a(\Delta_{j^*}) \leq b(\Delta)$. We have that $\Delta_{i^*}, \ldots, \Delta_{j^*}$ are all the segments in the removal sequence contributing to $\eta_{\Delta}(\pi)$ and $\Delta^{tr}_{i^*-1}, \ldots, \Delta^{tr}_{j^*-1}$ are all the segments in the truncated one contributing to $\eta_{\Delta}(\mathfrak r(\Delta', \pi))$. Note that, for $i^*\leq k \leq j^*$, $\Delta_k$ and $\Delta_{k-1}^{tr}$ contribute to the same coordinate $\varepsilon_{\widetilde{\Delta}}$ for some segment $\widetilde{\Delta}$. This shows the equality to two $\eta_{\Delta}$.
\end{itemize}

For (2), it is similar, but $i^*$ in above notation becomes $1$. Again, for $2\leq k \leq j^*$, $\Delta_k$ and $\Delta_{k-1}^{tr}$ contribute to the same $\varepsilon$. The term $\Delta_1$ explains $\varepsilon_{\Delta}(\pi)$ is decreased by $1$ to obtain $\varepsilon_{\Delta}(D_{\Delta'}(\pi))$.  
\end{proof}

\begin{lemma} \label{lem minimal in a basic case}
Let $\mathfrak m=\left\{ \Delta_1, \Delta_2, \Delta_3 \right\} \in \mathrm{Mult}_{\rho}$ in an ascending order. Let $\pi \in \mathrm{Irr}_{\rho}$ be such that $\mathfrak m$ is minimal to $\pi$. Then $\left\{ \Delta_1, \Delta_3\right\}$ is also minimal to $D_{\Delta_2}(\pi)$.
\end{lemma}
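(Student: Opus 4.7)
The plan is to split the proof into an admissibility step and a minimality step, and to handle minimality by contradiction using the characterization of minimality for two-segment multisegments collected in Proposition \ref{prop nonoverlapping property}.

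For admissibility, I would invoke Proposition \ref{prop first subseq property}(1) to see that the prefix $\{\Delta_1,\Delta_2\}$ of $\mathfrak m$ is minimal to $\pi$. In the linked case this gives the non-overlapping property for $(\Delta_1,\Delta_2,\mathfrak{hd}(\pi))$ by Proposition \ref{prop nonoverlapping property}, so Proposition \ref{prop dagger property 2} yields the commutativity $D_{\Delta_1}D_{\Delta_2}(\pi)\cong D_{\Delta_2}D_{\Delta_1}(\pi)$; in the unlinked case the same commutativity is immediate from Lemma \ref{lem comm derivative 1}. Applying $D_{\Delta_3}$ then gives
\[
D_{\{\Delta_1,\Delta_3\}}\bigl(D_{\Delta_2}(\pi)\bigr) = D_{\Delta_3}D_{\Delta_1}D_{\Delta_2}(\pi) \cong D_{\Delta_3}D_{\Delta_2}D_{\Delta_1}(\pi) = D_{\mathfrak m}(\pi) \ne 0,
\]
so that $\{\Delta_1,\Delta_3\}$ is admissible to $D_{\Delta_2}(\pi)$ and we simultaneously identify the resulting derivative with $D_{\mathfrak m}(\pi)$ for later use.

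For minimality, the case in which $\Delta_1,\Delta_3$ are unlinked is automatic, since no intersection-union is possible. Assume $\Delta_1<\Delta_3$ are linked and, for contradiction, that $\{\Delta_1,\Delta_3\}$ is not minimal to $D_{\Delta_2}(\pi)$. Then the unique available intersection-union gives $\mathfrak n^{*}=\{\Delta_1\cap\Delta_3,\Delta_1\cup\Delta_3\}$ with $D_{\mathfrak n^{*}}(D_{\Delta_2}(\pi))\cong D_{\mathfrak m}(\pi)$ via the admissibility step. I would now pass to $\mathfrak m^{*}=\{\Delta_1\cap\Delta_3,\Delta_2,\Delta_1\cup\Delta_3\}$, which is obtained from $\mathfrak m$ by the same intersection-union applied to $\Delta_1,\Delta_3$ and hence satisfies $\mathfrak m^{*}<_{Z}\mathfrak m$ strictly. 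If one can show $D_{\mathfrak m^{*}}(\pi)\cong D_{\mathfrak m}(\pi)$, then $\mathfrak m^{*}\in\mathcal S(\pi,D_{\mathfrak m}(\pi))$ sits strictly below $\mathfrak m$, contradicting the minimality of $\mathfrak m$.

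After checking that $\Delta_1\cup\Delta_3,\Delta_2,\Delta_1\cap\Delta_3$ is an ascending order of $\mathfrak m^{*}$ (which follows from $a(\Delta_1)\leq a(\Delta_2)\leq a(\Delta_3)$ and the fact that two segments sharing the same $a$-value are automatically unlinked), the desired isomorphism reduces, using the assumed $D_{\mathfrak n^{*}}(D_{\Delta_2}(\pi))\cong D_{\mathfrak m}(\pi)$, to the commutativity
\[
D_{\Delta_2}D_{\Delta_1\cup\Delta_3}(\pi)\cong D_{\Delta_1\cup\Delta_3}D_{\Delta_2}(\pi).
\]
This is the main obstacle: the pair $(\Delta_1\cup\Delta_3,\Delta_2)$ need not be unlinked, and the non-overlapping property on $\mathfrak{hd}(\pi)$ for this pair is not handed to us by the minimalities already at our disposal, so Proposition \ref{prop dagger property 2} does not apply out of the box. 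I would resolve it via a case analysis on the linkage of $(\Delta_2,\Delta_3)$: when $\Delta_2,\Delta_3$ are unlinked the ascending chain of $\mathfrak m$ can be reordered to put $\Delta_3$ before $\Delta_2$, after which Proposition \ref{prop first subseq property} together with Lemma \ref{lem dagger property on derivative} transfer the non-overlapping property from $\mathfrak{hd}(\pi)$ to $\mathfrak{hd}(D_{\Delta_2}(\pi))$; when they are linked, the intermediate-segment property of $(\Delta_1,\Delta_2,\mathfrak{hd}(\pi))$ provided by the minimality of $\{\Delta_1,\Delta_2\}$ should be promoted to $(\Delta_1\cup\Delta_3,\Delta_2,\mathfrak{hd}(\pi))$ using the $\eta$-invariant computations of Lemma \ref{lem a computation on eta change} and Lemma \ref{lem eta change unlinked case}. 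As a fallback that avoids any direct commutativity, one can work at the combinatorial level by comparing the removal processes $\mathfrak r(\mathfrak m^{*},\pi)$ and $\mathfrak r(\mathfrak m,\pi)$ through Theorem \ref{thm isomorphic derivatives}, the cancellative property (Proposition \ref{prop cancel property}), and the bookkeeping relations between $\mathfrak{hd}(D_{\Delta_2}(\pi))$ and $\mathfrak{hd}(\pi)$ supplied by Theorems \ref{thm effect of Steinberg} and \ref{thm effect of Steinberg small a}.
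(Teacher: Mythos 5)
Your admissibility step and the identification $D_{\{\Delta_1,\Delta_3\}}(D_{\Delta_2}(\pi))\cong D_{\mathfrak m}(\pi)$ match the paper, and your contradiction via $\mathfrak m^{*}=\{\Delta_1\cap\Delta_3,\Delta_2,\Delta_1\cup\Delta_3\}<_Z\mathfrak m$ is exactly how the paper handles the chain case $\Delta_1<\Delta_2<\Delta_3$ (there $\Delta_2$ is unlinked to both $\Delta_1\cap\Delta_3$ and $\Delta_1\cup\Delta_3$, so no extra commutativity is needed). The gap appears in your reduction of the remaining cases to the single commutativity $D_{\Delta_2}D_{\Delta_1\cup\Delta_3}(\pi)\cong D_{\Delta_1\cup\Delta_3}D_{\Delta_2}(\pi)$: it rests on the claim that $\Delta_1\cup\Delta_3,\Delta_2,\Delta_1\cap\Delta_3$ is an ascending order of $\mathfrak m^{*}$, justified by $a(\Delta_1)\le a(\Delta_2)\le a(\Delta_3)$. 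That inequality is not part of the hypothesis (an ascending order only forces $a_i<a_j$ for \emph{linked} pairs) and cannot be arranged WLOG, since relabelling changes which segment plays the role of $\Delta_2$. For instance $\Delta_1=[1,3]_{\rho}$, $\Delta_2=[4,5]_{\rho}$, $\Delta_3=[3,6]_{\rho}$ is ascending with $\Delta_1<\Delta_3$ but $a(\Delta_2)>a(\Delta_3)$; there $\Delta_1\cap\Delta_3=[3,3]_{\rho}<\Delta_2$ are linked, so your route would need a commutativity with $\Delta_1\cap\Delta_3$ that you never address (in that configuration $\Delta_2\subset\Delta_3$, so Lemma \ref{lem dagger property on derivative} in fact yields the lemma directly, as in the paper's proof, but that is a different argument from the one you wrote).

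The configuration your plan genuinely fails to cover is $\Delta_3\subset\Delta_2$, e.g. $\Delta_1=[1,2]_{\rho}$, $\Delta_2=[3,5]_{\rho}$, $\Delta_3=[3,4]_{\rho}$: this is precisely when $\Delta_1\cup\Delta_3<\Delta_2$ can be linked, so the commutativity you flag as the main obstacle is really needed. Here $\Delta_2$ and $\Delta_3$ are unlinked, so your ``linked'' branch is vacuous, and your ``unlinked'' branch invokes Lemma \ref{lem dagger property on derivative}, whose hypothesis is $\Delta''\subset\Delta'$, i.e. it transfers the non-overlapping property along $D_{\Delta_2}$ only when $\Delta_2\subset\Delta_3$ --- the opposite containment. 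Nothing you cite gives either the needed commutativity (Proposition \ref{prop dagger property 2} would require the non-overlapping property for the pair $(\Delta_1\cup\Delta_3,\Delta_2)$, which is not among the established facts) or, directly, the non-overlapping property for $(\Delta_1,\Delta_3,\mathfrak{hd}(D_{\Delta_2}(\pi)))$; and ``promoting'' the intermediate segment property from $(\Delta_1,\Delta_2)$ to $(\Delta_1\cup\Delta_3,\Delta_2)$ is not automatic, because the requirement $b(\Delta_1)\le b(\widetilde{\Delta})$ must be strengthened to $b(\Delta_3)\le b(\widetilde{\Delta})$. This is exactly the point where the paper inserts a dedicated computation: using $\Delta_3\subset\Delta_2$ it compares $\eta_{\Delta_3}$ before and after $D_{\Delta_2}$ via Lemma \ref{lem eta change unlinked case} (treating the subcases $a(\Delta_3)>a(\Delta_2)$ and $a(\Delta_3)=a(\Delta_2)$ separately) and concludes $\eta_{\Delta_3}(D_{\Delta_1}\circ D_{\Delta_2}(\pi))=\eta_{\Delta_3}(D_{\Delta_2}(\pi))$, whence minimality by Proposition \ref{prop nonoverlapping property}. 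Your fallback of ``comparing removal processes'' is only a pointer, not an argument, so as it stands the proposal has a genuine gap in this case.
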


\begin{proof}
By Proposition \ref{prop first subseq property}, $\left\{\Delta_1, \Delta_2 \right\}$ is minimal to $\pi$. By Proposition \ref{prop dagger property 2} (for the linked case between $\Delta_1$ and $\Delta_2$) and Lemma \ref{lem comm derivative 1} (for the unlinked case between $\Delta_1$ and $\Delta_2$), we have that
\[  D_{\Delta_3}\circ D_{\Delta_2}\circ D_{\Delta_1}(\pi) \cong D_{\Delta_3}\circ D_{\Delta_1}\circ D_{\Delta_2}(\pi) .
\]
The minimality is automatic if $\Delta_1$ and $\Delta_3$ are unlinked. So we shall assume that $\Delta_1$ and $\Delta_3$ are linked. There are three possibilities:
\begin{itemize}
\item $\Delta_2$ is unlinked to $\Delta_1$ (and so $\left\{ \Delta_2, \Delta_1, \Delta_3\right\}$ is still in an ascending order). Then the minimality of $\mathfrak m$ and Proposition \ref{prop first subseq property} imply this case.
\item $\Delta_2$ is unlinked to $\Delta_3$. We consider following possibilities:
\begin{itemize}
  \item[(i)] $\Delta_3 \subset \Delta_2$. Then $\left\{ \Delta_1, \Delta_3 \right\}$ is also minimal to $\pi$ by Lemma \ref{lem comm derivative 1} and Proposition \ref{prop first subseq property}. Hence, $\eta_{\Delta_3}(D_{\Delta_1}(\pi))=\eta_{\Delta_3}(\pi)$. On the other hand, if $a(\Delta_3)> a(\Delta_2)$, by Lemma \ref{lem eta change unlinked case}(1), 
	\[ \eta_{\Delta_3}(D_{\Delta_2}(\pi))=\eta_{\Delta_3}(\pi) \]
		and 
	\[ \eta_{\Delta_3}(D_{\Delta_2}\circ D_{\Delta_1}(\pi))=\eta_{\Delta_3}(D_{\Delta_1}(\pi)). \] Combining two equations, we have 
	\[ \eta_{\Delta_3}(D_{\Delta_2} \circ D_{\Delta_1}(\pi)) =\eta_{\Delta_3}(D_{\Delta_2}(\pi)) 
	\]
	and so, by Proposition \ref{prop dagger property 2} and Lemma \ref{lem comm derivative 1} again,
	\[  \eta_{\Delta_3}(D_{\Delta_1}\circ D_{\Delta_2}(\pi)) =\eta_{\Delta_3}(D_{\Delta_2}(\pi)).
	\]
	Thus, we have the minimality by Proposition \ref{prop nonoverlapping property}.
	
	When $a(\Delta_3)\cong a(\Delta_2)$, the argument is similar. The only difference, by Lemma \ref{lem eta change unlinked case}(2), is that $\eta_{\Delta_3}(D_{\Delta_2}(\pi))$ (resp. $\eta_{\Delta_3}(D_{\Delta_2}\circ D_{\Delta_1}(\pi))$) is obtained from $\eta_{\Delta_3}(\pi)$ (resp. $\eta_{\Delta_3}(D_{\Delta_1}(\pi))$) by decreasing the $\varepsilon_{\Delta_3}(\pi)$ (resp. $\varepsilon_{\Delta_3}(D_{\Delta_1}(\pi))$) by $1$.
	\item[(ii)] $\Delta_2 \subset \Delta_3$. In such case, one first has that $(\Delta_1, \Delta_3, \mathfrak{hd}(\pi))$ satisfies the non-overlapping property by the minimality of $\left\{ \Delta_1, \Delta_3\right\}$ to $\pi$. Then, by Lemma \ref{lem dagger property on derivative} to show that $(\Delta_1, \Delta_3, \mathfrak{hd}(D_{\Delta_2}(\pi)))$ still satisfies the non-overlapping property. Thus, $\left\{ \Delta_1, \Delta_3 \right\}$ is minimal to $D_{\Delta_2}(\pi)$. 
	\item[(iii)] $b(\Delta_2)<a(\Delta_3)$. Then the ascending order and linkedness between $\Delta_1$ and $\Delta_3$ also give that $\Delta_1$ and $\Delta_2$ are not linked. This goes back to the above bullet.
	\item[(iv)] $b(\Delta_3)<a(\Delta_2)$. Then the ascending order and linkedness between $\Delta_1$ and $\Delta_3$ also give that $\Delta_1$ and $\Delta_2$ are not linked. This goes back to the above bullet.
\end{itemize}
\item $\Delta_1<\Delta_2<\Delta_3$. If $\left\{ \Delta_1, \Delta_3\right\}$ is not minimal to $D_{\Delta_2}(\pi)$, then 
\[   D_{\Delta_1\cup \Delta_3}\circ D_{\Delta_1\cap \Delta_3}\circ D_{\Delta_2}(\pi)\cong D_{\Delta_3}\circ D_{\Delta_1}\circ D_{\Delta_2}(\pi)\cong D_{\Delta_3}\circ D_{\Delta_2}\circ D_{\Delta_1}(\pi) . \]
This contradicts the minimality of $\mathfrak m$.
\end{itemize}
\end{proof}

\subsection{Case: $\left\{ \Delta_2, \Delta_3\right\}$ minimal to $\pi$}

\begin{lemma} \label{lem basic subsequent property}
Let $\pi \in \mathrm{Irr}_{\rho}$. Let $\Delta_1, \Delta_2, \Delta_3$ be segments in an ascending order. If $\left\{ \Delta_1, \Delta_2, \Delta_3 \right\}$ is minimal to $\pi$, then $\left\{ \Delta_2, \Delta_3 \right\}$ is also minimal to $\pi$. 
\end{lemma}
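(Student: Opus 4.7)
The plan is to argue by contradiction. Assume $\{\Delta_2,\Delta_3\}$ is not minimal to $\pi$; since two unlinked segments are trivially minimal, $\Delta_2<\Delta_3$ must be linked, and by Proposition \ref{prop nonoverlapping property} the intermediate segment property (ISP) for $(\Delta_2,\Delta_3)$ fails in $\mathfrak{hd}(\pi)$. By contrast, Proposition \ref{prop first subseq property}(2) with $s=1$ shows $\{\Delta_2,\Delta_3\}$ \emph{is} minimal to $\mathfrak h^*:=\mathfrak r(\Delta_1,\mathfrak{hd}(\pi))$, so again by Proposition \ref{prop nonoverlapping property} some segment $[c_0,d_0]_\rho\in\mathfrak h^*$ realizes the ISP for $(\Delta_2,\Delta_3)$. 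Since this segment cannot already lie in $\mathfrak{hd}(\pi)$, it must be a truncation $E_i^{tr}=[e^a_{i+1},e^b_i]_\rho$ from the removal sequence $E_1,\ldots,E_r$ of $(\Delta_1,\mathfrak{hd}(\pi))$, and the parent $E_i=[e^a_i,d_0]_\rho\in\mathfrak{hd}(\pi)$ must satisfy $a(\Delta_1)\leq e^a_i<a(\Delta_2)$ (else $E_i$ itself would realize the ISP in $\mathfrak{hd}(\pi)$, a contradiction) and $b(\Delta_1)\leq d_0<b(\Delta_3)$.

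The target is to exhibit $\mathfrak n'<_Z\{\Delta_1,\Delta_2,\Delta_3\}$ with $\mathfrak r(\mathfrak n',\mathfrak{hd}(\pi))=\mathfrak r(\{\Delta_1,\Delta_2,\Delta_3\},\mathfrak{hd}(\pi))$; by Theorem \ref{thm isomorphic derivatives} this contradicts minimality. I would proceed by cases on how $\Delta_1$ sits relative to $\Delta_2,\Delta_3,\Delta_2\cap\Delta_3,\Delta_2\cup\Delta_3$. In the clean base case where $\Delta_1$ is unlinked to all four of these, Lemma \ref{lem removal process}(5) allows $\mathfrak r(\Delta_1,\cdot)$ to commute through both sides of the hypothesized equality $\mathfrak r(\{\Delta_2,\Delta_3\},\mathfrak{hd}(\pi))=\mathfrak r(\{\Delta_2\cap\Delta_3,\Delta_2\cup\Delta_3\},\mathfrak{hd}(\pi))$, producing $\mathfrak r(\{\Delta_2,\Delta_3\},\mathfrak h^*)=\mathfrak r(\{\Delta_2\cap\Delta_3,\Delta_2\cup\Delta_3\},\mathfrak h^*)$, which already contradicts the minimality of $\{\Delta_2,\Delta_3\}$ to $\mathfrak h^*$.

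The hard part will be the remaining cases where $\Delta_1$ is linked to at least one of the four segments above. Here I would use $E_i$ to extract additional two-segment minimalities: when $\Delta_1,\Delta_3$ are linked, $a(\Delta_3)\leq b(\Delta_1)+1$ forces $e^a_i\leq b(\Delta_1)$, so $E_i$ realizes the ISP for $(\Delta_1,\Delta_2\cup\Delta_3,\mathfrak{hd}(\pi))$, and the ISP-segment for $(\Delta_1,\Delta_2,\mathfrak{hd}(\pi))$ provided by Proposition \ref{prop first subseq property}(1) simultaneously witnesses the ISP for $(\Delta_1,\Delta_2\cap\Delta_3,\mathfrak{hd}(\pi))$. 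Thus $\{\Delta_1,\Delta_2\cup\Delta_3\}$ and $\{\Delta_1,\Delta_2\cap\Delta_3\}$ are both minimal to $\pi$; Proposition \ref{prop dagger property 2} then supplies commutativities of $D_{\Delta_1}$ with $D_{\Delta_2\cup\Delta_3}$ and $D_{\Delta_2\cap\Delta_3}$ on $\pi$. Combined with Lemma \ref{lem comm derivative 1} for the unlinked pair $(\Delta_2\cap\Delta_3,\Delta_2\cup\Delta_3)$ and the non-minimality hypothesis $D_{\Delta_3}\circ D_{\Delta_2}(\pi)\cong D_{\Delta_2\cup\Delta_3}\circ D_{\Delta_2\cap\Delta_3}(\pi)$, one rewrites $D_{\{\Delta_1,\Delta_2,\Delta_3\}}(\pi)$ as $D_{\mathfrak n'}(\pi)$ for $\mathfrak n'=\{\Delta_1,\Delta_2\cap\Delta_3,\Delta_2\cup\Delta_3\}$. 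The main obstacle is that the commutativities are only a priori known on $\pi$ rather than on intermediate representations such as $D_{\Delta_2\cup\Delta_3}(\pi)$, requiring delicate propagation via Lemma \ref{lem minimal in a basic case} or working directly at the removal-process level using Lemma \ref{lem removal process}(1) and Proposition \ref{prop cancel property}.
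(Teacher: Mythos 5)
Your reduction is set up correctly: the contradiction hypothesis, the identification of the ISP-witness in $\mathfrak h^*=\mathfrak r(\Delta_1,\mathfrak{hd}(\pi))$ as a truncation $E_i^{tr}$ with parent $E_i$ satisfying $a(\Delta_1)\leq a(E_i)<a(\Delta_2)$ and $b(\Delta_1)\leq b(E_i)<b(\Delta_3)$, the unlinked base case via Lemma \ref{lem removal process}(5), and the observation that $E_i$ witnesses the intermediate segment property for $(\Delta_1,\Delta_2\cup\Delta_3,\mathfrak{hd}(\pi))$ are all sound (modulo small omissions: the last truncation $E_r^{tr}=[b_r+1,b(\Delta_1)]_\rho$ has a different shape, and when $\Delta_1,\Delta_2$ are unlinked you get no ISP-witness from Proposition \ref{prop first subseq property}(1), so the minimality of $\{\Delta_1,\Delta_2\cap\Delta_3\}$ is not automatic even though $\Delta_1$ may be linked to $\Delta_2\cap\Delta_3$). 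But the decisive step is missing, and you have flagged it yourself without resolving it: to contradict the minimality of $\{\Delta_1,\Delta_2,\Delta_3\}$ you must convert the two-segment hypothesis $D_{\Delta_3}\circ D_{\Delta_2}(\pi)\cong D_{\Delta_2\cup\Delta_3}\circ D_{\Delta_2\cap\Delta_3}(\pi)$, which lives on $\pi$, into $D_{\Delta_3}\circ D_{\Delta_2}\circ D_{\Delta_1}(\pi)\cong D_{\Delta_2\cup\Delta_3}\circ D_{\Delta_2\cap\Delta_3}\circ D_{\Delta_1}(\pi)$. That forces you to commute $D_{\Delta_1}$ past $D_{\Delta_3}$ on $D_{\Delta_2}(\pi)$ and then back past $D_{\Delta_2\cup\Delta_3}$ on $D_{\Delta_2\cap\Delta_3}(\pi)$; Proposition \ref{prop dagger property 2} and Lemma \ref{lem comm derivative 1} only give commutativity on $\pi$ itself, Lemma \ref{lem minimal in a basic case} handles the first of these but nothing you cite handles the second, and proving such commutativity on intermediate representations is essentially the content of the later Theorem \ref{thm minimal and commut}, which depends on this lemma. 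As written, the argument is circular-or-open at exactly the hard point, so the proof is not complete.

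For comparison, the paper argues directly rather than by contradiction, and never moves $D_{\Delta_1}$ all the way out. In the main case it chains $\eta$-invariant equalities: minimality of the triple puts $\{\Delta_1,\Delta_2\}$ together with $\Delta_3$ in the situation of Proposition \ref{prop dagger property}, giving $\eta_{\Delta_3}(D_{\{\Delta_1,\Delta_2\}}(\pi))=\eta_{\Delta_3}(\pi)$; Lemma \ref{lem minimal in a basic case} (i.e.\ $\{\Delta_1,\Delta_3\}$ minimal to $D_{\Delta_2}(\pi)$) together with Proposition \ref{prop nonoverlapping property} gives $\eta_{\Delta_3}(D_{\Delta_1}\circ D_{\Delta_2}(\pi))=\eta_{\Delta_3}(D_{\Delta_2}(\pi))$; and Proposition \ref{prop dagger property 2} identifies $D_{\Delta_1}\circ D_{\Delta_2}(\pi)\cong D_{\{\Delta_1,\Delta_2\}}(\pi)$. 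Combining yields $\eta_{\Delta_3}(D_{\Delta_2}(\pi))=\eta_{\Delta_3}(\pi)$, which is precisely the non-overlapping property for $(\Delta_2,\Delta_3)$ and hence the desired minimality. If you want to salvage your route, you would need to supply the missing propagation (e.g.\ minimality of $\{\Delta_1,\Delta_2\cup\Delta_3\}$ to $D_{\Delta_2\cap\Delta_3}(\pi)$, or a purely removal-process identity replacing it), and that is where the real work lies.
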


\begin{proof}
When $\Delta_2$ and $\Delta_3$ are unlinked, there is nothing to prove. We assume that $\Delta_2$ and $\Delta_3$ are linked and so $\Delta_2<\Delta_3$. We consider the following cases:
\begin{itemize}
\item $b(\Delta_3) \leq b(\Delta_1)$. Then $\Delta_1$ is unlinked to both $\Delta_2$ and $\Delta_3$. Then the minimality of $\left\{ \Delta_2, \Delta_3 \right\}$ to $\pi$ follows from the minimality of $\left\{ \Delta_1, \Delta_2, \Delta_3 \right\}$ to $\pi$ by Proposition \ref{prop first subseq property}. 
\item $b(\Delta_1)<b(\Delta_3)$ and $a(\Delta_1)<a(\Delta_3)$, and $\Delta_1$ and $\Delta_3$ are linked. Then $\left\{ \Delta_1, \Delta_2\right\}$ with $\Delta_3$ is in the situation of Proposition \ref{prop dagger property}. Then $\eta_{\Delta_3}(D_{\left\{ \Delta_1, \Delta_2 \right\}}(\pi))=\eta_{\Delta_3}(\pi)$. On the other hand, we have $\eta_{\Delta_3}(D_{\Delta_1}\circ D_{\Delta_2}(\pi))=\eta_{\Delta_3}(D_{\Delta_2}(\pi))$ by Lemma \ref{lem minimal in a basic case} with Proposition \ref{prop nonoverlapping property}(1)$\Leftrightarrow$(3). Furthermore, $D_{\Delta_1}\circ D_{\Delta_2}(\pi)\cong D_{\left\{\Delta_1, \Delta_2 \right\}}(\pi)$, which again follows from Proposition \ref{prop dagger property 2}  and Lemma \ref{lem comm derivative 1} (see the top of the proof of Lemma \ref{lem minimal in a basic case} for a bit more details). 

Thus, combining above, we have $\eta_{\Delta_3}(\pi)=\eta_{\Delta_3}(D_{\Delta_2}(\pi))$. This implies $\left\{ \Delta_2, \Delta_3\right\}$ is minimal to $\pi$ by Proposition \ref{prop nonoverlapping property}(1)$\Leftrightarrow$(3).
\item $b(\Delta_1)<b(\Delta_3)$ and $a(\Delta_1)<a(\Delta_3)$, and $\Delta_1$ and $\Delta_3$ are not linked. Indeed, the argument in above bullet still works except that one has to directly observe from the removal process that  $\eta_{\Delta_3}(D_{\Delta_1}\circ D_{\Delta_2}(\pi))=\eta_{\Delta_3}(D_{\Delta_2}(\pi))$.
\item $b(\Delta_1)<b(\Delta_3)$ and $a(\Delta_3)\leq a(\Delta_1)$. In particular, $\Delta_1$ is unlinked to $\Delta_3$. Since we are assuming $\Delta_1, \Delta_2, \Delta_3$ are in an ascending order, and we are assuming that $\Delta_2$ and $\Delta_3$ are linked, we also have that $\Delta_1$ and $\Delta_2$ are unlinked. Then the minimality of $\left\{ \Delta_2, \Delta_3 \right\}$ to $\pi$ follows from the minimality of $\left\{ \Delta_1, \Delta_2, \Delta_3 \right\}$ to $\pi$ and Proposition \ref{prop first subseq property}. 
\end{itemize}
\end{proof}

\subsection{Case: $\left\{ \Delta_1, \Delta_3\right\}$ minimal to $\pi$} \label{ss case 13 minimal to pi}

\begin{lemma} \label{lem basic subsequent property 3}
Let $\pi \in \mathrm{Irr}_{\rho}$. Let $\Delta_1, \Delta_2, \Delta_3$ be segments in an ascending order. If $\left\{ \Delta_1, \Delta_2, \Delta_3 \right\}$ is minimal to $\pi$, then $\left\{ \Delta_1, \Delta_3\right\}$ is minimal to $\pi$.
\end{lemma}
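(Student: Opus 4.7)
The plan is to split the task into admissibility ($D_{\{\Delta_1,\Delta_3\}}(\pi)\neq 0$) and $\leq_Z$-minimality, and to deduce the latter from the intermediate segment property of Proposition \ref{prop nonoverlapping property} by lifting a segment through the removal process of $\Delta_2$. Throughout write $\Delta_i=[a_i,b_i]_\rho$ and $\mathfrak h=\mathfrak{hd}(\pi)$.

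For admissibility: Proposition \ref{prop first subseq property}(2) applied to the prefix $\{\Delta_1\}$ shows $\{\Delta_2,\Delta_3\}$ is minimal to $D_{\Delta_1}(\pi)$, hence $D_{\Delta_2}$ and $D_{\Delta_3}$ commute on $D_{\Delta_1}(\pi)$ (Lemma \ref{lem comm derivative 1} when unlinked, Proposition \ref{prop dagger property 2} when linked). Therefore $D_{\{\Delta_1,\Delta_2,\Delta_3\}}(\pi)\cong D_{\Delta_3}\circ D_{\Delta_2}\circ D_{\Delta_1}(\pi)\cong D_{\Delta_2}\circ D_{\Delta_3}\circ D_{\Delta_1}(\pi)$, and nonvanishing of the left side forces $D_{\Delta_3}\circ D_{\Delta_1}(\pi)\neq 0$.

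For $\leq_Z$-minimality: if $\Delta_1,\Delta_3$ are unlinked, the multisegment $\{\Delta_1,\Delta_3\}$ admits no elementary intersection-union process and is trivially minimal, so assume $\Delta_1<\Delta_3$ are linked, giving $a_1<a_3\leq b_1+1\leq b_3$. By Proposition \ref{prop nonoverlapping property} it suffices to exhibit $\widetilde\Delta\in\mathfrak h$ with $a_1\leq a(\widetilde\Delta)<a_3$ and $a(\widetilde\Delta)\leq b_1\leq b(\widetilde\Delta)<b_3$. By Lemma \ref{lem minimal in a basic case} the pair $\{\Delta_1,\Delta_3\}$ is minimal to $D_{\Delta_2}(\pi)$, so another application of Proposition \ref{prop nonoverlapping property} furnishes such a segment $\widetilde\Delta'$ in $\mathfrak r(\Delta_2,\mathfrak h)=\mathfrak{hd}(D_{\Delta_2}(\pi))$; it remains to lift $\widetilde\Delta'$ back to $\mathfrak h$.

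Let $\Sigma_1,\ldots,\Sigma_s$ with $\Sigma_i=[\alpha_i,\beta_i]_\rho$ be the removal sequence for $(\Delta_2,\mathfrak h)$, so $\mathfrak r(\Delta_2,\mathfrak h)=\mathfrak h-\sum_i\Sigma_i+\sum_{i<s}\Sigma_i^{tr}$, where the top index is $s-1$ because $\beta_s\geq b_2$ forces $\Sigma_s^{tr}=[\beta_s+1,b_2]_\rho$ to be empty. Either $\widetilde\Delta'$ is already a segment of $\mathfrak h$ and serves directly, or $\widetilde\Delta'=\Sigma_k^{tr}=[\alpha_{k+1},\beta_k]_\rho$ for some $k\in\{1,\ldots,s-1\}$, in which case $\Sigma_k=[\alpha_k,\beta_k]\in\mathfrak h$ plays the role of the intermediate segment: the monotonicities $a_1\leq a_2=\alpha_1\leq\alpha_k<\alpha_{k+1}$ (ascending order on $\{\Delta_1,\Delta_2\}$ together with the removal process) combine with the four inequalities on $\widetilde\Delta'$, namely $\alpha_{k+1}<a_3$, $\alpha_{k+1}\leq b_1$, $b_1\leq\beta_k$, and $\beta_k<b_3$, to give all four intermediate-segment conditions for $\Sigma_k$. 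The main obstacle is precisely this lifting step: tracking how truncations in $\mathfrak r(\Delta_2,\mathfrak h)$ correspond to original segments in $\mathfrak h$, leveraging the strict monotonicity of the $\alpha_i$, the strict decrease of the $\beta_i$ bounded below by $b_2$, and the vanishing of $\Sigma_s^{tr}$. Once the lifting is established, Proposition \ref{prop nonoverlapping property} immediately delivers the $\leq_Z$-minimality of $\{\Delta_1,\Delta_3\}$ to $\pi$.
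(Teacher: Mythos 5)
Your admissibility argument is fine and is essentially the paper's own (commute $D_{\Delta_2}$ and $D_{\Delta_3}$ on $D_{\Delta_1}(\pi)$ via Proposition \ref{prop first subseq property}(2) and Proposition \ref{prop dagger property 2}/Lemma \ref{lem comm derivative 1}). The minimality part, however, has genuine gaps, both located in the lifting step you yourself flag as the crux. First, the identification $\mathfrak r(\Delta_2,\mathfrak h)=\mathfrak{hd}(D_{\Delta_2}(\pi))$ is false in general: Theorem \ref{thm effect of Steinberg small a} gives only the inequality $\varepsilon_{\Delta'}(D_{\Delta_2}(\pi))\geq \varepsilon_{\Delta'}(\mathfrak r(\Delta_2,\pi))$ when $a(\Delta')<a_2$, and the proof of Lemma \ref{lem dagger property on derivative} records precisely that $\mathfrak{hd}(D_{\Delta_2}(\pi))$ is obtained from $\mathfrak r(\Delta_2,\mathfrak{hd}(\pi))$ by prolonging (on the right, with endpoints among $a_2-1,\ldots,b_2-1$) some segments whose left endpoint is $<a_2$. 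Consequently, when $a_1<a_2$ (which is exactly the main configuration $\Delta_1<\Delta_2<\Delta_3$), the intermediate segment $\widetilde\Delta'$ furnished by Lemma \ref{lem minimal in a basic case} and Proposition \ref{prop nonoverlapping property} may satisfy $a_1\leq a(\widetilde\Delta')<a_2$ and be such a prolongation: it is then neither a segment of $\mathfrak h$ nor a truncation $\Sigma_k^{tr}$, and its unprolonged parent $[x,y]_{\rho}\in\mathfrak h$ may have $y<b_1$, so neither branch of your dichotomy produces an intermediate segment of $\mathfrak h$. This case needs a separate argument and is not a mere bookkeeping issue (the paper's own lifting statement in the $\Delta_3$-direction, Lemma \ref{lem intermediate segment property for delta3}, required real work, even representation-theoretic input).

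Second, the inequality $a_1\leq a_2$ does not follow from the ascending order: ascending only forces $a_1<a_2$ \emph{or} $\Delta_1,\Delta_2$ unlinked, and, e.g., $\Delta_2=[0,6]_{\rho}$, $\Delta_1=[3,5]_{\rho}$, $\Delta_3=[4,7]_{\rho}$ is a legitimate ascending triple with $\Delta_1<\Delta_3$ and $a_2<a_1$. In such a case $\alpha_k\geq\alpha_1=a_2$ gives no control, the lifted candidate $\Sigma_k$ may have $\alpha_k<a_1$, and the first intermediate-segment inequality fails, so your chain $a_1\leq a_2=\alpha_1\leq\alpha_k$ breaks down. (A smaller point: the equivalences of Proposition \ref{prop nonoverlapping property} are stated assuming $\Delta_3$ is admissible to the ambient multisegment, which for $\mathfrak h=\mathfrak{hd}(\pi)$ is at this stage of the paper not yet known from minimality of the triple.) For comparison, the paper avoids all of this: in the subcases where $\Delta_1,\Delta_2$ or $\Delta_2,\Delta_3$ are unlinked it simply relabels and quotes Proposition \ref{prop first subseq property} or Lemma \ref{lem basic subsequent property}, and in the remaining case $\Delta_1<\Delta_2<\Delta_3$ it argues by contradiction: using Proposition \ref{prop dagger property 2} it writes $D_{\{\Delta_1,\Delta_2,\Delta_3\}}(\pi)\cong D_{\Delta_2}\circ D_{\{\Delta_1,\Delta_3\}}(\pi)$, so a failure of minimality of $\{\Delta_1,\Delta_3\}$ would replace $\Delta_1,\Delta_3$ by $\Delta_1\cup\Delta_3,\Delta_1\cap\Delta_3$ and produce a multisegment strictly below $\{\Delta_1,\Delta_2,\Delta_3\}$ in $\leq_Z$ with the same derivative, contradicting the hypothesis. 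If you want to salvage your route, you must either restrict the lifting to the case $a(\widetilde\Delta')\geq a_2$ and treat prolonged segments and the case $a_2<a_1$ separately, or switch to the paper's relabeling-plus-contradiction scheme.
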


\begin{proof}
We may assume $\Delta_1$ and $\Delta_3$ are linked. Otherwise, there is nothing to prove. We consider the following cases.
\begin{itemize}
\item   $\Delta_1<\Delta_2<\Delta_3$. By Proposition \ref{prop dagger property 2}, we have
\[    D_{\left\{ \Delta_1, \Delta_3, \Delta_2 \right\}}(\pi)\cong D_{\Delta_2}\circ D_{\left\{ \Delta_1, \Delta_3 \right\}}(\pi) .
\]
If the minimality does not hold, then we have
\[D_{\left\{ \Delta_1, \Delta_2, \Delta_3 \right\}}(\pi) \cong D_{\left\{\Delta_2, \Delta_1\cup \Delta_3, \Delta_1\cap \Delta_3 \right\}}(\pi) 
\]
since $\Delta_1\cup \Delta_3, \Delta_1\cap \Delta_3, \Delta_2$ are still in an ascending order. This contradicts to the minimality of $\left\{ \Delta_1, \Delta_2, \Delta_3\right\}$ to $\pi$.
\item $\Delta_2$ and $\Delta_3$ are not linked. Then we can switch the labellings of $\Delta_2$ and $\Delta_3$, which gives the minimality of $\left\{ \Delta_1, \Delta_3 \right\}$ to $\pi$ by Proposition \ref{prop first subseq property}.
\item  $\Delta_1$ and $\Delta_2$ are not linked. In this case, we can switch the labellings for $\Delta_1$ and $\Delta_2$ by using linkedness. Then the result follows from Lemma \ref{lem basic subsequent property}.
%\item $a(\Delta_2)\cong a(\Delta_3)$ or $b(\Delta_2)\cong b(\Delta_3)$. This is similar to the second case.
%This case is relatively easier by using Lemma \ref{lem comm derivative 1} and we omit the details.
\end{itemize}
\end{proof}

\subsection{Case: $\left\{ \Delta_1, \Delta_2 \right\}$ minimal to $D_{\Delta_3}(\pi)$} \label{ss basis case 1,3 to D3}

%We make a comparison with the approach for the following lemma with Lemmas \ref{lem minimal in a basic case} and \ref{lem basic subsequent property}. The main tool for Lemmas \ref{lem minimal in a basic case} and \ref{lem basic subsequent property} is to use $\eta_{\Delta_3}(\pi), \eta_{\Delta_3}(D_{\Delta_1}(\pi)), \eta_{\Delta_3}(D_{\Delta_2}(\pi))$-invariants, which are more available information from minimality. If one wants to proceed to use $\eta$-invariants for the following lemma, one has to compute $\eta_{\Delta_2}(D_{\Delta_3}(\pi))$ and $\eta_{\Delta_2}(D_{\Delta_1}\circ D_{\Delta_3}(\pi))$, which is not so available at this point (c.f. Theorem \ref{thm effect of Steinberg} and Example \ref{example a case of highest multi} below).

We now need some inputs from representation theory to prove a combinatorics result. Let $N_i \subset G_n$ (depending on $n$) be the unipotent radical containing matrices of the form $\begin{pmatrix} I_{n-i} & * \\ & I_i \end
{pmatrix}$. For a smooth representation $\pi$ of $G_n$, we write $\pi_{N_i}$ to be its Jacquet module.

\begin{lemma} \label{lem irreducible of a product}
Let $\Delta=[a,b]_{\rho}, \Delta'=[a',b']_{\rho}$ be two segments such that $\Delta < \Delta'$. Let $\omega= \mathrm{St}(\left\{ \Delta, \Delta' \right\})$. Let $\mathfrak m$ be a multisegment whose segments $\Delta''=[a'',b'']_{\rho}$ satisfy that $b''=b$ and $a'' < a$. Then $\mathrm{St}(\mathfrak m) \times \omega$ is irreducible and 
\[  \mathrm{St}(\mathfrak m)\times \omega \cong \omega \times \mathrm{St}(\mathfrak m) . \]
\end{lemma}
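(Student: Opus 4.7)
The plan is to derive both conclusions from a single irreducibility statement for $\mathrm{St}(\mathfrak{m}) \times \omega$: once this product is shown to be irreducible, the isomorphism $\mathrm{St}(\mathfrak{m}) \times \omega \cong \omega \times \mathrm{St}(\mathfrak{m})$ follows because the two products share the same semisimplification in the Grothendieck group and an irreducible parabolic induction is determined by its isomorphism class.

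The crucial combinatorial input is the shape of $\mathfrak{m}$: every $\Delta'' = [a'',b]_\rho \in \mathfrak{m}$ satisfies $\Delta = [a,b]_\rho \subsetneq \Delta''$ (because $a'' < a$), and hence $\Delta$ and $\Delta''$ are nested and therefore unlinked. Likewise, any two segments in $\mathfrak{m}$ are nested among themselves and thus pairwise unlinked. Consequently $\mathrm{St}(\mathfrak{m}) \times \mathrm{St}(\Delta)$ is a product of Steinberg modules on pairwise unlinked segments, which is irreducible and commutative:
\[
\mathrm{St}(\mathfrak{m}) \times \mathrm{St}(\Delta) \;\cong\; \mathrm{St}(\Delta) \times \mathrm{St}(\mathfrak{m}).
\]

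Using the Zelevinsky realization of $\omega = \mathrm{St}(\{\Delta, \Delta'\})$ as the unique irreducible submodule of $\mathrm{St}(\Delta) \times \mathrm{St}(\Delta')$ (valid since $\Delta < \Delta'$ are linked), and combining with the above commutation, I get the embedding
\[
\mathrm{St}(\mathfrak{m}) \times \omega \;\hookrightarrow\; \mathrm{St}(\mathfrak{m}) \times \mathrm{St}(\Delta) \times \mathrm{St}(\Delta') \;\cong\; \mathrm{St}(\Delta) \times \mathrm{St}(\mathfrak{m}) \times \mathrm{St}(\Delta').
\]
It then suffices to show that the ambient triple product on the right has a unique irreducible submodule; this subrepresentation must then be contained in $\mathrm{St}(\mathfrak{m}) \times \omega$, and a length/composition-series count forces $\mathrm{St}(\mathfrak{m}) \times \omega$ itself to be this simple socle.

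To pin down the socle of the triple product, I would compute its Jacquet module along $N_{l_{abs}(\Delta')}$ via the Bernstein--Zelevinsky geometric lemma and apply Frobenius reciprocity: any irreducible submodule must produce the ``naive'' top term $\mathrm{St}(\Delta) \otimes \mathrm{St}(\mathfrak{m}) \otimes \mathrm{St}(\Delta')$. The hypothesis $b(\Delta'') = b < b(\Delta')$ for every $\Delta'' \in \mathfrak{m}$ is precisely what forces this term to appear with multiplicity one and rules out other potential submodules coming from rearrangements produced by the geometric lemma. The main obstacle is exactly this socle/Jacquet-module bookkeeping: enumerating all geometric-lemma contributions and showing that the endpoint constraint $b(\Delta'') = b$ eliminates every competing subrepresentation is the technical heart of the argument.
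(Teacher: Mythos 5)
Your setup is fine: the segments of $\mathfrak m$ are pairwise nested and each contains $\Delta$, so $\mathrm{St}(\mathfrak m)\times \mathrm{St}(\Delta)$ commutes; $\omega$ is the unique irreducible submodule of $\mathrm{St}(\Delta)\times\mathrm{St}(\Delta')$, giving the embedding of $\mathrm{St}(\mathfrak m)\times\omega$ into the ascending-order product $\mathrm{St}(\Delta)\times\mathrm{St}(\mathfrak m)\times\mathrm{St}(\Delta')$; and deducing the commutation isomorphism from irreducibility is standard and is also how the paper handles that part. The genuine gap is the last step. That the ascending-order (co-standard) product has a unique irreducible submodule $\sigma$ is a general fact needing no hypothesis on the endpoints, and it only tells you that the socle of $\mathrm{St}(\mathfrak m)\times\omega$ is simple; it does not exclude composition factors sitting above the socle. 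The promised ``length/composition-series count'' is precisely the content of the lemma: a priori the length of $\mathrm{St}(\mathfrak m)\times\omega$ is unknown. Concretely, for $\mathfrak m=\{\widetilde{\Delta}\}$ the product could, as far as your argument goes, also contain $\mathrm{St}(\widetilde{\Delta}\cup\Delta'+\widetilde{\Delta}\cap\Delta'+\Delta)$ or $\mathrm{St}(\Delta\cup\Delta'+\Delta\cap\Delta'+\widetilde{\Delta})$ above its socle. The natural repair inside your framework --- showing $\sigma$ is also the cosocle and occurs with multiplicity one --- is not available from what you set up, because the realization of $\mathrm{St}(\mathfrak m)\times\omega$ as a quotient of $\mathrm{St}(\mathfrak m)\times\mathrm{St}(\Delta')\times\mathrm{St}(\Delta)$ is neither in ascending nor in descending order, so its cosocle is not known to be simple.

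For comparison, the paper proceeds by exclusion of composition factors rather than by a socle argument: it reduces to $\mathfrak m=\{\widetilde{\Delta}\}$, notes that every factor of $\mathrm{St}(\widetilde{\Delta})\times\omega$ is a factor of $\mathrm{St}(\widetilde{\Delta})\times\mathrm{St}(\Delta)\times\mathrm{St}(\Delta')$, hence is one of the three candidates above, and then rules out the two unwanted ones: the fully induced candidate because it is generic while $\omega$ is not (Whittaker dimension is multiplicative under parabolic induction), and the remaining one by a Jacquet-module computation along $N_l$ with $l=l_{abs}(\Delta\cup\Delta')$, using that no factor of $\omega_{N_l}$ has $\mathrm{St}(\Delta\cup\Delta')$ in the relevant position. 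Some step of this kind (or an equivalent multiplicity count) must be added to your proof; note also that the hypotheses $a''<a$, $b''=b$ are needed exactly there, not for the simple-socle property of the ambient module where you invoke them.
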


\begin{proof}
It is well-known that the second assertion implies the first one. We only have to prove the first one. Since $\mathrm{St}(\mathfrak m)$ can be written as $\times_{\Delta \in \mathfrak m} \mathrm{St}(\Delta)$, it reduces to the case that $\mathfrak m$ contains only one segment and so now we consider $\mathfrak m=\left\{ \widetilde{\Delta} \right\}$. 

We analyse possible composition factors of 
\[   \mathrm{St}(\widetilde{\Delta}) \times \omega .
\]
Since we know that a composition factor of $\mathrm{St}(\widetilde{\Delta})\times \omega$ is also a composition factor of $\mathrm{St}(\widetilde{\Delta})\times \mathrm{St}(\Delta)\times \mathrm{St}(\Delta')$, the possible composition factors are 
\[  \mathrm{St}(\left\{ \widetilde{\Delta}, \Delta, \Delta' \right\}), \mathrm{St}(\widetilde{\Delta}\cup \Delta'+\widetilde{\Delta}\cap \Delta'+\Delta), \mathrm{St}(\Delta \cup \Delta'+\Delta\cap \Delta'+\widetilde{\Delta}) .
\]
We denote the three representations $\pi_1, \pi_2, \pi_3$ respectively.

Thus it suffices to show that the last two composition factors cannot appear in $\mathrm{St}(\widetilde{\Delta}) \times \omega$. We first consider $\pi_2$. Note that $\pi_2$ is generic. However, $\omega$ is not generic and so $\mathrm{St}(\widetilde{\Delta}) \times \omega$ cannot contains a generic composition factor and so $\pi_2$ cannot appear in $\mathrm{St}(\widetilde{\Delta})\times \omega$. 

We now consider $\pi_3$. Let $l=l_{abs}(\Delta \cup \Delta')$. Then $(\pi_3)_{N_l}$ has the composition factor $\mathrm{St}(\Delta ) \boxtimes \mathrm{St}(\Delta \cup \Delta')$. Now we consider composition factors in $(\mathrm{St}(\widetilde{\Delta})\times \omega)_{N_l}$. If $(\mathrm{St}(\widetilde{\Delta})\times \omega)_{N_l}$ contains the factor $\mathrm{St}(\Delta ) \boxtimes \mathrm{St}(\Delta \cup \Delta')$, a simple composition factor is a simple composition factor in 
\[   \mathrm{St}(\Delta) \times \omega_1 \boxtimes \omega_2 ,
\]
where $\omega_1 \boxtimes \omega_2$ is a simple composition factor in $\omega_{N_l}$. However, the possibilities of those composition factors are well-known and it is impossible for $\omega_2$ to be the factor $\mathrm{St}(\Delta \cup \Delta')$. 
\end{proof}

\begin{lemma} \label{lem intermediate segment property for delta3}
Let $\pi \in \mathrm{Irr}_{\rho}$. Let $\Delta_1, \Delta_2, \Delta_3$ be segments satisfying $\Delta_1<\Delta_2<\Delta_3$. Suppose $\left\{ \Delta_1, \Delta_2, \Delta_3 \right\}$ is minimal to $\pi$. Then $D_{\Delta_3}(\pi)$. Let $\widetilde{\Delta}=\Delta_1 \cup \Delta_2$. Then the followings hold:
\begin{itemize}
\item[(1)] $\eta_{\widetilde{\Delta}}(\pi) -\eta_{\Delta_2}(\pi) = \eta_{\widetilde{\Delta}}(D_{\Delta_3}(\pi))-\eta_{\Delta_2}(D_{\Delta_3}(\pi))$;
\item[(2)] If $(\Delta_1, \Delta_2, \pi)$ satisfies the intermediate segment property, then $(\Delta_1, \Delta_2, D_{\Delta_3}(\pi))$ also satisfies the intermediate segment property. 
\end{itemize}
Here the subtraction in (1) means the subtraction entry-wise.
\end{lemma}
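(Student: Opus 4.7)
I first unpack statement (1) at the level of $\varepsilon$-values: the common coordinates indexed by $a\in\{a(\Delta_2),\ldots,b(\Delta_2)\}$ cancel in the two subtractions, so (1) is equivalent to the equalities
\[
 \varepsilon_{[a,b(\Delta_2)]_{\rho}}(\pi)=\varepsilon_{[a,b(\Delta_2)]_{\rho}}(D_{\Delta_3}(\pi))
\]
for every $a\in\{a(\Delta_1),\ldots,a(\Delta_2)-1\}$. Since $a<a(\Delta_2)<a(\Delta_3)$, Lemma \ref{lem removal process}(2) gives $\varepsilon_{[a,b(\Delta_2)]_{\rho}}(\mathfrak r(\Delta_3,\pi))=\varepsilon_{[a,b(\Delta_2)]_{\rho}}(\pi)$, and Theorem \ref{thm effect of Steinberg small a} promotes this to the $\geq$ inequality on the $D_{\Delta_3}(\pi)$ side. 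The substance of (1) will be ruling out strict inequality.

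To rule out strict inequality I plan to exploit the pairwise commutativities already available in this three-segment minimal setting. By Proposition \ref{prop first subseq property}, together with Lemmas \ref{lem basic subsequent property} and \ref{lem basic subsequent property 3}, each of $\{\Delta_1,\Delta_3\}$ and $\{\Delta_2,\Delta_3\}$ is minimal to $\pi$, so Proposition \ref{prop dagger property 2} yields $D_{\Delta_3}\circ D_{\Delta_i}(\pi)\cong D_{\Delta_i}\circ D_{\Delta_3}(\pi)$ for $i=1,2$, and hence
\[
 D_{\{\Delta_1,\Delta_2,\Delta_3\}}(\pi)\cong D_{\Delta_2}\circ D_{\Delta_1}\circ D_{\Delta_3}(\pi).
\]
Assume strict inequality $\varepsilon_{[a^*,b(\Delta_2)]_{\rho}}(D_{\Delta_3}(\pi))>\varepsilon_{[a^*,b(\Delta_2)]_{\rho}}(\pi)$ holds at some $a^*$ in the given range. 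Then $\mathfrak{hd}(D_{\Delta_3}(\pi))$ contains an ``extra'' segment $[a^*,d]$ with $d\geq b(\Delta_2)$ compared with $\mathfrak{hd}(\pi)$. Using Theorem \ref{thm isomorphic derivatives} and a direct analysis of the removal sequence for $\{\Delta_1,\Delta_2\}$ acting on $D_{\Delta_3}(\pi)$, this extra segment forces $\{\Delta_1,\Delta_2\}$ to fail to be minimal to $D_{\Delta_3}(\pi)$; Theorem \ref{thm unique minimal} then produces $\mathfrak n'<_Z\{\Delta_1,\Delta_2\}$ with $\mathfrak r(\mathfrak n',D_{\Delta_3}(\pi))=\mathfrak r(\{\Delta_1,\Delta_2\},D_{\Delta_3}(\pi))$. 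Every segment in $\mathfrak n'$ has $a$-value in $[a(\Delta_1),a(\Delta_2)]$ and $b$-value in $[b(\Delta_1),b(\Delta_2)]$, so $\mathfrak n'+\Delta_3$ remains in an ascending order. Lemma \ref{lem intersect union ascending order} combined with the displayed commutativity and Theorem \ref{thm isomorphic derivatives} will place $\mathfrak n'+\Delta_3$ in $\mathcal S(\pi,D_{\{\Delta_1,\Delta_2,\Delta_3\}}(\pi))$ with $\mathfrak n'+\Delta_3<_Z\{\Delta_1,\Delta_2,\Delta_3\}$, contradicting minimality.

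Part (2) will follow from (1). The intermediate segment $[c,d]\in\mathfrak{hd}(\pi)$ has $c<a(\Delta_2)<a(\Delta_3)$, so by Lemma \ref{lem removal process}(2) it persists in $\mathfrak r(\Delta_3,\pi)[c]=\mathfrak{hd}(\pi)[c]$. The preservation of $\varepsilon_{[c,b(\Delta_2)]_{\rho}}$ supplied by (1) prevents any segment starting at $c$ in $\mathfrak{hd}(D_{\Delta_3}(\pi))$ from acquiring endpoint $\geq b(\Delta_2)$ beyond what was already present in $\mathfrak{hd}(\pi)$, so a segment $[c,d']$ with $c\leq b(\Delta_1)\leq d'<b(\Delta_2)$ still exists there, giving the intermediate segment property for $(\Delta_1,\Delta_2,D_{\Delta_3}(\pi))$. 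The main obstacle will be the translation in (1) between a strict $\varepsilon$-inequality and a concrete Zelevinsky-smaller $\mathfrak n'$; I expect that step to require unwinding the fine chain of Theorem \ref{thm minimizability} and invoking the cancellative property of Proposition \ref{prop cancel property} to detect a local intersection-union inside the action of $\{\Delta_1,\Delta_2\}$ on $D_{\Delta_3}(\pi)$.
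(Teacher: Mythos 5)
Your reduction of (1) to the family of equalities $\varepsilon_{[a,b(\Delta_2)]_{\rho}}(\pi)=\varepsilon_{[a,b(\Delta_2)]_{\rho}}(D_{\Delta_3}(\pi))$ for $a(\Delta_1)\leq a<a(\Delta_2)$, the one-sided inequality via Lemma \ref{lem removal process}(2) and Theorem \ref{thm effect of Steinberg small a}, and the deduction of (2) from (1) are all fine. The gap is in the only substantive step: ruling out strict inequality. You assert that an extra segment $[a^{*},d]$ with $d\geq b(\Delta_2)$ in $\mathfrak{hd}(D_{\Delta_3}(\pi))$ ``forces $\{\Delta_1,\Delta_2\}$ to fail to be minimal to $D_{\Delta_3}(\pi)$'', citing only ``a direct analysis of the removal sequence''. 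No such implication is available: by Proposition \ref{prop nonoverlapping property}, minimality of $\{\Delta_1,\Delta_2\}$ to $\mathfrak{hd}(D_{\Delta_3}(\pi))$ amounts to the \emph{existence} of an intermediate segment, and an additional long segment starting at $a^{*}$ does not destroy that; in fact, under the hypotheses of the lemma $\{\Delta_1,\Delta_2\}$ \emph{is} minimal to $D_{\Delta_3}(\pi)$ (this is exactly Lemma \ref{lem basic subsequent property 2}, which the paper deduces from the present lemma), so the step you leave unargued is the entire difficulty. Moreover, even granting it, the final step does not close: from $\mathfrak r(\mathfrak n',\mathfrak{hd}(D_{\Delta_3}(\pi)))=\mathfrak r(\{\Delta_1,\Delta_2\},\mathfrak{hd}(D_{\Delta_3}(\pi)))$ you obtain $D_{\mathfrak n'}\circ D_{\Delta_3}(\pi)\cong D_{\{\Delta_1,\Delta_2\}}\circ D_{\Delta_3}(\pi)\cong D_{\{\Delta_1,\Delta_2,\Delta_3\}}(\pi)$, but $D_{\mathfrak n'+\Delta_3}(\pi)$ is computed in ascending order with $D_{\Delta_3}$ applied \emph{last} (note $\Delta_1\cup\Delta_2<\Delta_3$, so $\Delta_3$ cannot be moved before $\Delta_1\cup\Delta_2$), so placing $\mathfrak n'+\Delta_3$ in $\mathcal S(\pi,D_{\{\Delta_1,\Delta_2,\Delta_3\}}(\pi))$ requires commuting $D_{\Delta_3}$ past $D_{\Delta_1\cup\Delta_2}$ and $D_{\Delta_1\cap\Delta_2}$, which Proposition \ref{prop dagger property 2} only grants under a non-overlapping hypothesis for the pair $(\Delta_1\cup\Delta_2,\Delta_3)$ that you do not have. (Your preliminary commutativity $D_{\{\Delta_1,\Delta_2,\Delta_3\}}(\pi)\cong D_{\Delta_2}\circ D_{\Delta_1}\circ D_{\Delta_3}(\pi)$ is itself stated loosely --- you need minimality of $\{\Delta_2,\Delta_3\}$ to $D_{\Delta_1}(\pi)$, i.e.\ Proposition \ref{prop first subseq property}(2), not just to $\pi$ --- but that part is repairable.)

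For comparison, the paper's proof of (1) is not combinatorial at this point: it uses minimality of $\{\Delta_2,\Delta_3\}$ to $\pi$ (Lemma \ref{lem basic subsequent property}) to embed $\pi\hookrightarrow D_{\Delta_3}\circ D_{\Delta_2}(\pi)\times\mathrm{St}(\{\Delta_2,\Delta_3\})$, then embeds further into $D_{\mathfrak m}\circ D_{\Delta_3}\circ D_{\Delta_2}(\pi)\times\mathrm{St}(\mathfrak m)\times\mathrm{St}(\{\Delta_2,\Delta_3\})$, where $\mathfrak m=\sum_{c<a_2}\varepsilon_{[c,b_2]_{\rho}}(D_{\Delta_3}(\pi))\cdot[c,b_2]_{\rho}$, and commutes $\mathrm{St}(\mathfrak m)$ past $\mathrm{St}(\{\Delta_2,\Delta_3\})$ using the product-irreducibility Lemma \ref{lem irreducible of a product}; this yields $D_{\mathfrak m}(\pi)\neq 0$ and hence the missing bound $\varepsilon_{[c,b_2]_{\rho}}(D_{\Delta_3}(\pi))\leq\varepsilon_{[c,b_2]_{\rho}}(\pi)$. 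The paper explicitly remarks after the proof that a purely combinatorial argument for this lemma ``seems to require some further developments on removal process''; your proposal stakes its central step on exactly that unproven combinatorial claim, so as written it has a genuine gap.
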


\begin{proof}
We have shown in Lemma \ref{lem basic subsequent property} that $\left\{ \Delta_2, \Delta_3 \right\}$ is minimal to $\pi$. Thus, we have
\[  D_{\Delta_3}\circ D_{\Delta_2}(\pi) \not\cong D_{\Delta_2\cup \Delta_3}\circ D_{\Delta_2\cap \Delta_3}(\pi) .\]
By a standard argument (see e.g. proof of \cite[Proposition 6.2]{Ch22+d}), we have that $\pi$ is the unique simple submodule of 
\[   D_{\Delta_3}\circ D_{\Delta_2}(\pi) \times \mathrm{St}(\left\{ \Delta_2, \Delta_3 \right\}) .\]

Write $\Delta_2=[a_2,b_2]_{\rho}$. Let 
\[   \mathfrak m = \sum_{c<a_2} \varepsilon_{[c, b_2]_{\rho}}(D_{\Delta_3}(\pi)) \cdot [c,b_2]_{\rho} , 
\]
and 
\[  \mathfrak p = \sum_{c< a_2} \varepsilon_{[c,b_2]_{\rho}}(\pi) \cdot [c,b_2]_{\rho} .
\]

Thus, we have:
\[   D_{\Delta_3}\circ D_{\Delta_2}(\pi) \hookrightarrow D_{\mathfrak m}\circ D_{\Delta_3}\circ D_{\Delta_2}(\pi)\times \mathrm{St}(\mathfrak m)
\]
and so 
\[ \pi \hookrightarrow D_{\Delta_2}\circ D_{\Delta_2}(\pi) \times \mathrm{St}(\left\{ \Delta_2, \Delta_3 \right\}) \hookrightarrow D_{\mathfrak m}\circ D_{\Delta_3}\circ D_{\Delta_2}(\pi) \times \mathrm{St}(\mathfrak m) \times \mathrm{St}(\left\{ \Delta_2, \Delta_3 \right\}).
\]
By Lemma \ref{lem irreducible of a product}, 
\[  \pi \hookrightarrow D_{\mathfrak m}\circ D_{\Delta_3}\circ D_{\Delta_2}(\pi)  \times \mathrm{St}(\left\{ \Delta_2, \Delta_3 \right\}) \times \mathrm{St}(\mathfrak m).
\]
This implies that $D_{\mathfrak m}(\pi)\neq 0$ and so $\mathfrak m$ is a submultisegment of $\mathfrak p$. On the other hand, using Lemma \ref{lem removal process} and Theorem \ref{thm effect of Steinberg small a}, we also have that $\mathfrak p$ is a submultisegment of $\mathfrak m$. Hence, $\mathfrak m=\mathfrak p$. Translating to $\eta$-invariants, we obtain (1). 

We now prove (2). Write $\Delta_1=[a_1,b_1]_{\rho}$. Suppose $(\Delta_1, \Delta_2, \pi)$ satisfies the intermediate segment property. Then there exists a segment $[a,b]_{\rho}$ in $\mathfrak{hd}(\pi)$ satisfying that $a_1 \leq a < a_2$ and $b_1 \leq b <b_2$. Then, 
\[  \varepsilon_{[a,b]_{\rho}}(\pi) > \varepsilon_{[a,b_2]_{\rho}}(\pi) \]
By (1), we have that 
\[ \varepsilon_{[a,b_2]_{\rho}}(\pi)=\varepsilon_{[a,b_2]_{\rho}}(D_{\Delta_3}(\pi)).\]
By Theorem \ref{thm effect of Steinberg small a}, 
\[  \varepsilon_{[a,b]_{\rho}}(D_{\Delta_3}(\pi)) \geq \varepsilon_{[a,b]_{\rho}}(\pi) .\]
Combining the above equalities and inequalities, we have:
\[  \varepsilon_{[a,b]_{\rho}}(D_{\Delta_3}(\pi)) > \varepsilon_{[a,b_2]_{\rho}}(D_{\Delta}(\pi)) . \]
This implies that there exists a segment $[a,b']_{\rho}$ in $\mathfrak{hd}(D_{\Delta_3}(\pi))$ with $b'<b_2$. Thus $(\Delta_1, \Delta_2, D_{\Delta_3}(\pi))$ satisfies the intermediate segment property.
\end{proof}

It is of course desirable to also have a more combinatorial proof for Lemme \ref{lem intermediate segment property for delta3} while it seems to require some further developments on removal process to do so.

\begin{lemma} \label{lem basic subsequent property 2}
Let $\pi \in \mathrm{Irr}_{\rho}$. Let $\Delta_1, \Delta_2, \Delta_3$ be segments in an ascending order. If $\left\{ \Delta_1, \Delta_2, \Delta_3 \right\}$ is minimal to $\pi$, then $\left\{ \Delta_1, \Delta_2 \right\}$ is also minimal to $D_{\Delta_3}(\pi)$. 
\end{lemma}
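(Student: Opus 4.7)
The plan is to reduce the statement directly to the intermediate segment transfer already established in Lemma \ref{lem intermediate segment property for delta3}(2), so that almost all of the work is already done upstream. First, if $\Delta_1$ and $\Delta_2$ are unlinked there is nothing to prove, since once $\Delta_1$ and $\Delta_2$ are admissible the pair is automatically $\leq_Z$-minimal in its removal class. So I will assume $\Delta_1<\Delta_2$ are linked.

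Next I would verify the two admissibility statements needed to apply Proposition \ref{prop nonoverlapping property} to the triple $(\Delta_1,\Delta_2,\mathfrak{hd}(D_{\Delta_3}(\pi)))$: that $\Delta_1$ and $\Delta_2$ are each admissible to $D_{\Delta_3}(\pi)$. By Lemma \ref{lem basic subsequent property} and Lemma \ref{lem basic subsequent property 3} applied to the minimal triple $\{\Delta_1,\Delta_2,\Delta_3\}$, the pairs $\{\Delta_2,\Delta_3\}$ and $\{\Delta_1,\Delta_3\}$ are both minimal to $\pi$, and hence Proposition \ref{prop dagger property 2} (or Lemma \ref{lem comm derivative 1} in the unlinked subcase) gives
\[
D_{\Delta_3}\circ D_{\Delta_i}(\pi)\cong D_{\Delta_i}\circ D_{\Delta_3}(\pi),\qquad i=1,2,
\]
so $D_{\Delta_i}(D_{\Delta_3}(\pi))\neq 0$ for $i=1,2$.

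The core step is then: since $\{\Delta_1,\Delta_2,\Delta_3\}$ is minimal to $\pi$, Proposition \ref{prop first subseq property} gives that the pair $\{\Delta_1,\Delta_2\}$ is minimal to $\pi$, and therefore by the equivalence (1)$\Leftrightarrow$(2) of Proposition \ref{prop nonoverlapping property} the triple $(\Delta_1,\Delta_2,\mathfrak{hd}(\pi))$ satisfies the intermediate segment property. Feeding this input into Lemma \ref{lem intermediate segment property for delta3}(2) (which is exactly tailored to pass the intermediate segment property through $D_{\Delta_3}$ in this ascending three-segment situation), we conclude that $(\Delta_1,\Delta_2,\mathfrak{hd}(D_{\Delta_3}(\pi)))$ also satisfies the intermediate segment property. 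Reading the equivalence of Proposition \ref{prop nonoverlapping property} in the opposite direction then yields that $\{\Delta_1,\Delta_2\}$ is $\leq_Z$-minimal in $\mathcal S(D_{\Delta_3}(\pi),D_{\{\Delta_1,\Delta_2\}}(D_{\Delta_3}(\pi)))$, which is the desired minimality to $D_{\Delta_3}(\pi)$.

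The only genuinely nontrivial ingredient is Lemma \ref{lem intermediate segment property for delta3}(2), which is where the interplay between the minimality of the triple and the $\eta$-invariants on both sides of $D_{\Delta_3}$ is actually used; everything else in this lemma is bookkeeping with the ascending order, the subsequent property (Lemmas \ref{lem basic subsequent property} and \ref{lem basic subsequent property 3}), and the commutativity of Proposition \ref{prop dagger property 2}. Thus I do not foresee a real obstacle beyond invoking Lemma \ref{lem intermediate segment property for delta3}(2); the main subtlety is simply making sure the linked subcase is isolated at the start so that the intermediate segment property is both meaningful to state and genuinely transferred by that lemma.
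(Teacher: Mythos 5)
Your argument covers only part of the lemma: after isolating the unlinked $\Delta_1,\Delta_2$ case you assume $\Delta_1<\Delta_2$ and immediately invoke Lemma \ref{lem intermediate segment property for delta3}(2). But that lemma is stated (and proved) only under the hypothesis $\Delta_1<\Delta_2<\Delta_3$, i.e.\ it also requires $\Delta_2$ and $\Delta_3$ to be linked; its proof begins by using the minimality of $\{\Delta_2,\Delta_3\}$ to get $D_{\Delta_3}\circ D_{\Delta_2}(\pi)\not\cong D_{\Delta_2\cup\Delta_3}\circ D_{\Delta_2\cap\Delta_3}(\pi)$, which is meaningless when $\Delta_2,\Delta_3$ are unlinked. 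So the case where $\Delta_1<\Delta_2$ are linked but $\Delta_2$ and $\Delta_3$ are unlinked (which is allowed by the ascending-order hypothesis) is not covered by your core step, and this is a genuine gap rather than bookkeeping.

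The paper closes exactly this case by a separate argument: when $\Delta_2,\Delta_3$ are unlinked, the sequence $\Delta_1,\Delta_3,\Delta_2$ is again ascending and represents the same multisegment, so Lemma \ref{lem comm derivative 1} together with Lemma \ref{lem minimal in a basic case} (the case ``$\{\Delta_1,\Delta_3\}$ minimal to $D_{\Delta_2}(\pi)$'', applied after swapping the labels of $\Delta_2$ and $\Delta_3$) gives that $\{\Delta_1,\Delta_2\}$ is minimal to $D_{\Delta_3}(\pi)$. Note that Lemma \ref{lem minimal in a basic case} is itself a nontrivial case analysis, so this missing branch is not absorbed by the transfer lemma you rely on. In the remaining case $\Delta_1<\Delta_2<\Delta_3$ your route (minimality of $\{\Delta_1,\Delta_2\}$ via Proposition \ref{prop first subseq property}, the equivalence in Proposition \ref{prop nonoverlapping property}, and Lemma \ref{lem intermediate segment property for delta3}(2)) is precisely the paper's argument, and your extra admissibility checks via Proposition \ref{prop dagger property 2} are harmless; you only need to add the unlinked $\Delta_2,\Delta_3$ branch to have a complete proof.
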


\begin{proof}
If $\Delta_1$ and $\Delta_2$ are unlinked, then there is nothing to prove. If $\Delta_2$ and $\Delta_3$ are unlinked, then we use Lemma \ref{lem comm derivative 1} to transfer to Lemma \ref{lem minimal in a basic case}.

The remaining case is that $\Delta_1$ and $\Delta_2$ are linked, and $\Delta_2$ and $\Delta_3$ are linked. In other words, $\Delta_1<\Delta_2<\Delta_3$. This case follows from Proposition \ref{prop nonoverlapping property}(2)$\Leftrightarrow$(3) and Lemma \ref{lem intermediate segment property for delta3}.
\end{proof}

\subsection{Case: $\left\{ \Delta_1, \Delta_2 \right\}$ minimal to $\pi$}

\begin{lemma} \label{lem basic subsequent property 12 }
Let $\pi \in \mathrm{Irr}_{\rho}$. Let $\Delta_1, \Delta_2, \Delta_3$ be segments in an ascending order. If $\left\{ \Delta_1, \Delta_2, \Delta_3 \right\}$ is minimal to $\pi$, then $\left\{ \Delta_1, \Delta_2 \right\}$ is also minimal to $\pi$. 
\end{lemma}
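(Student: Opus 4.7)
The plan is to observe that this lemma is an immediate consequence of Proposition \ref{prop first subseq property}(1). By hypothesis, $\mathfrak n = \{\Delta_1, \Delta_2, \Delta_3\}$ is minimal to $\pi$, and the given listing $\Delta_1, \Delta_2, \Delta_3$ is already in an ascending order. Proposition \ref{prop first subseq property}(1) asserts that for any $s \leq r$, the prefix $\{\Delta_1, \ldots, \Delta_s\}$ of any ascending-order presentation of a minimal multisegment is again minimal to $\pi$. Applying this with $r = 3$ and $s = 2$ gives immediately that $\{\Delta_1, \Delta_2\}$ is minimal to $\pi$, which is the desired conclusion.

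Consequently, no case analysis on the linking patterns among $\Delta_1, \Delta_2, \Delta_3$ is required here, and there is no real obstacle: Proposition \ref{prop first subseq property} already handles all cases uniformly through Lemma \ref{lem intersect union ascending order} and the cancellative Proposition \ref{prop cancel property}. The reason this lemma admits such a short proof (in contrast with Lemmas \ref{lem basic subsequent property}, \ref{lem basic subsequent property 3}, and \ref{lem basic subsequent property 2}) is precisely that $\{\Delta_1, \Delta_2\}$ corresponds to an initial segment of the ascending order; dropping $\Delta_1$ (a suffix) or $\Delta_2$ (a middle segment), or replacing $\pi$ by $D_{\Delta_3}(\pi)$, falls outside the scope of Proposition \ref{prop first subseq property}(1) and forces the use of non-overlapping or intermediate segment arguments. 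The present lemma is recorded for the sake of a uniform statement of all subsequence properties in the three-segment case, even though its proof reduces to a single application of the general prefix minimality result.
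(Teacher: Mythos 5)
Your proposal is correct and matches the paper exactly: the paper also disposes of Lemma \ref{lem basic subsequent property 12 } by observing it is the special case $r=3$, $s=2$ of Proposition \ref{prop first subseq property}(1). Your additional remarks on why the other three-segment cases need more work are accurate but not required for the proof itself.
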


The above lemma is a special case of Proposition \ref{prop first subseq property}(1).

\subsection{Case: $\left\{ \Delta_2, \Delta_3 \right\}$ minimal to $D_{\Delta_1}(\pi)$}

\begin{lemma} \label{lem basic subsequent property 23 1}
Let $\pi \in \mathrm{Irr}_{\rho}$. Let $\Delta_1, \Delta_2, \Delta_3$ be segments in an ascending order. If $\left\{ \Delta_1, \Delta_2, \Delta_3 \right\}$ is minimal to $\pi$, then $\left\{ \Delta_2, \Delta_3 \right\}$ is also minimal to $D_{\Delta_1}(\pi)$. 
\end{lemma}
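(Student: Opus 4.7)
The plan is to use the multisegment-level subsequent property (Proposition \ref{prop first subseq property}(2)) together with the characterization of minimality via the intermediate segment property (Proposition \ref{prop nonoverlapping property}), and then transfer the intermediate segment from the multisegment $\mathfrak{r}(\{\Delta_1\},\mathfrak{hd}(\pi))$ to $\mathfrak{hd}(D_{\Delta_1}(\pi))$ using the effect-of-derivatives theorems.

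If $\Delta_2$ and $\Delta_3$ are unlinked, minimality is immediate, so I would assume $\Delta_2<\Delta_3$ linked. I will first apply Proposition \ref{prop first subseq property}(2) with $s=1$ to the ascending sequence $(\Delta_1,\Delta_2,\Delta_3)$ to conclude that $\{\Delta_2,\Delta_3\}$ is minimal, as a multisegment, to $\mathfrak{r}(\{\Delta_1\},\mathfrak{hd}(\pi))$. Then the implication (3)$\Rightarrow$(2) of Proposition \ref{prop nonoverlapping property} tells me that $(\Delta_2,\Delta_3,\mathfrak{r}(\{\Delta_1\},\mathfrak{hd}(\pi)))$ satisfies the intermediate segment property, so there exists a segment $[c,d]_{\rho}$ in $\mathfrak{r}(\{\Delta_1\},\mathfrak{hd}(\pi))$ with $a(\Delta_2)\leq c<a(\Delta_3)$ and $b(\Delta_2)\leq d<b(\Delta_3)$.

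The main step will be to show that the same segment $[c,d]_{\rho}$ appears in $\mathfrak{hd}(D_{\Delta_1}(\pi))$, i.e.\ $\varepsilon_{[c,d]_{\rho}}(D_{\Delta_1}(\pi))>0$, by a short case analysis on the position of $c$ relative to $a(\Delta_1)$. When $c>a(\Delta_1)$, or when $c=a(\Delta_1)$ (in which case $[c,d]_{\rho}$ and $\Delta_1$ share a left endpoint and are automatically unlinked, since the definition of linked segments requires strict inequality of left endpoints), Theorem \ref{thm effect of Steinberg} yields
\[
\varepsilon_{[c,d]_{\rho}}(D_{\Delta_1}(\pi))=\varepsilon_{[c,d]_{\rho}}(\mathfrak{r}(\{\Delta_1\},\mathfrak{hd}(\pi)))>0.
\]
When $c<a(\Delta_1)$, Theorem \ref{thm effect of Steinberg small a} combined with Lemma \ref{lem removal process}(2) gives
\[
\varepsilon_{[c,d]_{\rho}}(D_{\Delta_1}(\pi))\geq\varepsilon_{[c,d]_{\rho}}(\pi)=\varepsilon_{[c,d]_{\rho}}(\mathfrak{r}(\{\Delta_1\},\mathfrak{hd}(\pi)))>0.
\]
Thus $(\Delta_2,\Delta_3,\mathfrak{hd}(D_{\Delta_1}(\pi)))$ satisfies the intermediate segment property, and the reverse implication (2)$\Rightarrow$(3) of Proposition \ref{prop nonoverlapping property} delivers the minimality of $\{\Delta_2,\Delta_3\}$ to $D_{\Delta_1}(\pi)$.

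The only delicate point, which I would flag in the write-up, is the edge case $c=a(\Delta_1)$: one has to notice that segments with left endpoint equal to $a(\Delta_1)$ are automatically unlinked to $\Delta_1$, so they fall under the equality branch of Theorem \ref{thm effect of Steinberg} rather than producing a genuine discrepancy between $\mathfrak{r}(\Delta_1,\mathfrak{hd}(\pi))$ and $\mathfrak{hd}(D_{\Delta_1}(\pi))$. Beyond this small observation, no serious obstacle seems to arise, and the proof reduces cleanly to the subsequent-property machinery together with the two effect-of-derivatives theorems.
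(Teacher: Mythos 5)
Your route is genuinely different from the paper's. The paper disposes of this lemma in one line: it is the $s=1$ case of Proposition \ref{prop first subseq property}(2) (the tail of an ascending minimal sequence is minimal to the derivative by the head), whose proof is the cancellative property (Proposition \ref{prop cancel property}) applied at the level of derivatives, with no $\varepsilon$-bookkeeping. You instead read Proposition \ref{prop first subseq property}(2) combinatorially, as minimality to the multisegment $\mathfrak r(\{\Delta_1\},\mathfrak{hd}(\pi))$, and then try to transfer an intermediate segment to $\mathfrak{hd}(D_{\Delta_1}(\pi))$. Noticing that these two multisegments may differ is a fair point, but your transfer step contains a genuine gap.

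The gap is the inference ``$\varepsilon_{[c,d]_{\rho}}(D_{\Delta_1}(\pi))>0$, hence $[c,d]_{\rho}$ appears in $\mathfrak{hd}(D_{\Delta_1}(\pi))$, hence the intermediate segment property holds.'' By (\ref{eqn varepsilon combinatorics}), positivity of $\varepsilon_{[c,d]_{\rho}}$ only produces a segment $[c,d']_{\rho}$ with $d'\geq d$, whereas the intermediate segment property for $(\Delta_2,\Delta_3,\cdot)$ also demands the upper bound $d'<b(\Delta_3)$, which a single $\varepsilon$ does not control. This matters precisely where your argument is weakest: when $c<a(\Delta_1)$ you invoke only the inequality of Theorem \ref{thm effect of Steinberg small a}, and the passage from $\mathfrak r(\Delta_1,\mathfrak{hd}(\pi))$ to $\mathfrak{hd}(D_{\Delta_1}(\pi))$ can prolong segments with left endpoint $<a(\Delta_1)$ (this is exactly the phenomenon used in the proof of Lemma \ref{lem dagger property on derivative}); a priori the witness could be prolonged past $b(\Delta_3)$ and cease to be intermediate. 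The repair is to track the pair $\varepsilon_{[c,b(\Delta_2)]_{\rho}}$ and $\varepsilon_{[c,b(\Delta_3)]_{\rho}}$, since the intermediate segment property at $c$ is equivalent to a strict inequality between them, and to show both are preserved: for $c\geq a(\Delta_1)$ this is the equality of Theorem \ref{thm effect of Steinberg} (your $c=a(\Delta_1)$ observation is correct), while for $c<a(\Delta_1)$ the ascending order forces $\Delta_1$ to be unlinked to $\Delta_2$, and one then checks that $[c,b(\Delta_2)]_{\rho}$ and $[c,b(\Delta_3)]_{\rho}$ are themselves unlinked to $\Delta_1$, so the equality branch of Theorem \ref{thm effect of Steinberg} applies rather than the one-sided bound you use. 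With that correction your argument closes, but as written the step from positivity to the intermediate segment property fails, and in the case $c<a(\Delta_1)$ the cited theorem is too weak to carry it; the paper's reduction via the cancellative property avoids all of this.
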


The above lemma is again a special case of Proposition \ref{prop first subseq property}(2).

%Then this follows from a similar proof of Lemma \ref{lem basic subsequent property} (also c.f. Lemma \ref{lem min commut in special case}).

%%%%%%%%%%%%
%\begin{example} \label{example a case of highest multi}
%\begin{enumerate}
%\item Let $\mathfrak m=\left\{ [0,5],[1,6],[5,7],[4,8],[2,8],[3,9] \right\}$. Then $\mathfrak{hd}(\langle \mathfrak m \rangle)=\left\{ [5,7],[8,9],[8]  \right\}$. 
%\[  \mathfrak{hd}(D_{[8,9]}(\langle \mathfrak m \rangle))=\left\{ [5,9],[7],[8] \right\} \]
%(c.f. $\mathfrak r([8,9],\langle \mathfrak m \rangle)=\left\{ [5,7],[8] \right\}$.) Note that $\left\{ [5,7],[8] \right\}$ is minimal to $D_{[8,9]}(\langle \mathfrak m \rangle)$. 
%\item Let $\mathfrak m=\left\{ [3,5],[4,6],[5,7],[3,7],[4,8],[2,8],[3,9] \right\}$. Then 
%\[ \mathfrak{hd}(\langle \mathfrak m \rangle)=\left\{ [5,7],[7,8],[8,9]\right\}.  \]
%We also have that:
%\[  \mathfrak{hd}(D_{[8,9]}(\langle \mathfrak m \rangle)=\left\{ [5,7],[7,8],[7,8] \right\} \]
%(c.f. $\mathfrak r([8,9], \langle \mathfrak m \rangle)=\left\{ [5,7],[7,8] \right\}$). Note that $\left\{ [5,7],[7,8] \right\}$ is also minimal to $D_{[8,9]}(\langle \mathfrak m \rangle)$. 
%\end{enumerate}
%\end{example}
%%%%%%%%%%%%%%

\section{Subsequent property of minimal sequences} \label{s subseq property}

\subsection{Consecutive pairs}

\begin{definition} \label{def consecutive pair}
Let $\mathfrak m \in \mathrm{Mult}_{\rho}$. Two segments $\Delta_1$ and $\Delta_2$ in $\mathfrak m$ are said to be {\it consecutive} in $\mathfrak m$ if 
\begin{itemize}
\item $\Delta_1<\Delta_2$ i.e. $\Delta_1$ and $\Delta_2$ are linked with $a(\Delta_1)<a(\Delta_2)$
\item there is no other segment $\Delta'$ in $\mathfrak m$ such that 
\[ a(\Delta_1)\leq a(\Delta') \leq a(\Delta_2), \quad b(\Delta_1) \leq b(\Delta') \leq b(\Delta_2) \]
and $\Delta'$ is linked to either $\Delta_1$ or $\Delta_2$. 
\end{itemize}
(The last linkedness condition guarantees that $\Delta'\neq \Delta_1\cap \Delta_2$ and $\Delta'\neq \Delta_1\cup \Delta_2$.)
\end{definition}

%The second condition of two consecutive segments rules out the following possibilities of $\Delta'$ in $\mathfrak m$:
%\begin{itemize}
%\item $\Delta_1<\Delta'<\Delta_2$; 
%\item $\Delta'$ equal to one of $\Delta_1, \Delta_2, [a(\Delta_1),b(\Delta_2)], [a(\Delta_2), b(\Delta_1)]$. 
%\end{itemize}

\begin{example}
\begin{itemize}
\item Let $\mathfrak h=\left\{ [0,3], [1,4], [2,5] \right\}$. Then $[0,3], [1,4]$ form a pair of consecutive segments. Similarly, $[1,4], [2,5]$ also form a pair of consecutive segments, but $[0,3], [2,5]$ do not form a pair of consecutive segments.
\item Let $\mathfrak h=\left\{ [0,4], [1,2], [2,5] \right\}$. Then $[0,4], [2,5]$ form a pair of consecutive segments; and $[1,2], [2,5]$ also form a pair of consecutive segments.
\item Let $\mathfrak h=\left\{ [0,3],[1,3], [2,4], [2,5] \right\}$. Then $[1,3], [2,4]$ form a pair of consecutive segments, while $[0,3], [2,4]$ do not form a pair of consecutive segments.
\end{itemize}
\end{example}

The terminology of consecutive segments is suggested by its property in the intersection-union process.

\begin{lemma}
Let $\Delta_1, \Delta_2$ be linked segments with $\Delta_1 < \Delta_2$. Suppose there exists a segment $\Delta'$ satisfying the conditions in the second bullet of Definition \ref{def consecutive pair}. Then, if $\Delta'$ is linked to $\Delta_i$ ($i=1,2$), then 
\[   \left\{ \Delta_1 \cap \Delta_2, \Delta_1\cup \Delta_2, \Delta' \right\} \leq_Z \left\{ \Delta_i \cap \Delta', \Delta_i\cup \Delta', \Delta_{j}  \right\} \leq_Z \left\{ \Delta_1, \Delta_2, \Delta' \right\}.
\]
Here $j$ is the index other than $i$ i.e. $j\in \left\{ 1,2\right\}-\left\{i \right\}$. 
\end{lemma}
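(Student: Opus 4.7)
The plan is to exhibit both inequalities by chains of elementary intersection-union processes. The second inequality $\{\Delta_i \cap \Delta', \Delta_i \cup \Delta', \Delta_j\} \leq_Z \{\Delta_1, \Delta_2, \Delta'\}$ is immediate: it is a single intersection-union on the pair $(\Delta_i, \Delta')$, which is linked by hypothesis, with $\Delta_j$ unchanged. The work is entirely in the first inequality.

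For the first inequality I would write $\Delta_k = [a_k, b_k]_\rho$ for $k=1,2$ and $\Delta' = [a', b']_\rho$, and extract the numerical constraints: $a_1 < a_2 \leq b_1 + 1 \leq b_2$ (from $\Delta_1 < \Delta_2$ linked) together with $a_1 \leq a' \leq a_2$ and $b_1 \leq b' \leq b_2$ (from the hypothesis on $\Delta'$). The two cases $i=1$ and $i=2$ are then handled separately. For $i=1$: since $\Delta'$ is linked to $\Delta_1$ and $a' \geq a_1$, one must have $\Delta_1 < \Delta'$, so $a_1 < a' \leq b_1 + 1 \leq b'$, whence $\Delta_1 \cap \Delta' = [a', b_1]_\rho$ and $\Delta_1 \cup \Delta' = [a_1, b']_\rho$. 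Starting from the middle multisegment $\{[a', b_1]_\rho,\, [a_1, b']_\rho,\, [a_2, b_2]_\rho\}$, I would first apply an intersection-union to $([a_1, b']_\rho, [a_2, b_2]_\rho)$, which is linked because $a_1 < a_2 \leq b_1 + 1 \leq b' + 1$, producing $[a_1, b_2]_\rho = \Delta_1 \cup \Delta_2$ and $[a_2, b']_\rho$; a second intersection-union on the linked pair $([a', b_1]_\rho, [a_2, b']_\rho)$ then yields $[a', b']_\rho = \Delta'$ and $[a_2, b_1]_\rho = \Delta_1 \cap \Delta_2$, giving the target. The case $i = 2$ is symmetric: hypotheses force $\Delta' < \Delta_2$, so $\Delta_2 \cap \Delta' = [a_2, b']_\rho$ and $\Delta_2 \cup \Delta' = [a', b_2]_\rho$, and two analogous intersection-union steps --- first on $(\Delta_1, \Delta_2 \cup \Delta')$, then on the resulting pair --- produce $\{\Delta_1 \cap \Delta_2, \Delta_1 \cup \Delta_2, \Delta'\}$.

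The main (modest) obstacle will be the boundary configurations in which one of the equalities $a_1 = a'$, $a' = a_2$, $b' = b_1$, or $b' = b_2$ holds; in such situations one of the pairs in the chain above may fail to be linked, or a claimed union or intersection may coincide with a segment already present, so the two-step chain degenerates. I expect to dispose of these boundary cases by direct inspection: typically the intermediate multisegment already equals the target, or else swapping the order of the two intersection-union steps restores a linked pair and the same conclusion follows. The definition-level exclusion $\Delta' \notin \{\Delta_1 \cap \Delta_2, \Delta_1 \cup \Delta_2\}$ built into the consecutive-pair condition precisely rules out the genuinely degenerate configurations in which this strategy would collapse.
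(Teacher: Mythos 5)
Your proof is correct: both inequalities follow from the explicit intersection-union chains you describe, and the boundary configurations (for $i=1$, $a'=a_2$ or $b'=b_2$; for $i=2$, $a'=a_1$ or $b'=b_1$; plus the empty-intersection cases) all collapse to a single step or to an outright equality exactly as you predict, since the linkedness of $\Delta'$ with $\Delta_i$ forces $\Delta_1<\Delta'$ in the first case and $\Delta'<\Delta_2$ in the second. The paper offers no argument here --- it states that the lemma ``follows from a direct checking'' and omits the details --- so your explicit two-step chain with boundary inspection is precisely the intended verification.
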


The above lemma follows from a direct checking and we omit the details. A simple combinatorics give the following:

\begin{lemma} \label{lem cover consecutive}
\begin{enumerate}
\item Let $\mathfrak m, \mathfrak m' \in \mathrm{Mult}_{\rho}$ such that $\mathfrak m' \leq_Z \mathfrak m$ with $\mathfrak m'\neq \mathfrak m$. Then there exists a pair of consecutive segments $\Delta, \Delta'$ in $\mathfrak m$ such that for the multisegment $\mathfrak m''$ obtained from $\mathfrak m$ by the elementary intersection-union process involving $\Delta$ and $\Delta'$, $\mathfrak m' \leq_Z \mathfrak m'' \leq_Z \mathfrak m$.
\item Let $\mathfrak m \in \mathrm{Mult}_{\rho}$. Let $\Delta, \Delta'$ be a pair of consecutive segments in $\mathfrak m$ with $\Delta <\Delta'$. Let $\mathfrak m'$ be the submultisegment of $\mathfrak m-\Delta-\Delta'$ that contains all the segments $\widetilde{\Delta}$ with $a(\Delta')\leq a(\widetilde{\Delta})$ or $b(\Delta') \leq b(\widetilde{\Delta})$. Write the segments in $\mathfrak m-\mathfrak m'-\Delta-\Delta'$ in an ascending order: $\Delta_1, \ldots, \Delta_r$ and write the segments in $\mathfrak m'$ in an ascending order: $\Delta_1', \ldots, \Delta_s'$. Then the sequence:
\[  \Delta_1, \ldots, \Delta_r, \Delta, \Delta', \Delta_1', \ldots, \Delta_s' \]
%\[  \Delta_1, \ldots, \Delta_r, \Delta \cap \Delta', \Delta\cup \Delta', \Delta_1', \ldots, \Delta_s' \]
is ascending.
\end{enumerate}
\end{lemma}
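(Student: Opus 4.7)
The plan is to induct on the measure $\mu(\Delta_a,\Delta_b) := (a(\Delta_b)-a(\Delta_a)) + (b(\Delta_b)-b(\Delta_a))$ attached to a linked pair $\Delta_a<\Delta_b$ in $\mathfrak m$. Since $\mathfrak m' <_Z \mathfrak m$, pick any chain realizing $\mathfrak m'\leq_Z \mathfrak m$ as a sequence of elementary intersection--union processes, and single out the first step: it is performed on some linked pair $\Delta_1<\Delta_2$ in $\mathfrak m$ and produces a multisegment $\mathfrak m_1$ with $\mathfrak m'\leq_Z \mathfrak m_1\leq_Z \mathfrak m$. If $(\Delta_1,\Delta_2)$ is consecutive in $\mathfrak m$, we set $\mathfrak m''=\mathfrak m_1$ and are done.

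Otherwise, by Definition \ref{def consecutive pair}, there exists some $\Delta^*\in\mathfrak m$ (distinct from $\Delta_1,\Delta_2$) lying in the ``rectangle'' of $\Delta_1,\Delta_2$ and linked to one of them, say to $\Delta_1$. Applying the preceding unnamed lemma to the triple $\{\Delta_1,\Delta_2,\Delta^*\}$ gives
\[
\{\Delta_1\cap\Delta_2,\,\Delta_1\cup\Delta_2,\,\Delta^*\}\,\leq_Z\,\{\Delta_1\cap\Delta^*,\,\Delta_1\cup\Delta^*,\,\Delta_2\}\,\leq_Z\,\{\Delta_1,\Delta_2,\Delta^*\}.
\]
Embedding this into $\mathfrak m$, the elementary intersection--union step performed on $(\Delta_1,\Delta^*)$ produces $\mathfrak m_1^*$ with $\mathfrak m_1\leq_Z \mathfrak m_1^*\leq_Z \mathfrak m$; in particular $\mathfrak m'\leq_Z \mathfrak m_1^*\leq_Z \mathfrak m$. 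Moreover $(\Delta_1,\Delta^*)$ is a linked pair in $\mathfrak m$ with $\mu(\Delta_1,\Delta^*)<\mu(\Delta_1,\Delta_2)$: both summands are weakly smaller by the rectangle condition, and at least one is strictly smaller since $\Delta^*\neq \Delta_2$. The induction therefore continues until a consecutive pair is produced, yielding the desired $\mathfrak m''$.

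\textbf{Plan for (2).} This is a direct verification that the listed sequence is ascending. Pairs entirely inside $\Delta_1,\ldots,\Delta_r$ or entirely inside $\Delta_1',\ldots,\Delta_s'$ are handled by having chosen an ascending order on each block. The remaining pairs to check split as follows. For $\Delta_i$ versus $\Delta$: if they are linked with $\Delta<\Delta_i$, then $a(\Delta)\leq a(\Delta_i)$ and $b(\Delta)\leq b(\Delta_i)$; combined with $\Delta_i\notin\mathfrak m'$ (so $a(\Delta_i)<a(\Delta')$ and $b(\Delta_i)<b(\Delta')$), the segment $\Delta_i$ would violate the consecutiveness of $(\Delta,\Delta')$, a contradiction; hence $\Delta_i<\Delta$. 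For $\Delta_i$ versus $\Delta'$: since $a(\Delta_i)<a(\Delta')$, whenever they are linked one automatically has $\Delta_i<\Delta'$. For $\Delta$ versus $\Delta_j'$ and $\Delta'$ versus $\Delta_j'$: the defining property of $\mathfrak m'$ gives two cases; in the case $a(\Delta')\leq a(\Delta_j')$ one directly has $a(\Delta)<a(\Delta')\leq a(\Delta_j')$, and in the case $a(\Delta_j')<a(\Delta')$ with $b(\Delta')\leq b(\Delta_j')$ the wrong linking type $\Delta_j'<\Delta$ (resp.\ $\Delta_j'<\Delta'$) is ruled out by a $b$-coordinate comparison, leaving only the admissible order. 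For $\Delta_i$ versus $\Delta_j'$: combining $b(\Delta_i)<b(\Delta')\leq b(\Delta_j')$ (in the relevant case) with the inequality $b(\Delta_j')<b(\Delta_i)$ needed for $\Delta_j'<\Delta_i$ yields a contradiction, so the only possibility when they are linked is $\Delta_i<\Delta_j'$.

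\textbf{Main obstacle.} In (1), termination of the induction is clean once $\mu$ is introduced; the minor subtlety is to verify that the ``rectangle $+$ linkedness'' hypothesis of the preceding lemma really does apply to the chosen intermediate $\Delta^*$, which is exactly the definition of non-consecutiveness. In (2), the genuine obstacle is the pair $\Delta_i$ versus $\Delta$: it is the only case where one needs to go back to the consecutiveness of $(\Delta,\Delta')$ and argue by contradiction, recognizing that such a $\Delta_i$, if linked to $\Delta$ on the wrong side, would itself be an obstruction to consecutiveness. All other cases reduce to straightforward inequalities on endpoints.
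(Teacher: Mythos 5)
Your argument is correct and essentially coincides with the paper's: for (1) the paper likewise reduces to the first elementary step of a chain realizing $\mathfrak m' \leq_Z \mathfrak m$ and, when the pair involved is not consecutive, invokes the preceding three-segment lemma to replace it by a linked pair inside the rectangle and iterates, with a termination argument (the paper tracks the strictly lengthening intersection, you track the strictly decreasing measure $\mu$ --- equivalent bookkeeping). For (2) the paper simply records it as ``a direct check from the definition of an ascending order,'' which is exactly the endpoint case analysis you carry out.
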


\begin{proof}
For (1), it suffices to show for $\mathfrak m'$ obtained by a pair of elementary intersection-union operation involving $\overline{\Delta}$ and $\overline{\Delta}'$. If the segments involved in the operation are consecutive, then the statement is immediate. Otherwise, there exists a segment $\widetilde{\Delta}$ such that $\widetilde{\Delta}$ is linked to either $\overline{\Delta}$ or $\overline{\Delta}'$, and produce a multisegment $\widetilde{\mathfrak m}$ such that $\mathfrak m'\leq_Z \widetilde{\mathfrak m}\leq_Z \mathfrak m''$. We repeat the process if such linked pair is still not consecutive. Note that if $\widetilde{\Delta}$ is linked to $\overline{\Delta}$ (resp. $\overline{\Delta}'$), we must have $\widetilde{\Delta} \cap \overline{\Delta}$ (resp. $\widetilde{\Delta} \cap \overline{\Delta}'$) strictly longer than $\overline{\Delta} \cap \overline{\Delta}'$, and hence after repeating the process several times, we obtain desired consecutive segments.

For (2), it is a direct check from the definition of an ascending order.
\end{proof}

\subsection{Minimality under commutativity (second basic case)}

We first prove a commutativity result of minimal sequences, which is useful in proving Theorem \ref{thm subsequent minimal}.

\begin{lemma} \label{lem min commut in special case}
Let $\pi \in \mathrm{Irr}$. Let $\Delta, \Delta_1, \Delta_2, \ldots, \Delta_r$ be in an ascending order and minimal to $\pi$. Then $\left\{ \Delta, \Delta_{k+1}, \ldots, \Delta_r\right\}$ is also minimal to $D_{\Delta_k}\circ \ldots \circ D_{\Delta_1}(\pi)\neq 0$, and
\[   D_{\Delta_r}\circ \ldots \circ D_{\Delta_{k+1}}\circ D_{\Delta}\circ D_{\Delta_k}\circ \ldots D_{\Delta_1}(\pi)\cong D_{\Delta_r}\circ \ldots \circ D_{\Delta_1}\circ D_{\Delta}(\pi) .
\]
\end{lemma}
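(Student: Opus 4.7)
I will proceed by induction on $k$, where the base case $k=0$ is exactly the hypothesis (both the minimality claim and the isomorphism reduce to tautologies). For the inductive step from $k-1$ to $k$, set $\pi' := D_{\Delta_{k-1}}\circ \cdots \circ D_{\Delta_1}(\pi)$ (with $\pi' = \pi$ when $k=1$). By the inductive hypothesis, $\{\Delta, \Delta_k, \Delta_{k+1}, \ldots, \Delta_r\}$ is minimal to $\pi'$, and
\[
D_{\Delta_r}\circ \cdots \circ D_{\Delta_k}\circ D_{\Delta}(\pi') \;\cong\; D_{\Delta_r}\circ \cdots \circ D_{\Delta_1}\circ D_{\Delta}(\pi).
\]
Applying Proposition \ref{prop first subseq property}(1) with $s=2$ to this minimal sequence (noting $\Delta$ is the smallest so $\Delta, \Delta_k$ is an ascending prefix) yields that $\{\Delta, \Delta_k\}$ is minimal to $\pi'$. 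Hence by Proposition \ref{prop dagger property 2} in the linked case, or by Lemma \ref{lem comm derivative 1} in the unlinked case, we obtain the two-segment commutativity $D_{\Delta}\circ D_{\Delta_k}(\pi') \cong D_{\Delta_k}\circ D_{\Delta}(\pi')$. Chaining this into the inductive identity gives
\[
D_{\Delta_r}\circ \cdots \circ D_{\Delta_{k+1}}\circ D_{\Delta}\circ D_{\Delta_k}(\pi') \;\cong\; D_{\Delta_r}\circ \cdots \circ D_{\Delta_1}\circ D_{\Delta}(\pi),
\]
which is the desired derivative identity at step $k$ and in particular shows $D_{\{\Delta,\Delta_{k+1},\ldots,\Delta_r\}}(D_{\Delta_k}(\pi')) \neq 0$.

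It remains to verify minimality of $\{\Delta, \Delta_{k+1}, \ldots, \Delta_r\}$ to $D_{\Delta_k}(\pi')$. Call the common derivative $\tau$. Suppose for contradiction there exists $\mathfrak m <_Z \{\Delta, \Delta_{k+1}, \ldots, \Delta_r\}$ with $D_{\mathfrak m}(D_{\Delta_k}(\pi')) \cong \tau$. By Lemma \ref{lem cover consecutive}(1) combined with Theorem \ref{thm convex derivatives}, we may assume $\mathfrak m$ arises from $\{\Delta, \Delta_{k+1}, \ldots, \Delta_r\}$ by a single elementary intersection-union on a consecutive pair. Form $\mathfrak m' := \mathfrak m + \Delta_k$. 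Then $\mathfrak m' <_Z \{\Delta, \Delta_k, \Delta_{k+1}, \ldots, \Delta_r\}$ strictly. The strategy is to show that $D_{\mathfrak m'}(\pi') \cong \tau$, which would contradict the inductive minimality of $\{\Delta, \Delta_k, \Delta_{k+1}, \ldots, \Delta_r\}$ to $\pi'$.

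To establish $D_{\mathfrak m'}(\pi') \cong D_{\mathfrak m}(D_{\Delta_k}(\pi'))$, I split according to whether the consecutive intersection-union involves $\Delta$. When it only involves segments among $\{\Delta_{k+1},\ldots,\Delta_r\}$, all new segments have $a$-values $\geq a(\Delta_{k+1}) > a(\Delta_k)$, so Lemma \ref{lem cover consecutive}(2) lets me arrange the ascending order of $\mathfrak m'$ as $\Delta, \Delta_k$, followed by the modified tail; peeling off $\Delta$ and using the swap from the first paragraph reduces this to the identity $D_{\Delta}\circ D_{\Delta_k}(\pi') \cong D_{\Delta_k}\circ D_{\Delta}(\pi')$. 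The hard part will be the case where the consecutive pair is $\Delta$ and some $\Delta_j$: here $\mathfrak m$ acquires the new segments $\Delta\cap\Delta_j$ and $\Delta\cup\Delta_j$, and $\Delta\cup\Delta_j$ has $a$-value $a(\Delta)$ but may be linked to $\Delta_k$, so $\Delta_k$ cannot be moved past it via the ascending order alone. I expect this obstruction to be resolved by invoking the three-segment commutativity and minimality results of Section \ref{s three segment cases} — in particular Lemma \ref{lem minimal in a basic case} applied to the triple $\{\Delta\cup\Delta_j, \Delta_k, (\cdot)\}$ inside the minimal sequence at $\pi'$ — to swap $\Delta_k$ past $\Delta\cup\Delta_j$ while keeping the derivative unchanged. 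Once this swap is achieved, $\Delta_k$ can be placed in a position that allows computing $D_{\mathfrak m'}(\pi')$ as $D_{\mathfrak m}\circ D_{\Delta_k}(\pi')$, completing the contradiction.
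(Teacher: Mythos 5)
Your skeleton (induction reducing to a one-step statement; the swap $D_{\Delta}\circ D_{\Delta_k}(\pi')\cong D_{\Delta_k}\circ D_{\Delta}(\pi')$ via Proposition \ref{prop first subseq property}(1) together with Proposition \ref{prop dagger property 2} or Lemma \ref{lem comm derivative 1}; the reduction by Theorem \ref{thm convex derivatives} and Lemma \ref{lem cover consecutive}(1) to a single consecutive intersection-union) is the same as the paper's, and the derivative identity part is fine. The gap is exactly at the step you yourself label ``the hard part,'' and the fix you sketch does not work. To conclude $\mathfrak m':=\mathfrak m+\Delta_k\in\mathcal S(\pi',\tau)$ you must show $D_{\mathfrak m'}(\pi')\cong D_{\mathfrak m}\circ D_{\Delta_k}(\pi')$, i.e.\ commute $D_{\Delta_k}$ past every segment of $\mathfrak m$ that is forced to precede $\Delta_k$ in an ascending order of $\mathfrak m'$. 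When the consecutive pair is $(\Delta,\Delta_j)$, such a segment can really occur: e.g.\ $\Delta=[0,10]_{\rho}$, $\Delta_k=[6,15]_{\rho}$, $\Delta_j=[5,20]_{\rho}$ is an admissible ascending configuration, and $\Delta\cap\Delta_j=[5,10]_{\rho}<\Delta_k$ is linked to $\Delta_k$, so it must precede $\Delta_k$. The only commutation tools available for linked segments are Proposition \ref{prop dagger property 2} and its variants, which require the non-overlapping/minimality hypothesis for that pair; but $\Delta\cup\Delta_j$ and $\Delta\cap\Delta_j$ are not segments of the minimal multisegment, so no multisegment containing them is known (or expected) to be minimal, and your appeal to Lemma \ref{lem minimal in a basic case} ``applied to the triple $\left\{\Delta\cup\Delta_j,\Delta_k,(\cdot)\right\}$ inside the minimal sequence'' has no valid hypothesis: that triple is not a submultisegment of the minimal sequence at $\pi'$. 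Hence the swap is unavailable and the contradiction is not reached. (A smaller flaw: in your easy case the justification ``all new segments have $a$-values $\geq a(\Delta_{k+1})>a(\Delta_k)$'' is not valid, since an ascending order does not order the $a$-values; the conclusion there can still be rescued by a direct unlinkedness check.)

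The paper never forms $\mathfrak m+\Delta_k$. In the hard case it works with the genuine submultisegment $\left\{\Delta,\Delta_k,\Delta_j\right\}$, which is minimal by Proposition \ref{prop first subseq property}, applies Lemma \ref{lem minimal in a basic case} to get
\[
D_{\Delta_j}\circ D_{\Delta}\circ D_{\Delta_k}(\cdot)\not\cong D_{\Delta\cup\Delta_j}\circ D_{\Delta\cap\Delta_j}\circ D_{\Delta_k}(\cdot),
\]
and transfers this non-isomorphism to the full sequences by the cancellative property (Proposition \ref{prop cancel property}) applied to a common \emph{tail} of remaining derivatives; the segments with small endpoints that would otherwise have to precede the triple are first stripped off using unlinkedness (the paper's Case 2 with its auxiliary set $\mathfrak n$). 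To repair your argument, replace the ``swap $\Delta_k$ past $\Delta\cup\Delta_j$'' step by this kind of tail-cancellation argument at the level of the three-segment submultisegment.
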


\begin{proof}
By induction, it suffices to show when $k=1$. The case that $\Delta$ and $\Delta_1$ are unlinked is easy by Lemma \ref{lem comm derivative 1} and Proposition \ref{prop first subseq property}(1). Suppose $\left\{ \Delta, \Delta_2, \ldots, \Delta_r\right\}$ is not minimal to $D_{\Delta_1}(\pi)$ to arrive a contradiction. By Theorem \ref{thm convex derivatives} and Lemma \ref{lem cover consecutive}(1), there exists a pair of consecutive segments $\Delta'<\Delta''$ such that the multisegment obtained from the intersection-union operation of those two segments $\Delta', \Delta''$ gives the same derivative on $\pi$. 

Note that $\left\{ \Delta_2, \ldots, \Delta_r \right\}$ is minimal to $D_{\Delta_1}\circ D_{\Delta}(\pi) \cong D_{\Delta}\circ D_{\Delta_1}(\pi)$ (the last isomorphism by Proposition \ref{prop dagger property 2}). Thus the remaining possible cases to be considered could be that one of $\Delta', \Delta''$ is $\Delta$ and so $\Delta'=\Delta$. Let 
\[  \mathfrak n=\left\{ \Delta_i :  a(\Delta_i)\leq a(\Delta) \quad \mbox{or} \quad b(\Delta_i) \leq b(\Delta) \right\}
\]
Then any $\Delta_i$ is unlinked to $\Delta$ and $\Delta_1$ by using the ascending property for the sequence $\Delta, \Delta_1, \ldots, \Delta_r$ (and $\Delta< \Delta_1$). (In particular, $\Delta_1$ is not in $\mathfrak n$.)

 Let $\mathfrak m=\left\{ \Delta_2, \ldots, \Delta_r\right\}$. Now, since we chose $\Delta$ and $\Delta''$ to be consecutive, we can arrange and relabel the segments in an ascending order:
\[                   \widetilde{\Delta}_1, \ldots, \widetilde{\Delta}_l, \Delta, \Delta'', \widetilde{\Delta}_{k+1}, \ldots, \widetilde{\Delta}_{r-1},
\]
where all $\widetilde{\Delta}_1, \ldots, \widetilde{\Delta}_l$ are all elements in $\mathfrak n$ and $\widetilde{\Delta}_{k+1}, \ldots, \widetilde{\Delta}_{r-1}$ are all elements in $\mathfrak m-\mathfrak n$.
 
\noindent
{\bf Case 1:} $\mathfrak n=\emptyset$. We still have that $ \Delta, \Delta_1, \Delta'', \widetilde{\Delta}_1, \ldots, \widetilde{\Delta}_{r-1} $ form an ascending order and is minimal to $\pi$. In particular, we have $\Delta, \Delta_1, \Delta''$ is minimal to $\pi$ by the cancellative property. Thus
\[    D_{\Delta''}\circ D_{\Delta}\circ D_{\Delta_1}(\pi) \not\cong D_{\Delta''\cup \Delta}\circ D_{\Delta''\cap \Delta}\circ D_{\Delta_1}(\pi) 
\]
by Lemma \ref{lem minimal in a basic case} and Proposition \ref{prop dagger property 2}. Since  
\[  \Delta''\cap \Delta,  \Delta''\cup \Delta, \widetilde{\Delta}_{1}, \ldots, \widetilde{\Delta}_{r-1}
\]
still form an ascending order (Lemma \ref{lem intersect union ascending order}), applying $D_{\widetilde{\Delta}_1}, \ldots, D_{\widetilde{\Delta}_{r-1}}$ gives different derivatives on $\pi$ (Proposition \ref{prop cancel property}) and so this gives a contradiction.  \\

\noindent
{\bf Case 2:} $\mathfrak n\neq \emptyset$.
This implies that $\Delta+\mathfrak m-\mathfrak n$ is not minimal to $D_{\mathfrak n}\circ D_{\Delta_1}(\pi)$. However, we have that, by using unlinkedness discussed in the second paragraph, 
\[  D_{\mathfrak n}\circ D_{\Delta_1}(\pi) \cong D_{\Delta_1} \circ D_{\mathfrak n}(\pi) .
\]
and $\Delta+\Delta_1+\mathfrak m-\mathfrak n$ is minimal to $D_{\mathfrak n}(\pi)$ by Proposition \ref{prop first subseq property}. However, from Case 1, we have that $\Delta+\mathfrak m-\mathfrak n$ is minimal to 
\[   D_{\Delta_1}\circ D_{\mathfrak n}(\pi) (\cong D_{\mathfrak n}\circ D_{\Delta_1}(\pi)).
\]
 This gives a contradiction. 
\end{proof}

\subsection{Minimality of a subsequent sequence}

\begin{theorem} \label{thm subsequent minimal}
Let $\pi \in \mathrm{Irr}_{\rho}$. Let $\mathfrak n \in \mathrm {Mult}_{\rho}$ be minimal to $\pi$. Then any submultisegment of $\mathfrak n$ is also minimal to $\pi$.
\end{theorem}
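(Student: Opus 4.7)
The plan is to induct on $r = |\mathfrak n|$, taking the three-segment basic case lemmas (\ref{lem basic subsequent property}, \ref{lem basic subsequent property 3}, and \ref{lem basic subsequent property 12 }) together with Proposition \ref{prop first subseq property}(1) as the base. For the inductive step, I would first reduce (by iterating one-segment removals) to the claim that $\mathfrak n - \Delta$ is minimal to $\pi$ for every $\Delta \in \mathfrak n$.

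Writing $\mathfrak n$ in ascending order as $\Delta_1, \ldots, \Delta_r$ and picking $\Delta = \Delta_k$, the case $k = r$ is immediate from Proposition \ref{prop first subseq property}(1). For $k < r$, I would assume for contradiction that $\mathfrak n - \Delta_k$ is not minimal to $\pi$ and invoke Theorem \ref{thm convex derivatives} together with Lemma \ref{lem cover consecutive}(1) to produce a pair of consecutive segments $\sigma = \Delta_i, \sigma' = \Delta_j$ in $\mathfrak n - \Delta_k$ (with $\sigma < \sigma'$ and $i, j \neq k$) such that the intersection-union process on $\{\sigma, \sigma'\}$ yields a multisegment $\mathfrak m$ with $\mathfrak r(\mathfrak m, \pi) = \mathfrak r(\mathfrak n - \Delta_k, \pi)$. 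Using the canonical ascending decomposition from Lemma \ref{lem cover consecutive}(2), I would write $\mathfrak n - \Delta_k = \mathfrak p + \{\sigma, \sigma'\} + \mathfrak q$ and cancel the common suffix $\mathfrak q$ via Proposition \ref{prop cancel property} to reduce the equality of removal processes to the simpler assertion that $\{\sigma, \sigma'\}$ is not minimal to $D_\mathfrak p(\pi)$.

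The argument then splits according to whether $\Delta_k$ lies ``between'' $\sigma$ and $\sigma'$ in the sense of Definition \ref{def consecutive pair}. If $\Delta_k$ is not between them, then $\sigma, \sigma'$ remain consecutive in $\mathfrak n$, and an analogous cancellation applied to $\mathfrak n$ itself directly contradicts the minimality of $\mathfrak n$. If $\Delta_k$ is between them, I would focus on the three-segment subset $\{\sigma, \Delta_k, \sigma'\} \subset \mathfrak n$: Proposition \ref{prop first subseq property}(1) with $s = \max(i,j)$, or part (2) with $s = \min(i,k)-1$, passes to a minimal sequence of size strictly smaller than $r$, so the inductive hypothesis yields minimality of $\{\sigma, \Delta_k, \sigma'\}$ to the corresponding representation, and then the three-segment basic case (Lemma \ref{lem minimal in a basic case}) delivers the required contradiction. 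The main obstacle will be the extremal sub-case $(i,j) = (1,r)$, in which neither a prefix nor a suffix reduction shrinks $\mathfrak n$ below size $r$; handling this case is the technical heart of the argument and I expect to treat it by invoking the commutativity for minimal sequences established in Lemma \ref{lem min commut in special case}, using it to reinterpret $D_\mathfrak n(\pi)$ in an alternative order in which the inductive hypothesis can still be brought to bear.
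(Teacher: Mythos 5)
Your outline follows the same architecture as the paper's proof (induction, reduction to removing a single segment $\Delta_k$, convexity plus consecutive pairs, cancellation of the common suffix, a case split on the position of $\Delta_k$, and the three-segment lemmas), but there is a genuine gap in the ``not between'' case. Proposition \ref{prop cancel property} only cancels a common \emph{final} block of an ascending order. When $\Delta_k$ precedes the consecutive pair $\sigma<\sigma'$, the segment $\Delta_k$ sits strictly before the pair in the ascending order of $\mathfrak n$, so ``an analogous cancellation applied to $\mathfrak n$ itself'' does not produce a contradiction: minimality of $\mathfrak n$ together with Proposition \ref{prop first subseq property} only tells you that $\{\sigma,\sigma'\}$ is minimal to $D_{\mathfrak p+\Delta_k}(\pi)$, whereas your reduction requires it to be minimal to $D_{\mathfrak p}(\pi)$, and there is no cancellation that removes $D_{\Delta_k}$ from the middle of the sequence. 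Your induction on $r$ does rescue the sub-case where the pair is not terminal (then $\mathfrak p+\Delta_k+\{\sigma,\sigma'\}$ is a proper prefix of $\mathfrak n$, minimal by Proposition \ref{prop first subseq property}(1), of size $<r$, and the inductive hypothesis applies), but you never say this, and the terminal sub-case $\mathfrak q=\emptyset$ remains uncovered by what you wrote. This is exactly the paper's Case 1(b): one needs repeated use of Lemma \ref{lem min commut in special case} to see that $\{\Delta_k,\sigma,\sigma'\}$ is minimal to $\tau=D_{\mathfrak p}(\pi)$, and then Lemma \ref{lem basic subsequent property} to drop $\Delta_k$ and conclude $\{\sigma,\sigma'\}$ is minimal to $\tau$. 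You do invoke Lemma \ref{lem min commut in special case}, but you attach it to the wrong branch and leave its use unspecified.

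In the ``between'' case your argument is essentially right but miscalibrated. There is no extremal obstruction there: since $\mathfrak p,\sigma,\Delta_k,\sigma',\mathfrak q$ is an ascending order of $\mathfrak n$, Proposition \ref{prop first subseq property}(1) followed by (2), applied to $\mathfrak n$ itself, already gives that $\{\sigma,\Delta_k,\sigma'\}$ is minimal to $D_{\mathfrak p}(\pi)$ without any appeal to the inductive hypothesis (if the pair is first and last then $r=3$ and you are in the base case). Moreover the lemma needed to finish is Lemma \ref{lem basic subsequent property 3} (drop the \emph{middle} segment of a minimal triple, minimality to the \emph{same} representation), not Lemma \ref{lem minimal in a basic case}, which would only yield minimality of $\{\sigma,\sigma'\}$ to $D_{\Delta_k}\circ D_{\mathfrak p}(\pi)$ and does not contradict your reduced assertion about $D_{\mathfrak p}(\pi)$. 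So the ``technical heart'' is not the $(1,r)$ configuration in the between case, but the not-between configuration with $\Delta_k$ preceding a terminal consecutive pair, and filling it requires spelling out the commutativity-plus-Lemma \ref{lem basic subsequent property} argument above.
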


\begin{proof}
%It should be possible to use the minimizable functions to prove the statement. Here we use $\eta_{\Delta}$-invariant, which is indeed deduced from minimizable functions and so provides cleaner argument.

By an induction, it suffices to show the minimality for $\mathfrak n'=\mathfrak n-\Delta$ for any segment $\Delta$ in $\mathfrak n$. By Theorem \ref{thm convex derivatives} and Lemma \ref{lem cover consecutive}, it reduces to check that for any multisegment $\mathfrak m'$ obtained from $\mathfrak n'$ by an elementary intersection-union operations involving two consecutive segments, 
\[ D_{\mathfrak m'}(\pi)\not\cong D_{\mathfrak n'}(\pi). \]
 Denote such two consecutive segments by $\widetilde{\Delta}< \widetilde{\Delta}'$.

%We shall use induction on the number of segments in $\mathfrak m$. When there is only one segment, there is nothing to prove.

Let $\mathfrak m$ be obtained by the intersection-union process from $\mathfrak n$ involving $\widetilde{\Delta}$ and $\widetilde{\Delta}'$. Then $\mathfrak m'=\mathfrak m-\Delta$.

  We consider following possibilities:

\begin{enumerate}
\item Case 1: Suppose $\widetilde{\Delta}$ and $\widetilde{\Delta}'$ still form a pair of consecutive segments in $\mathfrak n$. Then we write the segments in $\mathfrak n$ as in Lemma \ref{lem cover consecutive} (with obvious notation replacement):
\[   \Delta_1, \ldots, \Delta_r, \widetilde{\Delta}, \widetilde{\Delta}', \Delta_1', \ldots, \Delta_s' .\]
\begin{enumerate}
  \item Case 1(a): $\Delta$ appears in one of $\Delta_1', \ldots, \Delta_s'$. If $D_{\mathfrak n-\Delta}(\pi)=D_{\mathfrak m-\Delta}(\pi)$, then the cancellative property (Proposition \ref{prop cancel property}) and Lemma \ref{lem cover consecutive}  (also see Proposition \ref{prop first subseq property}) imply that 
	\[ D_{\left\{ \Delta_1, \ldots, \Delta_r, \widetilde{\Delta}, \widetilde{\Delta}' \right\}}(\pi) \not\cong D_{\left\{ \Delta_1, \ldots, \Delta_r, \widetilde{\Delta}\cup \widetilde{\Delta}', \widetilde{\Delta}\cap \widetilde{\Delta}' \right\}}(\pi) .
	\]
	However, this implies $D_{\mathfrak n}(\pi)\not\cong D_{\mathfrak m}(\pi)$ by applying $D_{\Delta_1'}, \ldots, D_{\Delta_s'}$ with $D_{\Delta}$ omitted.
	\item Case 1(b): $\Delta$ appears in one of $\Delta_1, \ldots, \Delta_r$. Let 
	\[\mathfrak p=\left\{ \Delta_1, \ldots, \Delta_r, \widetilde{\Delta}, \widetilde{\Delta}' \right\}-\Delta \] 
and let $\mathfrak q$ be obtained from $\mathfrak p$ by an elementary intersection-union process on $\widetilde{\Delta}$ and $\widetilde{\Delta}'$. By the cancellative property (Proposition \ref{prop cancel property}), it suffices to show that $D_{\mathfrak p}(\pi)\not\cong D_{\mathfrak q}(\pi)$. 

Let $\tau=D_{\mathfrak p-\widetilde{\Delta}-\widetilde{\Delta}'-\Delta}(\pi)$. By repeatedly using Lemma \ref{lem min commut in special case}, we have:
\[ D_{\mathfrak p+\Delta}(\pi) \cong D_{\widetilde{\Delta}'}\circ D_{\widetilde{\Delta}}\circ D_{\Delta}(\tau) 
\]
and $\left\{\Delta, \widetilde{\Delta}, \widetilde{\Delta}' \right\}$ is minimal to $\tau$. Now by Lemma \ref{lem basic subsequent property}, we have that
\[ D_{\mathfrak p}(\pi) = D_{\widetilde{\Delta}'}\circ D_{\widetilde{\Delta}}(\tau)\not\cong  D_{\widetilde{\Delta}\cup \widetilde{\Delta}'}\circ D_{\widetilde{\Delta}\cap \widetilde{\Delta}'}(\tau)=D_{\mathfrak q}(\pi) 
\]
as desired.

%By the cancellative property (Corollary \ref{cor cancel property}) and Lemma \ref{lem cover consecutive} again, it suffices to show that $\mathfrak p-\Delta$ is minimal. Then, by Lemma \ref{lem min commut in special case}, 

%By induction, we have that $\mathfrak p$ is minimal and so $\mathfrak p-\widetilde{\Delta}'$ is also minimal by Proposition \ref{prop first subseq property}. Now since $\mathfrak p-\Delta-\widetilde{\Delta}'$ is a submultisegment of $\mathfrak p-\widetilde{\Delta}'$, by Lemma \ref{lem improved eta sub}, 
%\[ \eta_{\widetilde{\Delta}'}( \mathfrak r(\mathfrak p-\Delta-\widetilde{\Delta}', \mathfrak h))=\eta_{\widetilde{\Delta}'}(\mathfrak h).\]
%Now, by Proposition \ref{prop dagger property}, we have that $\mathfrak p-\Delta$ is also minimal.
\end{enumerate}
\item Case 2: $\widetilde{\Delta}$ and $\widetilde{\Delta}'$ do not form a consecutive pair. We then first write the segments in $\mathfrak n'$ as in Lemma \ref{lem cover consecutive}:
\[  \Delta_1, \ldots, \Delta_r, \widetilde{\Delta}, \widetilde{\Delta}', \Delta_1', \ldots, \Delta_s' .
\]
Since the pair is not consecutive, the segment $\Delta$ must take the form as in the second bullet of Definition \ref{def consecutive pair}. Then one can still check that
\[  \Delta_1, \ldots, \Delta_r, \widetilde{\Delta}, \Delta , \widetilde{\Delta}', \Delta_1', \ldots, \Delta_s'
\]
is an ascending sequence. Now let $\tau=D_{\left\{ \Delta_1, \ldots, \Delta_r \right\}}(\pi)$. By the cancellative property (Proposition \ref{prop cancel property}), it suffices to show that
\[ \quad D_{\left\{ \widetilde{\Delta}, \widetilde{\Delta}' \right\}}(\tau)\not\cong D_{\left\{ \widetilde{\Delta}\cap \widetilde{\Delta}', \widetilde{\Delta}\cup \widetilde{\Delta}' \right\}}(\tau) .
\]
This follows from the basic case of Lemma \ref{lem basic subsequent property 3}. 
\end{enumerate}
\end{proof}

\section{Commutativity and minimality} \label{s commutative minimal}

In this section, we study the commutativity for a minimal sequence. 

%One may consider it as a generalization of two segment case in Section \ref{s revised comm der} (also see Lemma \ref{lem min commut in special case}).

\subsection{Commutativity and minimality (anther basic case)}

\begin{lemma} \label{lem minmal commut 1}
Let $\pi \in \mathrm{Irr}_{\rho}$. Let $\mathfrak m \in \mathrm{Mult}_{\rho}$ be minimal to $\pi$. Let $\Delta \in \mathfrak m$. Then $D_{\mathfrak m-\Delta}\circ D_{\Delta}(\pi)\cong D_{\mathfrak m}(\pi)$.
\end{lemma}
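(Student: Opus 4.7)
The plan is to write $\mathfrak m$ in an ascending order
\[ \Delta_1, \ldots, \Delta_r \]
with $\Delta=\Delta_j$ for some $1 \leq j \leq r$, so that by definition
\[ D_{\mathfrak m}(\pi) \cong D_{\Delta_r}\circ \cdots \circ D_{\Delta_1}(\pi). \]
The target is to iteratively swap $D_{\Delta_j}$ past $D_{\Delta_{j-1}}, D_{\Delta_{j-2}},\ldots,D_{\Delta_1}$ in the composition, moving it to be applied first, and thereby produce
\[ D_{\Delta_r}\circ \cdots \circ D_{\Delta_{j+1}}\circ D_{\Delta_{j-1}}\circ \cdots \circ D_{\Delta_1}\circ D_{\Delta_j}(\pi). \]
Since $\Delta_1,\ldots,\widehat{\Delta_j},\ldots,\Delta_r$ is an ascending order for $\mathfrak m - \Delta$, this latter expression is exactly $D_{\mathfrak m-\Delta}\circ D_{\Delta}(\pi)$, giving the claim.

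I will carry out the swaps by induction on $k=1,\ldots,j-1$. Fix $k$ and set $\tau_k = D_{\Delta_{j-k-1}}\circ \cdots \circ D_{\Delta_1}(\pi)$ (with $\tau_0=\pi$); note that the first $j-k-1$ operators in the composition are unaffected by previous swaps, so $\tau_k$ is well-defined and nonzero. The $k$-th step consists of replacing $D_{\Delta_j}\circ D_{\Delta_{j-k}}(\tau_k)$ by $D_{\Delta_{j-k}}\circ D_{\Delta_j}(\tau_k)$. If $\Delta_{j-k}$ and $\Delta_j$ are unlinked, this is immediate from Lemma~\ref{lcomm derivative 1}. If they are linked, then because $\Delta_{j-k},\Delta_j$ stand in ascending order we have $\Delta_{j-k}<\Delta_j$, and the equality will follow from Proposition~\ref{pdagger property 2} once we check that $(\Delta_{j-k},\Delta_j,\mathfrak{hd}(\tau_k))$ satisfies the non-overlapping property, equivalently (by Proposition~\ref{pnonoverlapping property}) that $\{\Delta_{j-k},\Delta_j\}$ is minimal to $\tau_k$.

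This minimality hypothesis is the crux, and it is supplied by the two main combinatorial tools of the previous sections. First, Proposition~\ref{pfirst subseq property}(2) applied to the minimal sequence $\mathfrak m=\{\Delta_1,\ldots,\Delta_r\}$ for $\pi$ shows that $\{\Delta_{j-k},\Delta_{j-k+1},\ldots,\Delta_r\}$ is minimal to $\tau_k$. Then Theorem~\ref{tsubsequent minimal} (the subsequent property, already proved) guarantees that its submultisegment $\{\Delta_{j-k},\Delta_j\}$ is also minimal to $\tau_k$. This justifies every swap. Chaining the $j-1$ swaps together yields the desired isomorphism, and the case $j=1$ is trivial. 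The potential obstacle would have been controlling how swapping earlier operators affects the later minimality conditions, but since $\tau_k$ depends only on the first $j-k-1$ (unchanged) operators, each swap can be justified independently without worrying about accumulating errors.
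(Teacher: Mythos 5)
Your proposal is correct and uses essentially the same approach as the paper: both proofs commute $D_{\Delta}$ past the derivatives of the segments preceding it in an ascending order, with each adjacent swap justified by Lemma \ref{lem comm derivative 1} (unlinked case) or Proposition \ref{prop dagger property 2} (linked case), the required two-segment minimality being supplied by Proposition \ref{prop first subseq property}(2) together with the already-proved subsequent property (Theorem \ref{thm subsequent minimal}); the paper merely packages the same swaps as an induction on $|\mathfrak m|$ after reducing to the case where $\Delta$ is last in the ascending order. The only slip is the parenthetical ``$\tau_0=\pi$'', which should read $\tau_{j-1}=\pi$ (the empty composition), and this does not affect the argument.
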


\begin{proof}
 We write the segments in $\mathfrak m-\Delta$ in an ascending order: $\Delta_1, \ldots, \Delta_r$. By Proposition \ref{prop first subseq property}(1), we can reduce to the case that $\Delta_1, \ldots, \Delta_r, \Delta$ still form an ascending order. By Proposition \ref{prop first subseq property}(2) and the basic case (Proposition \ref{prop dagger property 2}), 
\[ D_{\mathfrak m}(\pi)\cong D_{\Delta} \circ D_{\mathfrak m-\Delta}(\pi) \cong D_{\Delta_r}\circ D_{\Delta}\circ D_{\mathfrak m-\Delta-\Delta_r}(\pi) .\]
By Theorem \ref{thm subsequent minimal}, $\mathfrak m-\Delta_r$ is still minimal to $\pi$. We now inductively obtain the statement.
\end{proof}

We first study a special case of commutativity and minimality, and we shall prove a full version in Theorem \ref{thm minimal and commut}.

\begin{lemma} \label{lem minimal commut}
Let $\pi \in \mathrm{Irr}_{\rho}$. Let $\mathfrak m \in \mathrm{Mult}_{\rho}$ be minimal to $\pi$. Let $c$ (resp. $d$) be the largest integer such that $\mathfrak m[ c ] \neq 0$ (resp. $\mathfrak m\langle d \rangle \neq 0$). Let $\Delta \in \mathfrak m[ c ]$ or $\in \mathfrak m\langle d \rangle$. Then $\mathfrak m-\Delta$ is minimal to $D_{\Delta}(\pi)$.
\end{lemma}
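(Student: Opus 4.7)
The plan is to mirror the proof structure of Theorem \ref{thm subsequent minimal}, reducing by cases to a basic three-segment result from Section \ref{s three segment cases}. By a symmetric argument, I may assume $\Delta \in \mathfrak m[c]$; the crucial consequence of either hypothesis on $\Delta$ is that $\Delta$ can be placed as the last segment in some ascending order of $\mathfrak m$, because if $a(\Delta)=c$ (resp. $b(\Delta)=d$) is maximal, any $\sigma \in \mathfrak m$ linked to $\Delta$ must satisfy $\sigma < \Delta$ (one easily verifies that segments with the common maximal $a$-value, resp. $b$-value, are pairwise unlinked). Admissibility of $\mathfrak m - \Delta$ to $D_{\Delta}(\pi)$ is immediate from Lemma \ref{lem minmal commut 1}, which gives $D_{\mathfrak m - \Delta}\circ D_{\Delta}(\pi) \cong D_{\mathfrak m}(\pi)$.

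Suppose for contradiction that $\mathfrak m - \Delta$ is not minimal to $D_{\Delta}(\pi)$. By Theorem \ref{thm convex derivatives} together with Lemma \ref{lem cover consecutive}(1), we may assume there is a pair of consecutive segments $\widetilde{\Delta}_1 < \widetilde{\Delta}_2$ in $\mathfrak m - \Delta$ such that the multisegment $\widetilde{\mathfrak m}$ obtained by the elementary intersection-union on this pair satisfies $D_{\widetilde{\mathfrak m}}(D_{\Delta}(\pi)) \cong D_{\mathfrak m - \Delta}(D_{\Delta}(\pi))$. We split into two cases, analogous to the case analysis in the proof of Theorem \ref{thm subsequent minimal}.

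In Case 1, $\widetilde{\Delta}_1, \widetilde{\Delta}_2$ remain a consecutive pair in $\mathfrak m$. Using Lemma \ref{lem cover consecutive}(2), arrange an ascending order of $\mathfrak m$ as $\Delta_1, \ldots, \Delta_r, \widetilde{\Delta}_1, \widetilde{\Delta}_2, \Delta'_1, \ldots, \Delta'_{s-1}, \Delta$, with $\Delta$ placed last (possible by the maximality of $a(\Delta)$). By Theorem \ref{thm subsequent minimal}, the subset $\{\widetilde{\Delta}_1, \widetilde{\Delta}_2, \Delta\}$ is minimal to $\tau := D_{\{\Delta_1, \ldots, \Delta_r\}}(\pi)$. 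Applying the basic three-segment Lemma \ref{lem basic subsequent property 2} to the ascending triple $\widetilde{\Delta}_1, \widetilde{\Delta}_2, \Delta$ yields that $\{\widetilde{\Delta}_1, \widetilde{\Delta}_2\}$ is minimal to $D_{\Delta}(\tau)$. By Lemma \ref{lem minmal commut 1} applied to the minimal submultisegment $\{\Delta_1, \ldots, \Delta_r, \Delta\}$ of $\mathfrak m$, we identify $D_{\Delta}(\tau) \cong D_{\{\Delta_1, \ldots, \Delta_r\}}(D_{\Delta}(\pi))$. Therefore the intersection-union of $\widetilde{\Delta}_1, \widetilde{\Delta}_2$ already produces a distinct representation when applied to $D_{\{\Delta_1, \ldots, \Delta_r\}}(D_{\Delta}(\pi))$; subsequently applying $D_{\Delta'_1}, \ldots, D_{\Delta'_{s-1}}$ preserves this inequality via the contrapositive of the cancellative property (Proposition \ref{prop cancel property}), contradicting our assumption.

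In Case 2, $\widetilde{\Delta}_1, \widetilde{\Delta}_2$ are not consecutive in $\mathfrak m$, so $\Delta$ satisfies the second bullet of Definition \ref{def consecutive pair} for this pair. Maximality of $a(\Delta) = c$ forces $a(\widetilde{\Delta}_2) = c$, so $\widetilde{\Delta}_2 \in \mathfrak m[c]$, and then $\widetilde{\Delta}_2$ and $\Delta$ are unlinked (having the same $a$-value); in particular $\Delta$ must be linked to $\widetilde{\Delta}_1$. The triple $\{\widetilde{\Delta}_1, \widetilde{\Delta}_2, \Delta\}$ still admits $\widetilde{\Delta}_1, \widetilde{\Delta}_2, \Delta$ as an ascending order, and an argument analogous to Case 1, using Lemma \ref{lem basic subsequent property 2} together with the cancellative property to peel off the other segments, yields the desired contradiction. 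The main obstacle is Case 2, where careful bookkeeping is required to select an ascending arrangement of $\mathfrak m$ that simultaneously places $\widetilde{\Delta}_1, \widetilde{\Delta}_2, \Delta$ in a configuration compatible with the hypothesis of the basic three-segment lemma.
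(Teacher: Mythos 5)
Your overall strategy is the right one, and your Case 1 is essentially the paper's argument: place $\Delta$ last in an ascending order (justified exactly as you say), reduce via Theorem \ref{thm convex derivatives} and Lemma \ref{lem cover consecutive}(1) to a consecutive pair, get $\{\widetilde{\Delta}_1,\widetilde{\Delta}_2,\Delta\}$ minimal to $\tau$, apply Lemma \ref{lem basic subsequent property 2}, identify $D_{\Delta}(\tau)$ via Lemma \ref{lem minmal commut 1}, and cancel the tail with Proposition \ref{prop cancel property}. (One small citation slip: Theorem \ref{thm subsequent minimal} only gives minimality of the sub-multisegment to $\pi$; to pass to $\tau$ you need Proposition \ref{prop first subseq property}(2), which is what the paper invokes.)

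The genuine gap is your Case 2, which you acknowledge but do not carry out, and it is created by an avoidable choice: you apply Lemma \ref{lem cover consecutive}(2) to $\mathfrak m$, whose hypothesis requires the pair to be consecutive in $\mathfrak m$, hence the case split and the unresolved ``bookkeeping.'' The pair $\widetilde{\Delta}_1<\widetilde{\Delta}_2$ was produced as a consecutive pair of $\mathfrak m-\Delta$, so the correct move (and the paper's) is to apply Lemma \ref{lem cover consecutive}(2) to $\mathfrak m-\Delta$, obtaining an ascending order $\Delta_1,\ldots,\Delta_k,\widetilde{\Delta}_1,\widetilde{\Delta}_2,\Delta_{k+3},\ldots,\Delta_r$ of $\mathfrak m-\Delta$, and then append $\Delta$ at the very end (legitimate by your maximality observation). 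Your Case 1 computation then applies verbatim: $\{\Delta_1,\ldots,\Delta_k,\widetilde{\Delta}_1,\widetilde{\Delta}_2,\Delta\}$ is minimal to $\pi$, Proposition \ref{prop first subseq property}(2) makes the triple minimal to $\tau=D_{\{\Delta_1,\ldots,\Delta_k\}}(\pi)$, Lemma \ref{lem basic subsequent property 2} gives the non-isomorphism after $D_{\Delta}$, and the common tail $\Delta_{k+3},\ldots,\Delta_r$ is removed by Proposition \ref{prop cancel property}. With this arrangement no case distinction on consecutiveness in $\mathfrak m$ is needed, and the sketchy parts of your Case 2 (including the unproved ``symmetric argument'' reducing to $\Delta\in\mathfrak m[c]$, which the paper never uses --- only the fact that $\Delta$ can be placed last is needed) disappear.
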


\begin{proof}
The condition in the lemma guarantees that $\Delta$ can be arranged in the last one for an ascending order for $\mathfrak m$.

 Suppose $\mathfrak m-\Delta$ is not minimal to $D_{\Delta}(\pi)$. Let $\mathfrak m'=\mathfrak m-\Delta$. By Theorem \ref{thm convex derivatives} and Lemma \ref{lem cover consecutive}, it suffices to show that for any pair $\widetilde{\Delta}< \widetilde{\Delta}'$ of consecutive segments,
\[ \mathfrak n':=\mathfrak m'-\left\{ \widetilde{\Delta}, \widetilde{\Delta}' \right\}+ \widetilde{\Delta}\cup \widetilde{\Delta}' + \widetilde{\Delta}\cap \widetilde{\Delta}'  \] 
does not give the same derivative on $D_{\Delta}(\pi)$ i.e.
\[ (*)\quad  D_{\mathfrak n'}\circ D_{\Delta}(\pi) \not\cong D_{\mathfrak m'}\circ D_{\Delta}(\pi) .\]
Now we arrange and relabel the segments in $\mathfrak m'$ as in Lemma \ref{lem cover consecutive}:
\[  \Delta_1, \ldots, \Delta_k, \widetilde{\Delta}, \widetilde{\Delta}' , \Delta_{k+3}, \ldots, \Delta_r ,
\]
which is in an ascending order. By Proposition \ref{prop first subseq property}, in order to show (*), it suffices to show that $\left\{ \Delta_1, \ldots, \Delta_k, \widetilde{\Delta}, \widetilde{\Delta}' \right\}$ is still minimal to $D_{\Delta}(\pi)$. Let 
\[ \tau =D_{\Delta_k}\circ \ldots \circ D_{\Delta_1}(\pi), \mbox{ and } \tau' =D_{\Delta_k}\circ \ldots D_{\Delta_1}\circ D_{\Delta}(\pi) . \] 
By Lemma \ref{lem intersect union ascending order} and Proposition \ref{prop cancel property}, it suffices to prove 
\begin{align} \label{eqn reduced iso in min comm}
   D_{\widetilde{\Delta}'}\circ D_{\widetilde{\Delta}}(\tau') \not\cong D_{\widetilde{\Delta}'\cup \widetilde{\Delta}}\circ D_{\widetilde{\Delta}'\cap \widetilde{\Delta}}(\tau') .
\end{align}

Let $\mathfrak p=\left\{ \Delta_1, \ldots, \Delta_k\right\}$. To this end, by the subsequent property (Theorem \ref{thm subsequent minimal}), we have that $\mathfrak p+\widetilde{\Delta}'+\widetilde{\Delta}+\Delta$ is minimal to $\pi$. Thus, we also have that $\left\{ \Delta, \widetilde{\Delta}', \widetilde{\Delta}\right\}$ is minimal to $\tau=D_{\mathfrak p}(\pi)$ by Proposition \ref{prop first subseq property}. Now, Lemma \ref{lem basic subsequent property 2} implies:
\[   D_{\widetilde{\Delta}'}\circ D_{\widetilde{\Delta}}\circ D_{\Delta}(\tau) \not\cong D_{\widetilde{\Delta}'\cup \widetilde{\Delta}}\circ D_{\widetilde{\Delta}'\cap \widetilde{\Delta}}\circ D_{\Delta}(\tau). 
\]
On the other hand, by the subsequent property (Theorem \ref{thm subsequent minimal}), it gives that $\left\{ \Delta_1, \ldots, \Delta_k , \Delta\right\}$ is minimal to $\pi$. Then combining with Lemma \ref{lem minmal commut 1}, we have $D_{\Delta}(\tau)\cong \tau'$. Combining, we have the desired non-isomorphism (\ref{eqn reduced iso in min comm}).
\end{proof}

\subsection{Commutativity and minimality (one segment case)}

\begin{theorem} \label{thm minimal and commut}
Let $\pi \in \mathrm{Irr}_{\rho}$. Let $\mathfrak n \in \mathrm{Mult}_{\rho}$ be minimal to $\pi$. Let $\Delta \in \mathfrak n$. Then $\mathfrak n-\Delta$ is minimal to $D_{\Delta}(\pi)$ and
\[  D_{\mathfrak n-\Delta}\circ D_{\Delta}(\pi) \cong D_{\mathfrak n}(\pi) .
\]
\end{theorem}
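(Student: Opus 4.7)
Part (2) is immediate from Lemma \ref{lem minmal commut 1}. For the minimality assertion in (1), I adapt the strategy of Lemma \ref{lem minimal commut}, which handled the special case where $\Delta$ has maximal $a$- or $b$-value. Assume for contradiction that $\mathfrak n - \Delta$ is not minimal to $D_\Delta(\pi)$; by Theorem \ref{thm convex derivatives} and Lemma \ref{lem cover consecutive}(1) there exist consecutive segments $\widetilde\Delta < \widetilde\Delta'$ in $\mathfrak n - \Delta$ with $D_{(\mathfrak n - \Delta)''}(D_\Delta(\pi)) \cong D_{\mathfrak n - \Delta}(D_\Delta(\pi))$, where $(\mathfrak n - \Delta)''$ denotes the elementary intersection-union mutation at $\widetilde\Delta, \widetilde\Delta'$. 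The plan is to produce an intermediate representation $\tau''$ of the form $D_{\mathfrak L'}(D_\Delta(\pi))$, where $\mathfrak L'$ is a suitable ``left part'' of $\mathfrak n - \Delta$, to which $\{\widetilde\Delta, \widetilde\Delta'\}$ is minimal, and then cancel the remaining outer derivatives via Proposition \ref{prop cancel property} to reach a contradiction.

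Split into cases depending on whether $\widetilde\Delta, \widetilde\Delta'$ remain consecutive in $\mathfrak n$. If so (Case A), Lemma \ref{lem cover consecutive}(2) applied to $\mathfrak n$ produces the ascending decomposition $\mathfrak L, \widetilde\Delta, \widetilde\Delta', \mathfrak R$, and $\Delta$ lies in either $\mathfrak R$ (Subcase A1) or $\mathfrak L$ (Subcase A2). If not (Case B), the only segment in $\mathfrak n$ that can obstruct the consecutiveness of the pair is $\Delta$ itself, which must then sit ``between'' $\widetilde\Delta$ and $\widetilde\Delta'$ in the sense of Definition \ref{def consecutive pair}; Lemma \ref{lem cover consecutive}(2) applied to $\mathfrak n - \Delta$, followed by inserting $\Delta$ between $\widetilde\Delta$ and $\widetilde\Delta'$, yields an ascending arrangement $\mathfrak L, \widetilde\Delta, \Delta, \widetilde\Delta', \mathfrak R$ of $\mathfrak n$.

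Set $\mathfrak L' = \mathfrak L \cap (\mathfrak n - \Delta)$ and $\tau'' = D_{\mathfrak L'}(D_\Delta(\pi))$. In Subcase A2, where $\Delta \in \mathfrak L$, Proposition \ref{prop first subseq property}(2) applied to the submultisegment $\mathfrak L + \{\widetilde\Delta, \widetilde\Delta'\}$ (minimal to $\pi$ by Theorem \ref{thm subsequent minimal}) gives that $\{\widetilde\Delta, \widetilde\Delta'\}$ is minimal to $D_\mathfrak L(\pi)$, and Lemma \ref{lem minmal commut 1} rewrites this as $D_{\mathfrak L - \Delta}(D_\Delta(\pi)) = \tau''$. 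In Subcase A1 and Case B, where $\Delta \notin \mathfrak L$, Proposition \ref{prop first subseq property}(2) applied to $\mathfrak L + \{\widetilde\Delta, \widetilde\Delta', \Delta\}$ gives that the triple is minimal to $\tau := D_\mathfrak L(\pi)$; the appropriate three-segment basic case (Lemma \ref{lem basic subsequent property 2} in Subcase A1, where $\Delta$ is last in the triple's ascending, or Lemma \ref{lem minimal in a basic case} in Case B, where $\Delta$ is in the middle) then shows that $\{\widetilde\Delta, \widetilde\Delta'\}$ is minimal to $D_\Delta(\tau)$, and Lemma \ref{lem minmal commut 1} applied to the submultisegment $\mathfrak L + \Delta$ identifies $D_\Delta(\tau)$ with $D_\mathfrak L(D_\Delta(\pi)) = \tau''$. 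In all cases, reading $D_{\mathfrak n - \Delta}(D_\Delta(\pi))$ and $D_{(\mathfrak n - \Delta)''}(D_\Delta(\pi))$ in the ascending order just described and cancelling the outer $D_\mathfrak R$ via Proposition \ref{prop cancel property} reduces the assumed isomorphism to $D_{\widetilde\Delta'}\circ D_{\widetilde\Delta}(\tau'') \cong D_{\widetilde\Delta \cap \widetilde\Delta'}\circ D_{\widetilde\Delta \cup \widetilde\Delta'}(\tau'')$, contradicting the established minimality.

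The principal technical obstacle is in Case B: one must check that $\mathfrak L, \Delta$ is a valid ascending order of $\mathfrak L + \Delta$, so that Lemma \ref{lem minmal commut 1} indeed yields the identification $D_\Delta \circ D_\mathfrak L(\pi) = D_\mathfrak L(D_\Delta(\pi))$. The only potential obstruction is a segment $C \in \mathfrak L$ linked to $\Delta$ with $a(C) > a(\Delta)$; a direct check shows such a $C$ would then itself be ``between'' $\widetilde\Delta$ and $\widetilde\Delta'$ in the sense of Definition \ref{def consecutive pair}, violating the hypothesis that the pair is consecutive in $\mathfrak n - \Delta$.
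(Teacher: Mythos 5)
Your proof is correct and follows essentially the same route as the paper's: reduction via Theorem \ref{thm convex derivatives} and Lemma \ref{lem cover consecutive} to a single mutation at a consecutive pair, a case split on whether the pair remains consecutive in $\mathfrak n$ and on the position of $\Delta$, identification of the left-part derivative with $D_{\mathfrak L}(D_{\Delta}(\pi))$ via the already-established commutativity (Lemma \ref{lem minmal commut 1}), the three-segment basic lemmas, and cancellation of the tail by Proposition \ref{prop cancel property}. The only minor deviation is that you settle the subcase with $\Delta$ to the right of the pair through Lemma \ref{lem basic subsequent property 2}, where the paper instead rearranges and invokes Lemma \ref{lem minimal commut}, and your explicit check that $\mathfrak L,\Delta$ remains ascending in Case B supplies a detail the paper treats as routine.
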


\begin{proof}
We first prove the second assertion. We write the segments in $\mathfrak n$ in an ascending order:
\[  \Delta_1, \ldots, \Delta_k, \Delta, \Delta_{k+1}, \ldots, \Delta_r \]
with $b(\Delta_1)\leq \ldots \leq b(\Delta)\leq \ldots \leq b(\Delta_r)$. Then, by Proposition \ref{prop first subseq property}, $\left\{\Delta_1, \ldots, \Delta_k, \Delta \right\}$ is still minimal to $\mathfrak n$. Then, the second assertion follows from Lemma \ref{lem minimal commut}. 

We now prove the first assertion. By Theorem \ref{thm convex derivatives} and Lemma \ref{lem cover consecutive}, it suffices to consider for a consecutive pair of segments. Let $\widetilde{\Delta}, \widetilde{\Delta}'$ be a pair of consecutive segments in $\mathfrak n-\Delta$. Let 
\[  \mathfrak n'=\mathfrak n-\Delta -\left\{ \widetilde{\Delta}, \widetilde{\Delta}' \right\}+ \widetilde{\Delta}\cup \widetilde{\Delta}'+ \widetilde{\Delta}\cap \widetilde{\Delta}' .
\]
We are going to show
\[  D_{\mathfrak n-\Delta}\circ D_{\Delta}(\pi)\not\cong D_{\mathfrak n'}\circ D_{\Delta}(\pi). 
\]

 We consider the following two cases:
\begin{enumerate}
\item $\widetilde{\Delta}$ and $\widetilde{\Delta}'$ still form a pair of consecutive segments in $\mathfrak n$. We arrange and relabel the segments as:
\[   \Delta_1, \ldots \Delta_p, \widetilde{\Delta}, \widetilde{\Delta}', \Delta_{p+1}, \ldots, \Delta_r 
\]
with $\Delta_{p+1}, \ldots, \Delta_r$ to be all the segments with $a(\Delta_t) \geq a(\widetilde{\Delta}')$ or $b(\Delta_t) \geq b(\widetilde{\Delta}')$. Similar to Lemma \ref{lem cover consecutive}, one has that
\[  \Delta_1, \ldots, \Delta_p, \widetilde{\Delta}\cup \widetilde{\Delta}', \widetilde{\Delta}\cap \widetilde{\Delta}', \Delta_{p+1}, \ldots, \Delta_r 
\]
form an ascending sequence. 

\begin{enumerate}
   \item[(i)] Suppose $\Delta$ appears in one of $\Delta_1, \ldots, \Delta_p$, say $\Delta_i$. Let $\tau=D_{\Delta_p}\circ \ldots \circ D_{\Delta_{i+1}}\circ D_{\Delta_{i-1}}\circ \ldots \circ D_{\Delta_1}\circ D_{\Delta}(\pi)$. In such case, the proved second assertion gives that
	\[  \tau \cong  D_{\Delta_p} \circ \ldots \circ D_{\Delta_1}(\pi) .
	\]
The isomorphism with the minimality of $\left\{ \Delta_1, \ldots, \Delta_p, \widetilde{\Delta}, \widetilde{\Delta}' \right\}$ to $\pi$ and the discussion on the ascending sequence above gives that
	\[  D_{\widetilde{\Delta}'}\circ D_{\widetilde{\Delta}}(\tau)\not\cong D_{\widetilde{\Delta}'\cup \widetilde{\Delta}}\circ D_{\widetilde{\Delta}'\cap \widetilde{\Delta}}(\tau) .
	\]
Applying $D_{\Delta_{p+1}}, \ldots, D_{\Delta_r}$, by Proposition \ref{prop cancel property}, we have that
	\[   D_{\mathfrak n-\Delta}\circ D_{\Delta}(\pi)\not\cong D_{\mathfrak n'}\circ D_{\Delta}(\pi).
	\]
	 \item[(ii)] Suppose $\Delta$ appears in one of $\Delta_{p+1}, \ldots, \Delta_r$, say $\Delta_j$. By rearranging and relabeling the segments in $\Delta_{p+1}, \ldots, \Delta_r$ if necessary, we assume $b(\Delta_{p+1})\leq \ldots \leq b(\Delta_r)$ if $b(\widetilde{\Delta}') \leq b(\Delta_j)$ and assume $a(\Delta_{p+1})\leq \ldots \leq a(\Delta_r)$ if $a(\widetilde{\Delta}')\leq a(\Delta_j)$. Let
\[  \mathfrak n_j=\left\{ \Delta_1, \ldots, \Delta_p, \widetilde{\Delta}, \widetilde{\Delta}', \Delta_{p+1}, \ldots, \Delta_j=\Delta \right\} ,\]
\[  \mathfrak n_j'=\left\{ \Delta_1, \ldots, \Delta_p, \widetilde{\Delta}\cup \widetilde{\Delta}', \widetilde{\Delta}\cap \widetilde{\Delta}', \Delta_{p+1}, \ldots, \Delta_j=\Delta \right\} .
\]
	Then, by Lemma \ref{lem minimal commut}, 
\[    D_{\mathfrak n_j'-\Delta} \circ D_{\Delta}(\pi) \not\cong D_{\mathfrak n_j-\Delta}\circ D_{\Delta}(\pi) 
\]
Thus, applying the derivatives $D_{\Delta_{j+1}},\ldots, D_{\Delta_r}$, we have $D_{\mathfrak n'}\circ D_{\Delta}(\pi)\not\cong D_{\mathfrak n-\Delta}\circ D_{\Delta}(\pi)$ as desired. 
\end{enumerate}
\item $\widetilde{\Delta}$ and $\widetilde{\Delta}'$ do not form a consecutive pair in $\mathfrak n$. One uses similar argument in Theorem \ref{thm subsequent minimal} to reduce to three segment case. Then one reduces to a basic case (see similar discussions in the proof of Theorem \ref{thm subsequent minimal}), that is Lemma \ref{lem minimal in a basic case}. In detail, we arrange the segments in $\mathfrak n$ in an ascending order:
\[  \Delta_1, \ldots, \Delta_p, \widetilde{\Delta}, \Delta, \widetilde{\Delta}', \Delta_{p+1}, \ldots, \Delta_r .
\]
Then, we still have
\[  \Delta_1, \ldots, \Delta_p, \widetilde{\Delta}, \Delta, \widetilde{\Delta}'
\]
is still minimal to $\pi$ by Theorem \ref{thm subsequent minimal}. Then we also have $\left\{ \widetilde{\Delta}, \Delta, \widetilde{\Delta}'\right\}$ is minimal to 
\begin{align} \label{eqn intersection non consec comm}
\tau':= D_{\Delta_p} \circ \ldots \circ D_{\Delta_1}(\pi) .
\end{align}
By Lemma \ref{lem minimal in a basic case}, we then have that 
\[  D_{\widetilde{\Delta}'}\circ D_{\widetilde{\Delta}} \circ D_{\Delta}(\tau') \not\cong D_{\widetilde{\Delta}'\cup \widetilde{\Delta}}\circ D_{\widetilde{\Delta}'\cap \widetilde{\Delta}} \circ D_{\Delta}(\tau') .
\]
But now, by Theorem \ref{thm subsequent minimal}, $\left\{ \Delta_1, \ldots, \Delta_p, \Delta\right\}$ is minimal to $\pi$, and so by the proved second assertion, we also have 
\begin{align} \label{eqn intersect 2}
\tau' \cong D_{\Delta_p}\circ \ldots \circ D_{\Delta_1}\circ D_{\Delta}(\pi) .   
\end{align}
Now, one applies $D_{\Delta_{p+1}}, \ldots, D_{\Delta_r}$ on (\ref{eqn intersection non consec comm}), and then uses (\ref{eqn intersect 2}) and Proposition \ref{prop cancel property} to see that $D_{\mathfrak n-\Delta} \circ D_{\Delta}(\pi) \not\cong D_{\mathfrak n'}\circ D_{\Delta}(\pi)$ as desired.

\end{enumerate}

\end{proof}

\subsection{General form of commutativity and minimality}

\begin{theorem} \label{thm minimal and commut 2}
Let $\pi \in \mathrm{Irr}_{\rho}$. Let $\mathfrak n \in \mathrm{Mult}_{\rho}$ be minimal to $\pi$. Let $\mathfrak n'$ be a submultisegment of $\mathfrak n$. Then $D_{\mathfrak n-\mathfrak n'}\circ D_{\mathfrak n'}(\pi)\cong D_{\mathfrak n}(\pi)$ and $\mathfrak n-\mathfrak n'$ is minimal to $D_{\mathfrak n'}(\pi)$. 
\end{theorem}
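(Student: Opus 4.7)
The plan is to reduce the theorem to the one-segment case already established in Theorem \ref{thm minimal and commut}, by induction on the cardinality $r = |\mathfrak n'|$ of segments in the submultisegment $\mathfrak n'$.

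For the base, when $r=0$ the claim is vacuous, and when $r=1$ the claim is precisely Theorem \ref{thm minimal and commut}. For the inductive step, suppose the result holds whenever the chosen submultisegment has at most $r$ segments, and let $\mathfrak n'$ be a submultisegment of $\mathfrak n$ with $r+1$ segments. Choose any segment $\Delta \in \mathfrak n'$, and set $\mathfrak n'' := \mathfrak n' - \Delta$, a submultisegment of $\mathfrak n - \Delta$ of size $r$.

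The argument proceeds in three steps. First, apply Theorem \ref{thm minimal and commut} to $\pi$ and $\Delta \in \mathfrak n$: we obtain that $\mathfrak n - \Delta$ is minimal to $D_{\Delta}(\pi)$ and that
\[
   D_{\mathfrak n - \Delta}\circ D_{\Delta}(\pi) \;\cong\; D_{\mathfrak n}(\pi).
\]
Second, invoke the inductive hypothesis with the minimal pair $(\mathfrak n - \Delta, D_{\Delta}(\pi))$ and its submultisegment $\mathfrak n''$: since $(\mathfrak n - \Delta) - \mathfrak n'' = \mathfrak n - \mathfrak n'$, we obtain that $\mathfrak n - \mathfrak n'$ is minimal to $D_{\mathfrak n''}(D_{\Delta}(\pi))$ and that
\[
   D_{\mathfrak n - \mathfrak n'} \circ D_{\mathfrak n''}(D_{\Delta}(\pi)) \;\cong\; D_{\mathfrak n - \Delta}(D_{\Delta}(\pi)) \;\cong\; D_{\mathfrak n}(\pi).
\]
Third, identify $D_{\mathfrak n''}(D_{\Delta}(\pi))$ with $D_{\mathfrak n'}(\pi)$. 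By Theorem \ref{thm subsequent minimal} the submultisegment $\mathfrak n'$ is minimal to $\pi$; since $\Delta \in \mathfrak n'$, Theorem \ref{thm minimal and commut} applied to $(\pi, \mathfrak n', \Delta)$ yields
\[
   D_{\mathfrak n' - \Delta} \circ D_{\Delta}(\pi) \;\cong\; D_{\mathfrak n'}(\pi),
\]
which is exactly $D_{\mathfrak n''}(D_{\Delta}(\pi)) \cong D_{\mathfrak n'}(\pi)$.

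Substituting the isomorphism of Step 3 into the conclusion of Step 2 gives $D_{\mathfrak n - \mathfrak n'}\circ D_{\mathfrak n'}(\pi) \cong D_{\mathfrak n}(\pi)$ and the minimality of $\mathfrak n - \mathfrak n'$ to $D_{\mathfrak n'}(\pi)$ (transporting along the isomorphism), completing the induction. There is no real obstacle here beyond bookkeeping: the entire content sits in Theorems \ref{thm subsequent minimal} and \ref{thm minimal and commut}, and the crucial combinatorial identity $(\mathfrak n - \Delta) - (\mathfrak n' - \Delta) = \mathfrak n - \mathfrak n'$ which allows the induction hypothesis to collapse onto the target submultisegment.
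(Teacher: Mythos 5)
Your proof is correct and follows essentially the same route as the paper, which deduces the statement by repeatedly applying Theorem \ref{thm minimal and commut} to peel off one segment of $\mathfrak n'$ at a time. Your extra Step 3 (invoking Theorem \ref{thm subsequent minimal} to identify $D_{\mathfrak n''}(D_{\Delta}(\pi))$ with $D_{\mathfrak n'}(\pi)$) is legitimate, and could even be avoided by always removing the first segment of an ascending order of $\mathfrak n'$, in which case the identification holds by the definition of $D_{\mathfrak n'}$.
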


\begin{proof}
This follows by repeatedly using Theorem \ref{thm minimal and commut}.
\end{proof}

\part{Representation-theoretic aspects} \label{part rep theoy asp}

\section{$\eta$-invariant and commutativity} \label{s eta invariant and commutativity}

We shall first discuss the representation-theoretic interpretation for $\eta$-invariants. Then we explain how to relate with the commutativity. The representation-theoretic approach provides different techniques, which are of independent interests.

\subsection{Representation-theoretic counterpart of $\eta_{\Delta}$}

Let $\Delta=[a,b]_{\rho}$. For $\pi \in \mathrm{Irr}_{\rho}$, let
\begin{eqnarray} \label{eqn mx segment}
 \mathfrak{mx}(\pi, \Delta) = \sum_{a\leq a'\leq b} \varepsilon_{[a',b]_{\rho}}(\pi)\cdot [a',b]_{\rho} , 
\end{eqnarray}
which means that $[a',b]_{\rho}$ appears with multiplicity $\varepsilon_{[a',b]_{\rho}}(\pi)$ in $\mathfrak{mx}(\pi, \Delta)$. This is the multisegment analogue of the $\eta$-invariant.

The following is the key property:

\begin{lemma} \cite[Proposition 11.1]{Ch22+} \label{lem eta invariant rep}
Let $l=l_{abs}(\mathfrak{mx}(\pi, \Delta))$. Let $\mathfrak m=\mathfrak{mx}(\pi, \Delta)$. Then $D_{\mathfrak m}(\pi)\boxtimes \mathrm{St}(\mathfrak m)$ is a direct summand in $\pi_{N_l}$.
\end{lemma}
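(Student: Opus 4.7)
The plan is to first realize $D_{\mathfrak m}(\pi) \boxtimes \mathrm{St}(\mathfrak m)$ as an irreducible quotient of $\pi_{N_l}$, then isolate the relevant Bernstein block, and finally upgrade a multiplicity computation to the direct summand conclusion. For the first step, by the defining property of $D_{\mathfrak m}$, we have $\pi \hookrightarrow D_{\mathfrak m}(\pi) \times \mathrm{St}(\mathfrak m)$, so Frobenius reciprocity yields a non-zero $M$-homomorphism $\pi_{N_l} \to D_{\mathfrak m}(\pi) \boxtimes \mathrm{St}(\mathfrak m)$. Since every segment in $\mathfrak m = \mathfrak{mx}(\pi,\Delta)$ has the form $[a',b]_{\rho}$ with the common right endpoint $b$, any two such segments are pairwise unlinked; hence $\mathrm{St}(\mathfrak m)$ is an irreducible product of essentially discrete series, the target of the above map is irreducible, and the map is surjective. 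This exhibits $D_{\mathfrak m}(\pi) \boxtimes \mathrm{St}(\mathfrak m)$ as an irreducible quotient of $\pi_{N_l}$.

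For the second step, let $\mathfrak s$ denote the inertial equivalence class of the cuspidal support of $\mathrm{St}(\mathfrak m)$ on the relevant Levi factor. The Bernstein decomposition splits $\pi_{N_l}$ as a direct sum of $(\pi_{N_l})_{\mathfrak s}$ and its complement, and the surjection from Step 1 factors through $(\pi_{N_l})_{\mathfrak s}$. I would then show, using the geometric lemma combined with the highest derivative multisegment machinery of Section \ref{ss highest derivative mult} and the analyses of Theorems \ref{thm effect of Steinberg} and \ref{thm effect of Steinberg small a}, that the maximality of $\mathfrak m = \mathfrak{mx}(\pi,\Delta)$ forces every other composition factor $\tau \boxtimes \sigma$ of $(\pi_{N_l})_{\mathfrak s}$ to have $\sigma$ strictly smaller than $\mathrm{St}(\mathfrak m)$ in the Zelevinsky ordering; in particular, $D_{\mathfrak m}(\pi) \boxtimes \mathrm{St}(\mathfrak m)$ has multiplicity exactly one as a composition factor of $(\pi_{N_l})_{\mathfrak s}$.

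The main obstacle is the final step: ruling out a non-split extension between $D_{\mathfrak m}(\pi) \boxtimes \mathrm{St}(\mathfrak m)$ and the smaller composition factors in $(\pi_{N_l})_{\mathfrak s}$. The plan is to pair the quotient construction with a matching embedding, produced by running the same argument through the contragredient $\pi^{\vee}$ and using the identification $(\pi^{\vee})_{N_l^{-}} \cong (\pi_{N_l})^{\vee}$, so that $D_{\mathfrak m}(\pi) \boxtimes \mathrm{St}(\mathfrak m)$ appears both as a submodule and as a quotient of $(\pi_{N_l})_{\mathfrak s}$; combined with the multiplicity-one statement, this forces a splitting. A more conceptual alternative is to exploit the rigidity of the extremal factor $\mathrm{St}(\mathfrak m)$, which is maximal in its Bernstein block under the Zelevinsky ordering on the second coordinate, to argue directly that any such extension must split.
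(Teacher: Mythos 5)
A preliminary remark: the paper does not prove this lemma at all --- it is quoted verbatim from \cite[Proposition 11.1]{Ch22+}, so there is no internal argument to compare against and your attempt is really a re-proof of an imported result. Your overall architecture is sound. The quotient realization is correct: all segments of $\mathfrak m=\mathfrak{mx}(\pi,\Delta)$ share the right endpoint $b$, hence are pairwise unlinked, $\mathrm{St}(\mathfrak m)$ and therefore $D_{\mathfrak m}(\pi)\boxtimes\mathrm{St}(\mathfrak m)$ are irreducible, and Frobenius reciprocity applied to $\pi\hookrightarrow D_{\mathfrak m}(\pi)\times\mathrm{St}(\mathfrak m)$ gives the surjection $\pi_{N_l}\twoheadrightarrow D_{\mathfrak m}(\pi)\boxtimes\mathrm{St}(\mathfrak m)$. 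The submodule realization is also available, either by your contragredient route or, more directly, from the equivalent surjection $\mathrm{St}(\mathfrak m)\times D_{\mathfrak m}(\pi)\twoheadrightarrow\pi$ (cf. (\ref{eq reduced decomp})) together with second adjointness. And ``sub $+$ quotient $+$ multiplicity one $\Rightarrow$ direct summand'' is a valid finishing move: if the composite $X\to\pi_{N_l}\to X$ were zero, $X$ would occur at least twice as a composition factor.

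The genuine gap is the multiplicity-one statement itself, which is the real content of the lemma and which you only assert. The geometric lemma does not apply to $\pi_{N_l}$ directly, since $\pi$ is irreducible; one must first embed $\pi$ into an induced module (the natural choice being $D_{\mathfrak m}(\pi)\times\mathrm{St}(\mathfrak m)$ itself) and then show that no layer of $(D_{\mathfrak m}(\pi)\times\mathrm{St}(\mathfrak m))_{N_l}$ other than the bottom one can contribute a factor of the form $\tau\boxtimes\mathrm{St}(\mathfrak m)$. That exclusion is exactly where the maximality of $\mathfrak{mx}(\pi,\Delta)$ enters: one needs that $D_{\mathfrak m}(\pi)$ is $\Delta$-reduced, i.e.\ $\varepsilon_{[a',b]_{\rho}}(D_{\mathfrak m}(\pi))=0$ for all $a\leq a'\leq b$ (via Theorems \ref{thm effect of Steinberg} and \ref{thm effect of Steinberg small a} applied along the removal of $\mathfrak m$), so that Jacquet modules of $D_{\mathfrak m}(\pi)$ cannot supply further segments ending at $b$, together with a cuspidal-support count on the $G_l$-side. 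None of this bookkeeping is carried out, and note that what is actually used later (in the proof of Lemma \ref{lem commut for max case}) is the stronger uniqueness that no other factor of the Jacquet module has second component $\mathrm{St}(\mathfrak m)$ at all. Two smaller points: since $\mathfrak m$ is pairwise unlinked it is $\leq_Z$-\emph{minimal} among multisegments with its cuspidal support, so your ordering claim is stated with the wrong extremality and, more importantly, extremality alone proves nothing here; and your ``more conceptual alternative'' (rigidity of the extremal factor forcing extensions to split) is not an argument --- the splitting must come from the sub/quotient/multiplicity-one mechanism or from an explicit Ext-vanishing computation.
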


\subsection{Commutativity}

Here we explain how to view the commutativity of Proposition \ref{prop dagger property 2} from the perspective of Lemma \ref{lem eta invariant rep}. We first show the following new lemma, using Lemma \ref{lem eta invariant rep}:

\begin{lemma} \label{lem commut for max case}
Let $\pi \in \mathrm{Irr}_{\rho}(G_n)$. Let $\Delta_1, \Delta_2 \in \mathrm{Seg}_{\rho}$ be admissible to $\pi$. Suppose $\Delta_1<\Delta_2$. Suppose $(\Delta_1, \Delta_2, \pi)$ satisfies the non-overlapping property. Let $\mathfrak m=\mathfrak{mx}(\pi, \Delta_2)$.  Then $D_{\Delta_1}\circ D_{\mathfrak m}(\pi)\cong D_{\mathfrak m} \circ D_{\Delta_1}(\pi)$.
\end{lemma}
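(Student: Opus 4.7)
The plan is to deploy Lemma \ref{lem eta invariant rep} twice, once for $\pi$ and once for $D_{\Delta_1}(\pi)$, and to exploit the direct-summand property to extract the commutativity through a common $\mathrm{St}(\mathfrak m)$-tail.

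First, I would observe that the non-overlapping assumption translates into the combinatorial equality $\mathfrak{mx}(D_{\Delta_1}(\pi), \Delta_2) = \mathfrak m$. Indeed, by Definition \ref{def overlapping property}(1) the hypothesis reads $\eta_{\Delta_2}(D_{\Delta_1}(\pi)) = \eta_{\Delta_2}(\pi)$, and the construction (\ref{eqn mx segment}) shows that $\mathfrak{mx}(\sigma, \Delta_2)$ is nothing but a repackaging of the tuple $\eta_{\Delta_2}(\sigma)$. Consequently, applying Lemma \ref{lem eta invariant rep} to each of $\pi$ and $D_{\Delta_1}(\pi)$ and invoking Frobenius reciprocity on the direct-summand assertions yields
\[
\pi \hookrightarrow D_{\mathfrak m}(\pi) \times \mathrm{St}(\mathfrak m), \qquad D_{\Delta_1}(\pi) \hookrightarrow D_{\mathfrak m}(D_{\Delta_1}(\pi)) \times \mathrm{St}(\mathfrak m).
\]
Chaining the latter with the defining embedding $\pi \hookrightarrow D_{\Delta_1}(\pi) \times \mathrm{St}(\Delta_1)$ gives
\[
\pi \hookrightarrow D_{\mathfrak m}(D_{\Delta_1}(\pi)) \times \mathrm{St}(\Delta_1) \times \mathrm{St}(\mathfrak m).
\]

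From this composite embedding I would extract the intermediate claim $D_{\mathfrak m}(\pi) \hookrightarrow D_{\mathfrak m}(D_{\Delta_1}(\pi)) \times \mathrm{St}(\Delta_1)$. Once this is in hand, the defining uniqueness of $D_{\Delta_1}$ applied to the irreducible $D_{\mathfrak m}(\pi)$, namely that $D_{\Delta_1}(D_{\mathfrak m}(\pi))$ is the unique simple $\tau$ with $D_{\mathfrak m}(\pi) \hookrightarrow \tau \times \mathrm{St}(\Delta_1)$, immediately forces $D_{\Delta_1}(D_{\mathfrak m}(\pi)) \cong D_{\mathfrak m}(D_{\Delta_1}(\pi))$, as claimed.

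The hard part is precisely this extraction step. By Frobenius reciprocity, the composite embedding into $V \times \mathrm{St}(\mathfrak m)$ with $V = D_{\mathfrak m}(D_{\Delta_1}(\pi)) \times \mathrm{St}(\Delta_1)$ corresponds to a nonzero map $\pi_{N_l} \to V \boxtimes \mathrm{St}(\mathfrak m)$; since Lemma \ref{lem eta invariant rep} exhibits $D_{\mathfrak m}(\pi) \boxtimes \mathrm{St}(\mathfrak m)$ as a \emph{direct} summand of $\pi_{N_l}$ and $\mathrm{St}(\mathfrak m)$ is irreducible (its constituent segments all end at $b(\Delta_2)$ and are pairwise unlinked, so Lemma \ref{lem irreducible of a product} applies), one must argue that this map is nonzero on that summand rather than factoring through its complement. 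This requires a geometric-lemma computation of the Jacquet pieces of $V \times \mathrm{St}(\mathfrak m)$ with $\mathrm{St}(\mathfrak m)$ as the right factor, using crucially that every segment of $\mathfrak m$ ends strictly beyond $b(\Delta_1)$ so that the $\mathrm{St}(\Delta_1)$-tail cannot absorb the cuspidal datum $\nu^{b(\Delta_2)}\rho$. Once this cuspidal bookkeeping rules out the spurious contribution, the expected direct-summand piece yields the desired embedding $D_{\mathfrak m}(\pi) \hookrightarrow V$ and the proof is concluded.
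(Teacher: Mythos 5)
Your opening observation is exactly the right one and matches the paper's: the non-overlapping hypothesis is, by Definition \ref{def overlapping property}(1), the equality $\eta_{\Delta_2}(D_{\Delta_1}(\pi))=\eta_{\Delta_2}(\pi)$, hence $\mathfrak{mx}(D_{\Delta_1}(\pi),\Delta_2)=\mathfrak m$, so Lemma \ref{lem eta invariant rep} applies to both $\pi$ and $D_{\Delta_1}(\pi)$; and the closing appeal to the uniqueness in the definition of $D_{\Delta_1}$ is also the correct final move. The genuine gap is in your displayed composite embedding. Chaining $\pi\hookrightarrow D_{\Delta_1}(\pi)\times\mathrm{St}(\Delta_1)$ with $D_{\Delta_1}(\pi)\hookrightarrow D_{\mathfrak m}(D_{\Delta_1}(\pi))\times\mathrm{St}(\mathfrak m)$ gives $\pi\hookrightarrow D_{\mathfrak m}(D_{\Delta_1}(\pi))\times\mathrm{St}(\mathfrak m)\times\mathrm{St}(\Delta_1)$, with $\mathrm{St}(\Delta_1)$ in the rightmost slot, whereas you wrote $\mathrm{St}(\mathfrak m)$ rightmost. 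The two orders are not interchangeable: since $\Delta_2$ is admissible to $\pi$, $\mathfrak m$ contains $[a(\Delta_2),b(\Delta_2)]_{\rho}$, which is linked to $\Delta_1$, so $\mathrm{St}(\mathfrak m)\times\mathrm{St}(\Delta_1)$ and $\mathrm{St}(\Delta_1)\times\mathrm{St}(\mathfrak m)$ are reducible and non-isomorphic, there is no embedding of the former into the latter, and composing with the (non-injective) intertwining operator would destroy injectivity. This ordering issue is not cosmetic for your plan: the entire extraction step --- Frobenius reciprocity with respect to $N_{l_{abs}(\mathfrak m)}$ producing a map $\pi_{N_{l_{abs}(\mathfrak m)}}\to V\boxtimes\mathrm{St}(\mathfrak m)$ to be compared with the direct summand $D_{\mathfrak m}(\pi)\boxtimes\mathrm{St}(\mathfrak m)$ --- requires $\mathrm{St}(\mathfrak m)$ in the rightmost slot, so the argument as written does not get started.

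Even granting that embedding, the nonvanishing-on-the-summand step is underspecified: the target of the Frobenius map is just $V\boxtimes\mathrm{St}(\mathfrak m)$, so ``a geometric-lemma computation of $V\times\mathrm{St}(\mathfrak m)$'' is not the missing ingredient; what one needs is either the stronger form of Lemma \ref{lem eta invariant rep} (no constituent of the complement has second factor $\mathrm{St}(\mathfrak m)$) or a genuine layer analysis. For comparison, the paper confronts the ordering problem head-on: with $l=l_{abs}(\Delta_1)$ it starts from $D_{\Delta_1}(\pi)\boxtimes\mathrm{St}(\Delta_1)\hookrightarrow\pi_{N_l}\hookrightarrow(D_{\mathfrak m}(\pi)\times\mathrm{St}(\mathfrak m))_{N_l}$, uses the geometric lemma together with the fact that every segment of $\mathfrak m$ ends at $b(\Delta_2)>b(\Delta_1)$ to show the image lies in the layer $D_{\mathfrak m}(\pi)_{N_l}\,\dot{\times}^1\,\mathrm{St}(\mathfrak m)$, and then applies a second Frobenius reciprocity plus the statement that $D_{\mathfrak m}\circ D_{\Delta_1}(\pi)\boxtimes\mathrm{St}(\mathfrak m)$ is the unique constituent of $D_{\Delta_1}(\pi)_{N_{l_{abs}(\mathfrak m)}}$ of the form $\tau\boxtimes\mathrm{St}(\mathfrak m)$, to conclude $D_{\mathfrak m}\circ D_{\Delta_1}(\pi)\boxtimes\mathrm{St}(\Delta_1)\hookrightarrow D_{\mathfrak m}(\pi)_{N_l}$ and hence the commutativity. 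Your cuspidal-support intuition (the $\mathrm{St}(\Delta_1)$-tail cannot carry $\nu^{b(\Delta_2)}\rho$) is the right mechanism, but it must be deployed inside the $N_{l_{abs}(\Delta_1)}$-Jacquet module of $D_{\mathfrak m}(\pi)\times\mathrm{St}(\mathfrak m)$, not used to justify commuting the two Steinberg factors.
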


\begin{proof}
Let $l=l_{abs}(\Delta_1)$ and let $N=N_l$. We have that:
\[   \pi \hookrightarrow D_{\mathfrak m}(\pi)\times \mathrm{St}(\mathfrak m) .\]
Then 
\[  D_{\Delta_1}(\pi)\boxtimes \mathrm{St}(\Delta_1) \hookrightarrow \pi_N \hookrightarrow (D_{\mathfrak m}(\pi) \times \mathrm{St}(\mathfrak m))_N .\]
An anslysis on the layers in the geometric lemma, we have that
\[ (*)\quad D_{\Delta_1}(\pi) \boxtimes \mathrm{St}(\Delta_1) \hookrightarrow D_{\mathfrak m}(\pi)_{N_l} \dot{\times}^1 \mathrm{St}(\mathfrak m) ,
\]
where $\dot{\times}^1$ means that the induction to a $G_{n-l}\times G_l$-representation. By Proposition \ref{prop dagger property} and Lemma \ref{lem eta invariant rep}, $D_{\mathfrak m}\circ D_{\Delta_1}(\pi)\boxtimes \mathrm{St}(\mathfrak m)$ is a direct summand in $D_{\Delta_1}(\pi)_{N_{l'}}$, where $l'=l_{abs}(\mathfrak m)$. Furthermore, no other composition factors in $D_{\Delta_1}(\pi)_{N_{l'}}$take the form $\tau \boxtimes \mathrm{St}(\mathfrak m)$. Now, via Frobenius reciprocity on the map in (*), we obtain a non-zero map from  $D_{\Delta_1}(\pi)_{N_{l'}} \boxtimes \mathrm{St}(\Delta_1)$ to $(D_{\mathfrak m}(\pi)_{N_l} \boxtimes \mathrm{St}(\mathfrak m))^{\phi}$, where $\phi$ is a twisting sending a $G_{n-l-l'}\times G_l\times G_{l'}$-representation to a $G_{n-l-l'}\times G_{l'}\times G_l$-representation. Then
\[  D_{\mathfrak m}\circ D_{\Delta_1}(\pi) \boxtimes \mathrm{St}(\Delta_1) \hookrightarrow D_{\mathfrak m}(\pi)_{N_l} .
\]
Thus, we have $D_{\mathfrak m}\circ D_{\Delta_1}(\pi)\cong D_{\Delta_1}\circ D_{\mathfrak m}(\pi)$. 
\end{proof}

Lemma \ref{lem commut for max case} can also be deduced from Proposition \ref{prop dagger property 2} and Lemma \ref{lem dagger property on derivative}. On the other hand, one can also give another proof for Proposition \ref{prop dagger property 2} by using Lemmas \ref{lem dagger property on derivative} and \ref{lem commut for max case}.

\section{Conjectural interpretation for minimal sequences} \label{s conj model minimal}

\subsection{Minimality for two segments}

\begin{proposition}   \label{prop injective for a pair}
Let $\pi \in \mathrm{Irr}_{\rho}$. Let $\Delta_1, \Delta_2$ be a pair of linked segments with $\Delta_1<\Delta_2$. Suppose 
\[ D_{\Delta_1\cup \Delta_2}\circ D_{\Delta_1\cap \Delta_2}(\pi) \not\cong D_{\Delta_2}\circ D_{\Delta_1}(\pi). \]
 Then the unique non-zero map 
\[    D_{\Delta_2}\circ D_{\Delta_1}(\pi) \boxtimes (\mathrm{St}(\Delta_1)\times \mathrm{St}(\Delta_2)) \rightarrow \pi_N ,
\]
where $N=N_{l_{abs}(\Delta_1)+l_{abs}(\Delta_2)}$, is injective.
\end{proposition}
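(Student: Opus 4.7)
The plan is to exploit the length-two uniserial structure of $\omega := \mathrm{St}(\Delta_1) \times \mathrm{St}(\Delta_2)$ and convert any failure of injectivity into a derivative isomorphism that contradicts the hypothesis. First I would recall that because $\Delta_1 < \Delta_2$ are linked, Zelevinsky's classification shows $\omega$ is uniserial of length two, with socle $\omega_0 := \mathrm{St}(\{\Delta_1, \Delta_2\})$ (the non-generic factor) and cosocle the irreducible representation $\omega_1 := \mathrm{St}(\Delta_1 \cap \Delta_2) \times \mathrm{St}(\Delta_1 \cup \Delta_2)$ (irreducible since its two segments are nested, hence unlinked). Setting $\tau := D_{\Delta_2}\circ D_{\Delta_1}(\pi)$, which is irreducible, the source $\tau \boxtimes \omega$ inherits this structure: it is uniserial of length two, with socle $\tau \boxtimes \omega_0$ and cosocle $\tau \boxtimes \omega_1$. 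For the uniqueness of the map $f$ asserted in the proposition, by the appropriate adjunction between Jacquet modules and parabolic induction (used, for instance, in the proof of Lemma~\ref{lem commut for max case}) the $M$-morphisms $\tau \boxtimes \omega \to \pi_N$ correspond to $G$-morphisms $\pi \to \tau \times \omega$; the canonical embedding $\pi \hookrightarrow \tau \times \omega_0 \hookrightarrow \tau \times \omega$ coming from the definition of the iterated derivative furnishes one such, and the uniqueness built into the definition of $D_{\Delta_2}\circ D_{\Delta_1}$ shows it is unique up to scalar.

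The core argument is then by contradiction: suppose $f$ is not injective. The uniseriality of $\tau \boxtimes \omega$ forces $\ker f = \tau \boxtimes \omega_0$, and hence the image of $f$ is isomorphic to the cosocle $\tau \boxtimes \omega_1$. In particular $\tau \boxtimes \omega_1$ embeds into $\pi_N$. Reversing the adjunction yields an injection
\[
   \pi \hookrightarrow \tau \times \omega_1 \;=\; \tau \times \mathrm{St}(\Delta_1 \cap \Delta_2) \times \mathrm{St}(\Delta_1 \cup \Delta_2).
\]
Iterating the single-segment derivative definition along this embedding, and using Lemma~\ref{lem comm derivative 1} to commute the two unlinked factors $\mathrm{St}(\Delta_1 \cap \Delta_2)$ and $\mathrm{St}(\Delta_1 \cup \Delta_2)$, one concludes
\[
   D_{\Delta_1 \cup \Delta_2}\circ D_{\Delta_1\cap \Delta_2}(\pi) \;\cong\; \tau \;=\; D_{\Delta_2}\circ D_{\Delta_1}(\pi),
\]
directly contradicting the hypothesis.

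The main obstacle is the last step, extracting $D_{\Delta_1 \cup \Delta_2}\circ D_{\Delta_1\cap \Delta_2}(\pi) \cong \tau$ from the embedding $\pi \hookrightarrow \tau \times \omega_1$, because the intermediate factor $\tau \times \mathrm{St}(\Delta_1 \cap \Delta_2)$ is not irreducible in general. Handling this requires either a careful iterative application of the single-segment derivative definition (peeling off $\mathrm{St}(\Delta_1\cup \Delta_2)$ first and exploiting irreducibility of $\tau$), or an appeal to the removal-process framework of Theorem~\ref{thm isomorphic derivatives} to identify the two multisegment derivatives via their removal multisegments. The other inputs, namely the uniseriality of $\omega$, the identification of its socle, and the adjunction bookkeeping, are standard consequences of Zelevinsky's classification and the proof template already employed in Lemma~\ref{lem commut for max case}.
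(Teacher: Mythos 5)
Your proposal is correct and follows essentially the same route as the paper: assume non-injectivity, use the length-two indecomposable (uniserial) structure of $D_{\Delta_2}\circ D_{\Delta_1}(\pi)\boxtimes(\mathrm{St}(\Delta_1)\times\mathrm{St}(\Delta_2))$ to identify the image, pass via Frobenius-type reciprocity to $\pi\hookrightarrow D_{\Delta_2}\circ D_{\Delta_1}(\pi)\times\mathrm{St}(\Delta_1\cup\Delta_2)\times\mathrm{St}(\Delta_1\cap\Delta_2)$, and extract $D_{\Delta_1\cup\Delta_2}\circ D_{\Delta_1\cap\Delta_2}(\pi)\cong D_{\Delta_2}\circ D_{\Delta_1}(\pi)$, contradicting the hypothesis. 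The final ``obstacle'' you flag is resolved in the paper exactly by the peeling strategy you name: take the unique irreducible submodule $\tau$ of $D_{\Delta_2}\circ D_{\Delta_1}(\pi)\times\mathrm{St}(\Delta_1\cup\Delta_2)$, so uniqueness of the irreducible submodule of the triple product forces $\pi\hookrightarrow\tau\times\mathrm{St}(\Delta_1\cap\Delta_2)$, giving $D_{\Delta_1\cap\Delta_2}(\pi)\cong\tau$ and $D_{\Delta_1\cup\Delta_2}(\tau)\cong D_{\Delta_2}\circ D_{\Delta_1}(\pi)$.
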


\begin{proof}
Suppose the map is not injective. Since $D_{\Delta_2}\circ D_{\Delta_1}(\pi) \boxtimes (\mathrm{St}(\Delta_1)\times \mathrm{St}(\Delta_2))$ is indecomposable and has length $2$, the image of the map can only be isomorphic to $D_{\Delta_2}\circ D_{\Delta_1}(\pi)\boxtimes (\mathrm{St}(\Delta_1\cup \Delta_2)\times \mathrm{St}(\Delta_1\cap \Delta_2))$. This implies that 
\[ D_{\Delta_2}\circ D_{\Delta_1}(\pi)\boxtimes (\mathrm{St}(\Delta_1\cup \Delta_2)\times \mathrm{St}(\Delta_1\cap \Delta_2)) \]
is a submodule of $\pi_N$. Then, applying Frobenius reciprocity, we have that $\pi$ is the unique submodule of $D_{\Delta_2}\circ D_{\Delta_1}(\pi)\times \mathrm{St}(\Delta_1\cup \Delta_2+\Delta_1\cap \Delta_2)$ (see \cite{LM16, Ch22+}). 

Recall that $\mathrm{St}(\Delta_1 \cup \Delta_2+\Delta_1 \cap \Delta_2)\cong \mathrm{St}(\Delta_1\cup \Delta_2) \times \mathrm{St}(\Delta_1\cap \Delta_2)$. Let $\tau$ be the unique submodule of $D_{\Delta_2}\circ D_{\Delta_1}(\pi) \times \mathrm{St}(\Delta_1 \cup \Delta_2)$ so that 
\[ (*)\quad  D_{\Delta_1\cup \Delta_2}(\tau) \cong D_{\Delta_2}\circ D_{\Delta_1}(\pi) .\]
 Then the uniqueness of submodule above also forces that
\[   \pi \hookrightarrow \tau \times \mathrm{St}(\Delta_1\cap \Delta_2) 
\]
and so $D_{\Delta_1\cap \Delta_2}(\pi) \cong \tau$. Combinbing with (*), we have:
\[ D_{\Delta_1\cup \Delta_2}\circ D_{\Delta_1\cap \Delta_2}(\pi) \cong D_{\Delta_2}\circ D_{\Delta_1}(\pi),\]
 giving a contradiction.
\end{proof}

In the Appendix (Section \ref{s lemma on non-isom derivatives}), we shall prove a converse of Proposition \ref{prop injective for a pair}.

\subsection{A representation-theoretic interpretation of minimal sequences}\begin{definition}
For a multisegment $\mathfrak h=\left\{ \Delta_1, \ldots, \Delta_r \right\} \in \mathrm{Mult}_{\rho}$ labelled in an ascending order, define
\[  \widetilde{\lambda}(\mathfrak h) := \mathrm{St}(\Delta_1)\times \ldots \times \mathrm{St}(\Delta_r) . \]
We shall call it a {\it co-standard representation}. Sometimes $\lambda(\mathfrak h)$ is used for standard modules and so we prefer to use $\widetilde{\lambda}$ here.
\end{definition}

We conjecture that Proposition \ref{prop injective for a pair} can be generalized to general minimal multisegments.

\begin{conjecture} \label{conj minimal model}
Let $\pi \in \mathrm{Irr}_{\rho}$. Let $\mathfrak n \in \mathrm{Mult}_{\rho}$ be minimal to $\pi$.  Then the unique non-zero map
\[  D_{\mathfrak n}(\pi) \boxtimes \widetilde{\lambda}(\mathfrak n) \rightarrow \pi_{N_l}, 
\] 
where $l=l_{abs}(\Delta_1)+\ldots +l_{abs}(\Delta_r)$, is injective.
\end{conjecture}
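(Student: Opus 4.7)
The plan is to proceed by induction on $r = |\mathfrak n|$. The base cases $r = 1$ (immediate from the defining embedding of $D_\Delta$) and $r = 2$ (Proposition \ref{prop injective for a pair}) are already established. For the inductive step I would pick a segment $\Delta \in \mathfrak n$ whose $a(\Delta)$-value is maximal, so that $\Delta$ can occupy the last position in some ascending order $\Delta_1, \ldots, \Delta_{r-1}, \Delta$ of $\mathfrak n$; in particular, $\widetilde\lambda(\mathfrak n) \cong \widetilde\lambda(\mathfrak n_0) \times \mathrm{St}(\Delta)$ where $\mathfrak n_0 = \mathfrak n - \Delta$. Write $l_0 = l_{abs}(\mathfrak n_0)$, $l_\Delta = l_{abs}(\Delta)$, and $l = l_0 + l_\Delta$.

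Theorem \ref{thm subsequent minimal} guarantees that $\mathfrak n_0$ is minimal to $\pi$, and Theorem \ref{thm minimal and commut 2} guarantees that $\{\Delta\}$ is minimal to $D_{\mathfrak n_0}(\pi)$ with $D_{\mathfrak n}(\pi) \cong D_\Delta \circ D_{\mathfrak n_0}(\pi)$. The induction hypothesis applied to $(\pi, \mathfrak n_0)$ together with the defining embedding of $D_\Delta$ applied to $D_{\mathfrak n_0}(\pi)$ produce two injections
\[
\iota_0 : D_{\mathfrak n_0}(\pi) \boxtimes \widetilde\lambda(\mathfrak n_0) \hookrightarrow \pi_{N_{l_0}}, \qquad \iota_\Delta : D_{\mathfrak n}(\pi) \boxtimes \mathrm{St}(\Delta) \hookrightarrow (D_{\mathfrak n_0}(\pi))_{N_{l_\Delta}}.
\]
Applying the exact Jacquet functor $(-)_{N_{l_\Delta}}$ on the first tensor factor of $\iota_0$ and composing with $\iota_\Delta \boxtimes \mathrm{id}_{\widetilde\lambda(\mathfrak n_0)}$ yields
\[
D_{\mathfrak n}(\pi) \boxtimes \mathrm{St}(\Delta) \boxtimes \widetilde\lambda(\mathfrak n_0) \hookrightarrow (\pi_{N_{l_0}})_{N_{l_\Delta}} \cong \pi_{P'},
\]
where by transitivity of Jacquet $P'$ is the parabolic of $G_n$ with Levi $G_{n-l} \times G_{l_\Delta} \times G_{l_0}$. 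By Frobenius reciprocity inside $G_l$ applied to the product $\widetilde\lambda(\mathfrak n) \cong \widetilde\lambda(\mathfrak n_0) \times \mathrm{St}(\Delta)$, this injection corresponds to a non-zero map $f : D_{\mathfrak n}(\pi) \boxtimes \widetilde\lambda(\mathfrak n) \to \pi_{N_l}$, which must be the unique non-zero map of the conjecture.

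The hard part will be to argue that $f$ itself is injective, rather than merely that its further Jacquet to $\pi_{P'}$ is injective (which is what the above already establishes). A nontrivial kernel would have to be of the form $D_{\mathfrak n}(\pi) \boxtimes K'$ for some $0 \neq K' \subseteq \widetilde\lambda(\mathfrak n)$, and by the Zelevinsky-theoretic structure of $\widetilde\lambda(\mathfrak n)$ the simple constituents of $K'$ are indexed by multisegments $\mathfrak n' <_Z \mathfrak n$ obtained from $\mathfrak n$ by sequences of elementary intersection-union processes. Following the Frobenius-reciprocity step used in the proof of Proposition \ref{prop injective for a pair}, any such constituent sitting in $\ker f$ would produce an alternative realisation $\pi \hookrightarrow D_{\mathfrak n}(\pi) \times \sigma$ with $\sigma$ indexed by some $\mathfrak n' <_Z \mathfrak n$, contradicting the minimality of $\mathfrak n$ in $\mathcal S(\pi, D_{\mathfrak n}(\pi))$ via the $\leq_Z$-characterisation of Proposition \ref{prop nonoverlapping property} together with the convexity Theorem \ref{thm convex derivatives}. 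Making this kernel analysis rigorous---pinning down the precise composition-factor (or socle) structure of $\widetilde\lambda(\mathfrak n)$ and tracing through the Frobenius translation of ``$K$ in the kernel'' into a genuine embedding of $\pi$ by a smaller multisegment---is the principal obstacle the proof must overcome.
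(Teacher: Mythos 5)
The statement you are trying to prove is precisely Conjecture \ref{conj minimal model}, which the paper leaves open: the authors prove only the two-segment case (Proposition \ref{prop injective for a pair}), give a purely combinatorial/co-standard analogue (Proposition \ref{prop removal embedding}), and explicitly conjecture the general case. So there is no proof in the paper to match your proposal against, and your proposal, by your own admission, is not a proof either: everything hinges on the final kernel analysis, which you flag as "the principal obstacle" but do not carry out. That obstacle is not a technicality to be polished; it is exactly the open content of the conjecture.

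Concretely, two gaps. First, in the inductive step, injectivity of the composite map into $\pi_{N_{n-l,\,l_\Delta,\,l_0}}$ tells you nothing about injectivity of the map $f$ into $\pi_{N_l}$ that you produce by Frobenius reciprocity: the adjunction only transfers non-vanishing, and the standard adjunction for the standard parabolic naturally controls quotients of Jacquet modules, not submodules (this is why the paper's own embedding statements, Lemma \ref{lem eta invariant rep}, Lemma \ref{lem commut for max case} and Proposition \ref{prop removal embedding}, all require either a maximality hypothesis, a direct-summand statement, or a careful geometric-lemma and Bruhat-order analysis rather than a formal adjunction argument). Second, the kernel analysis you sketch does not reduce to known results. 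For $r=2$ the paper's proof uses that $D_{\mathfrak n}(\pi)\boxtimes(\mathrm{St}(\Delta_1)\times\mathrm{St}(\Delta_2))$ has length two with irreducible socle $\mathrm{St}(\Delta_1\cup\Delta_2)\times\mathrm{St}(\Delta_1\cap\Delta_2)$, so a non-injective map forces a single alternative embedding, which is then ruled out by a two-segment non-isomorphism statement (Proposition \ref{prop distinct sub ladder and generic}, Corollary \ref{cor non isomo derivatives}). For $r\geq 3$ the co-standard $\widetilde{\lambda}(\mathfrak n)$ has many constituents and an uncontrolled submodule lattice; a kernel constituent gives, after Frobenius, an embedding of $\pi$ into $D_{\mathfrak n}(\pi)\times\sigma$ where $\sigma$ is some quotient of $\widetilde{\lambda}(\mathfrak n)$, not an irreducible or co-standard module attached to a smaller multisegment, and converting that into an isomorphism $D_{\mathfrak n'}(\pi)\cong D_{\mathfrak n}(\pi)$ with $\mathfrak n' <_Z \mathfrak n$ (so as to contradict minimality via Theorem \ref{thm convex derivatives}) would require a generalization of the Appendix A non-isomorphism results far beyond the two-segment case proved there. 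Until those two points are supplied, the proposal is a plausible reduction strategy, not a proof.
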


We remark that the uniqueness of the non-zero map in Conjecture \ref{conj minimal model} follows from the uniqueness of simple quotients of Bernstein-Zelevinsky derivatives, shown in \cite[Proposition 3.15]{Ch22+d}.

\section{Some applications on the embedding model} \label{s application embedding models}

In this section, we shall discuss applications of the embedding model arising from the minimality in Proposition \ref{prop injective for a pair}.

%A key idea of the following lemma is to apply Proposition \ref{prop injective for a pair} in this special case to construct one embedding and then use the uniqueness in Corollary \ref{cor hom inequality for derivative product} to show that another embedding arising from a non-minimality is not possible.

%One uses Lemma \ref{lem minimal in a basic case} and Proposition \ref{prop dagger property 2} to have $D_{\Delta_2}\circ D_{\Delta_1}\circ D_{\Delta_3}(\pi)\cong D_{\Delta_3}\circ D_{\Delta_2}\circ D_{\Delta_1}(\pi)$. If $\Delta_2$ and $\Delta_3$ are unlinked, then one switches the labellings and applies Lemma \ref{lem minimal in a basic case}.

%We now assume $\Delta_2$ and $\Delta_3$ are linked and so $b(\Delta_2)<b(\Delta_3)$. There is nothing to prove if $\Delta_1$ and $\Delta_2$ are unlinked. Thus, we may consider $\Delta_1$ and $\Delta_2$ are linked and so $b(\Delta_1)<b(\Delta_2)$.

\begin{lemma}\label{lem strategy for checking minimal}
Let $\Delta', \Delta'', \Delta''' \in \mathrm{Seg}_{\rho}$. Suppose $\Delta'<\Delta''$. Let $\tau=D_{\Delta''}\circ D_{\Delta'}(\pi)$ and let $l'=l_{abs}(\Delta'), l''=l_{abs}(\Delta''), l'''=l_{abs}(\Delta''')$. Let $\kappa=D_{\Delta'''}(\pi)$. Suppose the followings hold:
\begin{itemize}
\item $\mathrm{dim}~\mathrm{Hom}_{G_{n-l'-l''}\times G_{l'+l''}}(\tau \boxtimes (\mathrm{St}(\Delta')\times \mathrm{St}(\Delta'')), (\kappa \times \mathrm{St}(\Delta'''))_{N_{l'+l''}}) \leq 1$;
\item The non-zero map in the first bullet factors through the natural embedding:
\[   \kappa_{N_{l'+l''}} \dot{\times}^1 \mathrm{St}(\Delta''') \hookrightarrow (\kappa \times \mathrm{St}(\Delta'''))_{N_{l'+l''}} 
\]
from the bottom layer in the geometric lemma. Here $\dot{\times}^1$ again denotes a parabolic induction from a $G_{n-l'-l''-l'''}\times G_{l'''} \times G_{l'+l''} $-representation to a $G_{n-l'-l''}\times G_{l'+l''}$-representation.
\item $D_{\Delta'''}\circ D_{\Delta''}\circ D_{\Delta'}(\pi)\cong D_{\Delta''}\circ D_{\Delta'''}\circ D_{\Delta'}(\pi)$. 
\end{itemize}
Then, if $\left\{ \Delta', \Delta'' \right\}$ is minimal to $\pi$, then $\left\{ \Delta', \Delta'' \right\}$ is minimal to $D_{\Delta'''}(\pi)$.
\end{lemma}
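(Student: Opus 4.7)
The plan is to port the embedding model of Proposition \ref{prop injective for a pair} from $\pi$ to $\kappa = D_{\Delta'''}(\pi)$, using hypotheses (1)--(3) to pin down the relevant constituents of the Jacquet module via the geometric lemma, and then invoke the converse of Proposition \ref{prop injective for a pair} (to be proved in Section \ref{s lemma on non-isom derivatives}) to conclude minimality. First I would apply Proposition \ref{prop injective for a pair} to the minimality of $\{\Delta', \Delta''\}$ at $\pi$ to obtain the injection
\[
\tau \boxtimes (\mathrm{St}(\Delta') \times \mathrm{St}(\Delta'')) \hookrightarrow \pi_{N_{l'+l''}},
\]
and compose with the Jacquet module of the defining embedding $\pi \hookrightarrow \kappa \times \mathrm{St}(\Delta''')$, which remains an embedding by exactness of the Jacquet functor, to land this map in $(\kappa \times \mathrm{St}(\Delta'''))_{N_{l'+l''}}$.

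By hypothesis (1) the resulting composite is the unique non-zero map up to scalar, and by hypothesis (2) it factors through the bottom layer provided by the geometric lemma, giving an injection
\[
\tau \boxtimes (\mathrm{St}(\Delta') \times \mathrm{St}(\Delta'')) \hookrightarrow \kappa_{N_{l'+l''}} \dot{\times}^1 \mathrm{St}(\Delta''').
\]
Applying Frobenius reciprocity on the $G_{n-l'-l''}$-factor, and using that the unique subquotient of $\tau_{N_{l'''}}$ whose $G_{l'''}$-component is $\mathrm{St}(\Delta''')$ is $D_{\Delta'''}(\tau) \boxtimes \mathrm{St}(\Delta''')$, one extracts a non-zero map
\[
D_{\Delta'''}(\tau) \boxtimes (\mathrm{St}(\Delta') \times \mathrm{St}(\Delta'')) \to \kappa_{N_{l'+l''}}.
\]
Hypothesis (3) then identifies $D_{\Delta'''}(\tau)$ with $D_{\Delta''}\circ D_{\Delta'}(\kappa)$, so this map is precisely the one whose injectivity, by the converse of Proposition \ref{prop injective for a pair}, witnesses the minimality of $\{\Delta', \Delta''\}$ to $\kappa$.

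The main obstacle is to upgrade this extracted map to an \emph{injection}, since Frobenius reciprocity does not preserve injectivity in general. I would argue injectivity by tracing the composite back through the factorization of the first paragraph: the length-two indecomposable module $\mathrm{St}(\Delta') \times \mathrm{St}(\Delta'')$ has a unique irreducible sub, so a non-trivial kernel would force the image to coincide with the ``intersection-union'' constituent, contradicting the fact that the original embedding sits in the bottom layer of the geometric lemma rather than a higher one. A secondary subtlety is the precise identification of $D_{\Delta'''}(\tau)$ with $D_{\Delta''}\circ D_{\Delta'}(\kappa)$: hypothesis (3) supplies the commutativity of $D_{\Delta''}$ and $D_{\Delta'''}$ after $D_{\Delta'}$, and the remaining swap of $D_{\Delta'}$ and $D_{\Delta'''}$ must either be read off from the setting in which this strategy lemma is invoked or be recovered from the constructed map itself together with the uniqueness of simple quotients of Bernstein--Zelevinsky derivatives.
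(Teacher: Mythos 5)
Your route is genuinely different from the paper's. The paper argues by contradiction: assuming $\{\Delta',\Delta''\}$ is not minimal to $\kappa=D_{\Delta'''}(\pi)$, it produces an embedding $D_{\Delta''}\circ D_{\Delta'}(\kappa)\boxtimes\lambda_1\hookrightarrow\kappa_{N_{l'+l''}}$ with $\lambda_1=\mathrm{St}(\Delta'\cup\Delta'')\times\mathrm{St}(\Delta'\cap\Delta'')$, pushes it through the bottom layer into $(\kappa\times\mathrm{St}(\Delta'''))_{N_{l'+l''}}$ (using the third bullet to recognize $\tau=D_{\Delta''}\circ D_{\Delta'}(\pi)$ inside $D_{\Delta''}\circ D_{\Delta'}(\kappa)\times\mathrm{St}(\Delta''')$), and combines this with the embedding $\tau\boxtimes(\mathrm{St}(\Delta')\times\mathrm{St}(\Delta''))\hookrightarrow\pi_{N_{l'+l''}}\hookrightarrow(\kappa\times\mathrm{St}(\Delta'''))_{N_{l'+l''}}$ coming from minimality at $\pi$; since $\lambda_1$ is the cosocle of $\mathrm{St}(\Delta')\times\mathrm{St}(\Delta'')$, the two embeddings produce two linearly independent maps from $\tau\boxtimes(\mathrm{St}(\Delta')\times\mathrm{St}(\Delta''))$, contradicting the multiplicity-one hypothesis in the first bullet. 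No map into $\kappa_{N_{l'+l''}}$ ever has to be shown injective. You instead try the direct direction: transport the injective model from $\pi$ to $\kappa$ and then invoke the converse statement of Appendix A (Section \ref{s lemma on non-isom derivatives}).

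The gap in your plan is at the extraction/injectivity step, and it is decisive. First, the claim that the unique subquotient of $\tau_{N_{l'''}}$ with $G_{l'''}$-component $\mathrm{St}(\Delta''')$ is $D_{\Delta'''}(\tau)\boxtimes\mathrm{St}(\Delta''')$ is false in general: uniqueness holds for the simple quotient realizing the derivative, not for arbitrary subquotients of a Jacquet module, so Frobenius reciprocity only yields a non-zero map out of $\tau_{N_{l'''}}\boxtimes(\mathrm{St}(\Delta')\times\mathrm{St}(\Delta''))$, and descending it to one out of $D_{\Delta'''}(\tau)\boxtimes(\mathrm{St}(\Delta')\times\mathrm{St}(\Delta''))$ already needs an argument you do not supply. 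Second, and more seriously, the adjunction destroys injectivity, and your proposed repair is circular: if the extracted map $D_{\Delta''}\circ D_{\Delta'}(\kappa)\boxtimes(\mathrm{St}(\Delta')\times\mathrm{St}(\Delta''))\to\kappa_{N_{l'+l''}}$ had a kernel, its image would be the $\lambda_1$-constituent, and by the proof of Proposition \ref{prop injective for a pair} that is precisely equivalent to the non-minimality of $\{\Delta',\Delta''\}$ to $\kappa$ --- the very statement being proven; it does not contradict the bottom-layer factorization, which only constrains where the image of the map into $(\kappa\times\mathrm{St}(\Delta'''))_{N_{l'+l''}}$ sits and says nothing about the kernel of its adjoint. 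So the implication from the three bullets to minimality is not actually established in your write-up. (Your secondary concern about identifying $D_{\Delta'''}(\tau)$ with $D_{\Delta''}\circ D_{\Delta'}(\kappa)$ from the third bullet is a fair one, but the paper leans on the same point in the same way, so it is minor by comparison.) If you want to salvage a direct argument, you would need a mechanism, as in Lemma \ref{lem commut for max case} via Lemma \ref{lem eta invariant rep}, that pins down the relevant constituent of $\kappa_{N_{l'+l''}}$ as a direct summand or socle member; otherwise the contradiction scheme of the paper is the cleaner use of the three hypotheses.
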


\begin{proof}
Suppose $\left\{ \Delta', \Delta'' \right\}$ is not minimal to $D_{\Delta'''}(\pi)$. Let $i=l'+l''$. Let $\lambda_1=\mathrm{St}(\Delta'\cup \Delta'')\times \mathrm{St}(\Delta' \cap \Delta'')$. Let $i=l_{abs}(\Delta')+l_{abs}(\Delta'')$. Then, by Proposition \ref{prop injective for a pair},
\[            D_{\Delta''}\circ D_{\Delta'}\circ D_{\Delta'''}(\pi)  \boxtimes  \lambda_1 \hookrightarrow  D_{\Delta'''}(\pi)_{N_{i}} .
\]

On the other hand, we have the following embedding:
\[   \pi \hookrightarrow D_{\Delta'''}(\pi)\times \mathrm{St}(\Delta''') .
\]
Let $\omega=D_{\Delta''}\circ D_{\Delta'}\circ D_{\Delta'''}(\pi) \times \mathrm{St}(\Delta''')$. Via the bottom layer in the geometric lemma, we have an embedding:
\[  \omega \boxtimes \lambda_1 \hookrightarrow (D_{\Delta'''}(\pi)\times \mathrm{St}(\Delta''') )_{N_i}.
\]
By the third bullet of the hypothesis, $D_{\Delta''}\circ D_{\Delta'}(\pi)$ is a submodule of $\omega$. Combining above, we have that:
\[  D_{\Delta''}\circ D_{\Delta'}(\pi)\boxtimes \lambda_1 \hookrightarrow (D_{\Delta'''}(\pi)\times \mathrm{St}(\Delta''') )_{N_i}.
\]

 Let $\lambda_2=\mathrm{St}(\Delta')\times \mathrm{St}(\Delta'')$. Then, by the minimality of $\left\{ \Delta', \Delta'' \right\}$ to $\pi$ and Proposition \ref{prop injective for a pair}, we have 
\[  D_{\Delta''}\circ D_{\Delta'}(\pi)\boxtimes \lambda_2 \hookrightarrow \pi_{N_{i}} .
\]
This induces another embedding: 
\[ D_{\Delta''}\circ D_{\Delta'}(\pi) \boxtimes \lambda_2 \hookrightarrow ( D_{\Delta'''}(\pi) \times \mathrm{St}(\Delta'''))_{N_{i}}. \]
Since $\lambda_1$ and $\lambda_2$ have no isomorphic submodules, the above two embeddings give:
\[D_{\Delta''}\circ D_{\Delta'}(\pi)\boxtimes (\lambda_1 \oplus \lambda_2)\hookrightarrow (D_{\Delta'''}(\pi) \times \mathrm{St}(\Delta'''))_{N_{i}} .
\]
However, this induces two non-zero maps from $D_{\Delta''}\circ D_{\Delta'}(\pi)\boxtimes \lambda_1$ to $(D_{\Delta'''}(\pi) \times \mathrm{St}(\Delta'''))_{N_i}$, which are not scalar multiple of each other. This contradicts to the first bullet.
\end{proof}

Lemma \ref{lem strategy for checking minimal} provides another strategy for checking minimality in some three segment cases in Section \ref{s three segment cases} e.g. the second bullet case in the proof of Lemma \ref{lem minimal in a basic case} and the linked case in the proof of Lemma \ref{lem basic subsequent property 2}. Checking the second bullet usually involves analysis on the layers arising from the geometric lemma while checking the third bullet usually uses some known commutativity from minimality (Proposition \ref{prop dagger property 2}) in some other cases. Checking the first bullet requires some inputs of multiplicity-one theorems from \cite{AGRS10} and \cite{Ch21+}.

\section{Embedding model, minimality and removal process} \label{s embedding minimality}

\subsection{Combintorial preliminaries}
Let $S_n$ be the symmetric group permuting the integers $\left\{1, \ldots, n \right\}$. 

\begin{definition}
Let $w \in S_n$. For $1\leq k \leq n$ and $1 \leq l \leq n$, define
\[   w[k,l] := |\left\{ a : 1 \leq a \leq k, w(a) \geq l  \right\}| .\]
\end{definition}
We shall write $\leq_B$ to be the Bruhat ordering on $S_n$ i.e. $w' \leq_B w$ if and only if $w'$ is a subword of a reduced expression of $w$. We write $w'<_B w$ if $w' \leq_B w$ and $w'\neq w$.

\begin{proposition} \cite[Theorem 2.1.5]{BB05} \label{prop equiv bruhat order Sn}
Let $w, w' \in S_n$. Then the following statements are equivalent:
\begin{enumerate}
\item $w' \leq_B w$;
\item $w'[k,l] \leq w[k,l]$ for any $k,l$.
\end{enumerate}

\end{proposition}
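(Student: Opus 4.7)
The plan is to prove the two implications separately, each by induction on an appropriate length parameter. For $(1)\Rightarrow(2)$, I would induct on $\ell(w)-\ell(w')$. The base case $w=w'$ is trivial. For the inductive step, by the subword characterization of Bruhat order it suffices to verify monotonicity along a single cover: if $u\lessdot_B v$, then $u[k,l]\leq v[k,l]$ for every $k,l$. Such a cover is of the form $v=u\cdot t_{ij}$ with $i<j$, $u(i)<u(j)$, and no $i<m<j$ satisfying $u(i)<u(m)<u(j)$. Since $v$ agrees with $u$ outside positions $i,j$, the counts $u[k,l]$ and $v[k,l]$ coincide for $k<i$ and for $k\geq j$. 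For $i\leq k<j$, the only entry in the prefix that changes is at position $i$, where the value rises from $u(i)$ to $u(j)$; hence the number of prefix entries $\geq l$ can only increase, giving the required inequality.

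For $(2)\Rightarrow(1)$, I would induct on $\ell(w)$. When $w=e$ one has $e[k,l]=\max(0,k-l+1)$; the inequalities $w'[k,k+1]\leq 0$ then force $\{w'(1),\ldots,w'(k)\}\subseteq\{1,\ldots,k\}$ for all $k$, so $w'=e$. For the inductive step, assuming $w\neq w'$ and the matrix dominance, I claim there exists an adjacent transposition $s_i$ with $w(i)>w(i+1)$ such that $\tilde w:=w\cdot s_i$ still dominates, i.e., $w'[k,l]\leq\tilde w[k,l]$ for all $k,l$. Then $\tilde w\lessdot_B w$ is a cover and, by the inductive hypothesis applied to $\tilde w$, one concludes $w'\leq_B\tilde w<_B w$.

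To select the descent $i$, observe that the passage from $w$ to $\tilde w=w\cdot s_i$ decreases exactly the entries $w[i,l]$ by one for $l\in(w(i+1),w(i)]$ and leaves all other $w[k,l]$ unchanged. The claim therefore reduces to the combinatorial assertion that some descent $i$ of $w$ satisfies the strict inequalities $w'[i,l]<w[i,l]$ for every $l\in(w(i+1),w(i)]$. The main obstacle, and the technical heart of the argument, is establishing the existence of such a descent. The plan is to consider the lexicographically smallest pair $(k_0,l_0)$ with $w'[k_0,l_0]<w[k_0,l_0]$; comparing the positions $a\leq k_0$ with $w(a)\geq l_0$ to those with $w'(a)\geq l_0$, the minimality of $(k_0,l_0)$ forces the prefix $w(1),\ldots,w(k_0)$ to exhibit a descent at some index $i$ whose value drop straddles $l_0$, from which the strict dominance on the interval $(w(i+1),w(i)]$ follows. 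Should this bookkeeping prove unwieldy, a cleaner alternative is to invoke the lifting property of Bruhat order on Coxeter groups and deduce $(2)\Rightarrow(1)$ by a more uniform induction on $\ell(w)+\ell(w')$.
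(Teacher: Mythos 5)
Your direction $(1)\Rightarrow(2)$ is fine: reducing to a single cover $v=u\,t_{ij}$ and observing that for $i\leq k<j$ the prefix only gains value at position $i$ is correct (in fact the cover condition is not even needed there). The gap is in $(2)\Rightarrow(1)$, and it is not just bookkeeping: the combinatorial assertion your induction rests on is false. You claim that whenever $w'\neq w$ and $w'[k,l]\leq w[k,l]$ for all $k,l$, some adjacent descent $i$ of $w$ satisfies $w'[i,l]<w[i,l]$ for every $l\in(w(i+1),w(i)]$, i.e.\ that $ws_i$ still dominates $w'$. Take $n=3$, $w=231$ and $w'=132$ in one-line notation. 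Then $w'\leq_B w$ and dominance holds, but the unique descent of $w$ is $i=2$, and for $l=3\in(w(3),w(2)]=(1,3]$ one has $w'[2,3]=1=w[2,3]$, so strictness fails; concretely $ws_2=213$ does not dominate $w'$ (indeed $w'[2,3]=1>(ws_2)[2,3]=0$, consistent with $132\not\leq_B 213$). So no choice of descent lets you keep $w'$ fixed and push $w$ down by an adjacent transposition, and the lexicographically-minimal-pair argument you sketch cannot produce a descent that does not exist.

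The repair is to let $w'$ move as well, which is essentially your fallback sentence, but its real content is two dominance-preservation lemmas that you would still have to prove: for a right descent $s=s_i$ of $w$, (a) if $s$ is also a descent of $w'$ then $w's$ is dominated by $ws$, and (b) if $s$ is not a descent of $w'$ then $w'$ itself is dominated by $ws$; in both cases the needed strict drop on $l\in(w(i+1),w(i)]$ comes from comparing with rows $i-1$ or $i+1$ of the dominance matrix, and then the lifting property plus induction on $\ell(w)$ gives $w'\leq_B w$. (In the example above one is in case (a): $w's_2=123\leq_B ws_2=213$, and lifting recovers $w'\leq_B w$.) Alternatively one can lower $w$, or raise $w'$, by a suitable generally non-adjacent transposition read off from the dominance matrix, which is the route of Bj\"orner--Brenti. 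Note also that the paper gives no proof of this proposition — it is quoted from \cite{BB05} — so the only issue here is the correctness of your argument, and as it stands the inductive step does not go through.
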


We shall now discuss a special situation. Let $S^{n-i,i}$ be the set of minimal representatives in the cosets in $ S_n/(S_{n-i}\times S_i)$. It is well-known that if $w \in S^{n-i,i}$, then
\begin{itemize}
\item $w(k)<w(l)$ if $k,l \in \left\{ 1, \ldots, n-i\right\}$ and $k<l$;
\item $w(k)<w(l)$ if $k,l \in \left\{ n-i+1, \ldots, n \right\}$ and $k<l$.
\end{itemize}

It is straightforward to obtain the following proposition from Proposition \ref{prop equiv bruhat order Sn}:

\begin{proposition} \label{prop criteria for checking bruhat}
Let $w, w' \in S^{n-i, i}$. The following statements are equivalent:
\begin{enumerate}
\item $w' \leq_B w$;
\item $w'(k) \leq w(k)$ for all $k =1, \ldots, n-i$;
\item $w'(k) \geq w(k)$ for all $k=n-i+1, \ldots, n$.
\end{enumerate}
\end{proposition}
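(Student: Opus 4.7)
The plan is to use Proposition~\ref{prop equiv bruhat order Sn} as the bridge between the Bruhat condition~(1) and the entrywise conditions~(2) and~(3), and to dispatch (2)$\Leftrightarrow$(3) as an elementary fact about complements. For the latter, the observation is that a permutation $w \in S^{n-i,i}$ is encoded by the set $A_w := \{w(1),\ldots,w(n-i)\}$, with the listing $w(1)<\cdots<w(n-i)$ being the sorted order on $A_w$ and $w(n-i+1)<\cdots<w(n)$ the sorted order on its complement $B_w$. Then (2) reads ``$A_{w'} \leq A_w$ in Gale order'' and (3) reads ``$B_w \leq B_{w'}$ in Gale order''; the identity $|B_w \cap \{1,\ldots,m\}| = m - |A_w \cap \{1,\ldots,m\}|$ shows that complementation reverses the Gale order, so the two conditions coincide.

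Next I would prove (1)$\Rightarrow$(2) by specialising Proposition~\ref{prop equiv bruhat order Sn}: for any $k \leq n-i$ take $l = w(k)+1$; monotonicity of $w$ on $\{1,\ldots,n-i\}$ forces $w[k,w(k)+1]=0$, and the inequality $w'[k,w(k)+1]\leq w[k,w(k)+1]$ then forces $w'(a)\leq w(k)$ for all $a\leq k$, whence $w'(k)\leq w(k)$.

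The main work is (2)$\Rightarrow$(1), where I must verify $w'[k,l]\leq w[k,l]$ for every pair $(k,l)$. I would split into two cases according to whether $k\leq n-i$ or $k\geq n-i$. For $k\leq n-i$, both $w$ and $w'$ are strictly increasing on $\{1,\ldots,k\}$, and the entrywise inequality (2) gives the set inclusion $\{a\leq k:w'(a)\geq l\}\subseteq\{a\leq k:w(a)\geq l\}$, and the desired bound follows immediately. For $k\geq n-i$, I would instead use the reformulation $w'[k,l]=k-l+1+|\{a>k:w'(a)<l\}|$ (coming from $|\{a:w'(a)<l\}|=l-1$), and the analogous one for $w$; since $a>k\geq n-i$, condition~(3) applies to give $w'(a)\geq w(a)$ on that range, yielding the reverse inclusion $\{a>k:w'(a)<l\}\subseteq\{a>k:w(a)<l\}$. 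The only real obstacle is choosing, for each range of $k$, the right formulation of $w[k,l]$ (counting either the ``$\geq l$'' or the ``$<l$'' side) so that the relevant inequality from (2) or (3) points in the useful direction; once the case split and this choice are fixed, the verification reduces to a short counting argument.
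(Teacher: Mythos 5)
Your proof is correct and follows exactly the route the paper intends: the paper omits the argument, saying only that the statement is "straightforward" from Proposition \ref{prop equiv bruhat order Sn}, and your proposal supplies precisely that deduction (the counting identity $w'[k,l]=k-l+1+|\{a>k: w'(a)<l\}|$, the case split on $k\lessgtr n-i$, and the complementation argument for (2)$\Leftrightarrow$(3) all check out).
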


\subsection{Embedding model and removal process}

We now explain how the removal process plays a role in the embedding model. We first provide a basic case in Proposition \ref{prop removal embedding} and then conjecture more general case in Section \ref{conj multisegment}.

\begin{proposition} \label{prop removal embedding}
Let $\mathfrak h \in \mathrm{Mult}_{\rho}$. Let $l=l_{abs}(\Delta)$. Then there exists an embedding
\[  \widetilde{\lambda}(\mathfrak r(\Delta, \mathfrak h)) \boxtimes \mathrm{St}(\Delta) \hookrightarrow \widetilde{\lambda}(\mathfrak h)_{N_l} .
\]
\end{proposition}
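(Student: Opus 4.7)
The plan is to prove the embedding by induction on the length $l_{abs}(\Delta)$, using Lemma~\ref{lem removal process}(1) to reduce $\Delta$ to ${}^-\Delta$ one point at a time. The base case handles $\Delta = [a]_\rho$ a single point, and the inductive step stitches together two such embeddings via iterated Jacquet modules.

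For the base case ($\Delta = [a]_\rho$, so $l = \mathrm{deg}(\rho)$), let $\Xi = \Upsilon(\Delta, \mathfrak h) = [a,c]_\rho$ be the shortest segment of $\mathfrak h$ starting at $a$; then $\mathfrak r(\Delta,\mathfrak h) = \mathfrak h - \Xi + {}^-\Xi$ by Lemma~\ref{lem removal process}(1). The idea is to apply the geometric lemma to the product decomposition $\widetilde{\lambda}(\mathfrak h) = \prod_j \mathrm{St}(\Xi_j)$ and isolate the layer where the entire $\mathrm{deg}(\rho)$-part of the Jacquet module comes from the single factor $\mathrm{St}(\Xi)$, with the other factors unaffected. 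Since the Jacquet module of a Steinberg is irreducible with $\mathrm{St}(\Xi)_{N_{\mathrm{deg}(\rho)}} \cong \mathrm{St}({}^-\Xi) \boxtimes \nu^a\rho$, this layer furnishes a copy of $\widetilde{\lambda}(\mathfrak r(\Delta,\mathfrak h)) \boxtimes \nu^a\rho$ inside $\widetilde{\lambda}(\mathfrak h)_{N_{\mathrm{deg}(\rho)}}$; Bernstein's second adjunction converts the embedding claim into producing a non-zero map from a parabolic induction involving $\nu^a\rho$ back to $\widetilde{\lambda}(\mathfrak h)$, which is constructed from the canonical intertwiner.

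For the inductive step, set $\Delta' = {}^-\Delta$ and $\mathfrak h^* = \mathfrak h - \Upsilon(\Delta, \mathfrak h) + {}^-\Upsilon(\Delta, \mathfrak h)$; by Lemma~\ref{lem removal process}(1), $\mathfrak r(\Delta,\mathfrak h) = \mathfrak r(\Delta', \mathfrak h^*)$. The inductive hypothesis applied to $(\Delta', \mathfrak h^*)$ yields
\[
\widetilde{\lambda}(\mathfrak r(\Delta,\mathfrak h)) \boxtimes \mathrm{St}(\Delta') \hookrightarrow \widetilde{\lambda}(\mathfrak h^*)_{N_{l-\mathrm{deg}(\rho)}},
\]
while the base case applied to $(\mathfrak h, [a]_\rho)$ gives $\widetilde{\lambda}(\mathfrak h^*) \boxtimes \nu^a\rho \hookrightarrow \widetilde{\lambda}(\mathfrak h)_{N_{\mathrm{deg}(\rho)}}$. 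Taking a further $N_{l-\mathrm{deg}(\rho)}$-Jacquet of the left factor in the second embedding and composing with the first produces an embedding of $\widetilde{\lambda}(\mathfrak r(\Delta,\mathfrak h)) \boxtimes \mathrm{St}(\Delta') \boxtimes \nu^a\rho$ into the refined Jacquet module of $\widetilde{\lambda}(\mathfrak h)$ attached to the Levi $G_{n-l}\times G_{l-\mathrm{deg}(\rho)}\times G_{\mathrm{deg}(\rho)}$. The final move is to reassemble $\mathrm{St}(\Delta)$ from $\mathrm{St}(\Delta') \boxtimes \nu^a\rho$ by invoking the ascending Steinberg embedding $\mathrm{St}(\Delta) \hookrightarrow \nu^a\rho \times \mathrm{St}(\Delta')$ together with Frobenius reciprocity, descending the refined embedding back to $\widetilde{\lambda}(\mathfrak h)_{N_l}$.

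The hard part will be this reassembly step and verifying that the composed map is genuinely injective rather than merely non-zero. One expects this to follow from the uniqueness of $\mathrm{St}(\Delta)$ as the irreducible submodule of $\nu^a\rho \times \mathrm{St}(\Delta')$ together with the irreducibility of $\mathrm{St}(\Delta)$, so that a non-zero map from an irreducible source is automatically injective; nevertheless, tracking the $G_{n-l}$ vs.\ $G_l$ bookkeeping through the iterated Jacquet modules, and ensuring at each stage that the identified substructure is a genuine submodule (not merely a subquotient in the Bernstein-Zelevinsky filtration), is the main combinatorial burden. The Bruhat-theoretic preliminaries of Section~\ref{s embedding minimality} are expected to enter here, to control which double coset class the relevant layer sits in and to rule out competing composition factors that could obstruct injectivity.
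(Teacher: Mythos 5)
Your induction on $l_{abs}(\Delta)$ via Lemma \ref{lem removal process}(1) is a reasonable organizing idea, and the identification of the relevant layer (stripping $\nu^a\rho$ from $\Upsilon(\Delta,\mathfrak h)$, using $\mathrm{St}(\Xi)_{N_{\mathrm{deg}(\rho)}}\cong \mathrm{St}({}^-\Xi)\boxtimes\nu^a\rho$) is correct at the level of subquotients. But the proof has a genuine gap exactly at the two places you flag as "the hard part," and the tools you propose cannot close it. First, already in the base case the desired layer is only a subquotient of the geometric-lemma filtration of $\widetilde{\lambda}(\mathfrak h)_{N_{\mathrm{deg}(\rho)}}$: there are competing layers with identical cuspidal support on both factors (strip $\nu^a\rho$ from any other segment of $\mathfrak h$ starting at $a$), so neither cuspidal-support separation nor a formal adjunction argument shows your layer is a submodule; passing through Bernstein's second adjunction only converts the statement into the non-vanishing of some map, and non-vanishing of an adjoint does not give injectivity of the original map into the Jacquet module. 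Second, the reassembly step fails for the same structural reason: from your embedding into the iterated Jacquet module for the Levi $G_{n-l}\times G_{l-\mathrm{deg}(\rho)}\times G_{\mathrm{deg}(\rho)}$, adjunction in the $G_l$-variable returns a non-zero map whose source is $\widetilde{\lambda}(\mathfrak r(\Delta,\mathfrak h))\boxtimes\bigl(\nu^a\rho\times\mathrm{St}({}^-\Delta)\bigr)$, i.e. the full length-two induced product, not its irreducible constituent $\mathrm{St}(\Delta)$; you cannot conclude that the restriction to the $\mathrm{St}(\Delta)$-part is non-zero, let alone injective. The appeal to "non-zero map from an irreducible source is injective" also does not apply, because the left factor $\widetilde{\lambda}(\mathfrak r(\Delta,\mathfrak h))$ is a co-standard module and is reducible in general.

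This missing content is precisely what the paper's proof supplies, and by a different mechanism: it applies the geometric lemma to $\widetilde{\lambda}(\mathfrak h)_{N_l}$ once, for the whole segment $\Delta$, singles out the layer $w^*$ dictated by the removal sequence, and then upgrades subquotient to submodule by (a) a Bruhat-order comparison (Claim 2, using Proposition \ref{prop criteria for checking bruhat}) showing no other "standard order" layer with union $\Delta$ lies below $w^*$, and (b) an $\mathrm{Ext}$-vanishing statement (Claim 3, via the Jacquet module of $\mathrm{St}(\Delta)$ and a cuspidal-support/K\"unneth argument) against all non-standard-order layers; together these split $\widetilde{\lambda}(\mathfrak r(\Delta,\mathfrak h))\boxtimes\mathrm{St}(\Delta)$ off from $\kappa_{<w^*}$ inside $\kappa_{\leq w^*}$, yielding the embedding. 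If you want to rescue your inductive scheme, you would still have to prove analogues of these two claims at each stage (including the base case), at which point the induction buys little over the direct argument; as written, the proposal records the correct target layer but does not prove it is a submodule.
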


\begin{proof}
%We first prove the uniqueness, By taking suitable dualites, it suffices to show that
%\[  \mathrm{dim} \mathrm{Hom}_{G_{n-l}\times G_l}((\widetilde{\lambda}(\mathfrak h)^{\vee} )_{N_l^-}, \widetilde{\lambda}(\mathfrak r(\Delta, \mathfrak h))^{\vee} \boxtimes \mathrm{St}(\Delta)^{\vee}) \leq 1\]
%Since 
%\[ \mathrm{dim} \mathrm{Hom}_{G_{n-l}\times G_l}((\widetilde{\lambda}(\mathfrak h)^{\vee} )_{N_l^-}, \widetilde{\lambda}(\mathfrak r(\Delta, \mathfrak h))^{\vee} \boxtimes \mathrm{St}(\Delta)^{\vee}) \leq \mathrm{dim} \mathrm{Hom}_{G_{n-l}\times G_l}((\widetilde{\lambda}(\mathfrak h)^{\vee})^{(l)}, \widetilde{\lambda}(\mathfrak r(\Delta, \mathfrak h))^{\vee} ), 
%\]
%it suffices to show that the latter dimension is at most one. But, since $(\widetilde{\lambda}(\mathfrak h))^{\vee}$ and

Write the segments in $\mathfrak m$ as $\Delta_i=[a_i,b_i]_{\rho}$ ($i=1, \ldots, r$). We arrange the segments in $\mathfrak m$ satisfying:
\begin{itemize}
\item $b_1 \leq b_2 \leq \ldots \leq b_r$;
\item if $b_i=b_{i+1}$, then $a_{i+1}\geq a_i$. 
\end{itemize}

We now apply the geometric lemma on $\widetilde{\lambda}(\mathfrak m)_{N_l}$. We first write the segments as:
\[  \Delta_1=[a_1,b_1]_{\rho}, \ldots , \Delta_r=[a_r, b_r]_{\rho} .
\]
and $\Delta=[a,b]_{\rho}$. Set 
\[  \Delta_k^{+,l_k}=[a_k+l_k, b_k]_{\rho}, \quad \Delta_k^{-,l_k}=[a_k, a_k+l_k-1]_{\rho}
\]
Those are possiby empty sets.

Then the layers arising from the geometric lemma takes the form:
\[ (*)\quad   (\mathrm{St}(\Delta_1^{+,l_1}) \times \ldots \times \mathrm{St}(\Delta_r^{+,l_r}))\boxtimes (\mathrm{St}(\Delta_1^{-,l_1})\times \ldots \times \mathrm{St}(\Delta_r^{-,l_r}) ,
\]
where $l_1, \ldots, l_r$ run for all integers such that $l_1+\ldots +l_r=l/\mathrm{deg}(\rho)$. Let $d=\mathrm{deg}(\rho)$. Let $t_k=l_{abs}(\Delta_k^{+,l_k})$. We now describe the underlying element in $(S_{t_1}\times \ldots \times S_{t_r})\setminus S_n /(S_{n-i}\times S_i)$ corresponding to the geometric lemma. The assignment takes the form:
\[   n-(j-1) \mapsto t_1+\ldots + t_{r-1}+t_r -(j-1)\quad \mbox{ for $j=1, \ldots, l_rd$ } , \]
\[   (n-l_rd)-(j-1) \mapsto t_1+\ldots +t_{r-1}-(j-1) \quad \mbox{ for $j=1, \ldots, l_{r-1}d$} , \]
\[  \vdots  \]
\[   (n-l_2d-\ldots-l_rd) -(j-1) \mapsto t_1-(j-1) \quad \mbox{ for $j=1, \ldots, l_1d$ }. \]
The assignment for $x <n-l_1d-l_2d-\ldots-l_rd$ is then uniquely determined by using the properties of elements in $S_n/(S_{n-i}\times S_i)$ stated before Proposition \ref{prop equiv bruhat order Sn}. 

We now consider a specific layer from the geometric lemma. The segments are chosen in the order as the removal sequence for $(\Delta, \mathfrak h)$:
\begin{eqnarray} \label{eqn removal sequence associate}
   \Delta_{p_1}, \ldots , \Delta_{p_e} .
\end{eqnarray}
For those indexes, if there is more than one segment for $\Delta_{p_i}$, we always choose the one of smaller index $p_i$. By using the nesting property and our arrangement of segments in $\mathfrak m$, we have that 
\[   p_1 >p_2 >\ldots > p_e .
\]
If $k\neq p_i$ for some $i=1, \ldots, e$, then we set $\widetilde{l}_k=0$. If $k=p_i$ for some $i=1, \ldots, e-1$, then we set $\widetilde{l}_k=a_{p_{i-1}}-a_{p_i}$. If $k=p_e$, then we set $\widetilde{l}_{k}=b-a_{p_e}$.

We now study layers of the geometric lemma of the form (*) such that, as sets, 
\[  \Delta_1^{+, l_1} \cup \ldots \cup \Delta_r^{+,l_r} = \Delta .
\]
For such layer, we say it is in {\it standard order} if 
\[  a(\Delta_1^{+,l_x})  \geq a(\Delta_r^{+,l_y}) \]
for any $x < y$ with $l_x, l_y \neq 0$.

We now analyse some behaviors of two following cases.
\begin{enumerate}
\item Case 1: $\Delta_1^{+,l_1}, \ldots, \Delta_r^{+, l_r}$ are in standard order. Let $q_1< \ldots< q_k$ be all the indexes such that $l_{q_x}\neq 0$. We first prove the following claim: \\

\noindent
{\it Claim 1:} The sequence $\Delta_{q_1}, \ldots, \Delta_{q_k}$ satisfies the nesting property i.e.
\[   \Delta_{q_1} \supset \ldots \supset \Delta_{q_k} . \]

\noindent
{\it Proof of Claim 1:} Suppose the sequence does not satisfy the nesting property. Since the nesting property is not transitive, we have that a pair $\Delta_{q_x}, \Delta_{q_{x+1}}$ does not satisfy the nesting property. Due to the arrangement of the segments in $\mathfrak m$, we must have that $b_{q_x} <b_{q_{x+1}}$ and so the violation of the nestng property implies that 
\[ a_{q_x}<a_{q_{x+1}}.  \]
 However, this then contradicts that the sequence is in standard order.\\

As a consequence of Claim 1, we also have that $l_{q_i}=a_{q_{i-1}}-a_{q_i}$.

Let $w^*$ be the fixed  element in $(S_{t_1}\times \ldots \times S_{t_r})\setminus S_n/(S_{n-i}\times S_i)$ associated to (\ref{eqn removal sequence associate}) and let $w$ be the element in $(S_{t_1}\times \ldots \times S_{t_r})\setminus S_n/(S_{n-i}\times S_i)$ associated to any layer in standard order in the sense defined above with $w\neq w^*$. \\

\noindent
{\it Claim 2:} $ w \not \leq_B w^* $. 

\noindent
{\it Proof of Claim 2:} The strategy is to apply Proposition \ref{prop criteria for checking bruhat}. Suppose not to derive a contradiction. Let $i^*$ be the largest integer such that $p_{i^*}=q_{i^*}$. Set $i^*=0$ if such integer does not exist i.e. $p_1 \neq q_1$. 

If $i_*=r$, then $q_{i^*+1}$ is also not defined. Otherwise, by the nesting property shown in Claim 1, we obtain a contradiction to choices of segments in the removal sequence for $(\Delta, \mathfrak h)$. This implies that $w^*=w$, giving a contradiction.

Thus $i^*<r$. Now we compare $\Delta_{p_{i^*+1}}$ and $\Delta_{q_{i^*+1}}$. By using Definition \ref{def removal process}(2) and Claim 1, we must have that 
\[   a_{p_{i^*+1}} \geq a_{q_{i^*+1}} , \]
or $q_{i^*+1}$ is not defined.

We further divide into two subcases. 
\begin{itemize}
\item $a_{p_{i^*}+1}=a_{q_{i^*+1}}$. But now, we must have that $b_{q_{i^*+1}} \geq b_{p^*+1}$ by the nesting property in Claim 1 again. Then, from our arrangements and choices, $q_{i^*+1} >p_{i^*+1}$. As noted from above that $l_{q_1}=\widetilde{l}_{p_1}, \ldots, l_{q_{i^*}}=\widetilde{l}_{p_{i^*}}$, one checks that 
\[ w(n-l_1d-\ldots-l_{i^*}d-1)>w^*(n-l_1d-\ldots-l_{i^*}d-1) \]
 and so $w \not\leq_B w^*$ by Proposition \ref{prop criteria for checking bruhat}. 
\item $a_{p_{i^*}+1}>a_{q_{i^*+1}}$ or $q_{i^*+1}$ is not defined. In such case, $l_{q_{i^*}} > \widetilde{l}_{p_{i^*}}$. Hence we also have that:
\[   w(n-l_1d-\ldots-l_{i^*}d-1)>w^*(n-l_1d-\ldots-l_{i^*}d-1) 
\]
and so again $w \not\leq_B w^*$ by Proposition \ref{prop criteria for checking bruhat}. 
\end{itemize}

\item Case 2: $\Delta_1^{+, l_1}, \ldots, \Delta_r^{+,l_r}$ are not in standard order. 

Let $\omega_{l_1, \ldots, l_r}=\mathrm{St}(\Delta_1^{-,l_1}) \times \ldots \times \mathrm{St}(\Delta_r^{-,l_r})$ and similarly let $\omega_{\widetilde{l}_1, \ldots, \widetilde{l}_r}$. \\

\noindent
{\it Claim 3:} Let $\widetilde{l}=\widetilde{l}_1+\ldots+\widetilde{l}_r$. For all $j$, 
\[  \mathrm{Ext}^j_{G_{n-\widetilde{l}}\times G_{\widetilde{l}}}( \omega_{\widetilde{l}_1, \ldots, \widetilde{l}_r}\boxtimes \mathrm{St}(\Delta), \omega_{l_1, \ldots, l_r}\boxtimes \mathrm{St}(\Delta_1^{+,l_1})\times \ldots \times \mathrm{St}(\Delta_r^{+,l_r}))=0 
\]

\noindent
{\it Proof of Claim 3:} We apply Frobenius reciprocity on the second factor. Then the Jacquet module of $\mathrm{St}(\Delta)$ takes the form
\[  \mathrm{St}(\Delta'_1) \boxtimes \ldots \boxtimes \mathrm{St}(\Delta'_r)
\]
with $\Delta_1', \ldots, \Delta_r'$ in standard order. Since $\Delta_1^{+,l_1}, \ldots, \Delta_r^{+,l_r}$ are not in standard order, an argument on comparing cuspidal support gives 
\[ \mathrm{Ext}^j_{G_{\widetilde{l}}}(\mathrm{St}(\Delta), \mathrm{St}(\Delta'_1) \boxtimes \ldots \boxtimes \mathrm{St}(\Delta'_r))=0 \]
for all $j$. Then K\"unneth formula then gives the claim.

\end{enumerate}

We now go back to the proof. The element $w^* \in (S_{t_1}\times \ldots \times S_{t_r})\setminus S_n / (S_{n-i}\times S_i)$ is defined as above. For each $w \in  (S_{t_1}\times \ldots \times S_{t_r})\setminus S_n / (S_{n-i}\times S_i)$, let $\kappa(w)$ be the associated layer taking the form (*). Note that $\omega_{\widetilde{l}_1, \ldots, \widetilde{l}_r}  \cong \widetilde{\lambda}(\mathfrak r(\Delta, \mathfrak h))$.

 Let $\kappa_{\leq w^*}$ (resp. $\kappa_{< w^*}$) be the submodule of $\widetilde{\lambda}(\mathfrak h)_{N_{l}}$ that consists of only the layers associated to $w' \in  (S_{t_1}\times \ldots \times S_{t_r})\setminus S_n / (S_{n-i}\times S_i)$ satisfying $w' \leq_B w^*$ (resp. $w' <_B w^*$). We have the short exact sequence:
\[   0 \rightarrow  \kappa_{< w^*} \rightarrow \kappa_{\leq w^*} \rightarrow \kappa(w^*) \rightarrow 0 . \]

On the other hand, there is an embedding
\[    \omega_{\widetilde{l}_1, \ldots, \widetilde{l}_r}   \boxtimes \mathrm{St}(\Delta) \hookrightarrow   \kappa(w^*)
\]
Hence, we have a submodule $\kappa'$ of $\kappa_{\leq w^*}$ admitting a short exact sequence:
\[    0 \rightarrow  \kappa_{< w^*}   \rightarrow \kappa' \rightarrow  \omega_{\widetilde{l}_1, \ldots, \widetilde{l}_r}  \boxtimes \mathrm{St}(\Delta) \rightarrow 0.
 \]
However,  for $w' <w^*$, we can conclude that 
\[   \mathrm{Ext}^j_{G_{n-l}\times G_l}( (\mathfrak r(\Delta, \mathfrak h))  \boxtimes \mathrm{St}(\Delta) ,\kappa(w')) = 0
\]
for all $j$. This follows from a standard cuspdial support argument if the $G_l$-part of $\kappa(w')$ does not have the same cuspidal support as $\mathrm{St}(\Delta)$, and follows from Claim 2 and Claim 3 otherwise. In other words, we have:
\[   \kappa' \cong \kappa_{<w^*} \oplus \widetilde{\lambda}(\mathfrak r(\Delta, \mathfrak h))  \boxtimes \mathrm{St}(\Delta) .
\]
This then gives the following desired embedding
\[   \widetilde{\lambda}(\mathfrak r(\Delta, \mathfrak h)) \boxtimes \mathrm{St}(\Delta) \hookrightarrow \kappa' \hookrightarrow \kappa \hookrightarrow \lambda(\widetilde{\mathfrak h})_{N_{l}}.
\]

\end{proof}

\subsection{Conjectures} \label{ss conjectures}

For $n_1+\ldots+n_s=n$, define $P_{n_1, \ldots, n_s}$ to be the parabolic subgroup of $G_n$ generated by the matrices $\mathrm{diag}(g_1, \ldots, g_s)$ (each $g_i \in G_{n_i}$) and upper triangular matrices. Let $N_{n_1, \ldots, n_s}$ be the unipotent radical of $P_{n_1, \ldots, n_s}$. 

We end with some conjectures for the embedding model, which are possibly used to interpret some results in this article from representation-theoretic viewpoint:

\begin{conjecture} \label{conj unique embedding}
Let $\mathfrak h \in \mathrm{Mult}_{\rho}$. The embedding in Proposition \ref{prop removal embedding} is unique. 
\end{conjecture}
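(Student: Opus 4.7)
The plan is to build on the Bruhat-filtration analysis of $\widetilde{\lambda}(\mathfrak h)_{N_l}$ carried out in the proof of Proposition \ref{prop removal embedding}, upgrading its $\mathrm{Ext}$-vanishing arguments to give uniqueness at the level of $\mathrm{Hom}$. That proof singles out a distinguished element $w^*$ in the double coset space $(S_{t_1}\times\cdots\times S_{t_r})\backslash S_n/(S_{n-i}\times S_i)$ coming from the removal sequence for $(\Delta,\mathfrak h)$, whose associated layer $\kappa(w^*)$ receives $\widetilde{\lambda}(\mathfrak r(\Delta,\mathfrak h))\boxtimes\mathrm{St}(\Delta)$ as a subrepresentation. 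Claims~2 and~3 of that proof give vanishing of $\mathrm{Ext}^j$ for every $j\geq 0$ from the source into any other layer $\kappa(w)$: Claim~2 shows that standard-order $w\neq w^*$ satisfy $w\not\leq_B w^*$, and Claim~3 handles the non-standard-order layers by a K\"unneth-style comparison of cuspidal supports. Combining the $j=0$ instance as $\mathrm{Hom}$-vanishing with the $j=1$ instance via the long exact sequence, a standard d\'evissage along the filtration forces any nonzero map from $\widetilde{\lambda}(\mathfrak r(\Delta,\mathfrak h))\boxtimes\mathrm{St}(\Delta)$ into $\widetilde{\lambda}(\mathfrak h)_{N_l}$ to factor through $\kappa_{\leq w^*}$ and to be determined, up to maps into $\kappa_{<w^*}$ which again vanish, by its projection to $\kappa(w^*)$.

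Uniqueness thus reduces to proving
\[\dim\mathrm{Hom}_{G_{n-l}\times G_l}\bigl(\widetilde{\lambda}(\mathfrak r(\Delta,\mathfrak h))\boxtimes\mathrm{St}(\Delta),\,\kappa(w^*)\bigr)=1.\]
The layer $\kappa(w^*)$ is itself a parabolic induction from a finer Levi of an outer tensor product of two Steinberg-product representations, identified in the proof of Proposition \ref{prop removal embedding} so that one factor presents $\widetilde{\lambda}(\mathfrak r(\Delta,\mathfrak h))$ (up to a reordering of its constituent Steinberg modules) and the other is a product of Steinberg modules whose nonempty pieces concatenate to $\Delta$. Applying Bernstein's second adjointness and passing down to this finer Levi, the Hom above factors as an outer tensor product of two pieces. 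The ``$\mathrm{St}(\Delta)$'' piece is one-dimensional by the subrepresentation theorem for Steinberg modules, since $\mathrm{St}(\Delta)$ is the unique irreducible submodule of any regrouping of the standard parabolic induction of its cuspidal support that respects the linear order on $\Delta$.

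The main obstacle is the remaining piece: showing that the intertwining space between $\widetilde{\lambda}(\mathfrak r(\Delta,\mathfrak h))$ and its reordered presentation is one-dimensional. The ambient ordering in $\kappa(w^*)$ is inherited from the labelling of $\mathfrak h$ used in the proof of Proposition \ref{prop removal embedding} (with non-decreasing $b$-values), while $\widetilde{\lambda}(\mathfrak r(\Delta,\mathfrak h))$ is defined in an ascending order on $\mathfrak r(\Delta,\mathfrak h)$. Using the nesting $\Delta_{p_1}\supset\cdots\supset\Delta_{p_e}$ of the removal sequence together with that arrangement, one expects that the two orderings can be connected by swaps of unlinked pairs only, so that the intertwining space is spanned by an iterated composition of unlinked-pair commutations coming from Lemma \ref{lem comm derivative 1}. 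Verifying that no linked pair ever needs to be exchanged in this rearrangement is the delicate combinatorial step and subsumes most of the content of the conjecture; once it is carried out the one-dimensionality follows and the uniqueness is established.
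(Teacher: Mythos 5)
This statement is left as an open conjecture in the paper: there is no proof to compare against, and the author explicitly warns that it is not a mere consequence of the multiplicity one theorem for standard representations. Your proposal must therefore stand on its own, and as written it does not close the conjecture. The decisive gap is in your d\'evissage step. Claim 3 in the proof of Proposition \ref{prop removal embedding} does give $\mathrm{Ext}$-vanishing into the layers that are \emph{not} in standard order (a cuspidal support argument on the $G_l$-factor), but Claim 2 is purely a statement about the Bruhat order: it shows that a standard-order layer $\kappa(w)$ with $w\neq w^*$ satisfies $w\not\leq_B w^*$, hence does not occur in $\kappa_{<w^*}$. It gives no $\mathrm{Hom}$- or $\mathrm{Ext}$-vanishing from $\widetilde{\lambda}(\mathfrak r(\Delta,\mathfrak h))\boxtimes\mathrm{St}(\Delta)$ into such layers, and these spaces are precisely the danger: for every standard-order layer the nonempty segments $\Delta_k^{+,l_k}$ concatenate to $\Delta$ in decreasing order, so the $\Delta$-side factor of $\kappa(w)$ receives $\mathrm{St}(\Delta)$ as a submodule just as $\kappa(w^*)$ does, while the other factor is a product of Steinberg modules with the same cuspidal support as $\widetilde{\lambda}(\mathfrak r(\Delta,\mathfrak h))$, so nothing in the paper's proof (nor in your proposal) shows the corresponding Hom spaces vanish. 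Since these layers sit above, or incomparable to, $w^*$ in the filtration, a putative second embedding would have its leading projection into one of them; ruling this out is exactly the content of the conjecture, and it is the part the existence proof never needed and never addressed. The assertion that a ``standard d\'evissage'' forces every map to factor through $\kappa_{\leq w^*}$ is therefore unsupported.

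Even granting that reduction, your argument is incomplete by your own account: the one-dimensionality of $\mathrm{Hom}$ into $\kappa(w^*)$ is made to rest on the claim that the ambient ordering and the ascending ordering of $\mathfrak r(\Delta,\mathfrak h)$ differ only by exchanges of unlinked pairs, and you state that verifying this ``subsumes most of the content of the conjecture.'' A proof cannot defer its decisive step; at best you have reduced the conjecture, modulo the gap above, to two unproved assertions. (Two smaller points: the appeal to Bernstein's second adjointness is unnecessary, since $\kappa(w^*)$ is already an outer tensor product of two parabolic inductions and the K\"unneth-type factorization of the Hom space is immediate; and for the $G_{n-l}$-factor it may be cleaner to aim at $\mathrm{End}(\widetilde{\lambda}(\mathfrak r(\Delta,\mathfrak h)))=\mathbb C$ via the irreducible, multiplicity-one socle of a co-standard module, rather than via a chain of unlinked commutations --- but neither route is carried out in your write-up.)
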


We remark that Conjecture \ref{conj unique embedding} is not a mere consequence of the multiplicity one theorem for standard representations in \cite{Ch21+}.

\begin{conjecture} \label{conj multisegment}
Let $\mathfrak h \in \mathrm{Mult}_{\rho}$. Let $\mathfrak n \in \mathrm{Mult}_{\rho}$ be minimal to $\mathfrak h$. Let $l=l_{abs}(\mathfrak n)$. Then there exists a unique embedding:
\[   \widetilde{\lambda}(\mathfrak r(\mathfrak n, \mathfrak h)) \boxtimes \widetilde{\lambda}(\mathfrak n) \hookrightarrow \widetilde{\lambda}(\mathfrak h)_{N_l} .
\]
\end{conjecture}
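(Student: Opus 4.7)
My plan is to prove Conjecture~\ref{conj multisegment} by induction on $r = |\mathfrak n|$, with Proposition~\ref{prop removal embedding} serving as the base case $r=1$. For the inductive step, I would write the segments of $\mathfrak n$ in an ascending order $\Delta_1, \ldots, \Delta_r$, set $\Delta = \Delta_r$ and $\mathfrak n' = \mathfrak n - \{\Delta\}$. By Theorems~\ref{thm subsequent minimal} and \ref{thm minimal and commut 2}, $\mathfrak n'$ is still minimal to $\mathfrak h$, $\{\Delta\}$ is minimal to $\mathfrak r(\mathfrak n', \mathfrak h)$, and $\mathfrak r(\Delta, \mathfrak r(\mathfrak n', \mathfrak h)) = \mathfrak r(\mathfrak n, \mathfrak h)$. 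The inductive hypothesis applied to $(\mathfrak n', \mathfrak h)$ and Proposition~\ref{prop removal embedding} applied to $(\mathfrak r(\mathfrak n', \mathfrak h), \Delta)$ respectively produce embeddings
\[
\widetilde{\lambda}(\mathfrak r(\mathfrak n', \mathfrak h)) \boxtimes \widetilde{\lambda}(\mathfrak n') \hookrightarrow \widetilde{\lambda}(\mathfrak h)_{N_{l'}}, \qquad \widetilde{\lambda}(\mathfrak r(\mathfrak n, \mathfrak h)) \boxtimes \mathrm{St}(\Delta) \hookrightarrow \widetilde{\lambda}(\mathfrak r(\mathfrak n', \mathfrak h))_{N_{l_\Delta}},
\]
where $l' = l - l_{abs}(\Delta)$ and $l_\Delta = l_{abs}(\Delta)$. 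Applying the exact Jacquet functor $(-)_{N_{l_\Delta}}$ to the $G_{n-l'}$-factor of the first embedding and composing with the second (tensored with $\mathrm{id}_{\widetilde{\lambda}(\mathfrak n')}$), transitivity of Jacquet modules produces an embedding
\[
\widetilde{\lambda}(\mathfrak r(\mathfrak n, \mathfrak h)) \boxtimes \mathrm{St}(\Delta) \boxtimes \widetilde{\lambda}(\mathfrak n') \hookrightarrow \bigl(\widetilde{\lambda}(\mathfrak h)_{N_l}\bigr)_{N^{G_l}_{l_\Delta,\, l'}}.
\]

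The principal obstacle is to lift this iterated-Jacquet embedding to the single Jacquet module $\widetilde{\lambda}(\mathfrak h)_{N_l}$ in the form demanded by the conjecture, where $\widetilde{\lambda}(\mathfrak n) = \widetilde{\lambda}(\mathfrak n') \times \mathrm{St}(\Delta)$ is parabolically induced from $G_{l'} \times G_{l_\Delta}$; note the block order \emph{reverses} that of the iterated Jacquet. To address this, I would compute $\widetilde{\lambda}(\mathfrak n)_{N^{G_l}_{l_\Delta, l'}}$ via the geometric lemma: the layer indexed by the block-swapping double-coset representative is precisely $\mathrm{St}(\Delta) \boxtimes \widetilde{\lambda}(\mathfrak n')$, and a cuspidal-support plus Ext-vanishing argument in the spirit of Claims~2 and 3 of the proof of Proposition~\ref{prop removal embedding} should show that only this layer can match the image of our embedding. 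This is where the minimality of $\mathfrak n$ must enter decisively, by ruling out other candidate layers with the same cuspidal support on the $G_l$-side.

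The hard part, in my view, is precisely this reordering step, because the naive transitivity only produces a Jacquet for the ``wrong'' parabolic, and recovering the $N_l$-embedding requires a careful Bruhat-order analysis generalising Claims~1--3 of the proof of Proposition~\ref{prop removal embedding} from a single segment to the iterated removal sequences $\mathfrak r(\Delta_j, \mathfrak r(\{\Delta_1,\ldots,\Delta_{j-1}\}, \mathfrak h))$ indexed by the segments of $\mathfrak n$. Moreover, the existence and uniqueness parts of Conjecture~\ref{conj multisegment} appear genuinely intertwined: controlling the multiplicity with which $\widetilde{\lambda}(\mathfrak r(\mathfrak n, \mathfrak h)) \boxtimes \widetilde{\lambda}(\mathfrak n)$ appears inside $\widetilde{\lambda}(\mathfrak h)_{N_l}$ seems to require Conjecture~\ref{conj unique embedding} as input and, very likely, a simultaneous induction. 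A possible cleaner alternative is to bypass the inductive peeling altogether and perform a direct geometric-lemma calculation generalising Proposition~\ref{prop removal embedding}, parametrising layers by double cosets in the symmetric group and selecting the stratum encoded by the combined removal sequences of the segments of $\mathfrak n$; even then, however, the Bruhat-comparison argument that was the technical heart of Proposition~\ref{prop removal embedding} becomes substantially more delicate.
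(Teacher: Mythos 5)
You are attempting to prove a statement that the paper itself does not prove: Conjecture~\ref{conj multisegment} is stated as an open conjecture, and the only result the paper establishes in its direction is the single-segment case, Proposition~\ref{prop removal embedding} (which, note, does not assert uniqueness and does not require minimality). So there is no paper proof to compare against, and your proposal, by its own admission, is a strategy outline rather than a proof. The concrete gaps are the ones you yourself flag but do not close. First, the inductive step only yields an embedding into the iterated Jacquet module for the blocks $(n-l,\, l_{abs}(\Delta),\, l')$, and passing from that to an embedding of $\widetilde{\lambda}(\mathfrak r(\mathfrak n, \mathfrak h)) \boxtimes \bigl(\widetilde{\lambda}(\mathfrak n') \times \mathrm{St}(\Delta)\bigr)$ into $\widetilde{\lambda}(\mathfrak h)_{N_l}$ is not a formal consequence of transitivity or Frobenius reciprocity: adjunction arguments in this direction produce maps whose injectivity is exactly the point at issue, and your appeal to a ``cuspidal-support plus Ext-vanishing argument in the spirit of Claims 2 and 3'' is a hope, not an argument. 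This is also precisely where the minimality hypothesis on $\mathfrak n$ must do work (your base case and the exactness steps use only admissibility), so an argument that never pins down how minimality rules out the competing layers cannot be complete: without minimality the claimed embedding should in general fail, so any purported proof that does not use it decisively is suspect.

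Second, the uniqueness half of the conjecture is not addressed at all except by deferring to Conjecture~\ref{conj unique embedding}, which is itself open in the paper (and the paper explicitly warns it is not a mere consequence of the multiplicity-one theorem for standard representations). A proof of Conjecture~\ref{conj multisegment} conditional on another open conjecture, with the existence step also incomplete, is a research plan rather than a solution. If you want to make progress along your lines, the honest formulation of the task is: (i) generalize Claims 1--3 of the proof of Proposition~\ref{prop removal embedding} to the element of the double coset space attached to the full fine chain of removal sequences for $(\mathfrak n, \mathfrak h)$, proving the Bruhat non-comparability for all competing strata with matching cuspidal support on the $G_l$-side, with minimality entering through the non-overlapping/intermediate-segment characterizations (Proposition~\ref{prop nonoverlapping property}, Theorem~\ref{thm minimizability}); and (ii) separately bound the multiplicity of $\widetilde{\lambda}(\mathfrak r(\mathfrak n,\mathfrak h)) \boxtimes \widetilde{\lambda}(\mathfrak n)$ in $\widetilde{\lambda}(\mathfrak h)_{N_l}$, which is a genuinely new Hom-space computation, not something your induction scheme produces.
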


Proposition \ref{prop removal embedding} is a special case of Conjecture \ref{conj multisegment}.

\begin{conjecture}
Let $\pi \in \mathrm{Irr}_{\rho}$. Let $\mathfrak n \in \mathrm{Mult}_{\rho}$ be minimal to $\pi$. Let $\mathfrak h=\mathfrak{hd}(\pi)$. Let $l_1=l_{abs}(\mathfrak n)$ and let $l_2=l_{abs}(\mathfrak{hd}(\pi))-l_{abs}(\mathfrak n)$. Suppose Conjectures \ref{conj minimal model} and \ref{conj multisegment} hold. We have the following diagram of maps:
\[      \xymatrix{      D_{\mathfrak h}(\pi)\boxtimes \widetilde{\lambda}(\mathfrak r(\mathfrak n, \pi)) \boxtimes \widetilde{\lambda}(\mathfrak n)      \ar@{^{(}->}[r]^{\iota_1} &   D_{\mathfrak h}(\pi)\boxtimes \widetilde{\lambda}(\mathfrak h)_{N_{l_2,l_1}} \ar@{^{(}->}[r]^{\iota_2} &        \pi_{N_{n-l_1-l_2, l_2, l_1}}  \\ 
   &    &      D_{\mathfrak n}(\pi)_{N_{n-l_1-l_2,l_2}} \boxtimes \widetilde{\lambda}(\mathfrak n)  \ar@{^{(}->}[u]^{\iota_3}  },
\]
where 
\begin{itemize}
\item $\iota_1$ is the map indeced from the one in Conjecture \ref{conj multisegment};
\item $\iota_2$ is the map induced from the unique embedding;$D_{\mathfrak h}(\pi)\boxtimes \widetilde{\lambda}(\mathfrak h)$ in Conjecture \ref{conj minimal model}
\item $\iota_3$ is the map induced from the unique embedding $D_{\mathfrak n}(\pi)\boxtimes \widetilde{\lambda}(\mathfrak n) \hookrightarrow \pi_{n-l_1, l_1}$ in Conjecture \ref{conj minimal model}.
\end{itemize}
Then $\iota_2\circ \iota_1$ factors through $\iota_3$. 
\end{conjecture}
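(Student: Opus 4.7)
The plan is to construct an explicit embedding factoring through $\iota_3$ and then use a uniqueness argument to identify it with $\iota_2\circ \iota_1$.

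First I would observe that $\mathfrak r(\mathfrak n,\pi)=\mathfrak{hd}(D_{\mathfrak n}(\pi))$. Indeed, $D_{\mathfrak r(\mathfrak n,\pi)}(D_{\mathfrak n}(\pi))\cong D_{\mathfrak h}(\pi)$ is the (cuspidal) highest derivative of $\pi$, and hence also of $D_{\mathfrak n}(\pi)$; since $\mathfrak{hd}$ is uniquely characterized by producing the highest derivative, the equality follows. In particular $\mathfrak r(\mathfrak n,\pi)$ is minimal to $D_{\mathfrak n}(\pi)$, so Conjecture~\ref{conj minimal model} applied to $D_{\mathfrak n}(\pi)$ with this minimal sequence yields an embedding
\[
\beta\colon D_{\mathfrak h}(\pi)\boxtimes \widetilde{\lambda}(\mathfrak r(\mathfrak n,\pi))\hookrightarrow D_{\mathfrak n}(\pi)_{N_{l_2}}.
\]
Since the Jacquet functor is exact, applying further Jacquet on the $G_{n-l_1}$-factor of the right-hand side yields an injection
\[
\hat{\beta}\colon D_{\mathfrak h}(\pi)\boxtimes \widetilde{\lambda}(\mathfrak r(\mathfrak n,\pi))\hookrightarrow D_{\mathfrak n}(\pi)_{N_{n-l_1-l_2,l_2}}.
\]
Setting $g:=\hat{\beta}\boxtimes\mathrm{id}_{\widetilde{\lambda}(\mathfrak n)}$, the composition $\iota_3\circ g$ is a nonzero embedding into $\pi_{N_{n-l_1-l_2,l_2,l_1}}$ which factors through $\iota_3$ by construction.

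Next I would show that $\iota_2\circ\iota_1$ is a nonzero scalar multiple of $\iota_3\circ g$. Using transitivity $\pi_{N_{n-l_1-l_2,l_2,l_1}}\cong (\pi_{N_{l_1}})_{N_{n-l_1-l_2,l_2}}$ and the uniqueness of the embedding $D_{\mathfrak n}(\pi)\boxtimes \widetilde{\lambda}(\mathfrak n)\hookrightarrow \pi_{N_{l_1}}$ from Conjecture~\ref{conj minimal model} applied to $\mathfrak n$, I would argue that any morphism into the target whose $G_{l_1}$-factor is $\widetilde{\lambda}(\mathfrak n)$ must land in the image of $\iota_3$. The comparison then reduces to the comparison of two nonzero maps $D_{\mathfrak h}(\pi)\boxtimes \widetilde{\lambda}(\mathfrak r(\mathfrak n,\pi))\to D_{\mathfrak n}(\pi)_{N_{n-l_1-l_2,l_2}}$, and Conjecture~\ref{conj minimal model} applied to $D_{\mathfrak n}(\pi)$ with minimal sequence $\mathfrak r(\mathfrak n,\pi)$ implies this Hom-space is one-dimensional. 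Proportionality, and hence the desired factorization, would then follow.

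The hard part will be the step of forcing the image to land inside $\iota_3$, since Conjecture~\ref{conj minimal model} provides only a unique subobject, not full control of the $\widetilde{\lambda}(\mathfrak n)$-isotypic component of $\pi_{N_{l_1}}$. I would attempt this via iterated Frobenius reciprocity, identifying
\[
\mathrm{Hom}_{G_{n-l_1-l_2}\times G_{l_2}\times G_{l_1}}(D_{\mathfrak h}(\pi)\boxtimes \widetilde{\lambda}(\mathfrak r(\mathfrak n,\pi))\boxtimes \widetilde{\lambda}(\mathfrak n),\pi_{N_{n-l_1-l_2,l_2,l_1}})
\]
with a $\mathrm{Hom}_{G_n}$-space involving parabolic inductions of co-standards against $\pi$, and bounding its dimension using the uniqueness assertions in Conjectures~\ref{conj minimal model} and~\ref{conj multisegment}. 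A complementary geometric-lemma/Bruhat-order analysis in the spirit of the proof of Proposition~\ref{prop removal embedding} may be needed to single out the relevant layer of $\widetilde{\lambda}(\mathfrak h)_{N_{l_1}}$ carrying $\widetilde{\lambda}(\mathfrak n)$. Establishing this multiplicity bound is the chief technical obstacle.
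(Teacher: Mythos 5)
This statement is one of the paper's concluding conjectures: the paper offers no proof of it, so your attempt has to stand entirely on its own, and as written it does not. The decisive gap is the opening identification $\mathfrak r(\mathfrak n,\pi)=\mathfrak{hd}(D_{\mathfrak n}(\pi))$ together with its justification. The highest derivative of $\pi$ is not cuspidal in general, and it is not the highest derivative of $D_{\mathfrak n}(\pi)$; correspondingly, the removal process only computes the $\varepsilon$-invariants of $\mathfrak{hd}(D_{\Delta}(\pi))$ at points to the right of $a(\Delta)$, while to the left they can strictly grow (Theorem \ref{thm effect of Steinberg small a}; the proof of Lemma \ref{lem dagger property on derivative} states explicitly that $\mathfrak{hd}(D_{\Delta''}(\pi))$ is obtained from $\mathfrak r(\Delta'',\mathfrak{hd}(\pi))$ by prolonging segments). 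A concrete counterexample: take $\pi=u_{\rho}(1,1,3)$, so $\mathfrak{hd}(\pi)=\{[2,2]_{\rho}\}$, and $\mathfrak n=\{[2,2]_{\rho}\}$, which is the unique element of $\mathcal S(\pi,D_{\mathfrak n}(\pi))$ and hence minimal. Then $\mathfrak r(\mathfrak n,\pi)=\emptyset$ by Lemma \ref{lem removal process}(3), whereas $D_{\mathfrak n}(\pi)$ is an irreducible representation of $G_{2\mathrm{deg}(\rho)}$ and so $\mathfrak{hd}(D_{\mathfrak n}(\pi))\neq\emptyset$; even the absolute lengths disagree ($l_2=0$ versus $\mathrm{deg}(\rho)$). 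Moreover here $D_{\mathfrak h}(\pi)$ is the Langlands quotient of $\nu\rho\times\rho$, which is not cuspidal, and the highest derivative of $D_{\mathfrak n}(\pi)$ lives on a smaller group, so both supporting claims fail. Consequently the map $\beta$ (and hence $\hat\beta$ and your candidate factorization through $\iota_3$) is not legitimately obtained from Conjecture \ref{conj minimal model}: you would need $\mathfrak r(\mathfrak n,\pi)$ to be admissible and minimal to $D_{\mathfrak n}(\pi)$ with $D_{\mathfrak r(\mathfrak n,\pi)}(D_{\mathfrak n}(\pi))\cong D_{\mathfrak h}(\pi)$, and none of this is established once the false identity is removed (nor do you prove minimality even of $\mathfrak{hd}(D_{\mathfrak n}(\pi))$, which your "in particular" silently assumes).

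Independently of this, the comparison step is not a proof but a program. The bound $\dim\mathrm{Hom}\bigl(D_{\mathfrak h}(\pi)\boxtimes \widetilde{\lambda}(\mathfrak r(\mathfrak n,\pi))\boxtimes \widetilde{\lambda}(\mathfrak n),\ \pi_{N_{n-l_1-l_2,l_2,l_1}}\bigr)\leq 1$, and the claim that any map whose $G_{l_1}$-component is $\widetilde{\lambda}(\mathfrak n)$ must land in the image of $\iota_3$, do not follow from Conjectures \ref{conj minimal model} and \ref{conj multisegment}: those assert uniqueness of specific embeddings into a single Jacquet module, not one-dimensionality of this triple-Jacquet Hom-space, and the $G_{l_1}$-factor $\widetilde{\lambda}(\mathfrak n)$ is reducible, so a socle-uniqueness argument does not confine the image. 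You acknowledge this as the chief obstacle, and nothing in the proposal addresses it; in particular your argument never engages with the actual intermediate object $D_{\mathfrak h}(\pi)\boxtimes\widetilde{\lambda}(\mathfrak h)_{N_{l_2,l_1}}$ or with the geometric-lemma/Bruhat-order analysis of $\widetilde{\lambda}(\mathfrak h)_{N_{l_1}}$ (as in Proposition \ref{prop removal embedding}) through which $\iota_1$ is defined. So both halves of the plan have genuine gaps, the first one fatal as stated.
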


\part{Appendices}

\section{Appendix A: Non-isomorphic derivatives} \label{s lemma on non-isom derivatives}

We prove a converse of Proposition \ref{prop injective for a pair} in this appendix. For a ladder representation or a generic representation $\sigma$ of $G_k$, let $I_{\sigma}(\pi)$ be the unique submodule of $\pi \times \sigma$ (see \cite{LM16}, also see \cite{Ch22+, Ch22+d}). See \cite{At24} for a generalization of ladder representations to some other classical groups.

\subsection{Non-isomorphic integrals}

%By using the embedding 
%\[  I_{\Delta_2} \circ I_{\Delta_1}(\pi) \hookrightarrow \mathrm{St}(\Delta_2) \times I_{\Delta_1}(\pi) \hookrightarrow \mathrm{St}(\Delta_2)\times \mathrm{St}(\Delta_1)\times \pi ,\]
%Instead of using Theorem \ref{thm combinatorial def}, another approach to prove the above result is to use parabolic induction with some similar techniques before and the following lemma. 

For two segments $\Delta_1, \Delta_2$ in $\mathrm{Seg}_{\rho}$, 
\begin{enumerate}
\item Suppose $\Delta_1 <\Delta_2$. Define $\mathrm{St}(\left\{ \Delta_1, \Delta_2\right\})$ to be the unique irreducible quotient of $\mathrm{St}(\Delta_2)\times \mathrm{St}(\Delta_1)$.
\item Suppose $\Delta_1$ and $\Delta_2$ are not linked. Define $\mathrm{St}(\left\{ \Delta_1, \Delta_2\right\})$ to be the irreducible module $\mathrm{St}(\Delta_1) \times \mathrm{St}(\Delta_2)$. 
\end{enumerate}

\begin{proposition} \label{prop distinct sub ladder and generic}
Let $\pi \in \mathrm{Irr}_{\rho}$. Let $\Delta_1, \Delta_2$ be two linked segments. Let $\Delta_1'=\Delta_1\cup \Delta_2$ and let $\Delta_2'=\Delta_1\cap \Delta_2$ (possibly the empty set). Let $\sigma=\mathrm{St}(\left\{ \Delta_1, \Delta_2 \right\})$ and let $\sigma'=\mathrm{St}( \Delta_1'+\Delta_2' )$. Then $I_{\sigma}(\pi) \not\cong I_{\sigma'}(\pi)$. 
\end{proposition}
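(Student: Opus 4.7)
The plan is to argue by contradiction: suppose $\tau := I_\sigma(\pi) \cong I_{\sigma'}(\pi)$, and then compute the derivative $D_{\Delta_2'}(\tau)$ via the two embeddings $\tau \hookrightarrow \pi \times \sigma$ and $\tau \hookrightarrow \pi \times \sigma'$, obtaining an incompatibility that reduces, by induction on $l_{abs}(\Delta_1 \cap \Delta_2)$, to the base case $\Delta_2' = \emptyset$ (adjacent linked segments).

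From the first embedding, since the nested segments $\Delta_2' \subseteq \Delta_1'$ are unlinked, $\sigma' \cong \mathrm{St}(\Delta_1') \times \mathrm{St}(\Delta_2')$, so $\tau \hookrightarrow \pi \times \mathrm{St}(\Delta_1') \times \mathrm{St}(\Delta_2')$. The uniqueness of $D_{\Delta_2'}$ together with the irreducibility of the socle of $\pi \times \mathrm{St}(\Delta_1')$ then yields $D_{\Delta_2'}(\tau) \cong I_{\Delta_1'}(\pi)$.

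From the second embedding, assuming $\Delta_2' \neq \emptyset$, I would use the socle embedding $\sigma \hookrightarrow \mathrm{St}(\Delta_1) \times \mathrm{St}(\Delta_2)$ combined with the socle embedding $\mathrm{St}(\Delta_2) \hookrightarrow \mathrm{St}([b_1+1,b_2]) \times \mathrm{St}(\Delta_2')$ (valid because $\Delta_2' = [a_2,b_1]$ and $[b_1+1,b_2]$ are adjacent linked with union $\Delta_2$) to obtain $\tau \hookrightarrow \pi \times \mathrm{St}(\Delta_1) \times \mathrm{St}([b_1+1,b_2]) \times \mathrm{St}(\Delta_2')$. By uniqueness of $D_{\Delta_2'}(\tau)$, the irreducible $I_{\Delta_1'}(\pi)$ established above must embed as a submodule of $\pi \times \mathrm{St}(\Delta_1) \times \mathrm{St}([b_1+1,b_2])$. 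On the other hand, the socle of $\mathrm{St}(\Delta_1) \times \mathrm{St}([b_1+1,b_2])$ is the Zelevinsky irreducible $\widetilde\sigma'' := L(\{\Delta_1,[b_1+1,b_2]\})$ (adjacent linked pair, empty intersection, union $\Delta_1'$), so $I_{\widetilde\sigma''}(\pi)$ is also evidently such a submodule. Assuming the socle of $\pi \times \mathrm{St}(\Delta_1) \times \mathrm{St}([b_1+1,b_2])$ is irreducible, this forces $I_{\widetilde\sigma''}(\pi) \cong I_{\Delta_1'}(\pi)$. But $(\Delta_1,[b_1+1,b_2])$ is a linked pair with empty intersection and union $\Delta_1'$, so this equality is precisely an instance of the current proposition for a pair with strictly smaller intersection, and the inductive hypothesis yields the contradiction.

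The base case $\Delta_2' = \emptyset$ requires a separate argument, since the above rearrangement is unavailable. Here $\sigma' = \mathrm{St}(\Delta_1')$ is a single-segment Steinberg; one argues directly via the $\eta$-invariant at $\Delta_1'$, showing that $D_{\Delta_1'}(I_{\sigma'}(\pi)) = \pi$ while $D_{\Delta_1'}(I_\sigma(\pi))$ cannot equal $\pi$, using that $\mathfrak{hd}(\sigma) = \{\Delta_1, \Delta_2\}$ contains no segment extending from $a_1$ to $b_2$, so any $\mathrm{St}(\Delta_1')$-contribution to the relevant layer of the geometric lemma for $(\pi \times \sigma)_{N_{l_{abs}(\Delta_1')}}$ must come from the $\pi$-side and is incompatible with the specific socle embedding $\tau \hookrightarrow \pi \times \sigma$. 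The main obstacle is the base case, together with the socle-irreducibility statement invoked in the inductive step; both are expected to require the realization theorem (Theorem \ref{thm realize highest derivative mult}) and a careful interplay between the removal process of Section \ref{s highest derivative removal} and the socle operation $\pi \mapsto I_\bullet(\pi)$.
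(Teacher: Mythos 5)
Your inductive step has two genuine gaps. First, from $\tau\hookrightarrow A\times \mathrm{St}(\Delta_2')$ with $A=\pi\times\mathrm{St}(\Delta_1)\times\mathrm{St}([b_1+1,b_2]_{\rho})$, the filtration argument only shows that $D_{\Delta_2'}(\tau)\cong I_{\mathrm{St}(\Delta_1')}(\pi)$ is an irreducible \emph{subquotient} of $A$, not a submodule; the jump to "must embed as a submodule of $A$" is unjustified, and the way you then compare it with $I_{\widetilde\sigma''}(\pi)$ rests on the assumed irreducibility of the socle of $A$ (or of $A\times\mathrm{St}(\Delta_2')$). That assumption is exactly the kind of statement that fails in general: the SI property is available for $\omega\times\mathrm{St}(\Delta)$ with $\omega$ irreducible, but once two Steinberg factors are attached to a general irreducible $\pi$ the socle can have length greater than one, and nothing in the paper (nor in the results you cite) gives it here. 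Second, your base case $\Delta_2'=\emptyset$ is where the real content sits, and it is only gestured at: the claim that a $\mathrm{St}(\Delta_1')$-contribution "must come from the $\pi$-side and is incompatible with the specific socle embedding" is not an argument, and even the preliminary assertion $\mathfrak{hd}(\sigma)=\{\Delta_1,\Delta_2\}$ is left unverified. So the proof does not close: the reduction leans on an unproved socle-irreducibility statement, and the case everything is reduced to is not established.

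For comparison, the paper distinguishes $I_\sigma(\pi)$ and $I_{\sigma'}(\pi)$ by the computable invariant $\mathfrak{mx}(\cdot,\Delta_1)$, with an induction on $l_{abs}(\Delta_1)+l_{abs}(\Delta_2)$ and a different case split: when $a_1\neq a_2-1$ it strips point derivatives $D_{[a_1]_{\rho}}$ (compatibly with the integrals $I_\kappa$) to shorten the segments, and in the remaining case $a_1=a_2-1$ it computes $\mathfrak{mx}(I_\sigma(\pi),\Delta_1)=\mathfrak m+\Delta_2'$ versus $\mathfrak{mx}(I_{\sigma'}(\pi),\Delta_1)=\mathfrak m+\Delta_1+\Delta_2'$ via the embedding $\pi\hookrightarrow D_{\mathfrak m}(\pi)\times\mathrm{St}(\mathfrak m)$, the commutation $\mathrm{St}(\Delta')\times\sigma\cong\sigma\times\mathrm{St}(\Delta')$ of Moeglin--Waldspurger, and the geometric lemma. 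Note that the paper's hard case is $a_1=a_2-1$ (adjacent left endpoints), not your $\Delta_2'=\emptyset$; your base case still contains configurations that, in the paper's scheme, require the full $\mathfrak{mx}$ computation. If you want to salvage your route, you would need either a proof of the socle statements you assume or a replacement of the socle comparison by an invariant-based comparison such as $\mathfrak{mx}(\cdot,\Delta_1)$ or the $\eta$-invariant, at which point the argument essentially converges to the paper's.
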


\begin{proof}
We shall use the invariant $\mathfrak{mx}(.,\Delta_1)$ to distinguish the two representations. Write $\Delta_1=[a_1,b_1]_{\rho}$ and $\Delta_2=[a_2,b_2]_{\rho}$. Switching labels if necessary, we may and shall assume $a_1<a_2$. We shall prove by an induction on the sum $l_{abs}(\Delta_1)+l_{abs}(\Delta_2)$. When the sum is $2$, the argument is similar to the cases below and we omit the details. \\

\noindent
{\bf Case 1:} $a_1\neq a_2-1$. Let $k=\varepsilon_{[a_1]_{\rho}}(\sigma)$. We have that $\varepsilon_{[a_1]_{\rho}}(I_{\sigma}(\pi)), \varepsilon_{[a_2]_{\rho}}(I_{\sigma'}(\pi))=\varepsilon_{[a_1]_{\rho}}(\pi)+1$. Let $\kappa=\mathrm{St}(\left\{{}^-\Delta_1,\Delta_2\right\})$ and let $\kappa'=\mathrm{St}(\left\{{}^-\Delta_1', \Delta_2'\right\}$. Furthermore,
\[   D_{[a_1]_{\rho}}^{k+1}(I_{\sigma}(\pi))=I_{\kappa}(D_{[a_1]_{\rho}}^k(\pi)), \quad D_{[a_1]_{\rho}}^{k+1}(I_{\sigma'}(\pi))=I_{\kappa'}(D_{[a_1]_{\rho}}^k(\pi)) .
\]
Then, by induction, we have that $I_{\kappa}(D_{[a_1]_{\rho}}^k(\pi)) \not\cong I_{\kappa'}(D_{[a_1]_{\rho}}^k(\pi))$ as desired. 

\noindent
{\bf Case 2:} $a_1 =a_2-1$. In this case, let $\mathfrak m=\mathfrak{mx}(\pi, \Delta_1)$. Then 
\[  \pi \hookrightarrow D_{\mathfrak m}(\pi) \times \mathrm{St}(\mathfrak m) \]
Now let $\widetilde{\sigma}=D_{\Delta_2'}(\sigma)$. We have that
\begin{align*}
  I_{\sigma}(\pi) & \hookrightarrow \pi \times \sigma \\
	                  & \hookrightarrow D_{\mathfrak m}(\pi)\times \mathrm{St}(\mathfrak m) \times \sigma  \\
									  & \cong D_{\mathfrak m}(\pi) \times \sigma \times \mathrm{St}(\mathfrak m)  \\
										& \hookrightarrow D_{\mathfrak m}(\pi) \times \widetilde{\sigma} \times \mathrm{St}(\Delta_2'+\mathfrak m) ,
\end{align*}
where the isomorphism in the third line follows from $\mathrm{St}(\Delta')\times \sigma \cong \sigma \times \mathrm{St}(\Delta')$ for any $\Delta_1$-saturated segment $\Delta'$ (see e.g. \cite[Lemme II 10.1]{MW86}).

Since $\mathfrak{mx}(D_{\mathfrak m}(\pi), \Delta_1)=\emptyset$ and $\mathfrak{mx}(\widetilde{\sigma},\Delta_1)=\emptyset$, we have that 
\[\mathfrak{mx}(I_{\sigma}(\pi), \Delta_1)=\mathfrak m+\Delta_2' \] 
The last equality follows from an application on the geometric lemma (see details from the proof of \cite[Proposition 11.1]{Ch22+}).

Similarly, let $\widetilde{\sigma}'=D_{\Delta_1}\circ D_{\Delta_2'}(\sigma')$. We also have that:
\begin{align*}
 I_{\sigma'}(\pi) & \hookrightarrow \pi \times \sigma' \\
                    & \hookrightarrow D_{\mathfrak m}(\pi) \times \mathrm{St}(\mathfrak m) \times \sigma' \\
										& \cong D_{\mathfrak m}(\pi) \times \sigma'\times \mathrm{St}(\mathfrak m)  \\
										& \hookrightarrow D_{\mathfrak m}(\pi) \times \widetilde{\sigma}' \times \mathrm{St}(\Delta_1) \times \mathrm{St}(\Delta_2') \times \mathrm{St}(\mathfrak m) \\
										& \hookrightarrow D_{\mathfrak m}(\pi) \times \widetilde{\sigma}' \times \mathrm{St}(\mathfrak m +\Delta_1 +\Delta_2') 
\end{align*}
Again,  $\mathfrak{mx}(D_{\mathfrak m}(\pi), \Delta_1)=\emptyset$ and $\mathfrak{mx}(\widetilde{\sigma}',\Delta_1)=\emptyset$. Thus, 
\[ \mathfrak{mx}(I_{\sigma'}(\pi), \Delta_1)=\mathfrak m+\Delta_1+\Delta_2' .
\]
Thus, comparing the invariant $\mathfrak{mx}(.,\Delta_1)$, we have $I_{\sigma}(\pi)\not\cong I_{\sigma'}(\pi)$.
\end{proof}

It is an interesting to investigate if an analogue of Proposition \ref{prop distinct sub ladder and generic} can be obtained if one replaces essentially square-integrable representations by other interesting representations such as Speh representations and ladder representations. The composition factors for parabolically induced from Speh representations and ladder representations are studied in \cite{Ta15} and \cite{Gu21}, and so it is possible to develop a parallel theory from those via above approach.

\subsection{Consequences}
We similarly define those notions for derivatives for ladder representations (also see e.g. \cite{Ch22+c}). If there exists $\omega \in \mathrm{Irr}(G_{n-k})$ such that $\omega \boxtimes \sigma \hookrightarrow \pi_{N_k}$ for $\sigma$ defined in Proposition \ref{prop distinct sub ladder and generic}, then denote such $\omega$ by $D_{\sigma}(\pi)$. Otherwise, set $D_{\sigma}(\pi)=0$. 

\begin{corollary} \label{cor non isomo derivatives}
We use the notations in Proposition \ref{prop distinct sub ladder and generic}. Then $D_{\sigma}(\pi)\not\cong D_{\sigma'}(\pi)$ if both terms are non-zero.
\end{corollary}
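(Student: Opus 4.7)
\medskip

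\textbf{Proof plan for Corollary \ref{cor non isomo derivatives}.} The plan is to reduce the non-isomorphism of the derivatives to the non-isomorphism of the corresponding integrals established in Proposition \ref{prop distinct sub ladder and generic}, by inverting the derivative via a unique-submodule argument.

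First, suppose for a contradiction that $\omega := D_{\sigma}(\pi) \cong D_{\sigma'}(\pi)$, where both are assumed non-zero. By the very definition of $D_{\sigma}$ (resp.\ $D_{\sigma'}$), we have embeddings
\[
   \omega \boxtimes \sigma \hookrightarrow \pi_{N_k}, \qquad \omega \boxtimes \sigma' \hookrightarrow \pi_{N_k}.
\]
Frobenius reciprocity then gives injections $\pi \hookrightarrow \omega \times \sigma$ and $\pi \hookrightarrow \omega \times \sigma'$.

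Next, I will use the fact that each of $\omega \times \sigma$ and $\omega \times \sigma'$ has a unique irreducible submodule, so that $I_{\sigma}(\omega)$ and $I_{\sigma'}(\omega)$ are well-defined and agree with $\pi$. For $\sigma = \mathrm{St}(\{\Delta_1,\Delta_2\})$ this is the standard uniqueness for integrals with ladder/generic representations recalled at the beginning of the appendix (cf.\ \cite{LM16}); for $\sigma' = \mathrm{St}(\Delta_1'+\Delta_2')$, observe that $\Delta_2' = \Delta_1 \cap \Delta_2 \subset \Delta_1 \cup \Delta_2 = \Delta_1'$, so $\Delta_1'$ and $\Delta_2'$ are unlinked, whence $\sigma' \cong \mathrm{St}(\Delta_1') \times \mathrm{St}(\Delta_2')$ is itself generic, and the same uniqueness applies. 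It follows that
\[
   \pi \cong I_{\sigma}(\omega) \quad \text{and} \quad \pi \cong I_{\sigma'}(\omega),
\]
and therefore $I_{\sigma}(\omega) \cong I_{\sigma'}(\omega)$.

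Finally, applying Proposition \ref{prop distinct sub ladder and generic} to the irreducible representation $\omega \in \mathrm{Irr}_{\rho}$ (with the same pair of linked segments $\Delta_1,\Delta_2$), we obtain $I_{\sigma}(\omega) \not\cong I_{\sigma'}(\omega)$, contradicting the previous display. Hence $D_{\sigma}(\pi) \not\cong D_{\sigma'}(\pi)$.

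The main (and really only) subtle point is justifying the unique-submodule property of $\omega \times \sigma$ and $\omega \times \sigma'$, which is what allows the passage from the Frobenius-reciprocity embedding back to an isomorphism with $I_{\sigma}(\omega)$ and $I_{\sigma'}(\omega)$; everything else is formal.
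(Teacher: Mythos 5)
Your proof is correct and follows essentially the same route as the paper: the paper also inverts the derivative (setting $\pi'=D_{\sigma}(\pi)$, using $\pi\cong I_{\sigma}\circ D_{\sigma}(\pi)$ and $\pi\cong I_{\sigma'}\circ D_{\sigma'}(\pi)$ via the unique-submodule property) and then invokes Proposition \ref{prop distinct sub ladder and generic} for the common derivative to get a contradiction. Your explicit justification that $\sigma'$ is generic (so that $I_{\sigma'}$ is defined) is a sound filling-in of a detail the paper leaves implicit.
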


\begin{proof}
Let $\pi'=D_{\sigma}(\pi)$. Then $I_{\sigma}(\pi')\not\cong I_{\sigma'}(\pi')$ and so $\pi \not\cong I_{\sigma'}\circ D_{\sigma}(\pi)$. Applying $D_{\sigma'}$ on both sides, we obtain the corollary.
\end{proof}

\begin{corollary}
Let $\Delta_1, \Delta_2 \in \mathrm{Seg}_{\rho}$ such that $\Delta_1<\Delta_2$. Let $\pi \in \mathrm{Irr}_{\rho}$. Suppose $D_{\Delta_2}\circ D_{\Delta_1}(\pi)\neq 0$. If the non-zero map
\[  D_{\Delta_2}\circ D_{\Delta_1}(\pi) \boxtimes (\mathrm{St}(\Delta_1)\times \mathrm{St}(\Delta_2)) \rightarrow \pi_{N_{l_a(\Delta_1)+l_a(\Delta_2)}}
\]
is injective, then $\left\{ \Delta_1, \Delta_2 \right\}$ is minimal to $\pi$. 
\end{corollary}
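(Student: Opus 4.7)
The plan is to argue the contrapositive: assume $\{\Delta_1,\Delta_2\}$ is not minimal to $\pi$ and show that the map $f\colon \tau\boxtimes(\mathrm{St}(\Delta_1)\times \mathrm{St}(\Delta_2))\to \pi_N$ (with $\tau := D_{\Delta_2}\circ D_{\Delta_1}(\pi)$) fails to be injective. By the characterization of minimality (Proposition \ref{prop nonoverlapping property} or the condition in Proposition \ref{prop dagger property 2}), non-minimality amounts to $\tau\cong D_{\Delta_1\cup \Delta_2}\circ D_{\Delta_1\cap \Delta_2}(\pi)$. Set $\sigma = \mathrm{St}(\{\Delta_1,\Delta_2\})$, which the proof of Proposition \ref{prop injective for a pair} identifies as the unique simple submodule of $\mathrm{St}(\Delta_1)\times \mathrm{St}(\Delta_2)$, and $\sigma' = \mathrm{St}(\Delta_1\cup \Delta_2 + \Delta_1\cap \Delta_2) \cong \mathrm{St}(\Delta_1\cup \Delta_2)\times \mathrm{St}(\Delta_1\cap \Delta_2)$, which is irreducible since the two segments are unlinked.

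The strategy is to force a contradiction with Corollary \ref{cor non isomo derivatives} by showing that both $D_\sigma(\pi)$ and $D_{\sigma'}(\pi)$ exist and are isomorphic to $\tau$. On one hand, if $f$ were injective, then restricting to the submodule $\tau\boxtimes \sigma$ of the source would give an embedding $\tau\boxtimes \sigma\hookrightarrow \pi_N$; by the definition of $D_\sigma$ at the start of Section \ref{s lemma on non-isom derivatives}, this forces $D_\sigma(\pi)=\tau$. On the other hand, from $\tau\cong D_{\Delta_1\cup \Delta_2}\circ D_{\Delta_1\cap \Delta_2}(\pi)$ I unpack the iterated Steinberg derivative and, using that the two outer segments are unlinked, obtain an embedding $\pi\hookrightarrow \tau\times \sigma'$; this will be upgraded to a Jacquet-level embedding $\tau\boxtimes \sigma'\hookrightarrow \pi_N$, yielding $D_{\sigma'}(\pi)=\tau$. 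Then $D_\sigma(\pi)\cong D_{\sigma'}(\pi)\cong \tau$ with both sides non-zero, directly contradicting Corollary \ref{cor non isomo derivatives}, so $f$ cannot be injective.

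The main obstacle is the upgrade from $\pi\hookrightarrow \tau\times \sigma'$ to $\tau\boxtimes \sigma'\hookrightarrow \pi_N$: Frobenius reciprocity alone only produces $\tau\boxtimes \sigma'$ as a quotient of $\pi_N$. To get the required submodule structure I plan to exploit that $\sigma'$ is an irreducible product of two unlinked Steinberg representations and mimic the construction used in the proof of Proposition \ref{prop injective for a pair}, where a length-two analysis together with the geometric lemma pinned down a similar embedding. Concretely, one iterates the $\mathfrak{mx}$-construction of Lemma \ref{lem eta invariant rep} for $D_{\Delta_1\cap \Delta_2}$ followed by $D_{\Delta_1\cup \Delta_2}$ (the two derivatives commute since the segments are unlinked), obtaining $\tau\boxtimes \sigma'$ as a direct summand of $\pi_N$; alternatively one invokes second adjointness against the irreducible generic $\sigma'$. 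Either way closes the argument.
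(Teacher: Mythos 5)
Your proposal is correct and follows essentially the same route as the paper's proof: injectivity forces the socle $\tau\boxtimes\mathrm{St}(\{\Delta_1,\Delta_2\})$ of the source into $\pi_{N}$, i.e.\ $D_{\sigma}(\pi)\cong\tau$ with $\sigma=\mathrm{St}(\{\Delta_1,\Delta_2\})$, and the contradiction with non-minimality is exactly Corollary \ref{cor non isomo derivatives} (equivalently Proposition \ref{prop distinct sub ladder and generic}). Two remarks on your ``main obstacle'': it can be bypassed entirely by phrasing the contradiction at the level of $I$ rather than $D$ --- non-minimality gives $\pi\hookrightarrow\tau\times\mathrm{St}(\Delta_1\cup\Delta_2)\times\mathrm{St}(\Delta_1\cap\Delta_2)=\tau\times\sigma'$ directly, so $\pi\cong I_{\sigma'}(\tau)$, while $D_{\sigma}(\pi)\cong\tau$ yields $\pi\cong I_{\sigma}(\tau)$ (the identification already used in the proof of Corollary \ref{cor non isomo derivatives}), and Proposition \ref{prop distinct sub ladder and generic} then gives $I_{\sigma}(\tau)\not\cong I_{\sigma'}(\tau)$, a contradiction; moreover, your first proposed fix via Lemma \ref{lem eta invariant rep} does not literally apply, since that lemma concerns the maximal multisegment $\mathfrak{mx}(\pi,\Delta)$ rather than the particular pair $\{\Delta_1\cup\Delta_2,\Delta_1\cap\Delta_2\}$, so if you insist on producing $\tau\boxtimes\sigma'\hookrightarrow\pi_N$ you should rely on the second-adjointness argument with the irreducible generic $\sigma'$ instead.
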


\begin{proof}
If the map is injective, then $D_{\Delta_2}\circ D_{\Delta_1}(\pi)\boxtimes \mathrm{St}(\left\{\Delta_1+\Delta_2\right\})$ is a submodule of $\pi_{N_{l_a(\Delta_1)+l_a(\Delta_2)}}$. This implies that 
\[ D_{\Delta_2}\circ D_{\Delta_1}(\pi) \cong D_{\mathrm{St}(\Delta_1+\Delta_2)}(\pi).\] 
Then the corollary follows from Corollary \ref{cor non isomo derivatives}.
\end{proof}

\section{Appendix B: Applications}

\subsection{Minimality under $\Delta$-reduced condition}

\begin{corollary} \label{cor minimal reduced decomp}
Let $\pi \in \mathrm{Irr}_{\rho}$. Let $\mathfrak n \in \mathrm{Mult}_{\rho}$ be minimal to $\pi$. Let $\Delta$ be a segment. Suppose $b(\Delta')< b(\Delta)$ for any segment $\Delta' \in \mathfrak n$ with $b(\Delta')\neq b(\Delta)$. If $\eta_{\Delta}(D_{\mathfrak n}(\pi))=0$, then $\mathfrak{mx}(\pi, \Delta) \subseteq \mathfrak n$. 
\end{corollary}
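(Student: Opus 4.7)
The plan is to combine the commutativity of minimal sequences (Theorem \ref{thm minimal and commut 2}) with a careful tracking of how $\varepsilon$-invariants evolve under derivatives. Let $\Delta=[a,b]_{\rho}$ and split $\mathfrak n=\mathfrak n_1+\mathfrak n_2$, where $\mathfrak n_1$ collects the segments of $\mathfrak n$ with right endpoint $b$ and $\mathfrak n_2$ collects those with right endpoint strictly less than $b$ (the hypothesis forbids right endpoints exceeding $b$). By Theorem \ref{thm minimal and commut 2}, $\mathfrak n_1$ is minimal to $\sigma:=D_{\mathfrak n_2}(\pi)$ and $D_{\mathfrak n}(\pi)\cong D_{\mathfrak n_1}(\sigma)$, so $\eta_\Delta(D_{\mathfrak n_1}(\sigma))=0$.

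Next I would analyze the action of $\mathfrak n_1$ on $\sigma$ via Lemma \ref{lem eta change unlinked case}. Every segment $[a_i,b]_{\rho}\in \mathfrak n_1$ is nested with (hence unlinked from) every $[a',b]_{\rho}$: applying $D_{[a_i,b]_{\rho}}$ with $a\leq a_i\leq b$ decreases $\varepsilon_{[a_i,b]_{\rho}}$ by exactly one and preserves $\varepsilon_{[a',b]_{\rho}}$ for $a'\neq a_i$, combining Lemma \ref{lem eta change unlinked case}(2) for $a'=a_i$, Lemma \ref{lem eta change unlinked case}(1) for $a'>a_i$, and Theorem \ref{thm effect of Steinberg} together with Lemma \ref{lem removal process}(2) for $a'<a_i$; when $a_i<a$, every $\varepsilon_{[a',b]_{\rho}}$ with $a'\in[a,b]$ is preserved. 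Iterating over $\mathfrak n_1$ and imposing $\eta_\Delta=0$ at the end forces $\varepsilon_{[a',b]_{\rho}}(\sigma)=m(a')$ for each $a\leq a'\leq b$, where $m(a')$ denotes the multiplicity of $[a',b]_{\rho}$ in $\mathfrak n_1$. This immediately gives $\mathfrak{mx}(\sigma,\Delta)\subseteq \mathfrak n_1\subseteq \mathfrak n$.

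The remaining step, and the main obstacle, is to upgrade this to $\mathfrak{mx}(\pi,\Delta)\subseteq \mathfrak{mx}(\sigma,\Delta)$, equivalently $\varepsilon_{[a',b]_{\rho}}(\pi)\leq\varepsilon_{[a',b]_{\rho}}(\sigma)$ for all $a\leq a'\leq b$. For a segment $[c,d]_{\rho}\in \mathfrak n_2$ (so $d<b$), Theorem \ref{thm effect of Steinberg} converts the question into tracking $\varepsilon_{[a',b]_{\rho}}$ under the removal process: the case $a'<c$ is immediate from Lemma \ref{lem removal process}(2), while for $a'\geq c$ the removal sequence could in principle truncate a segment of the form $[a',c']_{\rho}$ with $c'\geq b$. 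I expect the minimality of $\mathfrak n$, controlled via Theorem \ref{thm minimizability} and Theorem \ref{thm subsequent minimal}, to forbid the ``gap'' configuration that would cause such a strict decrease, because such a gap would allow a $\leq_Z$-smaller alternative to $\mathfrak n$ giving the same removal result. A cleaner alternative is to induct on $|\mathfrak n|$: one peels off any $[a_i,b]_{\rho}\in \mathfrak n_1$ via Theorem \ref{thm minimal and commut 2} (noting $\mathfrak{mx}(D_{[a_i,b]_{\rho}}(\pi),\Delta)=\mathfrak{mx}(\pi,\Delta)-\{[a_i,b]_{\rho}\}$ when $a\leq a_i\leq b$, and equals $\mathfrak{mx}(\pi,\Delta)$ when $a_i<a$), thereby reducing to the sole case $\mathfrak n=\mathfrak n_2$, in which one would have to show $\mathfrak{mx}(\pi,\Delta)=\emptyset$ directly from the minimality-based combinatorial structure.
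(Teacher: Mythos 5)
Your first two paragraphs and the peeling/induction at the end are sound: splitting $\mathfrak n=\mathfrak n_1+\mathfrak n_2$, invoking Theorem \ref{thm minimal and commut 2}, and the coordinate bookkeeping via Lemma \ref{lem eta change unlinked case}, Theorem \ref{thm effect of Steinberg} and Lemma \ref{lem removal process}(2) do give $\mathfrak{mx}(D_{\mathfrak n_2}(\pi),\Delta)\subseteq\mathfrak n_1$, and the induction correctly reduces everything to the case where all segments of $\mathfrak n$ have right endpoint strictly less than $b$. But that reduced statement is the entire content of the corollary, and your only argument for it is the expectation that minimality ``forbids the gap configuration''. That mechanism is false as stated: take $\pi$ with $\mathfrak{hd}(\pi)=\left\{[3,10]_{\rho},[7,9]_{\rho}\right\}$ (it exists by Theorem \ref{thm realize highest derivative mult}), $\mathfrak n=\left\{[3,4]_{\rho}\right\}$ (a single segment, hence automatically minimal, and $b([3,4]_{\rho})=4<5$), and $\Delta=[3,5]_{\rho}$. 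Computing the removal via Lemma \ref{lem removal process}(1) gives $\mathfrak r([3,4]_{\rho},\mathfrak{hd}(\pi))=\left\{[5,10]_{\rho},[7,9]_{\rho}\right\}$, so by the unlinked case of Theorem \ref{thm effect of Steinberg}, $\varepsilon_{[3,5]_{\rho}}(D_{[3,4]_{\rho}}(\pi))=0<1=\varepsilon_{[3,5]_{\rho}}(\pi)$: a coordinate of $\eta_{\Delta}$ drops under a derivative along a minimal multisegment all of whose right endpoints are below $b$, and no $\leq_Z$-smaller alternative to $\mathfrak n$ exists. Of course in this configuration $\eta_{\Delta}(D_{\mathfrak n}(\pi))=(0,0,1)\neq 0$, so the corollary itself is untouched; but it shows that the inequality $\varepsilon_{[a',b]_{\rho}}(\pi)\leq\varepsilon_{[a',b]_{\rho}}(D_{\mathfrak n_2}(\pi))$ you need cannot follow from minimality plus local ``gap'' reasoning alone --- the global vanishing hypothesis must be fed back into the argument, and neither your heuristic nor the unproven base case of your induction does that.

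This is exactly where the paper's proof has its real content, and it uses a tool you never invoke: Proposition \ref{prop dagger property}. The paper first proves a counting claim $|\mathfrak p'|\geq|\mathfrak p|$ (with $\mathfrak p=\mathfrak{mx}(\pi,\Delta)$ and $\mathfrak p'$ the segments of $\mathfrak n$ with right endpoint $b$ contained in $\Delta$), which uses the vanishing of $\eta_{\Delta}(D_{\mathfrak n}(\pi))$ after peeling off only the segments strictly containing $\Delta$ --- this is close in spirit to your paragraph two. The coordinate-wise refinement is then obtained by contradiction: if some $[c^*,b]_{\rho}$ occurs in $\mathfrak p'$ with multiplicity exceeding $\varepsilon_{[c^*,b]_{\rho}}(\pi)$, the subsequent property (Theorem \ref{thm subsequent minimal}) makes $\widetilde{\mathfrak n}+[c^*,b]_{\rho}$ and $\widetilde{\mathfrak n}+\mathfrak p'$ minimal to $\pi$ (where $\widetilde{\mathfrak n}$ consists of the segments of $\mathfrak n$ with small endpoints), and the ``only if'' direction of Proposition \ref{prop dagger property} forces $\varepsilon_{[c^*,b]_{\rho}}(\mathfrak r(\widetilde{\mathfrak n},\pi))=\varepsilon_{[c^*,b]_{\rho}}(\pi)$, contradicting the admissibility of $\widetilde{\mathfrak n}+\mathfrak p'$. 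Proposition \ref{prop dagger property} is precisely the device that converts minimality into equalities of $\varepsilon_{[\cdot,b]_{\rho}}$-invariants; until you either invoke it or prove an equivalent statement, your proposal is a (correct) reduction, not a proof.
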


\begin{proof}
Let $\Delta=[a,b]_{\rho}$. Let $\mathfrak p=\mathfrak{mx}(\pi, \Delta)$. Let $\mathfrak p'$ be all the segments $\Delta'$ in $\mathfrak n$ such that $b(\Delta')=b$ and $\Delta' \subseteq \Delta$. 

We first prove the following claim: \\
\noindent
{\it Claim:} $|\mathfrak p'|\geq |\mathfrak p|$. \\

\noindent
{\it Proof of Claim:} We also let $\mathfrak p''$ be all the segments $\Delta'$ in $\mathfrak n$ such that $b(\Delta')=b$ and $\Delta \subsetneq \Delta'$. Note that 
\[   \eta_{\Delta}(D_{\mathfrak p''} \circ D_{\mathfrak n-\mathfrak p''}(\pi))=\eta_{\Delta}(D_{\mathfrak n-\mathfrak p''}(\pi)) . \]
Hence, we also have $\eta_{\Delta}(D_{\mathfrak n-\mathfrak p''}(\pi))=0$. But then, by counting the multiplicity at $b(\Delta)$, we must have that $|\mathfrak p'|\geq |\mathfrak p|$. This proves the claim. \\

  If $\mathfrak p\neq \mathfrak p'$, then the Claim implies that there exists $a \leq c \leq b$ such that the number of segments $[c,b]_{\rho}$ in $\mathfrak p'$ is strictly greater than $\varepsilon_{[c,b]_{\rho}}(\pi)$. Let $c^*$ be such smallest integer. Now let $\widetilde{\mathfrak n}$ be the submultisegment of $\mathfrak n$ whose segments $\Delta'$ satisfy $a(\Delta')<a$. By the subsequent property, $\widetilde{\mathfrak n}+[c^*,b]_{\rho}$ and $\widetilde{\mathfrak n}+\mathfrak p'$ are still minimal to $\pi$. Now, by Proposition \ref{prop dagger property} and the minimality of $\widetilde{\mathfrak n}+[c^*,b]_{\rho}$, $\varepsilon_{[c^*,b]_{\rho}}(\mathfrak r(\widetilde{\mathfrak n}, \pi))=\varepsilon_{[c^*,b]_{\rho}}(\pi)$. But, by the admissibility of $\widetilde{\mathfrak n}+\mathfrak p'$, 
	\[ \varepsilon_{[c^*,b]_{\rho}}(\mathfrak r(\widetilde{\mathfrak n}, \pi))=\mbox{number of segments $[c^*,b]_{\rho}$ in $\mathfrak p'$} >\varepsilon_{[c^*,b]_{\rho}}(\pi). \]
This gives a contradiction. Thus we must have $\mathfrak p'=\mathfrak p$.
\end{proof}

\subsection{Generalized reduced decomposition}
The notion of reduced decomposition is introduced in \cite[Section 7]{AL23} for a segment $\Delta$.

We now describe a generalization of reduced decomposition for multisegments. Let $\pi \in \mathrm{Irr}_{\rho}(G_n)$. Let $\mathfrak n$ be minimal to $\pi$. Let $b$ be the largest $b(\Delta)$ among all segments $\Delta$ in $\mathfrak n$. We then choose the longest segment $\Delta_1 \in \mathfrak n$ such that $b(\Delta_1)=b$. Let $\mathfrak p_1=\mathfrak{mx}(D_{\mathfrak n}(\pi), \Delta)$. We now set $\mathfrak m_1'=\mathfrak n+\mathfrak p_1$. In general, $\mathfrak n_1$ is not minimal to $\pi$ and so one find the minimal element, denoted by $\mathfrak m_1$, in $\mathcal S(\pi, D_{\mathfrak m_1}(\pi))$. Then, by Corollary \ref{cor minimal reduced decomp}, $\mathfrak m_1=\mathfrak q_1+\mathfrak n_2$ for some multisegment $\mathfrak n_2$ and $\mathfrak q_1=\mathfrak{mx}(\pi, \Delta)$. Thus, from commutativity, we now have that $D_{\mathfrak m_1}(\pi) \cong D_{\mathfrak n_2}\circ D_{\mathfrak q_1}(\pi)$. Thus, one may consider that $\pi \hookrightarrow D_{\mathfrak q_1}(\pi) \times \mathrm{St}(\mathfrak q_1)$ is the step for the reduction. One can repeat the same process for $\mathfrak n_2$ and then repeat to obtain a sequence of triples $(\mathfrak p_1, \mathfrak q_1, \mathfrak n_1), \ldots, (\mathfrak p_r, \mathfrak q_r, \mathfrak n_r)$ until the process terminates. Then we obtain a kind of reduced decompositon for $\pi$ with respect to $\mathfrak n$ as follows:
\[  \pi \hookrightarrow   (D_{\mathfrak q_r}\circ \dots \circ D_{\mathfrak q_1})(\pi) \times \mathrm{St}(\mathfrak q_r)\times \ldots \times \mathrm{St}(\mathfrak q_1) .
\]
Let $l=l_{abs}(\mathfrak n)$. One may expect there is a natural map:
\[    D_{\mathfrak n}(\pi)\boxtimes \mathrm{St}(\mathfrak n) \rightarrow (D_{\mathfrak q_r}\circ \dots \circ D_{\mathfrak q_1})(\pi) \dot{\times}^1 (\mathrm{St}(\mathfrak q_r)\times \ldots \times \mathrm{St}(\mathfrak q_1))_{N_l},
\]
where $\dot{\times}^1$ is a parabolic induction from a $G_{n_1}\times G_{n_2}\times G_l$-representation to a $G_{n_1+n_2}\times G_l$ representation. Here $n_1=n-l_{abs}(\mathfrak q_1)-\ldots-l_{abs}(\mathfrak q_r)$ and $n_2=l_{abs}(\mathfrak p_1)+\ldots +l_{abs}(\mathfrak p_r)$.

\subsection{An inductive construction of simple quotients of Bernstein-Zelevinsky derivatives}

%\begin{lemma} \label{lem eta 3}
%Let $\pi \in \mathrm{Irr}_{\rho}$. Let $\Delta=[a,b]_{\rho}, \Delta'=[a',b']_{\rho}$. Suppose $\Delta$ is admissible to $\pi$. If $a<a'$ and $b<b'$, then 
%\[  |\eta|_{\Delta'}(\pi) \geq |\eta|_{\Delta'}(D_{\Delta}(\pi)) \]
%Moreover, if the equality holds, then $\eta_{\Delta'}(\pi)=\eta_{\Delta'}(D_{\Delta}(\pi))$. 
%\begin{enumerate}
%\item[(1)] If $a' \leq a$ and $b'<b$, then $|\eta|_{\Delta'}(\mathfrak h)=|\eta|_{\Delta'}(\mathfrak r(\Delta, \mathfrak h))$.
%\item[(2)] If $a'\leq a$ and $b'=b$, then $|\eta|_{\Delta'}(\mathfrak h)-1=|\eta|_{\Delta'}(\mathfrak r(\Delta, \mathfrak h))$.
%\item[(3)] If $a < a'$ and $b'=b$, then $|\eta|_{\Delta'}(\mathfrak h)=|\eta|_{\Delta'}(\mathfrak r(\Delta, \mathfrak h))$.
%\end{enumerate} 
%\end{lemma}

Let $\pi \in \mathrm{Irr}_{\rho}$. Let $\Delta \in \mathrm{Seg}_{\rho}$ and let $\mathfrak n \in \mathrm{Mult}_{\rho}$. Let $\mathfrak p=\mathfrak{mx}(\pi, \Delta)$. Note that $D_{\mathfrak n}\circ D_{\mathfrak p}(\pi)$ is a simple quotient of the $l_{abs}(\mathfrak n+\mathfrak p)$-th Bernstein-Zelevinsky derivative of $\mathrm{St}(\mathfrak p)\times D_{\mathfrak p}(\pi)$ if $\eta_{\Delta}(D_{\mathfrak n}\circ D_{\mathfrak p}(\pi))=0$. Then one may ask if $D_{\mathfrak n}\circ D_{\mathfrak p}(\pi)$ is also a simple quotient of the $l_{abs}(\mathfrak n+\mathfrak p)$-th Bernstein-Zelevinsky derivative of $\pi$. In a special case, we have the following criteria using commutativity:

\begin{proposition} \label{prop inductive construction}
Let $\pi \in \mathrm{Irr}_{\rho}$. Let $\Delta \in \mathrm{Seg}_{\rho}$ and let $\mathfrak n \in \mathrm{Mult}_{\rho}$. Let $\mathfrak p=\mathfrak{mx}(\pi, \Delta)$.
\begin{enumerate}
\item[(1)] Suppose $\mathfrak n$ is minimal to $D_{\mathfrak p}(\pi)$ and $\eta_{\Delta}(D_{\mathfrak n}\circ D_{\mathfrak p}(\pi))=0$. Suppose further that for any segment $\Delta' \in\mathfrak n$, $b(\Delta')<b(\Delta)$. If $D_{\mathfrak n}\circ D_{\mathfrak p}(\pi) \cong D_{\mathfrak m}(\pi)$ for some $\mathfrak m \in \mathrm{Mult}_{\rho}$, then  $\mathfrak n+\mathfrak p$ is minimal to $\pi$. 
\item[(2)] Suppose $\mathfrak m$ is minimal to $\pi$ and $\eta_{\Delta}(D_{\mathfrak m}(\pi))=0$. Suppose further that for any segment $\Delta' \in\mathfrak m$, $b(\Delta') \leq b(\Delta)$. Then $D_{\mathfrak m}(\pi)\cong D_{\mathfrak n}\circ D_{\mathfrak p}(\pi)$ and $\mathfrak m=\mathfrak n+\mathfrak p$ for some multisegment $\mathfrak n$ minimal to $D_{\mathfrak p}(\pi)$.
\end{enumerate}
\end{proposition}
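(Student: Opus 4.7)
The plan is to dispose of (2) first since it is essentially a direct invocation of the removal-process machinery, and then use (2) to power the more delicate proof of (1). For (2), the hypotheses match verbatim those of Corollary \ref{cor minimal reduced decomp}: $\mathfrak m$ is minimal to $\pi$, $\eta_\Delta(D_\mathfrak m(\pi))=0$, and every $\Delta'\in\mathfrak m$ satisfies $b(\Delta')\leq b(\Delta)$ (which, split into $b(\Delta')=b(\Delta)$ and $b(\Delta')<b(\Delta)$, supplies the strict inequality the corollary requires whenever $b(\Delta')\neq b(\Delta)$). This yields $\mathfrak p=\mathfrak{mx}(\pi,\Delta)\subseteq\mathfrak m$. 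Setting $\mathfrak n:=\mathfrak m-\mathfrak p$, Theorem \ref{thm minimal and commut 2} then simultaneously delivers the minimality of $\mathfrak n$ to $D_\mathfrak p(\pi)$ and the isomorphism $D_\mathfrak m(\pi)\cong D_\mathfrak n\circ D_\mathfrak p(\pi)$.

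For (1), set $\tau:=D_\mathfrak n\circ D_\mathfrak p(\pi)$. By hypothesis $\mathfrak m\in\mathcal S(\pi,\tau)$, so Theorem \ref{thm unique minimal} produces a unique $\leq_Z$-minimal element $\mathfrak m_0\in\mathcal S(\pi,\tau)$, and my aim is to identify $\mathfrak m_0$ with $\mathfrak n+\mathfrak p$. I would apply (2) to $\mathfrak m_0$; minimality of $\mathfrak m_0$ is automatic and $\eta_\Delta(D_{\mathfrak m_0}(\pi))=\eta_\Delta(\tau)=0$ is by hypothesis, so the only non-trivial hypothesis of (2) left to verify is the $b$-value bound $b(\Delta'')\leq b(\Delta)$ for every $\Delta''\in\mathfrak m_0$. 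Granting this, (2) yields $\mathfrak m_0=\mathfrak p+\mathfrak n_0$ with $\mathfrak n_0$ minimal to $D_\mathfrak p(\pi)$ and $D_{\mathfrak n_0}\circ D_\mathfrak p(\pi)\cong\tau\cong D_\mathfrak n\circ D_\mathfrak p(\pi)$. Since $\mathfrak n$ is also minimal in $\mathcal S(D_\mathfrak p(\pi),\tau)$ by assumption, a second application of Theorem \ref{thm unique minimal} (now to the smaller poset) forces $\mathfrak n_0=\mathfrak n$, and hence $\mathfrak m_0=\mathfrak n+\mathfrak p$.

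The main obstacle is therefore the verification of the $b$-value bound on $\mathfrak m_0$. My strategy is to first show that $\mathfrak n+\mathfrak p\in\mathcal S(\pi,\tau)$; once this is in place, $\mathfrak m_0\leq_Z\mathfrak n+\mathfrak p$ by minimality, and because the maximum $b$-value is preserved under any elementary intersection--union operation (the segment $\Delta_1\cup\Delta_2$ realises the maximum of $b(\Delta_1)$ and $b(\Delta_2)$), the maximum $b$-value in $\mathfrak m_0$ is bounded by that in $\mathfrak n+\mathfrak p$, namely $b(\Delta)$. To place $\mathfrak n+\mathfrak p$ in $\mathcal S(\pi,\tau)$ I would exploit the hypothesis $b(\Delta')<b(\Delta)$ on all $\Delta'\in\mathfrak n$: for any linked pair $(\Delta',\Delta'')\in\mathfrak n\times\mathfrak p$ a direct check rules out the case $\Delta''<\Delta'$ and forces $\Delta'<\Delta''$, so concatenating ascending orders of $\mathfrak n$ and of $\mathfrak p$ produces an ascending order for $\mathfrak n+\mathfrak p$, giving $D_{\mathfrak n+\mathfrak p}(\pi)\cong D_\mathfrak p\circ D_\mathfrak n(\pi)$. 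To identify this with $D_\mathfrak n\circ D_\mathfrak p(\pi)$ I would iterate a commutativity argument in the spirit of Lemma \ref{lem commut for max case} and Proposition \ref{prop dagger property 2}, using that $\mathfrak p=\mathfrak{mx}(\pi,\Delta)$ is precisely the maximal multisegment concentrated at $b(\Delta)$ and that the $\eta$-vanishing together with the $b$-value constraint supply the intermediate-segment/non-overlapping property at each step.
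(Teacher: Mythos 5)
Your part (2) is exactly the paper's argument: Corollary \ref{cor minimal reduced decomp} gives $\mathfrak p\subseteq\mathfrak m$, and Theorem \ref{thm minimal and commut 2} applied to the submultisegment $\mathfrak p$ of the minimal $\mathfrak m$ gives both the isomorphism and the minimality of $\mathfrak n=\mathfrak m-\mathfrak p$ to $D_{\mathfrak p}(\pi)$. For part (1) your skeleton also agrees with the paper: replace $\mathfrak m$ by the minimal element $\mathfrak m_0$ of $\mathcal S(\pi,\tau)$, apply part (2) (equivalently Corollary \ref{cor minimal reduced decomp} plus Theorem \ref{thm minimal and commut 2}) to write $\mathfrak m_0=\mathfrak p+\mathfrak n_0$ with $\mathfrak n_0$ minimal to $D_{\mathfrak p}(\pi)$, and then use Theorem \ref{thm unique minimal} in $\mathcal S(D_{\mathfrak p}(\pi),\tau)$ to force $\mathfrak n_0=\mathfrak n$. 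You are also right that this requires the bound $b(\Delta'')\leq b(\Delta)$ for all $\Delta''\in\mathfrak m_0$, a hypothesis of the corollary which the paper does not verify explicitly.

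The gap is in how you verify that bound. Your route requires first proving $\mathfrak n+\mathfrak p\in\mathcal S(\pi,\tau)$, i.e. $D_{\mathfrak p}\circ D_{\mathfrak n}(\pi)\cong D_{\mathfrak n}\circ D_{\mathfrak p}(\pi)$ (and in particular $D_{\mathfrak n}(\pi)\neq 0$), and this is only sketched as an iteration of Lemma \ref{lem commut for max case} and Proposition \ref{prop dagger property 2}. Neither result applies as stated: you do not know that the segments of $\mathfrak n$ are admissible to $\pi$ (only to $D_{\mathfrak p}(\pi)$), and the non-overlapping or intermediate-segment hypotheses would have to hold for each successive representation produced along the iteration, where $\mathfrak p$ is no longer $\mathfrak{mx}(\cdot,\Delta)$ and the highest derivative multisegment has changed; the given hypotheses ($\mathfrak n$ minimal to $D_{\mathfrak p}(\pi)$ and $\eta_{\Delta}(D_{\mathfrak n}\circ D_{\mathfrak p}(\pi))=0$) do not supply these conditions. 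Moreover, proving $\mathfrak n+\mathfrak p\in\mathcal S(\pi,\tau)$ outright would render the hypothesis ``$D_{\mathfrak n}\circ D_{\mathfrak p}(\pi)\cong D_{\mathfrak m}(\pi)$ for some $\mathfrak m$'' superfluous; in the paper this commutativity is only recovered \emph{after} minimality of $\mathfrak n+\mathfrak p$ is established, via Theorem \ref{thm minimal and commut 2}, so your stepping stone is essentially equivalent to the conclusion you are trying to reach. The detour is also unnecessary: for any $\mathfrak m'$ with $D_{\mathfrak m'}(\pi)\cong\tau$, iterating the embeddings $\pi\hookrightarrow D_{\Delta''}(\cdot)\times\mathrm{St}(\Delta'')$ gives $\mathrm{csupp}(\pi)=\mathrm{csupp}(\tau)+\sum_{\Delta''\in\mathfrak m'}\mathrm{csupp}(\mathrm{St}(\Delta''))$, and computing $\mathrm{csupp}(\pi)-\mathrm{csupp}(\tau)$ from $\tau=D_{\mathfrak n}\circ D_{\mathfrak p}(\pi)$ shows it only involves $\nu^{c}\rho$ with $c\leq b(\Delta)$; hence every segment of every element of $\mathcal S(\pi,\tau)$, in particular of $\mathfrak m_0$, has $b$-value at most $b(\Delta)$. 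With that replacement your argument closes and coincides with the paper's.
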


\begin{proof}
We first consider (1). Suppose $D_{\mathfrak n}\circ D_{\mathfrak p}(\pi)\cong D_{\mathfrak m}(\pi)$ for some $\mathfrak m \in \mathrm{Mult}_{\rho}$. Without loss of generality, we may assume that $\mathfrak m$ is minimal to $\pi$. Then, by Corollary \ref{cor minimal reduced decomp}, $\mathfrak m=\mathfrak p+\mathfrak n'$. By the subsequent property and commutativty property (Theorem \ref{thm minimal and commut 2}), $\mathfrak n'$ is also minimal to $D_{\mathfrak p}(\pi)$ and $D_{\mathfrak n'}\circ D_{\mathfrak p}(\pi)\cong D_{\mathfrak m}(\pi)$. By Theorem \ref{thm unique minimal}, we then have that $\mathfrak n=\mathfrak n'$ and this implies (1).

We now consider (2). By Corollary \ref{cor minimal reduced decomp}, $\mathfrak p \subset \mathfrak m$ and so $\mathfrak m=\mathfrak p+\mathfrak n$ for some multisegment $\mathfrak n$. By the subsequent property and commutativity property (Theorem \ref{thm minimal and commut 2}), we also have that $\mathfrak n$ is minimal to $D_{\mathfrak p}(\pi)$.
\end{proof}


\begin{thebibliography}{AGRS}
\bibitem[AL25]{AL23} A. Aizenbud and E. Lapid, A binary operation on irreducible components of Lusztig's nilpotent varieties I: definition and properties, Pure and Applied Mathematics Quarterly (2025), Volume 21 (2025) Number 1, pp. 5-41.
\bibitem[AGRS10]{AGRS10} A. Aizenbud, D. Gourevitch, S. Rallis and G. Schiffmann, Multiplicity one theorems, Ann. of Math. (2) {\bf 172} (2010), no. 2, 1407-1434. 
\bibitem[At24]{At24}  H. Atobe, An Analogue of Ladder Representations for Classical Groups, International Mathematics Research Notices, Volume 2024, Issue 9, May 2024, Pages 7891-7913.
%\bibitem[Be92]{Be92} J. Bernstein. Represenations of p-adic groups. Harvard University, 1992. Lectures by Joseph Bernstein. Written by Karl E. Rumelhart.
%\bibitem[BZ76]{BZ76} I. N. Bernstein and A. V. Zelevinsky, Representations of the group $\mathrm{GL}(n,F)$, where $F$ is a non-archimedean local field, Russian Math. Surveys 31:3 (1976), 1-68. 
\bibitem[Ba08]{Ba08} A. Badulescu, Global Jacquet-Langlands correspondence, multiplicity one and classification of automorphic representations . Invent. math. 172, 383-438 (2008). https://doi.org/10.1007/s00222-007-0104-8.
\bibitem[BZ77]{BZ77} I. N. Bernstein and A. V. Zelevinsky, Induced representations of reductive p-adic groups, I, Ann. Sci. Ecole Norm. Sup. {\bf 10} (1977), 441-472.
\bibitem[BB05]{BB05} A. Bjorner, F. Brenti, Combinatorics of Coxeter Groups, Graduate Texts in Mathematics (2005).
\bibitem[Ch21]{Ch21} K.Y. Chan, Homological branching law for $(\mathrm{GL}_{n+1}(F),\mathrm{GL}_n(F))$: projectivity and indecomposability, Invent. Math. (2021). https://doi.org/10.1007/s00222-021-01033-5.
%\bibitem[Ch22]{Ch20} K.Y. Chan, Restriction for general linear groups: The local non-tempered Gan-Gross-Prasad conjecture (non-Archimedean case), Crelles Journal, vol. 2022, no. 783, 2022, pp. 49-94. https://doi.org/10.1515/crelle-2021-0066
\bibitem[Ch23]{Ch21+} K.Y. Chan, Ext-multiplicity theorem for standard representations of $(\mathrm{GL}_{n+1}, \mathrm{GL}_n)$,  Math. Z. 303, 45 (2023). https://doi.org/10.1007/s00209-022-03198-y.
\bibitem[Ch24]{Ch22+} K.Y. Chan, On the product functor on inner forms of general linear group over a non-Archimedean local field, Transformation Groups (2024).
\bibitem[Ch22+b]{Ch22+b} K.Y. Chan, Quotient branching law for $p$-adic $(\mathrm{GL}_{n+1}, \mathrm{GL}_n)$ I: generalized Gan-Gross-Prasad relevant pairs, arXiv:2212.05919 (2022).
\bibitem[Ch22+c]{Ch22+c} K.Y. Chan, Duality for the generalized Gan-Gross-Prasad relevant pairs for p-adic GL, arXiv:2210.17249 (2022).
\bibitem[Ch25]{Ch22+d} K.Y. Chan, Construction of simpe quotients of Bernstein-Zelevinsky derivatives and highest derivative multisegments I: reduction to combinatorics, Transactions of the AMS (Series B) (2025).
\bibitem[Ch25b]{Ch22+e} K.Y. Chan,  Construction of simple quotients of Bernstein-Zelevinsky derivatives and highest derivative multisegments II: minimal sequences, Manuscripta Mathematica (2025).
\bibitem[CP25+]{CP25}  K. Y. Chan and B. Pattanayak, Algorithms for parabolic inductions and Jacquet modules in GLn, arXiv:2503.00886, 2025.
%\bibitem[CS18]{CS18} Kei Yuen Chan and Gordan Savin, Iwahori component of the Gelfand-Graev representation, Math. Z. 288 (2018), no. 1-2, 125-133. MR 3774407, DOI 10.1007/s00209-017-1882-3
\bibitem[CS19]{CS19} K.Y. Chan and G. Savin,  Bernstein-Zelevinsky derivatives: a Hecke algebra approach, International Mathematics Research Notices, Volume 2019, Issue 3, February 2019, Pages 731-760, https://doi.org/10.1093/imrn/rnx138.
\bibitem[CZ26+]{Ch26+} K.Y. Chan and P. Zhou, Construction of simple quotients of Bernstein-Zelevinsky derivatives and highest derivative multisegments IV: removal processes, in preparation.
\bibitem[CMBO24]{CMBO24} D. Ciubotaru, L. Mason-Brown and E. Okada, The wavefront sets of unipotent supercuspidal representations, Algebra and Number Theory, Vol. 18 (2024), No. 10, 1863-1889.
%\bibitem[CS21]{CS21} K.Y. Chan and G. Savin, A vanishing Ext-branching theorem for $(\mathrm{GL}_{n+1}(F), \mathrm{GL}_n(F))$, Duke Math Journal, 2021, 170 (10), 2237-2261. https://doi.org/10.1215/00127094-2021-0028
%\bibitem[CPS17]{CPS17} Cogdell J.W., Piatetski-Shapiro I.I. (2017) Derivatives and L-Functions for GLn. In: Cogdell J., Kim JL., Zhu CB. (eds) Representation Theory, Number Theory, and Invariant Theory. Progress in Mathematics, vol 323. Birkh\"auser, Cham. https://doi.org/10.1007/978-3-319-59728-7-5
%\bibitem[De16]{De16} T. Deng, Parabolic induction and geometry of orbital varieties for GL(n), arXiv:1603:06387, PhD thesis, Universit\'e Sorbinne Paris Cit\'e (2016)
%\bibitem[GGP20]{GGP20} W.T. Gan, B.H. Gross, and D. Prasad, Branching laws for classical groups: the non-tempered case, Compositio Mathematica, 156(11) (2020), 2298-2367. doi:10.1112/S0010437X20007496
%\bibitem[GLS11]{GLS11} Christof Gei\ss, Bernard Leclerc, and Jan Schr\"oer, Kac-Moody groups and cluster algebras, Adv. Math. 228 (2011), no. 1, 329-433.
%\bibitem[GV01]{GV01} Grojnowski, I., Vazirani, M. Strong multiplicity one theorems for affine Hecke algebras of type A. Transformation Groups 6, 143-155 (2001). https://doi.org/10.1007/BF01597133
% \bibitem[Gu22]{Gu22} M. Gurevich, On restriction of unitarizable representations of general linear groups and the non-generic local Gan-Gross-Prasad conjecture. J. Eur. Math. Soc. 24 (2022), no. 1, pp. 265-302, DOI 10.4171/JEMS/1093
%\bibitem[Gu21]{Gu21} M. Gurevich, Graded Specht modules as Bernstein-Zelevinsky derivatives of the RSK model, arXiv:2110.11381
%\bibitem[He11]{He11} Heiermann, V. Op\'erateurs d'entrelacement et alg\`ebres de Hecke avec param\`etres d'un groupe r\'eductif p-adique: le cas des groupes classiques. Sel. Math. New Ser. 17, 713-756 (2011). https://doi.org/10.1007/s00029-011-0056-0
%\bibitem[CM24]{CM24} C. Cunningham, M. Ray, Proof of Vogan's conjecture on Arthur packets: irreducible parameters of p-adic general linear groups. manuscripta math. 173, 1073-1097 (2024). https://doi.org/10.1007/s00229-023-01490-7.
\bibitem[CP96]{CP96} V. Chari and A. Pressley, Quantum affine algebras and affine Hecke algebras, Pacific J. Math. 174 (1996), no. 2, 295-326.
\bibitem[Ch86]{Ch86} I.V. Cherednik, A new interpretation of Gelfand-Tzetlin bases, Duke Math. J. (1987), Vol.54 (2), p.563-577.
\bibitem[CR24]{CR24} C. Cunningham, M. Ray, Proof of Vogan's conjecture on Arthur packets: irreducible parameters of p-adic general linear groups. manuscripta math. 173, 1073-1097 (2024). https://doi.org/10.1007/s00229-023-01490-7.
\bibitem[Gu21]{Gu21} M. Gurevich, Quantum invariants for decomposition problems in type A rings of representations, Journal of Combinatorial Theory, Series A, {\bf 180} (2021), https://doi.org/10.1016/j.jcta.2021.105431.
\bibitem[Ja07]{Ja07} C. Jantzen, Jacquet modules of $p$-adic general linear groups, Represent. Theory {\bf 11} (2007), 45-83 https://doi.org/10/1090/S1088-4165-0700316-0.
%\bibitem[JS83]{JS83} H. Jacquet and J. Shalika, The Whittaker models of induced representations, Pacific Journal of Mathematics 109 (1983), 1, 107-120.
%\bibitem[JK22]{JK22} Y. Jo, M. Krishnamurthy, The Langlands-Shahidi method for pairs via types and cover, to appear in Representation Theory
%\bibitem[KKKO18]{KKKO18}  S.-J. Kang, M. Kashiwara, M. Kim and S.-J. Oh, J. Amer. Math. Soc. 31 (2018), 349-426, doi.org/10.1090/jams/895
%\bibitem[EK08]{EK08}  N. Enomoto, M. Kashiwara, Symmetric Crystals and LLT-Ariki type conjectures for the affine Hecke algebras of type B. Combinatorial representation theory and related topics RIMS K\^oky\^uroku Besstsu, B8, Res. Inst. Math. Sci. (RIMS), Kyoto, pp. 1-20 (2008) 
%\bibitem[Kl95]{Kl95} A.S. Kleshchev, Branching rules for modular representations of symmetric groups. II. J. reine angew. Math. 459 (1995), 163 212. 
%\bibitem[Kl10]{Kl10}  Alexander Kleshchev, Representation Theory of symmetric groups and related Hecke algebras, Bull. Amer. Math. Soc. 47 (2010), 419-481,https://doi.org/10.1090/S0273-0979-09-01277-4 
%\bibitem[LM14]{LM14} E. Lapid and A. M\'inguez. On a determinantal formula of Tadi\'c. Amer. J. Math. 136 (2014): 111-142.
\bibitem[LM16]{LM16} E. Lapid, A. M\'inguez, On parabolic induction on inner forms of the general linear group over a non-Archimedean local field,  Sel. Math. New Ser. (2016) 22, 2347-2400. 
% \bibitem[LM18]{LM18} E. Lapid, A. M\'inguez, Geometric conditions for $\square$-irreducibility of certain representations of the general linear group over a non-archimedean local field, Advances in Mathematics, Volume 339, 2018, Pages 113-190, https://doi.org/10.1016/j.aim.2018.09.027.
\bibitem[LM25]{LM22} E. Lapid, M\'inguez, A binary operation on irreducible components of Lusztig's nilpotent varieties II: applications and conjectures for representations of GLn over a non-archimedean local field,  Pure and Applied Mathematics Quarterly (2025).
%\bibitem[LLT96]{LLT96} A. Lascoux, B. Leclerc, and J.-Y. Thibon, Hecke algebras at roots of unity and crystal bases of quantum affine algebras, Comm. Math. Phys. 181 (1996), 205-263.
%\bibitem[Le03]{Le03}  B. Leclerc, Imaginary vectors in the dual canonical basis of Uq(n), Transform. Groups 8 (2003), no. 1, 95-104.
%\bibitem[Lu89]{Lu89} G. Lusztig, Affine Hecke algebras and their graded versions. J. Amer. Math. Soc. 2 (1989): 599-635.
%\bibitem[Lu91]{Lu91} G. Lusztig, Quivers, perverse sheaves, and quantized enveloping algebras J. Amer. Math. Soc. 4 (1991), 365-421 
%\bibitem[Mi08]{Mi08} V. Miemietz, On Representations of Affine Hecke Algebras of Type B . Algebr Represent Theory 11, 369 (2008), doi.org/10.1007/s10468-008-9086-5
\bibitem[MW86]{MW86} C. M\oe glin, J.-L. Waldspurger, Sur l'involution de Zelevinski. J. Reine Angew. Math. 372, 136-177 (1986), doi.org/10.1515/crll.1986.372.136.
\bibitem[Mu03]{Mu03} F. Murnaghan, Local character expansions of admissible representations of p-adic general linear groups. In: Journal f\"ur die reine und angewandte Mathematik 2003.554 (2003), pp. 139-155. doi:10.1515/crll.2003.002.
\bibitem[Ro86]{Ro86} J.D. Rogawski, Representations of $GL(n)$ over a p-adic field with an Iwahori-fixed vector, Israel J. Math. {\bf 54} (1986), 242-256.  https://doi.org/10.1007/BF02764944.
\bibitem[Ta15]{Ta15} M. Tadi\'c: On the reducibility points beyond the ends of complementary series of p-adic general linear groups, J. Lie Theory (2015) 25(1), 147-183.
%\bibitem[MW12]{MW12} C. M{\oe}glin and J.-L. Waldspurger, La conjecture locale de Gross-Prasad pour les groupes speciaux orthogonaux: le cas general, Sur les conjectures de Gross et Prasad. II, Astersque. No. {\bf 247} (2012), 167-216.
%\bibitem[Mi08]{Mi08} A. M\'inguez, Correspondance de Howe explicite : paires duales de type II, Annales scientifiques de l'\'Ecole Normale Sup\'erieure, S\'erie 4, Tome 41 (2008) no. 5, pp. 717-741, doi.org/10.24033/asens.2080
%\bibitem[Mi09]{Mi09} A. M\'inguez, Surl'irr\'educitibilit\'e d'une induite parabolique, 2009, no. 629, Crelle's Journal, doi.org/10.1515/CRELLE.2009.028
%\bibitem[Ma13]{Ma13} Nadir Matringe, Essential Whittaker functions for GL(n), Documenta Math. 18 (2013) 1191-1214
%\bibitem[Of18]{Of18} O. Offen, On symplectic periods and restriction to SL(2n), Math. Z. 294, 1521-1552 (2020), doi.org/10.1007/s00209-019-02390-x
%\bibitem[OS10]{OS10} Eric Opdam. Maarten Solleveld. Discrete series characters for affine Hecke algebras and their formal degrees. Acta Math. 205 (1) 105 - 187, 2010, doi.org/10.1007/s11511-010-0052-9 
%\bibitem[Pr18]{Pr18} D. Prasad, An Ext-analogue of branching laws, ICM proceedings 2018.
%\bibitem[Ra07]{Ra07} Raghuram, A. (2007). A K\"unneth Theorem for p-Adic Groups. Canadian Mathematical Bulletin, 50(3), 440-446, doi:10.4153/CMB-2007-043-5
%\bibitem[Re02]{Re02} M. Reeder, Isogenies of Hecke algebras and a Langlands correspondence for ramified principal series representations,  Represent. Theory 6 (2002), 101-126, DOI: https://doi.org/10.1090/S1088-4165-02-00167-X 
%\bibitem[Ro85]{Ro85} J.D. Rogawski, On modules over the Hecke algebra of a p-adic group. Invent Math 79, 443-465 (1985). https://doi.org/10.1007/BF01388516
%\bibitem[SVV11]{SVV11} P. Shan, M. Varagnolo, E. Vasserot, Canonical bases and affine Hecke algebras of type D, Advances in Mathematics, Volume 227, Issue 1, 2011, Pages 267-291, doi.org/10.1016/j.aim.2011.01.024.
%\bibitem[SV17]{SV17} V. S\'echerre, C. G. Venketasubramanian. Modular representations of GL(n) distinguished by GL(n-1) over a p-adic field. IMRN, 2017 (14), pp.4435-4492. 10.1093/imrn/rnw150.
%\bibitem[SS12]{SS12} V. S\'echerre, S. Stevens, Smooth Representations of GLm(D) VI: Semisimple Types, International Mathematics Research Notices, Volume 2012, Issue 13, 2012, Pages 2994-3039, https://doi.org/10.1093/imrn/rnr122
%\bibitem[Ta87]{Ta87} M. Tadi\'c: Unitary representations of GL(n), derivatives in the non-Archimedean case, V. Mathematikertreffen Zagreb-Graz (Mariatrost/Graz, 1986), Ber. Math.-Statist. Sekt. Forschungsgesellsch. Joanneum, vol. 274, Forschungszentrum Graz, Graz, pp. Ber. No.281, 19 (1987)
%\bibitem[Ta86]{Ta86} M. Tadi\'c, On the classification of irreducible unitary representations of GL(n) and the conjectures of Bernstein and Zelevinsky, Ann. Sci. \'Ecole Norm. Sup., {\bf 19} (1986), 335-382.
%\bibitem[VV11]{VV11} M. Varagnolo, E. Vasserot, Canonical bases and affine Hecke algebras of type B. Invent. math. 185, 593-693 (2011). https://doi.org/10.1007/s00222-011-0314-y
%\bibitem[Ve13]{Ve13} Venketasubramanian, C.G. On representations of GL(n) distinguished by GL(n-1) over a p-adic field. Isr. J. Math. 194, 1-44 (2013). doi.org/10.1007/s11856-012-0152-7
%\bibitem[Wa86]{Wa86} Waldspurger, J.-L.. Alg\`ebres de Hecke et induites de repr\'esentations cuspidales, pour GL (N). Journal f\"ur die reine und angewandte Mathematik 370 (1986): 127-191. 
%\bibitem[Xu17]{Xu17} B. Xu, On M\oe glin's parametrization of Arthur packets for p-adic quasisplit Sp(N) and SO(N). Canad. J. Math. 69 (2017), no. 4, 890-960.
\bibitem[Ze80]{Ze80} A. Zelevinsky, Induced representations of reductive p-adic groups II, Ann. Sci. Ecole Norm. Sup. {\bf 13} (1980), 154-210.
\end{thebibliography}
\end{document}